\theoremstyle{plain}
\numberwithin{equation}{section}
\newtheorem{thm}{Theorem}[section]
\newtheorem{lem}[thm]{Lemma}
\newtheorem{cor}[thm]{Corollary}
\newtheorem{prop}[thm]{Proposition}
\newtheorem{defn}[thm]{Definition}
\newtheorem{rem}[thm]{Remark}
\newtheorem{ex}[thm]{Example}
\newtheorem{conj}[thm]{Conjecture}
\newtheorem{prob}[thm]{Problem}
\DeclareMathOperator{\rk}{rk}
\declaretheoremstyle[
 headfont=\normalfont\bfseries,
 bodyfont=\itshape,
 numbered=no,
 headpunct={.},
 postheadspace=1em
 ]{nonumberplain}
\declaretheorem[style=nonumberplain, name=Theorem A]{thmA}
\declaretheorem[style=nonumberplain, name=Theorem B]{thmB}
\declaretheorem[style=nonumberplain, name=Theorem C]{thmC}
\newcommand{\B}{\bullet}
\newcommand{\cod}[1]{0.46*#1}
\newcommand{\cord}[4]{\draw[dashed] (\cod{#1}, \cod{#2}) -- (\cod{#3}, #4)}
\newcommand{\dod}[1]{\cod{#1} + 0.23}
\newcommand{\dord}[2]{\node at (\dod{#1}, #2) {$\bullet$}}
\newcommand{\dif}[1]{\stackrel{(#1)}{\mathbf{X}}\hskip-0.5em^{(p)}}
\newcommand{\pdif}[1]{\stackrel{(#1)}{X}\hskip-0.5em^{(p)}}
\begin{document}

\title{Combinatorial Structure in Nevanlinna Theory}
\author{Shuhei Katsuta}
\thanks{This work was supported by JSPS KAKENHI Grant Number JP24KJ1283.}
\address{Graduate School of Mathematics, Nagoya University. Furocho, Chikusaku, Nagoya, Japan, 464-8602.}
\email{katsuta.shuhei.c9@s.mail.nagoya-u.ac.jp}
\date{}

\begin{abstract}
  In \emph{Meromorphic Functions and Analytic Curves}, H. and F. J. Weyl identified an intriguing connection between holomorphic curves and their associated curves, which they referred to as the ``peculiar relation''. In this paper, we present a generalization of the Weyl peculiar relation and investigate a combinatorial structure underlying the Weyl--Ahlfors theory via standard Young tableaux. We also provide an alternative proof of the Second Main Theorem from the viewpoint of comparing the order functions $ iT_p $ and $ T_i\{\mathbf{X}^{(p)}\} $.
\end{abstract}

\maketitle

\section{Introduction}
Classical Nevanlinna theory, initiated by  R. Nevanlinna, studies the value distribution of holomorphic curves in complex projective space---originally in $ \mathbb{P}^1 $, and more generally in $ \mathbb{P}^n $. This theory provides powerful methods for proving key results about holomorphic curves, such as the Casorati--Weierstrass theorem,
Picard's theorem, and Borel's theorem. To develop this theory, various approaches have been proposed. One of them is the \textbf{Weyl--Ahlfors theory,} introduced by H. and F. J. Weyl \cite{Weyl_1}, \cite{Weyl_2} and L. V. Ahlfors \cite{Ahlfors}. In this paper, our study is primarily based on this approach.
A distinctive feature of the Weyl--Ahlfors theory is the introduction of the \textbf{$ p $-th associated curve} $ \mathbf{X}^{(p)} $, which arises from the holomorphic curve $ \mathbf{x} : \mathbb{C} \to \mathbb{P}^n $ by taking the wedge product with its higher-order derivatives.
In addition, Ahlfors introduced the innovative idea of employing singular metrics in the averaging process of integral geometry. On the basis of these ideas, they provided an alternative proof of Cartan's Second Main Theorem \cite{Cartan}, and also proved the Second Main Theorem for associated curves;
for details, see \cite{Ahlfors}, \cite{Weyl_2}, and H.-H. Wu \cite{Wu}.

The Weyl--Ahlfors theory has been studied from various perspectives. For example, in \cite{Weyl_2}, it was generalized to the case where the domain of $ \mathbf{x} $ is an open Riemann surface. W. Stoll \cite{Stoll_1}, \cite{Stoll_2} extended the domain from $ \mathbb{C} $ to $ \mathbb{C}^m $ $ (m > 1)$, and further to parabolic manifolds.
The theory was also studied with an emphasis on curvature by M. Cowen and P. Griffiths \cite{Cowen-Griffiths}. Other studies from the differential-geometric point of view include, for example, those by S. S. Chern \cite{Chern}, H.-H. Wu \cite{Wu}, and Y.-T. Siu \cite{Siu_1}, \cite{Siu_2}.

Another important aspect of the Weyl--Ahlfors theory is its connection to number theory, as pointed out by P. Vojta \cite{Vojta_1} (known as \emph{Vojta's dictionary}); see also J. Noguchi and J. Winkelmann \cite{Noguchi-Winkelmann}, and M. Ru \cite{Ru}. In place of associated curves, lattice parallelepipeds in the geometry of numbers on Grassmannians play a similar role in the proof of Schmidt's Subspace Theorem; see W. M. Schmidt's lecture notes \cite{Schmidt}.

In this paper, we study this theory from a combinatorial perspective. In particular, we establish a new connection with \textbf{Young diagrams}. This connection arises in the course of examining a generalization of an intriguing relation appearing in Chapter~3, Section~8 of the book by H. and F. J. Weyl \cite{Weyl_2}.
This relation describes the connection between several order functions $ T_i $ of $ \mathbf{x} $ and the single order function $ T_i\{\mathbf{X}^{(p)}\} $ of the associated curve $ \mathbf{X}^{(p)} $. They discovered this relation as a natural generalization of the equations $ S^p = S^1\{\mathbf{X}^{(p)}\}$,
$ \Omega_{p} = \Omega_1\{\mathbf{X}^{(p)}\} $,
and $ T_p = T_1\{\mathbf{X}^{(p)}\} $; see Section~\ref{sec:Weyl_pec_rel}. Indeed, in \cite{Weyl_2}, they write:
\begin{center}
  ``\emph{It is natural to ask whether the higher $ v_q $, $ S^q $ of the curve $ \mathfrak{C}_p  $ in $ \binom{n}{p} $-space may be expressed by the same quantities for $ \mathfrak{C} $. Closer examination shows that such relations prevail only for $ q = 2 $ besides $ q = 1 $.}''(\cite{Weyl_2}, p.~161).
\end{center}
(Note that the notation in this statement differs from ours.)
Moreover, they also write:
\begin{center}
  ``\emph{a strange relation of which we are not aware whether it is known even for
    rational and algebraic curves.}''(\cite{Weyl_2}, p.~162).
\end{center}
Hence, following their terminology, we refer to this as the \textbf{Weyl peculiar relation} (Theorem~\ref{thm:pec_rel}). It remains an interesting problem to clarify what kinds of relations hold when $ q \geq 3 $ (in this paper, we denote $ q $ by $ i $). At present, it seems difficult to establish such generalized relations as equalities; therefore, we attempt to formulate them as inequalities. One of the main results of this paper is the following:
\begin{thmA}[Theorem~\ref{thm:gen_pec_rel}, Generalized Weyl Peculiar Relation for $ T_i\{\mathbf{X}^{(p)}\} $]\label{thm:thmA}
  Let $ n $ and $ p $ $ (\leq n) $ be positive integers, and let $ i $ be an integer satisfying $ 1 \leq i \leq p(n - p + 1) $. Assume that $ \mathbf{X}^{(p)} $ is non-degenerate as a holomorphic curve. Then the following inequality holds:
  \begin{equation}\label{eq:thmA}
    \sum_{s = 1}^{i - 1} \max_{\sigma \in \binom{[n + 1]}{p}_{(k_s)}}\left(\sum_{k = 1}^n n_{\lambda(\sigma)}(k)(T_{k - 1} - 2T_k + T_{k + 1})\right)
    + i T_p
    \leq T_i\{\mathbf{X}^{(p)}\} + O(1),
  \end{equation}
  where the set $ \binom{[n + 1]}{p}_{(k_s)} $ is defined in \textup{Section~\ref{sec:assoc_curve}}, and the integer $ n_{\lambda(\sigma)}(k)$ is defined in \textup{Definition~\ref{def:phi}} for each $ \sigma \in \binom{[n + 1]}{p}_{(k_s)} $ and for each integer $ 1 \leq k \leq n $.
\end{thmA}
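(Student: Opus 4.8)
The plan is to bound $T_i\{\mathbf{X}^{(p)}\}$ from below by the growth of a single, carefully chosen Plücker coordinate of the $i$-th associated curve of $\mathbf{X}^{(p)}$, and then to evaluate that growth in terms of the order functions $T_k$ of $\mathbf{x}$. Write $\Xi_i=\mathbf{X}^{(p)}\wedge(\mathbf{X}^{(p)})'\wedge\cdots\wedge(\mathbf{X}^{(p)})^{(i-1)}$ for a representative of the $i$-th associated curve of $\mathbf{X}^{(p)}$ in $\wedge^{i}\wedge^{p}\mathbb{C}^{n+1}$. First I would record the elementary bound $\log\|\Xi_i\|\geq\log|\langle\Xi_i,e_J\rangle|$, valid for every coordinate multi-index $J$; integrating over circles and applying Jensen's formula turns this into $T_i\{\mathbf{X}^{(p)}\}\geq\frac{1}{2\pi}\int_0^{2\pi}\log|\langle\Xi_i,e_J\rangle|\,d\theta+O(1)$. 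The non-degeneracy hypothesis on $\mathbf{X}^{(p)}$ guarantees $\Xi_i\not\equiv 0$, so that some coordinate is not identically zero; the freedom in choosing $J$ is precisely what will produce the maximum in \eqref{eq:thmA}.

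Next I would expand each derivative $(\mathbf{X}^{(p)})^{(s)}$ in the weight basis $\{\mathbf{x}^{(a_0)}\wedge\cdots\wedge\mathbf{x}^{(a_{p-1})}\}$ indexed by $\binom{[n+1]}{p}$, using that differentiation raises the total derivative order (the ``excess'') by exactly one. A coordinate $\langle\Xi_i,e_J\rangle$ is then a determinant whose rows run over the derivative levels $s=0,\dots,i-1$ and whose columns select $p$-subsets of prescribed excess. I would choose $J$ so as to single out a strictly increasing chain of excess levels, taking at step $s$ a representative $\sigma_s\in\binom{[n+1]}{p}_{(k_s)}$ from the stratum of fixed excess $k_s$ defined in Section~\ref{sec:assoc_curve}; up to strictly lower-order terms, the resulting coordinate is a generalized Wronskian $W_{\sigma_1,\dots,\sigma_{i-1}}$ built from $\mathbf{x}$ and its derivatives. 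The base level $k_0=0$ contributes the factor $\mathbf{x}\wedge\cdots\wedge\mathbf{x}^{(p-1)}$, i.e. $\mathbf{X}^{(p)}$ itself, at every row, which is the source of the $iT_p$ term through the classical identity $T_p=T_1\{\mathbf{X}^{(p)}\}$ of Theorem~\ref{thm:pec_rel}.

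The heart of the argument is the evaluation of the mean of $\log|W_{\sigma_1,\dots,\sigma_{i-1}}|$. Associating to each $\sigma_s$ its Young diagram $\lambda(\sigma_s)$ via the map of Definition~\ref{def:phi}, I would establish the identity (as functions of $r$)
\[
  \frac{1}{2\pi}\int_0^{2\pi}\log\bigl|W_{\sigma_1,\dots,\sigma_{i-1}}(re^{\sqrt{-1}\,\theta})\bigr|\,d\theta
  = iT_p+\sum_{s=1}^{i-1}\sum_{k=1}^{n}n_{\lambda(\sigma_s)}(k)\bigl(T_{k-1}-2T_k+T_{k+1}\bigr)+O(1),
\]
in which the consecutive order functions $T_k$ are recombined into second differences by Abel summation, each non-base row of excess $k_s$ contributing the weighted sum $\sum_k n_{\lambda(\sigma_s)}(k)(T_{k-1}-2T_k+T_{k+1})$. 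This is the step I expect to be the main obstacle. One must (i) show that the cross-terms in the expansion of $\langle\Xi_i,e_J\rangle$ perturb the integral only by $O(1)$, and (ii) prove the combinatorial identity that converts the content statistic $n_\lambda(k)$ of the Young diagram into the Weyl--Ahlfors fundamental (Plücker-type) relations for the skew wedges $\mathbf{x}^{(a_0)}\wedge\cdots\wedge\mathbf{x}^{(a_{p-1})}$. The standard-Young-tableau structure carried by $\binom{[n+1]}{p}_{(k_s)}$ is what makes both the enumeration of admissible chains and the non-vanishing of $W_{\sigma_1,\dots,\sigma_{i-1}}$ tractable.

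Finally, since the representative $\sigma_s$ may be chosen freely within $\binom{[n+1]}{p}_{(k_s)}$ at each level, and since the lower bound of the first paragraph holds for the best choice, I would maximize over $\sigma_s$ independently at each step; summing over $s=1,\dots,i-1$ and adding $iT_p$ produces exactly the left-hand side of \eqref{eq:thmA}. The hypothesis $1\leq i\leq p(n-p+1)$ is used here: it is precisely the range in which a strictly increasing chain of excess levels of the required length fits inside the available range of weights, so that the construction never degenerates. As a sanity check, when $p=1$ the strata $\binom{[n+1]}{1}_{(k_s)}$ are singletons and the displayed identity telescopes to $T_i$, recovering the classical relation with equality; for $p\geq 2$ the passage to a single coordinate genuinely loses growth, which is why the statement is only an inequality.
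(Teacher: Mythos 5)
The central step of your argument fails. You reduce the problem to the circular mean of $\log$ of a \emph{single} Pl\"ucker coordinate $\langle\Xi_i,e_J\rangle$ of $\Xi_i=\mathbf{X}^{(p)}\wedge(\mathbf{X}^{(p)})'\wedge\cdots\wedge(\mathbf{X}^{(p)})^{(i-1)}$, and then assert that this mean equals $iT_p+\sum_{s=1}^{i-1}\sum_{k=1}^{n}n_{\lambda(\sigma_s)}(k)(T_{k-1}-2T_k+T_{k+1})+O(1)$. But for a holomorphic function $f$, Jensen's formula gives $\frac{1}{2\pi}\int_0^{2\pi}\log|f(re^{\sqrt{-1}\theta})|\,d\theta=N(r,0,f)+O(1)$, where $N(r,0,f)$ is the integrated counting function of the zeros of $f$: it records only the zeros of that one coordinate and carries none of the area/proximity growth that the order functions measure. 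Concretely, take $n=2$, $p=1$, $i=2$ and $\mathbf{x}=(1,e^z,e^{2z})$. Every coordinate of $\Xi_2=\mathbf{x}\wedge\mathbf{x}'$ is a zero-free exponential ($e^z$, $2e^{2z}$, $e^{3z}$), so your intermediate quantity is $O(1)$ for \emph{every} choice of $J$, while the left-hand side of \eqref{eq:thmA} equals $T_2=\frac{2}{\pi}r+O(1)$. The chain ``LHS $\leq$ coordinate mean $\leq T_i\{\mathbf{X}^{(p)}\}$'' therefore breaks at its first link, and your own sanity check for $p=1$ fails at the level of the intermediate identity. What makes the paper's argument work is that the lower bound for $|\mathbf{X}^{(p)}\wedge\dif{1}\wedge\cdots\wedge\dif{i}|$ is taken \emph{pointwise} against a \emph{sum} of squared selected coordinates (Lemma~\ref{lem:norm_compare}), propagated through the telescoping quotients $S^k\{\mathbf{X}^{(p)}\}$ (Proposition~\ref{prop:S_ineq}), and only then log-integrated; the resulting inequality is between the $\Omega_k$'s (Theorem~\ref{thm:gen_pec_rel_omega}), which the Pl\"ucker formula --- not Jensen --- converts into second differences of the $T_k$'s.

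There is a second, independent gap. Even if you replace the single coordinate by the full norm $\|\Xi_i\|$, its log-mean is $\overline{T}_i\{\mathbf{X}^{(p)}\}=T_i\{\mathbf{X}^{(p)}\}+N_i\{\mathbf{X}^{(p)}\}$, and the Pl\"ucker formula likewise produces the ramification terms $V_k$ alongside $T_{k-1}-2T_k+T_{k+1}$. Passing from the resulting $\overline{T}$-statement to the stated $T$-statement requires the inequality $\sum_{s=1}^{i-1}\sum_{k=1}^{n}n_{\lambda(\sigma_s)}(k)V_k\geq\sum_{s=1}^{i-1}\sum_{k=1}^{s}V_k\{\mathbf{X}^{(p)}\}$, which in turn rests on the upper bounds for the stationary indices of $\mathbf{X}^{(p)}$ in terms of $\phi_p(\lambda(\sigma))$ (Proposition~\ref{prop:ord_ineq} and Lemma~\ref{lem:V_ineq}). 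Your proposal never engages with the counting functions at all, so even a corrected version of your analytic step would prove the inequality only for $\overline{T}_i\{\mathbf{X}^{(p)}\}$ on the right and $\overline{T}_k$ on the left, which is not the statement of the theorem.
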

Here, a combinatorial quantity $ n_{\lambda(\sigma)}(k) $ arises, which is associated with the Young diagram $ \lambda(\sigma) $. Moreover, this inequality is best possible in the sense that there exists a holomorphic curve  for which equality holds in \eqref{eq:thmA}; see Proposition~\ref{prop:sp_curve}. For our theorem to hold in a meaningful way, it is necessary that the left-hand side of \eqref{eq:thmA} be positive. The following theorem ensures this.
\begin{thmB}[Theorem~\ref{thm:pos_ineq}]\label{thm:thmB}
  Let $ 0 < \epsilon < 1 $. Then, for each integer $ 1 \leq i \leq p(n - p + 1) $, the following inequality holds:
  \begin{equation*}
    (1 - \epsilon)\min(T_p, T_{n - p + 1}) < \sum_{s = 1}^{i - 1}\max_{\sigma \in \binom{[n + 1]}{p}_{(k_s)}}\left(\sum_{k = 1}^n n_{\lambda(\sigma)}(k)(T_{k - 1} - 2T_k + T_{k + 1})\right) + iT_p \, //,
  \end{equation*}
  where the symbol $ // $ indicates that the inequality holds for all $ r \in \mathbb{R} $ outside a subset of $ \mathbb{R} $ of finite \textup{(\textit{Lebesgue})} measure. In this case, the exceptional set depends on $ n $, $ \mathbf{x} $, $ p $, $ i $, and $ \epsilon $.
\end{thmB}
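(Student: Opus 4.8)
The plan is to bound the right-hand side $L := \Sigma + iT_p$ from below, where $\Sigma$ denotes the sum of maxima. Since a maximum dominates the value at any single index, I would first replace, for each $s$, the inner maximum by its value at one distinguished element $\sigma_s \in \binom{[n+1]}{p}_{(k_s)}$ --- concretely the level-$k_s$ subset whose Young diagram $\lambda(\sigma_s)$ is extremal for the special curve of Proposition~\ref{prop:sp_curve}, so that the resulting bound is tight in that case. This yields
\[
L \;\ge\; \sum_{k=1}^{n} c_k\,(T_{k-1}-2T_k+T_{k+1}) + iT_p,
\qquad c_k := \sum_{s=1}^{i-1} n_{\lambda(\sigma_s)}(k) \ge 0,
\]
with the nonnegative integer weights $c_k$ computed from the statistics $n_{\lambda(\sigma_s)}(k)$ of Definition~\ref{def:phi}. (Replacing each maximum by the average over its level set gives an alternative, slightly weaker, admissible lower bound with symmetric weights, which is a useful cross-check.)

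Next I would apply summation by parts twice in the index $k$ to rewrite the second-difference sum as a linear form in the order functions themselves:
\[
\sum_{k=1}^{n} c_k\,(T_{k-1}-2T_k+T_{k+1}) + iT_p
\;=\; \sum_{j} e_j\,T_j + (\text{boundary terms}),
\]
where each coefficient $e_j$ is the discrete second difference of the sequence $k \mapsto c_k$, adjusted by $+i$ at $j = p$, together with endpoint contributions at $j \in \{0,1,n,n+1\}$. The problem then becomes the purely quantitative assertion that $\sum_j e_j T_j$ (plus boundary) is at least $\min(T_p, T_{n-p+1})$ up to a controlled error. Here the symmetry $p \leftrightarrow n+1-p$ of the level sets $\binom{[n+1]}{p}_{(k_s)}$ is what makes the symmetric quantity $\min(T_p, T_{n-p+1})$, rather than $T_p$ alone, the natural lower bound.

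The decisive analytic input is the Weyl--Ahlfors Plücker (associated First Main Theorem) relations, which show that the second differences $T_{j-1}-2T_j+T_{j+1}$ are, up to an error term $S_j(r)$, nonpositive (the nonpositivity coming from nonnegative stationary-point counting functions), so that the sequence $(T_j)$ is concave up to $S_j$. This confines admissible $(T_j)$ to a finite-dimensional convex cone, over which I would minimize the linear form $\sum_j e_j T_j$. The minimum is attained on an extreme ray, and these extreme rays are realized by model curves --- including the one underlying Proposition~\ref{prop:sp_curve} --- on which the inequality collapses, after the common growth factor cancels, to a finite combinatorial inequality among the $e_j$ that is tight in the equality case. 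Finally, Borel's growth (``calculus'') lemma gives $S_j(r) = O(\log r) + O(\log \min(T_p, T_{n-p+1}))$ for $r$ outside a set of finite Lebesgue measure; taking the union of these finitely many exceptional sets over the indices $j$ and $s$ produces the single exceptional set depending on $n$, $\mathbf{x}$, $p$, $i$, and $\epsilon$, and is exactly the origin of the symbol $//$. Outside it the total error is $o(\min(T_p, T_{n-p+1}))$, hence smaller than $\epsilon\,\min(T_p, T_{n-p+1})$ once $\min(T_p, T_{n-p+1})$ is large, which yields the factor $1-\epsilon$.

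The main obstacle is the combinatorial-convex core spanning the second and third steps: one must compute the coefficients $e_j$ explicitly from the Young-diagram statistics $n_{\lambda(\sigma)}$ and the choice of level sets, and then prove the extremal inequality $\sum_j e_j T_j \ge \min(T_p, T_{n-p+1})$ for every admissible (nonnegative, concave-up-to-error) sequence $(T_j)$. Because the bound is asserted to be best possible, the lower bounds on the maxima must be sharpened exactly to the extremal configuration, so the real work is to identify the distinguished $\sigma_s$ correctly and to verify tightness against the special curve; the analytic passage through Borel's lemma, by contrast, is routine once the concavity relations are in place.
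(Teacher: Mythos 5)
Your opening move --- replacing each maximum $\max_{\sigma \in \binom{[n+1]}{p}_{(k_s)}}(\cdots)$ by its value at one $\sigma_s$ fixed once and for all --- is where the argument breaks. The maximizing $\sigma$ genuinely depends on $r$ (equivalently, on the shape of the concave sequence $(T_j(r))_j$), and no single selection makes the resulting linear form large enough on the cone you later minimize over. Concretely, take $n = 3$, $p = 2$, $i = 4$. The level-$2$ set has two elements, with Young diagrams $(2)$ and $(1,1)$, contributing $T_1 - T_2 - T_3$ and $-T_1 - T_2 + T_3$ respectively; levels $1$ and $3$ are singletons contributing $T_1 - 2T_2 + T_3$ and $-T_1 - T_3$. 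With the maxima the right-hand side equals $T_2 + |T_1 - T_3|$, which does dominate $(1-\epsilon)T_2$; but with either fixed choice at level $2$ it equals $T_2 \pm (T_1 - T_3)$, and on the nonnegative concave sequence $(T_0,\ldots,T_4) = (0,1,2,3,0)$ (or its mirror) one of these is $0$, strictly below $(1-\epsilon)\min(T_2,T_2)$. So within your own framework --- minimizing a fixed linear form over the concave cone --- the reduction already destroys the statement. Your selection rule is also ill-posed: for the curve of Proposition~\ref{prop:sp_curve} all second differences are equal, so every $\sigma$ at a given level gives the same value and none is singled out. Finally, the ``finite combinatorial inequality among the $e_j$'' that you defer to the last step is precisely the hard content of the theorem, and the sketch supplies no mechanism for proving it.

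The paper's route avoids selection entirely. The weighted balanced sum formula (Theorem~\ref{thm:weighted_balanced_sum_formula}) is an exact identity: with the weights $f_{\lambda(\sigma)}f_{\widehat{\lambda(\sigma)}}$, whose sum over each level $s$ equals the same constant $f_{[(n-p+1)^p]}$, the weighted total of $\sum_k n_{\lambda(\sigma)}(k)(T_{k-1}-2T_k+T_{k+1}) + T_p$ over all levels $0 \le s \le p(n-p+1)$ is zero. Since a maximum dominates any convex combination, and since the top level is a singleton whose term is exactly $-T_{n-p+1}$, this yields $\sum_{s=0}^{p(n-p+1)-1}\max_\sigma(\cdots + T_p) \ge T_{n-p+1}$ with no choices made. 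The passage from this full-sum bound to the partial sums is then a minimality-and-contradiction argument: if some partial sum dipped below $(1-\epsilon)\min(T_p, T_{n-p+1})$ on a set of infinite measure, the first such level would force all later summands to be $O(\log T)$ by the concavity estimate \eqref{eq:concave_rel}, contradicting the full-sum lower bound for large $r$. These two ingredients --- the constant-per-level weighting that turns the sum of maxima into an exact identity, and the minimality step that replaces convex-cone minimization --- are exactly what your proposal is missing.
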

Furthermore, we examine the combinatorial (and geometric) background underlying the relation \eqref{eq:thmA}. In classical Nevanlinna theory, research on the associated curves has suggested the presence of combinatorial structures.
For example, H. Fujimoto \cite{Fujimoto_1} established a truncated version of a defect relation \eqref{eq:trunc_def_rel}, which generalizes Ahlfors's defect relation \eqref{eq:def_rel}, by analyzing the weight of $ I = (i_0, i_1, \ldots, i_k) $ (see \cite{Fujimoto_1}, p.~148, Definition~4.1; similar quantities appear in this paper as $ d(i_0, i_1, \ldots, i_{p - 1}) $ \eqref{eq:d(I)}) and the behavior of the Wronskian (see \cite{Fujimoto_1}, p.~148, Lemma~4.2 (\emph{Fujimoto's trick}) and p.~151, Proposition~5.3).
More recently, using similar methods, D.T. Huynh and S.-Y. Xie \cite{Huynh-Xie} proved the defect relation for degenerate entire curves, which improves the result of W. Chen \cite{Chen}.
In the Weyl--Ahlfors theory, the \textbf{Pl\"ucker formula} for holomorphic curves (Theorem~\ref{thm:Plucker_formula}) plays a crucial role in deriving the Second Main Theorem. Separately, in connection with the Pl\"{u}cker formula, the work of J.-L. Gervais and Y. Matsuo \cite{Gervais-Matsuo} along with subsequent studies, such as
J. Jost and G. Wang \cite{Jost-Wang} and A. Eremenko \cite{Eremenko}, is of interest. In these works, the applications to integrable systems (Toda lattice) are explored. In addition, we note a connection to the geometry of flag varieties, as discussed by P. Vojta \cite{Vojta_2}, p.~14, Proposition~3.7, p.~15, Remarks~3.9 and Remark~3.10.
In the present paper, the Pl\"ucker formula enables us to regard the collection of order functions $\{T_i\}_{i = 0}^{n + 1}$ as a combinatorial object. Indeed, the formula shows that the second-order differences $ T_{i - 1} - 2T_i + T_{i + 1} $ are ``negative'', and hence the sequence $ \{T_i\}_{i = 0}^{n + 1} $ is ``concave''; see Remark~\ref{rem:T-seq}.
\begin{figure}[htbp]
  \begin{equation*}
    \begin{matrix}
      T_1                     & T_2                     & \cdots & T_n                                                              \\
      T_1\{\mathbf{X}^{(2)}\} & T_2\{\mathbf{X}^{(2)}\} & \cdots & T_n\{\mathbf{X}^{(2)}\} & T_{n + 1}\{\mathbf{X}^{(2)}\} & \cdots \\
      T_1\{\mathbf{X}^{(3)}\} & T_2\{\mathbf{X}^{(3)}\} & \cdots & T_n\{\mathbf{X}^{(3)}\} & T_{n + 1}\{\mathbf{X}^{(3)}\} & \cdots \\
      \vdots                  & \vdots                  & \ddots & \vdots                  & \cdots                                 \\
      T_1\{\mathbf{X}^{(n)}\} & T_2\{\mathbf{X}^{(n)}\} & \cdots & T_n\{\mathbf{X}^{(n)}\}
    \end{matrix}
  \end{equation*}
  \caption{System of order functions}
  \label{fig:order_functions}
\end{figure}

Figure~\ref{fig:order_functions} shows the central objects considered in this paper, namely, a system of order functions. While the rows correspond to the classical Weyl--Ahlfors theory, the columns represent a new perspective. In particular, the Weyl peculiar relation appears in the second column.

From a different perspective, \eqref{eq:thmA} can be viewed as a comparison between $ iT_p $ and $ T_i\{\mathbf{X}^{(p)}\} $. One may summarize Theorem A with the slogan:
\begin{equation*}
  iT_p + \sum_{s = 1}^{i - 1}(\text{``negative'' term}) \leq T_i\{\mathbf{X}^{(p)}\}.
\end{equation*}
On the other hand, one can verify that $ T_i\{\mathbf{X}^{(p)}\} < (i + \epsilon)T_p \, // $ \eqref{eq:fund_ineq_3}. We also consider possible refinements of this inequality.
\begin{thmC}[Theorem \ref{thm:gen_SMT}]\label{thm:thmC}
  Let $ 2 \leq i \leq \binom{n + 1}{p} $ be an integer, and set $ h \coloneqq \binom{n + 1}{p} - 1 $. Let $ \{\mathbf{B}_j^{(i - 1)}\}_{j = 1}^d \subseteq \bigwedge^{i - 1}\mathbb{C}^{\binom{n + 1}{p}} $ be a finite set of nonzero decomposable $ (i - 1) $-vectors in general position for $ 1 $ \textup{(\textit{see} Definition~\ref{def:in_gen_pos})}. Then, for any $ \epsilon > 0 $, the following inequality holds:
  \begin{equation*}
    \begin{split}
      T_i\{\mathbf{X}^{(p)}\}(r) + \sum_{j = 1}^d \frac{i - 1}{\binom{h}{i - 2}}\widetilde{m}_1\{\mathbf{X}^{(p)}\}(r, \mathbf{B}_j^{(i - 1)})
      + \sum_{j = 1}^d\sum_{k = 1}^{i - 2}\frac{h - i + 1}{\binom{h - k + 1}{i - 1 - k}} & M_1^{(k)}\{\mathbf{X}^{(p)}\}(r, \mathbf{B}_j^{(i - 1)}) \\
      & < (i + \epsilon)T_p(r) \, //,
    \end{split}
  \end{equation*}
  where the proximity function $ \widetilde{m}_1\{\mathbf{X}^{(p)}\}(r, \mathbf{B}^{(i - 1)}) $ is defined in \textup{Definition~\ref{def:prox_func}}, and its higher-dimensional analogue, $ M_1\{\mathbf{X}^{(p)}\}(r, \mathbf{B}^{(i - 1)}) $ is given in \textup{Definition~\ref{def:Psi-M}}.
\end{thmC}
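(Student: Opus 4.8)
The plan is to reduce the statement to the fundamental inequality \eqref{eq:fund_ineq_3} by extracting the proximity contributions from the First Main Theorem. Write $\mathbf{Y} \coloneqq \mathbf{X}^{(p)}$, a holomorphic curve into $\mathbb{P}^h$ with $h = \binom{n+1}{p} - 1$, and recall that $T_1\{\mathbf{Y}\} = T_p$. For each fixed decomposable $(i-1)$-vector $\mathbf{B}_j^{(i-1)}$, I would first apply the First Main Theorem to $\mathbf{Y}$ and to the associated curves appearing in Definition~\ref{def:prox_func} and Definition~\ref{def:Psi-M}. This expresses each proximity term $\widetilde{m}_1\{\mathbf{X}^{(p)}\}(r, \mathbf{B}_j^{(i-1)})$ and $M_1^{(k)}\{\mathbf{X}^{(p)}\}(r, \mathbf{B}_j^{(i-1)})$ as an order function of $\mathbf{Y}$ minus a nonnegative counting function, up to $O(1)$. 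Because the counting functions are nonnegative, discarding them yields upper bounds for each proximity term purely in terms of order functions $T_k\{\mathbf{X}^{(p)}\}$. The general position hypothesis of Definition~\ref{def:in_gen_pos} is what guarantees that each contraction against $\mathbf{B}_j^{(i-1)}$ leaves the relevant associated curve non-degenerate, so that these order functions are well defined and the Pl\"ucker formula (Theorem~\ref{thm:Plucker_formula}) applies.

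The combinatorial heart of the proof is to assemble these bounds with exactly the prescribed weights. The coefficients $\frac{i-1}{\binom{h}{i-2}}$ and $\frac{h-i+1}{\binom{h-k+1}{i-1-k}}$ are the normalizations produced when one averages the contracted First Main Theorems over the flags of linear subspaces subordinate to $\mathbf{B}_j^{(i-1)}$, the binomial coefficients counting the subspaces of each dimension compatible with a fixed partial flag in $\mathbb{P}^h$, in the spirit of Ahlfors' averaging with singular metrics. I would carry this out level by level in $k$, so that after summing over $j$ the weighted proximity contributions collapse into the single order function $T_i\{\mathbf{X}^{(p)}\}$ together with a remainder whose leading term is comparable to $i\,T_1\{\mathbf{Y}\} = iT_p$. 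The nonnegative counting functions dropped at this stage are precisely what makes room for the sum over all $d$ vectors while keeping the right-hand side independent of $d$; this is the Second-Main-Theorem (defect) mechanism, and it is here that general position prevents the curve from approaching too many of the $\mathbf{B}_j^{(i-1)}$ simultaneously.

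Finally, I would invoke the fundamental inequality \eqref{eq:fund_ineq_3}, namely $T_i\{\mathbf{X}^{(p)}\} < (i+\epsilon)T_p \, //$, to convert the comparability obtained above into the stated strict inequality, letting the exceptional set of finite Lebesgue measure absorb both the error term of \eqref{eq:fund_ineq_3} and the usual calculus-lemma estimate used in the averaging. The main obstacle is the weight-matching in the averaging step: one must verify that the binomial factors combine so that the total coefficient of $T_p$ is exactly $i$---matching the right-hand side $(i+\epsilon)T_p$ with no loss---while every counting contribution remains nonnegative and is legitimately discarded. A secondary delicate point is to confirm that general position ``for $1$'' is precisely the hypothesis needed to keep each associated curve arising in the telescoping non-degenerate, so that no order function in the combination is undefined and the Pl\"ucker formula is uniformly applicable.
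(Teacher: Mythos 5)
Your proposal has a genuine gap at its core: the mechanism you describe cannot produce a right-hand side independent of the number $d$ of targets. The First Main Theorem (Theorem~\ref{thm:FMT}) gives, for each fixed $\mathbf{B}_j^{(i-1)}$, an identity of the form $m + N = T$, so discarding the nonnegative counting function only yields an upper bound on each proximity term \emph{separately}; summing over $j$ then gives a bound proportional to $d\,T_p$, not $(i+\epsilon)T_p$. Dropping nonnegative counting functions weakens each individual estimate and does nothing to make the sum over $j$ uniformly bounded---that uniformity is exactly the nontrivial analytic content of a Second Main Theorem and is not extractable from First-Main-Theorem identities. The paper instead applies Theorem~\ref{thm:omega_M_small} (the Ahlfors--Weyl estimate, which packages the lemma on logarithmic derivatives and the sum-into-products estimate) to $\mathbf{X}^{(p)}$ regarded as a curve in $\mathbb{P}^{\binom{n+1}{p}-1}$: a single instance of that theorem controls $\Omega_{1+k}\{\mathbf{X}^{(p)}\}$ together with the \emph{entire} sum over $j$ of the differences $M_1^{(k)} - M_1^{(k+1)}$, weighted by $1/W_k^{(1+k)} = 1/\binom{h-k}{i-2-k}$ from the dimension count of Remark~\ref{rem:W_i}. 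General position for $1$ is precisely the hypothesis of that theorem; it is a condition on the targets $\mathbf{B}_j^{(i-1)}$, not (as you suggest) a device for keeping auxiliary associated curves non-degenerate, which is a separate standing assumption on $\mathbf{X}^{(p)}$.

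Your final step is also not viable: \eqref{eq:fund_ineq_3} is exactly the $d = 0$ case of the statement to be proved, and invoking it leaves no room to absorb the proximity terms. The paper instead uses the exact identity of Lemma~\ref{lem:iT_p-T_i{X^p}_rel}, namely $\sum_{s=1}^{i-1}\sum_{k=1}^{s}\Omega_k\{\mathbf{X}^{(p)}\} + i\overline{T}_p = \overline{T}_i\{\mathbf{X}^{(p)}\} + O(1)$: after double-summing the instances of Theorem~\ref{thm:omega_M_small} over $k$ and telescoping the $M_1^{(k)}$ terms (using $M_1^{(i-1)} = 0$, $M_1^{(0)} = \widetilde{m}_1 - \widetilde{m}_0$, and the identity $\frac{i-1-k}{\binom{h-k}{i-2-k}} - \frac{i-k}{\binom{h-k+1}{i-1-k}} = \frac{h-i+1}{\binom{h-k+1}{i-1-k}}$), the $\Omega$-sum converts \emph{exactly} into $\overline{T}_i\{\mathbf{X}^{(p)}\} - i\overline{T}_p$, which is what places $T_i\{\mathbf{X}^{(p)}\}$ and $iT_p$ on opposite sides of the inequality with the proximity terms in between. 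Your intuition that the binomial weights arise from averaging over subordinate flags is in the right spirit, but that averaging is performed inside the proof of Theorem~\ref{thm:omega_M_small}; it cannot be reconstructed from the First Main Theorem.
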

In fact, Theorem C is a form of the Second Main Theorem for $ \mathbf{X}^{(p)} $ (Theorem~\ref{thm:SMT}). We derive this result by applying Theorem~\ref{thm:omega_M_small}, due to Ahlfors \cite{Ahlfors} and H. and F. J. Weyl \cite{Weyl_2}, to $ \mathbf{X}^{(p)} $.
In slogan form, Theorem C says:
\begin{equation*}
  T_i\{\mathbf{X}^{(p)}\} + \sum_{j = 1}^d (\text{proximity function}) \leq iT_p.
\end{equation*}
Figure \ref{fig:conceptual_diagram} is a conceptual diagram summarizing the main results of this paper.
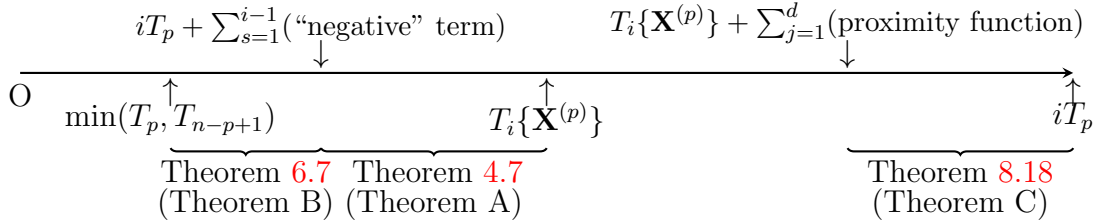
\begin{figure}[htbp]
  \centering
  \begin{tikzpicture}
    \draw[->,>=stealth,thick](-7,0) node[below]{O}--(7,0);
    \draw(-5,-0.25) node[below]{$\min(T_p, T_{n - p + 1})$};
    \draw(-5,-0.25) node{$ \uparrow $};
    \draw(-3,0.25) node{$ \downarrow $};
    \draw(-3,0.25) node[above,font=\small]{$ iT_p + \sum_{s = 1}^{i - 1}(\text{``negative'' term}) $};
    \draw(0,-0.25) node{$ \uparrow $};
    \draw(0,-0.25) node[below]{$ T_i\{\mathbf{X}^{(p)}\} $};
    \draw(4,0.25) node{$ \downarrow $};
    \draw(4,0.25) node[above,font=\small]{$ T_i\{\mathbf{X}^{(p)}\} + \sum_{j = 1}^d(\text{proximity function})$};
    \draw(7,-0.25) node{$ \uparrow $};
    \draw (7,-0.25) node[below]{$ iT_p $};
    \draw [thick,decoration={brace,mirror,raise=0.5cm},decorate] (-5,-0.5) -- (-3,-0.5) [anchor=north,xshift=-1cm,yshift=-0.5cm] node{Theorem~\ref{thm:pos_ineq}};
    \draw [thick,decoration={brace,mirror,raise=0.5cm},decorate] (-3,-0.5) -- (0,-0.5) [anchor=north,xshift=-1.45cm,yshift=-0.5cm] node{Theorem~\ref{thm:gen_pec_rel}};
    \draw [thick,decoration={brace,mirror,raise=0.5cm},decorate] (4,-0.5) -- (7,-0.5) [anchor=north,xshift=-1.5cm,yshift=-0.5cm] node{Theorem~\ref{thm:gen_SMT}};
    \draw(-4,-1.75) node{$ (\text{Theorem B}) $};
    \draw(-1.5,-1.75) node{$ (\text{Theorem A}) $};
    \draw(5.5,-1.75) node{$ (\text{Theorem C}) $};
  \end{tikzpicture}
  \caption{Main inequalities: conceptual diagram}
  \label{fig:conceptual_diagram}
\end{figure}

Although it will not be difficult to prove these results in a more general setting, we consider only the most fundamental case $ \mathbf{x} : \mathbb{C} \to \mathbb{P}^n $.

We outline the structure of this paper. In Section~\ref{sec:assoc_curve}, we introduce the concept of \textbf{stationary indices} to analyze the order of vanishing of $ \mathbf{x} $. For associated curves, upper bounds on the stationary indices are described in terms of \textbf{Maya diagrams} or Young diagrams. To compute these bounds, we define a function $ \phi_p(\lambda) $, where $ \lambda $ is a Young diagram. In addition, we observe that the higher-order derivatives of associated curves can be computed using \textbf{standard Young tableaux}.
In Section~\ref{sec:Weyl_pec_rel}, after defining some fundamental functions in the Weyl--Ahlfors theory, we recall the Weyl peculiar relation and outline its proof in a way that is amenable to generalization. In Section~\ref{sec:gen_pec_rel}, following the approach of the Weyls' original argument, we establish a generalized version of the Weyl peculiar relation in the form of an inequality for each
$ 1 \leq i \leq p(n + p - 1) $. In the rest of Section~\ref{sec:gen_pec_rel}, we make several observations on our results and discuss their geometric aspects, particularly from the viewpoint of Schubert calculus as described in P. Griffiths and J. Harris \cite{Griffiths-Harris}. In Section~\ref{sec:balanced_sum_formula}, we study the sequence $ \{T_k\}_{k = 0}^{n + 1} $ from a combinatorial perspective and establish the \textbf{balanced sum formula}, which describes the relationship between the second-order differences $ T_{k - 1} - 2T_k + T_{k + 1} $ and $ T_p $.
In Section~\ref{sec:weighted_balanced_sum_formula}, we investigate certain combinatorial properties on standard Young tableaux by introducing the (finite) \textbf{Young lattice}. On the basis of these investigations, we prove the \textbf{weighted balanced sum formula}, which plays a key role in proving Theorem~\ref{thm:pos_ineq} (Theorem B).
In Section~\ref{sec:geom_ineq}, using exponential curves, we provide an elementary geometric interpretation of the generalized Weyl peculiar relation and conclude the section by proving that the inequality in Theorem~\ref{thm:gen_pec_rel} (Theorem A) is best possible.
In Section~\ref{sec:SMT}, we present an alternative approach to proving the \textbf{Second Main Theorem} in the same spirit as the proof of the generalized Weyl peculiar relation. We then generalize this argument to obtain Theorem~\ref{thm:gen_SMT} (Theorem C).

\section{Associated curves and Stationary indices}\label{sec:assoc_curve}
\subsection{Definitions of Stationary Indices and Associated Curves}
\ \par
Let $ n $ be a positive integer. A map
\begin{equation*}
  \mathbf{x} : \mathbb{C} \to \mathbb{P}^{n}, z \mapsto (x_0(z) : x_1(z): \cdots: x_n(z))
\end{equation*}
is called a \textbf{holomorphic curve} if each component $ x_i(z) $ $ (i = 0, 1, \ldots, n)$ is a holomorphic function. Let $ \Delta(r) \subseteq \mathbb{C} $ be the open disc of radius $ r > 0 $ centered at the origin.
We obtain a \textbf{reduced representation} $ \mathbf{x}_{\mathrm{red}} $ of a holomorphic curve $ \mathbf{x} = \mathbf{x}(z)$ by removing common zeros from each $ x_i $:
\begin{equation*}
  \mathbf{x}_{\mathrm{red}}: \mathbb{C} \to \mathbb{C}^{n + 1}, \quad z \mapsto (y_0(z), y_1(z), \ldots, y_n(z)).
\end{equation*}
This reduced representation is defined up to multiplication by a nowhere-vanishing entire function. Throughout the rest of this paper, we will use a reduced representation whenever considering $ \mathbf{x} $ as a map from $\mathbb{C}$ to $ \mathbb{C}^{n + 1} $ and write $ \mathbf{x}_{\mathrm{red}} $ (resp. $ y_i $)
simply as $ \mathbf{x} $ (resp. $ x_i $). Additionally, we assume that $ \mathbf{x} $ is \textbf{non-degenerate}, meaning that the image of $ \mathbf{x} $ is not contained in any hyperplane in $ \mathbb{P}^n $. In this setting, for each $ z_0 \in \mathbb{C} $, we can choose an appropriate local coordinate system on $ \mathbb{C}^{n + 1} $ such that each $ x_i $ admits a power series expansion of the form
\begin{equation*}
  x_i(z) = (z - z_0)^{\delta_i} + \cdots \quad (i = 0, 1, \ldots, n), \quad 0 = \delta_0 < \delta_1 < \cdots < \delta_n.
\end{equation*}
This coordinate system is called the \textbf{normal form} of $ \mathbf{x} $.
The integer
\begin{equation*}
  v_i = v_i(z_0) \coloneqq \delta_i - \delta_{i - 1} \quad (i = 1, \ldots, n)
\end{equation*}
is called the \textbf{stationary index of order $ i $} of $ \mathbf{x} $ at $ z_0 $.
We begin by proving the existence of the normal form and the well-definedness of the stationary indices.
\begin{lem}\textup{(\cite{Weyl_2}, p.~41; \cite{Griffiths-Harris}, p.~266)}
  By a suitable coordinate transformation of $ \mathbb{C}^{n + 1} $, one can obtain the normal form of $ \mathbf{x} $. Moreover, the stationary indices are well-defined; that is, they are uniquely determined.
\end{lem}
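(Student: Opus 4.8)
The plan is to encode all the relevant data at the base point $ z_0 $ into a single \emph{order function} on linear combinations of the components, and to read off both the normal form and the invariance of the $ \delta_i $ from an associated filtration by order of vanishing. Writing $ w = z - z_0 $, for a vector $ c = (c_0, \ldots, c_n) \in \mathbb{C}^{n + 1} $ I set $ f_c \coloneqq \sum_{i = 0}^n c_i x_i $ and let $ \mathrm{ord}(c) \coloneqq \mathrm{ord}_{z_0}(f_c) \in \mathbb{Z}_{\geq 0} \cup \{\infty\} $ be its order of vanishing at $ z_0 $. The non-degeneracy of $ \mathbf{x} $ says exactly that $ x_0, \ldots, x_n $ are linearly independent over $ \mathbb{C} $, equivalently that $ f_c \equiv 0 $ forces $ c = 0 $. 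I would then introduce the decreasing filtration $ F_d \coloneqq \{\, c \in \mathbb{C}^{n + 1} : \mathrm{ord}(c) \geq d \,\} $, so that $ F_0 = \mathbb{C}^{n + 1} $ and, by non-degeneracy, $ \bigcap_{d \geq 0} F_d = \{0\} $.

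For existence, the key observation is that $ F_{d + 1} $ is cut out inside $ F_d $ by the single linear condition ``the coefficient of $ w^d $ vanishes'', so that $ \dim F_d - \dim F_{d + 1} \in \{0, 1\} $ for every $ d $. Since the dimension drops from $ n + 1 $ down to $ 0 $, it does so at exactly $ n + 1 $ values $ \delta_0 < \delta_1 < \cdots < \delta_n $, the \emph{jump values} of the filtration; moreover $ \delta_0 = 0 $ because the representation is reduced, so some $ x_i $ does not vanish at $ z_0 $. Choosing $ c^{(k)} \in F_{\delta_k} \setminus F_{\delta_k + 1} $ and rescaling so that the leading Taylor coefficient of $ f_{c^{(k)}} $ equals $ 1 $, the vectors $ c^{(0)}, \ldots, c^{(n)} $ have pairwise distinct orders $ \delta_0, \ldots, \delta_n $; a standard argument—take the summand of smallest order in a vanishing linear combination, which cannot be cancelled—shows they are linearly independent, hence a basis. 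Taking $ T \in GL_{n + 1}(\mathbb{C}) $ to be the matrix whose $ k $-th row is $ c^{(k)} $, the $ k $-th component of $ T\mathbf{x} $ is $ f_{c^{(k)}} = w^{\delta_k} + \cdots $, which is precisely the normal form.

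For uniqueness I would argue that the sequence $ (\delta_k)_{k = 0}^n $—and hence $ v_i = \delta_i - \delta_{i - 1} $—is an invariant of $ \mathbf{x} $ that refers to no coordinate system. Indeed, under a coordinate change $ \mathbf{x} \mapsto T\mathbf{x} $ one computes the order function of the new curve at $ c $ to be $ \mathrm{ord}(T^{\mathsf{t}} c) $ for the old one, so the filtration is merely transported by the automorphism $ (T^{\mathsf{t}})^{-1} $; its dimensions, and therefore its jump values $ \delta_k $, are unchanged. Likewise, passing to a different reduced representation multiplies every $ f_c $ by a nowhere-vanishing entire $ u $, which shifts all orders by $ \mathrm{ord}_{z_0}(u) = 0 $ and so changes nothing. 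Thus any normal form must realize exactly the jump values of $ \{F_d\} $, and the stationary indices are uniquely determined.

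I do not expect a genuinely deep obstacle here, since the statement is at bottom linear algebra over Taylor expansions; the point requiring the most care is the bookkeeping that ties the three ingredients together: non-degeneracy (to guarantee the filtration reaches $ \{0\} $ and hence yields exactly $ n + 1 $ jumps), reducedness (to force $ \delta_0 = 0 $), and the behavior of the order function under $ GL_{n + 1}(\mathbb{C}) $ (to secure coordinate-independence for uniqueness). A convenient alternative packaging of the same argument is to row-reduce the $ (n + 1) \times \infty $ matrix of Taylor coefficients of $ x_0, \ldots, x_n $: the normal form is its reduced row-echelon form, and the invariance of the pivot columns gives uniqueness directly.
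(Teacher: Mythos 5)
Your proof is correct and, as you yourself note in the closing paragraph, is the same argument as the paper's in different packaging: the paper row-reduces the $(n+1)\times\infty$ matrix of Taylor coefficients and reads off the $\delta_i$ as the pivot columns of the echelon form, which are exactly the jump values of your filtration $F_d$. The one place your version adds something is the uniqueness step: the paper simply declares the well-definedness of the pivots (and hence of the $v_i$) to be evident, whereas your observation that a coordinate change $T$ merely transports the filtration by $(T^{\mathsf{t}})^{-1}$, preserving all dimensions and jump values, makes the coordinate-invariance explicit.
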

\begin{proof}
  Assume that each component $ x_i $ of $ \mathbf{x} $ is represented by a power series of the form $ \sum_{j = 0}^{\infty}a_{ij}(z - z_0)^j $. Define the $ ((n + 1) \times \infty) $-matrix $ A = (a_{ij}) $ $ (0 \leq i \leq n, j \geq 0) $. Since $ \mathbf{x} $ is non-degenerate, we have $ \mathrm{rank}(A) = n + 1 $.
  By applying elementary row operations, we obtain the row echelon form $ B = (b_{ij}) $ $ (0 \leq i \leq n, j \geq 0) $ of $ A $, where all pivots are equal to $ 1 $. Since we assume that $ \mathbf{x} $ is a reduced representation, we have $ b_{00} = 1 $. Hence, the coordinate system $ y_i \coloneqq \sum_{j = 0}^{\infty}b_{ij}(z - z_0)^j $  for $ i = 0, 1, \ldots, n $ gives the normal form of $ \mathbf{x} $ at $ z_0 $.
  The order of vanishing $ \delta_i $ is determined as the minimal integer $ k \, (\geq 0)$ such that $ b_{ik} = 1 $; that is, $ b_{ik} $ is the pivot in the $ (i + 1) $-th row of $ B $, and the uniqueness of the stationary indices is evident. This construction also shows that they are independent of the choice of reduced representation.
\end{proof}
For each integer $ 1 \leq p \leq n + 1 $, we denote the $ p $-th derivative of $ \mathbf{x} $ by $ \mathbf{x}^{(p)} $. We also denote by $\mathbf{X}^{(p)}$ the $p$-th wedge product of $\mathbf{x}, \mathbf{x}^{(1)}, \mathbf{x}^{(2)}, \ldots, \mathbf{x}^{(p - 1)}$:
\begin{equation*}
  \mathbf{X}^{(p)} \coloneqq \mathbf{x} \wedge \mathbf{x}^{(1)} \wedge \mathbf{x}^{(2)} \wedge \cdots \wedge \mathbf{x}^{(p - 1)}.
\end{equation*}
The map $ \mathbf{X}^{(p)} $ defines a holomorphic curve from $ \mathbb{C} $ into $ \mathbb{P}\left(\bigwedge^p \mathbb{C}^{n + 1}\right) = \mathbb{P}^{\binom{n + 1}{p} - 1} $ via the \textbf{Pl\"{u}cker embedding}:
\begin{equation*}
  \mathbf{X}^{(p)} : \mathbb{C} \to \mathrm{Gr}(p, n + 1) \hookrightarrow \mathbb{P}^{\binom{n + 1}{p} - 1},
\end{equation*}
where $ \mathrm{Gr}(p, n + 1) \coloneqq \{p\text{-dimensional subspaces in} \, \mathbb{C}^{n + 1}\} $ is the Grassmannian.
If $ p = 0 $, we set $ \mathbf{X}^{(0)} \coloneqq 1 $, and if the integer $ p $ does not belong to $ \{0, 1, \ldots, n + 1\} $, we set $ \mathbf{X}^{(p)} \coloneqq 0 $. The holomorphic curve $ \mathbf{X}^{(p)} $ is called the \textbf{$p$-th associated curve} of $ \mathbf{x} $, the $ p $-th derived curve, or the holomorphic curve of rank $ p $.

$ W \coloneqq \mathbf{X}^{(n + 1)} $ is called the \textbf{Wronskian} of $ \mathbf{x} $. The non-degeneracy condition of $ \mathbf{X}^{(p)} $ for all $ 0 \leq p \leq n + 1 $ is equivalent to $ W $ not being identically zero.

We write the Pl\"{u}cker coordinates of $ \mathbf{X}^{(p)} $ by
\begin{equation*}
  X_{i_0, i_1, \ldots, i_{p - 1}}^{(p)} \quad (0 \leq i_0 < i_1 < \cdots < i_{p - 1} \leq n).
\end{equation*}
Then, by computing the power series expansion of $ X_{i_0, i_1, \ldots, i_{p - 1}}^{(p)} $, we obtain the following lemma.
\begin{lem}[\cite{Weyl_2}, p.~43; H. Fujimoto \cite{Fujimoto_2}, p.~126]\label{lem:d_p}
  The order of vanishing of $ X_{i_0, i_1, \ldots, i_{p - 1}}^{(p)} $ at $ z_0 $ is given by
  \begin{equation}\label{eq:d(I)}
    d(i_0, i_1, \ldots, i_{p - 1}) \coloneqq \delta_{i_0} + (\delta_{i_1} - 1) + \cdots + (\delta_{i_{p - 1}} - p + 1).
  \end{equation}
  For simplicity of notation, we set
  \begin{equation*}
    d_p \coloneqq d(0, 1, \ldots, p - 1) \quad (1 \leq p \leq n + 1), \quad d_0 \coloneqq 0.
  \end{equation*}
\end{lem}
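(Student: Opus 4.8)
The plan is to identify the Pl\"ucker coordinate $X_{i_0, i_1, \ldots, i_{p-1}}^{(p)}$ with the Wronskian-type determinant
\[
  X_{i_0, \ldots, i_{p-1}}^{(p)} = \det\left( x_{i_j}^{(k)} \right)_{0 \leq k, j \leq p-1},
\]
which is the standard expression for the coordinates of $\mathbf{x} \wedge \mathbf{x}^{(1)} \wedge \cdots \wedge \mathbf{x}^{(p-1)}$ in the basis $e_{i_0} \wedge \cdots \wedge e_{i_{p-1}}$ (here rows are indexed by the derivative order $k$ and columns by $j$). Working in the normal form at $z_0$ and writing $\zeta = z - z_0$, each component satisfies $x_i(z) = \zeta^{\delta_i}(1 + O(\zeta))$, so for $\delta_{i_j} \geq k$ the entry $x_{i_j}^{(k)}$ has leading term $(\delta_{i_j})_k\,\zeta^{\delta_{i_j} - k}$, where $(\delta)_k \coloneqq \delta(\delta - 1)\cdots(\delta - k + 1)$ denotes the falling factorial; when $\delta_{i_j} < k$ the entry instead has order of vanishing $\geq 0$. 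The first step is to record these entrywise orders precisely.

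Next I would expand the determinant as $\sum_{\pi} \operatorname{sgn}(\pi) \prod_{k=0}^{p-1} x_{i_{\pi(k)}}^{(k)}$ and estimate the order of each summand. Since the order of $x_{i_{\pi(k)}}^{(k)}$ is always at least $\delta_{i_{\pi(k)}} - k$, with equality exactly when $\delta_{i_{\pi(k)}} \geq k$, every product has order at least
\[
  \sum_{k} \left(\delta_{i_{\pi(k)}} - k\right) = \sum_{j} \delta_{i_j} - \binom{p}{2} = d(i_0, \ldots, i_{p-1}),
\]
the crucial point being that this lower bound is the same for all $\pi$. Moreover a summand has order strictly greater than $d$ precisely when some $\delta_{i_{\pi(k)}} < k$. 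Consequently the coefficient of $\zeta^{d}$ in the determinant is obtained by replacing each entry with its falling-factorial leading coefficient, yielding $\det\big( (\delta_{i_j})_k \big)_{k,j}$; the ``bad'' permutations contribute nothing, consistently with the fact that $(\delta_{i_j})_k = 0$ whenever $\delta_{i_j} < k$.

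Finally I would evaluate this leading determinant. Because $(\delta)_k$ is a monic polynomial of degree $k$ in $\delta$, subtracting suitable combinations of the lower rows converts $\big( (\delta_{i_j})_k \big)$ into the Vandermonde matrix $\big(\delta_{i_j}^{\,k}\big)$ without altering the determinant, so
\[
  \det\big( (\delta_{i_j})_k \big)_{k,j} = \prod_{0 \leq a < b \leq p-1} \left(\delta_{i_b} - \delta_{i_a}\right).
\]
As $i_0 < i_1 < \cdots < i_{p-1}$ forces $\delta_{i_0} < \delta_{i_1} < \cdots < \delta_{i_{p-1}}$, every factor is positive, so the $\zeta^{d}$-coefficient is nonzero and the order of vanishing is exactly $d(i_0, \ldots, i_{p-1})$.

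The main obstacle is the potential cancellation of leading terms: a priori the numerous summands of the determinant could conspire to annihilate the coefficient of $\zeta^{d}$, and it is precisely the Vandermonde computation that rules this out. A secondary technical point, easy to overlook, is the treatment of entries with $\delta_{i_j} < k$, whose naive leading exponent $\delta_{i_j} - k$ is negative; one must verify that such entries only raise the order of a summand and that their falling factorials vanish, so that they neither pull the determinant's order below $d$ nor disturb the Vandermonde identity.
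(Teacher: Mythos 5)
Your proof is correct and follows exactly the route the paper intends (the paper only remarks ``by computing the power series expansion'' and defers to Weyl and Fujimoto): expand the minor $\det\bigl(x_{i_j}^{(k)}\bigr)$ in the normal form at $z_0$, observe the uniform lower bound $d(i_0,\ldots,i_{p-1})$ on the order of every permutation summand, and identify the coefficient of $\zeta^{d}$ with $\det\bigl((\delta_{i_j})_k\bigr)=\prod_{a<b}(\delta_{i_b}-\delta_{i_a})\neq 0$. The Vandermonde step correctly supplies the non-cancellation argument that the paper leaves implicit, and your handling of the entries with $\delta_{i_j}<k$ (vanishing falling factorial, strictly higher order) is sound.
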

Although the following lemma is straightforward, it is essential in proving the concavity of the order functions; see Remark~\ref{rem:T-seq}.
\begin{lem}[\cite{Weyl_1} p.~522; \cite{Weyl_2}, p.~43]\label{lem:d-seq}
  \begin{equation*}
    d_{p - 1} - 2d_p + d_{p + 1} = v_p - 1 \geq 0.
  \end{equation*}
  This means that $ \{d_p\}_{p = 0}^{n + 1} $ is a convex sequence.
\end{lem}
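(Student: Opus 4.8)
The plan is to reduce the identity to a direct computation of the second-order differences of the sequence $\{d_p\}$, relying only on the explicit formula \eqref{eq:d(I)} from Lemma~\ref{lem:d_p}. First I would specialize $d(i_0, \ldots, i_{p-1})$ to the leading multi-index $(0, 1, \ldots, p-1)$, which gives the closed form
\[
  d_p = \sum_{j = 0}^{p - 1}(\delta_j - j)
\]
for $1 \le p \le n + 1$, consistent with $d_0 = 0$. The key observation I would isolate is that $d_{p+1}$ is obtained from $d_p$ by appending a single summand, so that
\[
  d_{p+1} - d_p = \delta_p - p,
\]
and this relation even remains valid at $p = 0$ because $\delta_0 = 0$. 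This turns the whole problem into bookkeeping of consecutive first differences.

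From there the second difference is a one-line subtraction. I would compute
\[
  d_{p-1} - 2 d_p + d_{p+1} = (d_{p+1} - d_p) - (d_p - d_{p-1}) = (\delta_p - p) - (\delta_{p-1} - (p-1)),
\]
and simplify the right-hand side to $\delta_p - \delta_{p-1} - 1$, which is exactly $v_p - 1$ by the definition of the stationary index. This establishes the claimed equality for every $1 \le p \le n$, i.e.\ for every interior point of the sequence.

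For the inequality I would appeal to the defining property of the normal form, in which the vanishing orders form the strictly increasing chain $0 = \delta_0 < \delta_1 < \cdots < \delta_n$. Since the $\delta_i$ are integers, strict monotonicity forces $v_p = \delta_p - \delta_{p-1} \ge 1$, hence $v_p - 1 \ge 0$. Because all interior second differences of $\{d_p\}_{p = 0}^{n + 1}$ are nonnegative, the sequence is convex by definition, which completes the argument.

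There is no genuine obstacle here beyond careful index handling; the only point that repays a moment's attention is the shift by $p$ built into $d(i_0, \ldots, i_{p-1})$, since it is precisely this shift that converts the \emph{strict} gaps $v_p \ge 1$ of the $\delta_i$ into the \emph{nonstrict} convexity $v_p - 1 \ge 0$ of the $d_p$. I would therefore present the computation of the first difference as the central step and treat the nonnegativity as an immediate consequence of the normal form.
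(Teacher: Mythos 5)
Your proof is correct and is exactly the straightforward computation the paper has in mind (the paper itself omits the proof, deferring to Weyl--Weyl with the remark that it is immediate): the first difference $d_{p+1}-d_p=\delta_p-p$ telescopes to give the second difference $\delta_p-\delta_{p-1}-1=v_p-1$, and nonnegativity follows from the strict integrality chain $\delta_{p-1}<\delta_p$ in the normal form. Nothing is missing.
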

\subsection{Maya Diagrams and Young Diagrams}
\ \par
We introduce the following useful notation:
\begin{equation*}
  \binom{[n + 1]}{p} \coloneqq \{(i_0, i_1, \ldots, i_{p - 1}) \in \mathbb{Z}^p \mid 0 \leq i_0 < i_1 < \cdots < i_{p - 1} \leq n\}.
\end{equation*}
For each $ k \in \mathbb{Z} $, we define a subset of $ \binom{[n + 1]}{p} $ by
\begin{equation*}
  \binom{[n + 1]}{p}_{(k)} \coloneqq \left\{(i_0, i_1, \ldots, i_{p - 1}) \in \binom{[n + 1]}{p} \Bigg| \, i_0 + i_1 + \cdots + i_{p - 1} = k\right\}.
\end{equation*}
Let $ k_0 < k_1 < \cdots < k_i < \cdots < k_q $ be all integers for which the set $\binom{[n + 1]}{p}_{(k_i)}$ $ (0 \leq i \leq q)$ is non-empty. In addition, we define $ \binom{[\infty]}{p} $ (resp. $ \binom{[\infty]}{p}_{(k_i)} $) as the union of all sets $ \binom{[k+1]}{p} $ (resp. $\binom{[k+1]}{p}_{(k_i)}$) for $ k \geq p - 1 $.

Obviously, the only element in the set $ \binom{[n + 1]}{p}_{(k_0)} $ is $ (0, 1, \ldots, p - 1) $. Hence, $ k_0 $ is given by $ 0 + 1 + \cdots + (p - 1) = \frac{p(p - 1)}{2} $.
Therefore, by Lemma~\ref{lem:d_p}, removing the common factor $ (z - z_0)^{d_p} $ from all Pl\"{u}cker coordinates $ X_{i_0, i_1, \ldots, i_{p - 1}}^{(p)} $ yields a reduced representation $ \mathbf{X}_{\mathrm{red}}^{(p)} $ of $ \mathbf{X}^{(p)} $. On the other hand, $ k_i \, (0 \leq i \leq q)$ is given by $ \frac{p(p - 1)}{2} + i $. Since the only element of the set $ \binom{[n + 1]}{p}_{(k_q)} $ is $ (n - p + 1, \ldots, n - 1, n) $, we have
\begin{equation*}
  \frac{p(p - 1)}{2} + q = (n - p + 1) + \cdots + (n - 1) + n = np - \frac{p(p - 1)}{2}.
\end{equation*}
Therefore, we deduce that
\begin{equation}\label{eq:dim_Grassmann}
  q = np - p(p - 1) = p(n - p + 1) = \dim \mathrm{Gr}(p, n + 1).
\end{equation}

An element of the set $ \binom{[n + 1]}{p} $ can be represented by a \textbf{Maya diagram} (introduced by M. Sato) of length $ n + 1 $, which consists of a sequence of $ n + 1 $ boxes arranged in a row, with $p$ balls (fermions), where at most one ball is placed in each box.
More precisely, the Maya diagram corresponding to $ (i_0, i_1, \ldots, i_{p - 1}) \in \binom{[n + 1]}{p}$ is obtained by placing one ball into each of the boxes numbered $ i_0 + 1, i_1 + 1, \ldots, i_{p-1} + 1 $. Clearly, this correspondence is bijective. \emph{Note that, in our notation, the indices of the boxes with a ball in the Maya diagram differ by $ 1 $ from the corresponding components of the elements of the set $ \binom{[n + 1]}{p} $}.
We denote the Maya diagram corresponding to $ \sigma \in \binom{[n + 1]}{p} $ by the same symbol $ \sigma $.
\begin{ex}
  Let $n = 12$ and $p = 8$. An example of this identification is shown in \textup{Figure~\ref{fig:int_seq-Maya_diag}}.
  \begin{figure}[htbp]
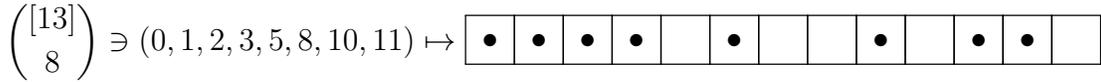

    \centering

    \begin{equation*}
      \binom{[13]}{8} \ni (0, 1, 2, 3, 5, 8, 10, 11) \mapsto \lower1ex\hbox{\ytableaushort{\B \B \B \B \ \B \ \ \B \ \B \B \ }}
    \end{equation*}

    \caption{Correspondence between a sequence of integers and a Maya diagram}\label{fig:int_seq-Maya_diag}
  \end{figure}

\end{ex}
Depending on the context, we identify the set $ \binom{[n + 1]}{p} $ with the set of \textbf{Young diagrams} that fit inside the $ p \times (n - p + 1) $ rectangle.
The procedure for this identification is as follows. For a given Maya diagram $ \sigma \in \binom{[n + 1]}{p} $, construct a stepwise path in the $ p \times (n - p + 1) $ grid, starting from the bottom-left corner and ending at the top-right corner, according to the following rule: read the Maya diagram from left to right; move one step upward for each ball, and one step to the right for each empty box.
This path forms part of the contour of a Young diagram, and the corresponding Young diagram $ \lambda(\sigma) $ is uniquely determined.  We write
\begin{equation*}
  (\lambda_0, \lambda_1, \ldots, \lambda_{p - 1}) \quad (\lambda_0 \geq \lambda_1 \geq \cdots \geq \lambda_{p - 1} \geq 0)
\end{equation*}
to represent the Young diagram $ \lambda $, and denote its size by $ |\lambda| $. We sometimes use the notation
\begin{equation*}
  [\lambda_0^{m_1 - m_0}, \lambda_{m_1 + 1}^{m_2 - m_1}, \ldots, \lambda_{m_{s - 1} + 1}^{m_s - m_{s - 1}}] \coloneqq (\lambda_0, \lambda_1, \ldots, \lambda_{p - 1})
\end{equation*}
if $ \lambda_{m_{k - 1} + 1} = \lambda_{m_{k - 1} + 2} = \cdots = \lambda_{m_{k}} $ $ (1 \leq k \leq s, \, 0 \leq m_{k} \leq p - 1, \, m_0 \coloneqq - 1)$. (By convention, we omit the parts with $ \lambda_i = 0 $.) Under this notation, the above correspondence is given by
\begin{equation*}
  \sigma = (i_0, i_1, \ldots, i_{p - 1}) \mapsto \lambda(\sigma) = (i_{p - 1} - p + 1, i_{p - 2} - p + 2, \ldots, i_1 - 1, i_0).
\end{equation*}
Thus, the size of the Young diagram $ \lambda(\sigma) $ corresponding to $ \sigma = (i_0, i_1, \ldots, i_{p - 1}) $ is given by
\begin{equation}\label{eq:size_Young}
  |\lambda(\sigma)| = i_0 + (i_1 - 1) + \cdots + (i_{p - 1} - p + 1).
\end{equation}
\begin{ex}\label{ex:Young_diag}
  An example of the correspondence between a Maya diagram and a Young diagram is shown in \textup{Figure~\ref{fig:Maya_diag-Young_diag}}.
  \begin{figure}[htbp]
    \centering
    \begin{align*}
      \raise6ex\hbox{\ytableaushort{\B \B \B \B \ \B \ \ \B \ \B \B \ }} \ \raise7ex\hbox{$\mapsto$}
      \begin{tikzpicture}
        \node[rotate=0] (a) at (0, 0) {
          $\ydiagram{4, 4, 3, 1}$
        };
      \end{tikzpicture}
    \end{align*}
    \caption{Correspondence between a Maya diagram and a Young diagram}\label{fig:Maya_diag-Young_diag}
  \end{figure}
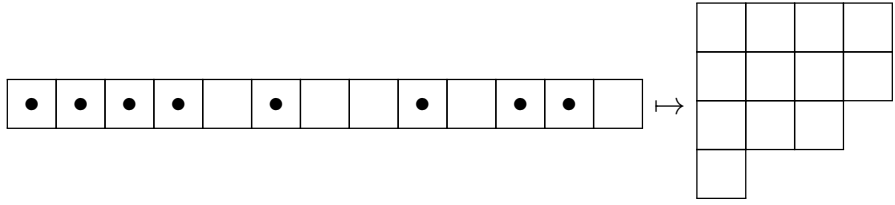
  In addition, \textup{Figure~\ref{fig:Russian_conv}} is useful for understanding this correspondence. Since it is essentially equivalent to a Young diagram in the Russian style, we will also refer to such a figure as following the Russian convention.
  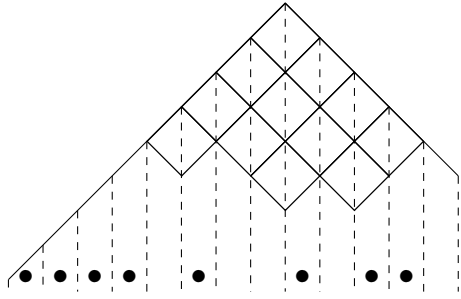
\begin{figure}[htbp]
    \centering
    \begin{tikzpicture}
      \draw[domain=0:\cod{5}] plot(\x, -\x + 1.84);
      \draw[domain=\cod{-8}:0] plot(\x, \x + 1.84);
      \node[rotate=-45] (a) at (0, 0) {
        $\ydiagram{4, 4, 3, 1}$
      };
      \cord{-8}{-4}{-8}{-2};
      \cord{-7}{-3}{-7}{-2};
      \cord{-6}{-2}{-6}{-2};
      \cord{-5}{-1}{-5}{-2};
      \cord{-4}{0}{-4}{-2};
      \cord{-3}{1}{-3}{-2};
      \cord{-2}{2}{-2}{-2};
      \cord{-1}{3}{-1}{-2};
      \cord{0}{4}{0}{-2};
      \cord{1}{3}{1}{-2};
      \cord{2}{2}{2}{-2};
      \cord{3}{1}{3}{-2};
      \cord{4}{0}{4}{-2};
      \cord{5}{-1}{5}{-2};
      \dord{-8}{-1.8};
      \dord{-7}{-1.8};
      \dord{-6}{-1.8};
      \dord{-5}{-1.8};
      \dord{-3}{-1.8};
      \dord{0}{-1.8};
      \dord{2}{-1.8};
      \dord{3}{-1.8};
    \end{tikzpicture}
    \caption{Young diagram drawn in the Russian convention}\label{fig:Russian_conv}
  \end{figure}
\end{ex}
\begin{lem}\label{lem:partition}
  The following inequality holds:
  \begin{equation*}
    \# \binom{[n + 1]}{p}_{(k_i)} \leq \mathsf{p}(i),
  \end{equation*}
  where $ \mathsf{p}(i) $ is the partition function of the integer $ i $; that is, the number of ways to express $ i $ as a sum of positive integers. The equality holds if and only if $ i \leq \min(n - p + 1, p) $.
\end{lem}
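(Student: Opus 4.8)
The plan is to translate the counting problem into one about partitions fitting inside a rectangle, using the bijection between $\binom{[n+1]}{p}$ and the set of Young diagrams contained in the $p \times (n-p+1)$ rectangle that was set up above. First I would observe that, under the correspondence $\sigma \mapsto \lambda(\sigma)$, an element $\sigma \in \binom{[n+1]}{p}_{(k_i)}$ produces a Young diagram of size exactly $i$. Indeed, combining \eqref{eq:size_Young} with the identity $k_i = \frac{p(p-1)}{2} + i$ established just before \eqref{eq:dim_Grassmann}, one gets $|\lambda(\sigma)| = (i_0 + i_1 + \cdots + i_{p-1}) - \frac{p(p-1)}{2} = k_i - \frac{p(p-1)}{2} = i$. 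Moreover, the defining constraint $0 \leq i_0 < i_1 < \cdots < i_{p-1} \leq n$ translates precisely into the statement that $\lambda(\sigma)$ has at most $p$ rows (there are $p$ entries $\lambda_0, \ldots, \lambda_{p-1}$, some possibly zero) and largest part $\lambda_0 = i_{p-1} - (p-1) \leq n - p + 1$. Hence the correspondence restricts to a bijection between $\binom{[n+1]}{p}_{(k_i)}$ and the set of partitions $\lambda$ of $i$ that fit inside the $p \times (n-p+1)$ rectangle.

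From this the inequality is immediate: every such $\lambda$ is in particular a partition of $i$, so the count is bounded above by the total number $\mathsf{p}(i)$ of partitions of $i$. For the equality statement, I would note that $\# \binom{[n+1]}{p}_{(k_i)} = \mathsf{p}(i)$ holds exactly when \emph{every} partition of $i$ already fits inside the $p \times (n-p+1)$ rectangle. I would then pin this down using the two extremal partitions: the partition $(1^i)$ maximizes the number of rows (namely $i$), and so it fits if and only if $i \leq p$; while the single-row partition $(i)$ maximizes the largest part (namely $i$), and so it fits if and only if $i \leq n - p + 1$. Since no other constraint is imposed by the rectangle, every partition of $i$ fits if and only if both conditions hold, that is, if and only if $i \leq \min(p, n - p + 1)$. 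Conversely, if $i > \min(p, n - p + 1)$, then at least one of $(1^i)$ or $(i)$ fails to fit, whence the count is strictly smaller than $\mathsf{p}(i)$.

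I do not anticipate any genuine obstacle here; the content of the lemma is essentially the observation that a subset of partitions of $i$ is all of them precisely when the bounding box is large enough in both directions. The only point requiring a little care is the bookkeeping in verifying $|\lambda(\sigma)| = i$ and in checking that the inequalities $0 \leq i_0 < \cdots < i_{p-1} \leq n$ correspond exactly to the row bound $p$ and the column bound $n - p + 1$, together with the elementary fact that $(1^i)$ and $(i)$ are the partitions extremizing the number of rows and the size of the largest part, respectively.
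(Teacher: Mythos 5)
Your proposal is correct and follows essentially the same route as the paper: both reduce the count to partitions of $i$ fitting inside the $p \times (n-p+1)$ rectangle via $\sigma \mapsto \lambda(\sigma)$, and both characterize equality by checking the two extremal partitions $(i)$ and $(1^i)$ (the paper writes these as the elements $(0,1,\ldots,p-2,p-1+i)$ and $(0,1,\ldots,p-i-1,p-i+1,\ldots,p)$ of $\binom{[n+1]}{p}_{(k_i)}$). No gaps.
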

\begin{proof}
  The size of the Young diagram $ \lambda(\sigma) $ corresponding to $ \sigma \in \binom{[n+1]}{p}_{(k_i)} $ is independent of the choice of $ \sigma $, and is equal to $ i $ by \eqref{eq:size_Young}.
  This implies the desired inequality.
  Moreover, equality holds if and only if $ \binom{[n + 1]}{p}_{(k_i)} $ contains the elements corresponding to the partitions of $ i $ with the minimum and maximum numbers of parts. These elements are represented by $ (0, 1, \ldots, p - 2, p - 1 + i) $ and $ (0, 1, \ldots, p - i - 1, p - i + 1, p - i + 2, \ldots, p - 1, p) $, respectively. The condition that they are contained in $ \binom{[n + 1]}{p}_{(k_i)} $ is equivalent to $ p - 1 + i \leq n$ and $p - i \geq 0 $, which is further equivalent to $ i \leq \min(n - p + 1, p) $.
\end{proof}
Since the explicit expression of $ \mathsf{p}(i) $ is complicated, Lemma~\ref{lem:partition} implies that computing $ \#\binom{[n + 1]}{p}_{(k_i)} $ is generally difficult. On the other hand, using ``Gauss's method'' (i.e., computing the sum of an arithmetic series), we can compute the following quantity.
\begin{lem}\label{lem:Gauss_comp}
  \begin{equation*}
    \sum_{s = 1}^{q}s \cdot \#\binom{[n + 1]}{p}_{(k_s)} = \frac{q}{2}\binom{n + 1}{p} = \frac{p(n - p + 1)}{2}\binom{n + 1}{p}.
  \end{equation*}
\end{lem}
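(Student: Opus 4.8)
The plan is to recognize the left-hand side as the sum of the sizes of all Young diagrams inside the $p \times (n-p+1)$ rectangle, and then to exploit the symmetry of this family under complementation. First I would record that, since $ k_s = \tfrac{p(p-1)}{2} + s $ and $ |\lambda(\sigma)| = i_0 + (i_1 - 1) + \cdots + (i_{p-1} - p + 1) = k_s - \tfrac{p(p-1)}{2} $ by \eqref{eq:size_Young}, every $ \sigma \in \binom{[n+1]}{p}_{(k_s)} $ has a Young diagram of size exactly $ s $. Consequently $ s \cdot \#\binom{[n+1]}{p}_{(k_s)} $ is the total number of boxes contributed by all diagrams of size $ s $, and (the $ s = 0 $ term being zero)
\[
  \sum_{s=1}^q s \cdot \#\binom{[n+1]}{p}_{(k_s)} = \sum_{\sigma \in \binom{[n+1]}{p}} |\lambda(\sigma)|.
\]

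Next I would set $ a_s := \#\binom{[n+1]}{p}_{(k_s)} $ and prove the palindromic identity $ a_s = a_{q-s} $ for $ 0 \le s \le q $. This comes from complementation inside the rectangle: sending $ \sigma = (i_0, \ldots, i_{p-1}) $ to $ \sigma^c := (n - i_{p-1}, \ldots, n - i_0) $---equivalently, reading the Maya diagram from right to left, or taking the $ 180^\circ $-rotation complement of $ \lambda(\sigma) $ in the $ p \times (n-p+1) $ box---is an involution of $ \binom{[n+1]}{p} $ with $ |\lambda(\sigma^c)| = p(n-p+1) - |\lambda(\sigma)| = q - |\lambda(\sigma)| $. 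Hence it restricts to a bijection $ \binom{[n+1]}{p}_{(k_s)} \to \binom{[n+1]}{p}_{(k_{q-s})} $. (Equivalently, $ \sum_s a_s x^s $ is the Gaussian binomial coefficient $ \binom{n+1}{p}_x $, which is palindromic of degree $ q $.)

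With the symmetry in hand, the computation is exactly Gauss's pairing: substituting $ s \mapsto q - s $ and using $ a_s = a_{q-s} $ gives
\[
  \sum_{s=0}^q s\, a_s = \sum_{s=0}^q (q - s)\, a_s = q \sum_{s=0}^q a_s - \sum_{s=0}^q s\, a_s,
\]
so that $ 2 \sum_{s=0}^q s\, a_s = q \sum_{s=0}^q a_s = q \binom{n+1}{p} $, since $ \sum_{s=0}^q a_s = \#\binom{[n+1]}{p} = \binom{n+1}{p} $. Dividing by $ 2 $ and recalling $ q = p(n-p+1) $ from \eqref{eq:dim_Grassmann} yields the claim.

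The computation is entirely routine once the symmetry $ a_s = a_{q-s} $ is established, so the only point genuinely requiring care---the ``main obstacle,'' such as it is---is verifying that the reflection $ i_t \mapsto n - i_{p-1-t} $ is a well-defined, size-reversing involution of $ \binom{[n+1]}{p} $. If one prefers to avoid the symmetry altogether, the same total can be obtained by direct double counting: each index $ j \in \{0, 1, \ldots, n\} $ lies in exactly $ \binom{n}{p-1} $ of the $ p $-element subsets, so $ \sum_\sigma (i_0 + \cdots + i_{p-1}) = \binom{n}{p-1}\,\tfrac{n(n+1)}{2} $; subtracting the constant shift $ \tfrac{p(p-1)}{2}\binom{n+1}{p} $ and simplifying with $ \binom{n}{p-1} = \tfrac{p}{n+1}\binom{n+1}{p} $ again gives $ \tfrac{p(n-p+1)}{2}\binom{n+1}{p} $.
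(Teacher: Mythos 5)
Your proof is correct and follows essentially the same route as the paper: the symmetry $\#\binom{[n+1]}{p}_{(k_s)} = \#\binom{[n+1]}{p}_{(k_{q-s})}$ followed by Gauss's pairing $2\sum_s s\,a_s = q\sum_s a_s = q\binom{n+1}{p}$. You go somewhat further than the paper by actually justifying the symmetry via the complementation involution (the paper merely asserts it) and by sketching a second, double-counting derivation; both of these additions check out.
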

\begin{proof}
  Since $ \#\binom{n + 1}{p}_{(k_s)} = \#\binom{n + 1}{p}_{(k_{q - s})} \, (s = 0, 1, \ldots, q)$, we have
  \begin{equation*}
    2\sum_{s = 0}^{q}s \cdot \#\binom{[n + 1]}{p}_{(k_s)} = q\sum_{s = 0}^{q}\#\binom{[n + 1]}{p}_{(k_s)} = q\binom{n + 1}{p}.
  \end{equation*}
\end{proof}
We will use this equality later (see Corollary~\ref{cor:deg_X^p}).
\subsection{\texorpdfstring{Stationary Indices of the $ p $-th Associated Curve}{Stationary Indices of the p-th Associated Curve}}\label{subsec:stat_index}
\ \par
In this subsection, we evaluate the stationary indices of $ \mathbf{X}^{(p)} $ by investigating the order of vanishing $ d(\sigma) \coloneqq d(i_0, i_1, \ldots, i_{p - 1}) $ of the Pl\"{u}cker coordinates $ (X^{(p)})_{\sigma} \coloneqq X_{i_0, i_1, \ldots, i_{p - 1}}^{(p)}$, where $\sigma = (i_0, i_1, \ldots, i_{p - 1}) \in \binom{[n + 1]}{p} $. We denote the stationary index of $ \mathbf{X}^{(p)} $ by $ v_{i}\{\mathbf{X}^{(p)}\} $ $ (i = 1, \ldots, \binom{n + 1}{p} - 1)$, and write $ d_p $ as $ v_0\{\mathbf{X}^{(p)}\} $.
Observe the following decomposition:
\begin{align}\label{eq:ord_decomp}
   & d(i_0, i_1, \ldots, i_{p - 1})
  = \delta_{i_0} + (\delta_{i_1} - 1) + \cdots + (\delta_{i_{p - 1}} - p + 1)                                                                                                  \notag    \\
   & = \delta_{0} + (\delta_{1} - 1) + \cdots + (\delta_{p - 1} - p + 1) + (\delta_{i_0} - \delta_0) + (\delta_{i_1} - \delta_1) + \cdots + (\delta_{i_{p - 1}} - \delta_{p - 1}) \notag \\
   & = v_0\{\mathbf{X}^{(p)}\} + (\delta_{i_0} - \delta_0) + (\delta_{i_1} - \delta_1) + \cdots + (\delta_{i_{p - 1}} - \delta_{p - 1}).
\end{align}
If $ 0 \leq i < j \leq q $, then the definition of $ \delta_{\ast} $ and \eqref{eq:ord_decomp} imply
\begin{equation*}
  \min_{\sigma \in \binom{[n + 1]}{p}_{(k_i)}}d(\sigma) < \min_{\tau \in \binom{[n + 1]}{p}_{(k_j)}}d(\tau).
\end{equation*}
From this, we obtain an upper bound for $ v_i\{\mathbf{X}^{(p)}\} $ by subtracting $ \sum_{k = 0}^{i - 1}v_k\{\mathbf{X}^{(p)}\} $ from the common order of vanishing of the Pl\"{u}cker coordinates $ (X^{(p)})_{\sigma} $. Here, $ \sigma $ ranges over all elements of the set $ V(i) \, (0 \leq i \leq p(n + p - 1))$, which is inductively defined as
\begin{equation}\label{eq:V(i)}
  \bigcup_{0 \leq j \leq i}\binom{[n + 1]}{p}_{(k_j)} \bigg\backslash \{\sigma \in \mathbb{Z}^p \mid \text{there exists} \, j \, (< i) \, \text{such that} \, d(\sigma) = \min_{\tau \in V(j)}d(\tau) \}.
\end{equation}
The initial set $ V(0) $ is defined as $ \binom{[n + 1]}{p}_{(k_0)} = \{(0, 1, \ldots, p - 1)\} $.
Namely, we establish the following inequality:
\begin{equation}\label{eq:stat_rank_p}
  v_i\{\mathbf{X}^{(p)}\} \mathbf\leq \min_{\sigma \in V(i)}d(\sigma) - \sum_{k = 0}^{i - 1}v_k\{\mathbf{X}^{(p)}\}.
\end{equation}
\begin{ex}\label{ex:stat_index}
  For simplicity, we assume that $ i \leq \min(n - p + 1, p) $.
  \begin{itemize}
    \item $ i = 1 $. $ \#\binom{[n + 1]}{p}_{(k_1)} = \mathsf{p}(1) = 1 $ and its unique element is $ (0, 1, \ldots, p - 2, p) $. Since
          $$ d(0, 1, \ldots, p - 2, p) = v_0\{\mathbf{X}^{(p)}\} + \delta_p - \delta_{p - 1} = v_0\{\mathbf{X}^{(p)}\} + v_{p}, $$
          we have $ v_1\{\mathbf{X}^{(p)}\} = v_p $.
    \item $ i = 2 $. $ \#\binom{[n + 1]}{p}_{(k_2)} = \mathsf{p}(2) = 2 $ and its elements are
          $ (0, 1, \ldots, p - 3, p - 1, p)$ and $(0, 1, \ldots, p - 2, p + 1) $. Thus we have
          \begin{align*}
             & d(0, 1, \ldots, p - 3, p - 1, p)                                                                                                         = v_0\{\mathbf{X}^{(p)}\} + v_{p - 1} + v_{p}, \\
             & d(0, 1, \ldots, p - 2, p + 1)                                                                                                         = v_0\{\mathbf{X}^{(p)}\} + v_{p} + v_{p + 1}.
          \end{align*}
          This implies that $ v_2\{\mathbf{X}^{(p)}\} = \min(v_{p - 1}, v_{p + 1}) $.
    \item $ i = 3 $. $ \#\binom{[n + 1]}{p}_{(k_3)} = \mathsf{p}(3) = 3 $ and its elements are
          $ (0, 1, \ldots, p - 4, p - 2, p - 1, p) $, $ (0, 1, \ldots, p - 3, p - 1, p + 1) $, and $ (0, 1, \ldots, p - 2, p + 2)$. Thus we have
          \begin{align*}
             & d(0, 1, \ldots, p - 4, p - 2, p - 1, p)
            = v_0\{\mathbf{X}^{(p)}\} + v_{p - 2} + v_{p - 1} + v_{p},                                                                                                                                                                           \\
             & d(0, 1, \ldots, p - 3, p - 1, p + 1)                                                                                                                                   = v_0\{\mathbf{X}^{(p)}\} + v_{p - 1} + v_{p} + v_{p + 1}, \\
             & d(0, 1, \ldots, p - 2, p + 2)
            = v_0\{\mathbf{X}^{(p)}\} + v_{p} + v_{p + 1} + v_{p + 2}.
          \end{align*}
          This implies that
          \begin{equation*}
            v_3\{\mathbf{X}^{(p)}\}
            \begin{cases*}
              = \min(v_{p - 2}, v_{p + 1} - v_{p - 1}) & if $ v_2\{\mathbf{X}^{(p)}\} = v_{p - 1} < v_{p + 1}, $ \\
              = \min(v_{p - 1} - v_{p + 1}, v_{p + 2}) & if $ v_2\{\mathbf{X}^{(p)}\} = v_{p + 1} < v_{p - 1}, $ \\
              \leq \min(v_{p - 2}, v_{p - 1} (= v_{p + 1}), v_{p + 2}) & if $ v_2\{\mathbf{X}^{(p)}\} = v_{p - 1} = v_{p + 1}.$
            \end{cases*}
          \end{equation*}
  \end{itemize}
\end{ex}
The following function plays a key role in estimating the upper bounds of the stationary indices of the $ p $-th associated curves.
\begin{defn}\label{def:phi}
  Let $ \mathbf{x} $ be a holomorphic curve, and let $ 1 \leq p \leq n $ be an integer. Let $ \lambda $ be a Young diagram. Rotate $ \lambda $ clockwise by $ 45 $ degrees, and place it on the coordinate plane $ \mathbb{R}^2 $ so that the $ x $-coordinate of the peak vertex of the rotated diagram is $ p $.
  Additionally, we apply a suitable scaling so that the horizontal distance between adjacent vertices of the squares in $ \lambda $ becomes $ 1 $. For each integer $ 1 \leq k \leq n $, let $ n_{\lambda}(k) $ denote the number of squares intersected by the vertical line $ x = k $, excluding intersections that occur only at the vertices. Let $ \phi_p $ be a map from the set of Young diagrams to the set of non-negative integers $ \mathbb{Z}_{\geq 0} $ defined by
  \begin{equation*}
    \phi_p(\lambda) \coloneqq \sum_{k = 1}^n n_{\lambda}(k)v_k,
  \end{equation*}
  where $ v_k $ $ (k = 1, 2, \ldots, n) $ is the stationary index of $ \mathbf{x} $.
\end{defn}

\begin{ex}
  Let $ n = 13 $ and $ p = 7 $. We compute $ \phi_{7}(\lambda) $ for the Young diagram $ \lambda $ corresponding to $ (0, 1, 4, 7, 9, 10, 12) \in \binom{[n + 1]}{p} = \binom{[14]}{7}$.
  \begin{figure}[htbp]
    \centering

    \begin{tikzpicture}
      \draw[->,>=stealth,semithick] (\cod{-7.5},-1.5)--(\cod{6.5},-1.5)node[above]{$x$};
      \draw[->,>=stealth,semithick] (\cod{-0.5},-1.5)--(\cod{-0.5},2.8)node[right]{$y$};
      \draw[domain=\cod{-0.5}:\cod{6.5}] plot(\x, -\x + \cod{5});
      \draw[domain=\cod{-7.5}:\cod{-0.5}] plot(\x, \x + \cod{6});
      \node[rotate=-45] (a) at (0, 0) {
        $\ydiagram{6, 5, 5, 4, 2}$
      };
      \cord{-7.5}{-1.5}{-7.5}{-1.5};
      \cord{-6.5}{-0.5}{-6.5}{-1.5};
      \cord{-5.5}{0.5}{-5.5}{-1.5};
      \cord{-4.5}{1.5}{-4.5}{-1.5};
      \cord{-3.5}{2.5}{-3.5}{-1.5};
      \cord{-2.5}{3.5}{-2.5}{-1.5};
      \cord{-1.5}{4.5}{-1.5}{-1.5};
      \cord{-0.5}{5.5}{-0.5}{-1.5};
      \cord{0.5}{4.5}{0.5}{-1.5};
      \cord{1.5}{3.5}{1.5}{-1.5};
      \cord{2.5}{2.5}{2.5}{-1.5};
      \cord{3.5}{1.5}{3.5}{-1.5};
      \cord{4.5}{0.5}{4.5}{-1.5};
      \cord{5.5}{-0.5}{5.5}{-1.5};
      \cord{6.5}{-1.5}{6.5}{-1.5};
      \node at (\cod{-7.5}, -1.75) {$0$};
      \node at (\cod{-6.5}, -1.75) {$1$};
      \node at (\cod{-5.5}, -1.75) {$2$};
      \node at (\cod{-4.5}, -1.75) {$3$};
      \node at (\cod{-3.5}, -1.75) {$4$};
      \node at (\cod{-2.5}, -1.75) {$5$};
      \node at (\cod{-1.5}, -1.75) {$6$};
      \node at (\cod{-0.5}, -1.75) {$7$};
      \node at (\cod{0.5}, -1.75) {$8$};
      \node at (\cod{1.5}, -1.75) {$9$};
      \node at (\cod{2.5}, -1.75) {$10$};
      \node at (\cod{3.5}, -1.75) {$11$};
      \node at (\cod{4.5}, -1.75) {$12$};
      \node at (\cod{5.5}, -1.75) {$13$};
      \node at (\cod{6.5}, -1.75) {$14$};
    \end{tikzpicture}

    \caption{\texorpdfstring{Young diagram on $ \mathbb{R}^2 $}{Young diagram on R^2}}\label{fig:Young_diag_on_R^2}
  \end{figure}
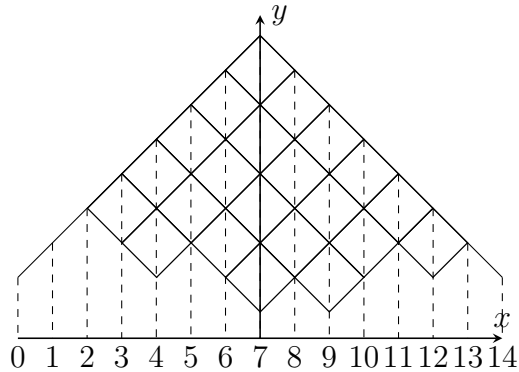

  From \textup{Figure~\ref{fig:Young_diag_on_R^2}}, we have
  \begin{align*}
    n_{\lambda}(3) & =  n_{\lambda}(11) =  n_{\lambda}(12) = 1, \\
    n_{\lambda}(4) & =  n_{\lambda}(5) =  n_{\lambda}(10) = 2,  \\
    n_{\lambda}(6) & =  n_{\lambda}(8) =  n_{\lambda}(9) = 3,   \\
    n_{\lambda}(7) & = 4.
  \end{align*}
  Therefore, we obtain
  \begin{equation*}
    \phi_7(\lambda) = v_3 + 2v_4 + 2v_5 + 3v_6 + 4v_7 + 3v_8 + 3v_9 + 2v_{10} + v_{11} + v_{12}.
  \end{equation*}
\end{ex}

\begin{prop}\label{prop:ord_ineq}
  Let $ 1 \leq i \leq p(n - p + 1) $ be an integer. Then we have
  \begin{equation*}
    v_i\{\mathbf{X}^{(p)}\} \leq \min_{\sigma \in V(i)}\phi_{p}(\lambda(\sigma)) - \sum_{k = 1}^{i - 1}v_k\{\mathbf{X}^{(p)}\},
  \end{equation*}
  where $ \lambda(\sigma) $ is the Young diagram corresponding to $ \sigma \in V(i) $.
\end{prop}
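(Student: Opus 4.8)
The plan is to reduce everything to the bound \eqref{eq:stat_rank_p}, which is already established, together with a single pointwise identity valid for \emph{every} $\sigma \in \binom{[n+1]}{p}$, namely
\[
  d(\sigma) - v_0\{\mathbf{X}^{(p)}\} = \phi_p(\lambda(\sigma)).
\]
Granting this, the proposition is immediate. Since $\sum_{k=0}^{i-1}v_k\{\mathbf{X}^{(p)}\} = v_0\{\mathbf{X}^{(p)}\} + \sum_{k=1}^{i-1}v_k\{\mathbf{X}^{(p)}\}$, I can pull the constant $v_0\{\mathbf{X}^{(p)}\} = d_p$ inside the minimum in \eqref{eq:stat_rank_p} and rewrite
\[
  \min_{\sigma\in V(i)}d(\sigma) - \sum_{k=0}^{i-1}v_k\{\mathbf{X}^{(p)}\} = \min_{\sigma\in V(i)}\bigl(d(\sigma)-v_0\{\mathbf{X}^{(p)}\}\bigr) - \sum_{k=1}^{i-1}v_k\{\mathbf{X}^{(p)}\} = \min_{\sigma\in V(i)}\phi_p(\lambda(\sigma)) - \sum_{k=1}^{i-1}v_k\{\mathbf{X}^{(p)}\},
\]
which is exactly the claimed right-hand side. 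So the whole content sits in the displayed identity, and I would isolate it as a separate combinatorial lemma.

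To prove the identity I would start from the decomposition \eqref{eq:ord_decomp}, which already yields $d(\sigma)-v_0\{\mathbf{X}^{(p)}\} = \sum_{j=0}^{p-1}(\delta_{i_j}-\delta_j)$. Writing $\delta_m = v_1 + \cdots + v_m$, so that $\delta_{i_j}-\delta_j = \sum_{l=j+1}^{i_j}v_l$ (using $i_j \ge j$, which holds because the $i_j$ strictly increase from $i_0 \ge 0$), and interchanging the order of summation, I find that the coefficient of $v_k$ in $\sum_j(\delta_{i_j}-\delta_j)$ is the number of indices $j$ with $j+1 \le k \le i_j$, that is $\#\{\, j : 0 \le j \le \min(p-1,k-1),\ i_j \ge k \,\}$. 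Hence the identity reduces to matching this displacement count against $n_{\lambda(\sigma)}(k)$ for each $1 \le k \le n$.

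The heart of the argument, and the step I expect to be the main obstacle, is therefore the purely combinatorial claim that $n_{\lambda(\sigma)}(k)$ equals this displacement count. Here I would exploit that, under the Russian convention of Definition~\ref{def:phi} with the peak vertex at $x=p$, the vertical line $x=k$ meets precisely the boxes of $\lambda(\sigma)$ on the diagonal of content $k-p$, so that $n_{\lambda(\sigma)}(k)$ is the number of boxes of content $k-p$. Using the explicit correspondence $\sigma \mapsto \lambda(\sigma) = (i_{p-1}-p+1,\ldots,i_1-1,i_0)$, the part arising from the $j$-th ball has length $i_j - j$; a box of content $c$ in the row from the top indexed by $a = p-j$ exists exactly when $1 \le a+c \le i_j - j$, which after substitution simplifies to $i_j \ge p+c$ together with $j \le p-1+c$. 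Setting $c = k-p$ recovers exactly the count from the previous paragraph. The delicate points I would pin down are: (i) justifying the geometric fact that $n_\lambda(k)$ counts content-$(k-p)$ boxes, i.e.\ that the scaling and placement of Definition~\ref{def:phi} align the integer abscissae $x=k$ with the diagonals and that the clause excluding vertex-only intersections discards no genuine crossing; and (ii) keeping track of the index shift between the Maya-diagram positions $i_j+1$ and the stationary-index labels $v_k$. I would settle (i) directly from the profile of $\lambda(\sigma)$ read off the Maya diagram (balls giving up-steps, empty boxes giving down-steps), cross-checking against the running example of Figure~\ref{fig:Young_diag_on_R^2}, and (ii) via the substitution $a = p-j$ made above.
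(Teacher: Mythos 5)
Your reduction is exactly the paper's: both arguments boil the proposition down, via \eqref{eq:stat_rank_p} and the decomposition \eqref{eq:ord_decomp}, to the single identity $d(\sigma) = v_0\{\mathbf{X}^{(p)}\} + \phi_p(\lambda(\sigma))$ for all $\sigma \in \binom{[n+1]}{p}$, and both identify the coefficient of $v_k$ in $\sum_{j}(\delta_{i_j}-\delta_j)$ as the count $\#\{\,j : j+1 \le k \le i_j\,\}$ (the paper's $b_\sigma(k)$). The only divergence is in how the combinatorial identity $b_\sigma(k) = n_{\lambda(\sigma)}(k)$ is verified: the paper argues by induction on $|\lambda(\sigma)|$, shifting one ball of the Maya diagram at a time and tracking how both sides change by $1$ in a single position, whereas you compute both sides in closed form, reading $n_{\lambda(\sigma)}(k)$ as the number of boxes of content $k-p$ and matching it against the displacement count via the explicit row lengths $\lambda_a = i_{p-a} - (p-a)$. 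Your computation checks out (including the index bookkeeping and the identification of the line $x=k$ with the content-$(k-p)$ diagonal, which is consistent with Figure~\ref{fig:Young_diag_on_R^2}); the direct count is slightly more explicit and self-contained, while the paper's induction avoids any coordinate computation on the rotated diagram at the cost of an inductive setup. Either route is complete and correct.
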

\begin{proof}
  By \eqref{eq:stat_rank_p}, it suffices to show that
  \begin{equation*}
    d(\sigma) = v_0\{\mathbf{X}^{(p)}\} + \phi_p(\lambda(\sigma))
  \end{equation*}
  for all $ \sigma \in \binom{[n + 1]}{p} $. To this end, we first examine the quantity $ \delta_{i_k} - \delta_k $, which appears in the equality \eqref{eq:ord_decomp}:
  \begin{align}\label{eq:delta_decomp}
    \delta_{i_k} - \delta_k & = (\delta_{k + 1} - \delta_{k}) + (\delta_{k + 2} - \delta_{k + 1})+ \cdots + (\delta_{i_k} - \delta_{i_{k - 1}}) \notag \\
                            & = v_{k + 1} + v_{k + 2} + \cdots + v_{i_{k}}.
  \end{align}
  To represent this quantity, we use a row of $ n $ boxes together with some balls.
  For each integer $ 0 \leq k \leq p - 1 $, place one ball into each of the $ (k + 1), (k + 2), \ldots, i_k $-th boxes.
  Repeat this procedure for all $ k $ from $  0 $ to $ p - 1 $, and for each integer $ 1 \leq m \leq n $, let $ b_{\sigma}(m) $ denote the number of balls in the $ m $-th box after all placements are complete.
  Then, using \eqref{eq:ord_decomp}, we derive
  \begin{equation*}
    d(\sigma) = v_0\{\mathbf{X}^{(p)}\} + \sum_{k = 1}^{n}b_{\sigma}(k)v_k.
  \end{equation*}
  As shown in Example~\ref{ex:Young_diag}, we represent the Young diagram $ \lambda(\sigma) $ derived from the Maya diagram $ \sigma $ as a figure in $ \mathbb{R}^2 $ using the Russian convention. Here, we arrange the Young diagram as described in Definition~\ref{def:phi} (see Figure~\ref{fig:Young_diag_on_R^2}).
  We claim that
  \begin{equation*}
    b_{\sigma}(k) = n_{\lambda(\sigma)}(k)
  \end{equation*}
  for all $ 1 \leq k \leq n $. We prove this claim by induction on the size $ s $ of the Young diagram $ \lambda(\sigma) $. First, this claim holds for $ s = 0 $. Indeed, for $ \sigma = (0, 1, \ldots, p - 1) $, we have $ n_{\lambda(\sigma)}(k) = 0 $ for all $ 1 \leq k \leq n $. On the other hand, since no balls are placed in any of the boxes by the above procedure, we have $ b_{\sigma}(k) = 0 $.
  Assume that the claim holds for $ s - 1 $ $ (s \geq 1 )$. Since $ s \geq 1 $, there exists a ball in $ \sigma $ with an empty box to its left. Suppose that the ball is placed in the $ (j + 1) $-th box from the left. Let $ \sigma' $ be the Maya diagram obtained by shifting this ball one position to the left. Then we have $ b_{\sigma'}(k) = b_{\sigma}(k) - 1 $ when
  $ k = j $, and $ b_{\sigma'}(k) = b_{\sigma}(k) $ for all other $ 1 \leq k \leq n $. Since the size of the corresponding Young diagram $ \lambda(\sigma') $ is $ s - 1 $, the induction hypothesis gives $ b_{\sigma'}(k) = n_{\lambda(\sigma')}(k) $ for all $ 1 \leq k \leq n $.
  By the correspondence between Maya diagrams and Young diagrams, it follows that $ n_{\lambda(\sigma')}(k) =  n_{\lambda(\sigma)}(k) - 1 $ if $ k = j $. For all other $ 1 \leq k \leq n $, we have $ n_{\lambda(\sigma')}(k) =  n_{\lambda(\sigma)}(k) $. Therefore, we have $ b_{\sigma}(k) = n_{\lambda(\sigma)}(k) $ for all $ 1 \leq k \leq n $, which completes the proof of the claim.
\end{proof}

\subsection{\texorpdfstring{Higher-Order Derivatives for the $ p $-th Associated Curve}{Higher-Order Derivatives for the p-th Associated Curve}}
\ \par
We compute the higher-order derivatives of the holomorphic curve $ \mathbf{X}^{(p)} $. Here, differentiation of $ \mathbf{X}^{(p)} $ refers to applying differentiation componentwise to its Pl\"{u}cker coordinates. We denote the $ i $-th derivative of $ \mathbf{X}^{(p)} $ by $ \dif{i} $.
These computations are carried out formally  using the Leibniz rule. When identifying $ \mathbf{x}^{(i_0)} \wedge \mathbf{x}^{(i_1)} \wedge \cdots \wedge \mathbf{x}^{(i_{p - 1})}$ with $ \sigma = (i_0, i_1, \ldots, i_{p - 1}) \in \binom{[\infty]}{p} $, the differentiation operator acts as
\begin{equation*}
  \sigma \mapsto \sum_{k : \, i_k - i_{k - 1} \geq 2}(i_0, i_1, \ldots, i_{k - 2}, i_{k - 1} + 1, i_{k}, \ldots, i_{p - 1}) + (i_0, i_1, \ldots, i_{p - 1} + 1),
\end{equation*}
where the right-hand side is a formal sum of elements in $ \binom{[\infty]}{p} $.
In terms of Maya diagrams, this operation corresponds to shifting a ball to the right when the adjacent right box is empty.
In terms of Young diagrams, this operation can be seen as the growth of Young diagrams. Lemma~\ref{lem:partition} implies that if $ \sigma \in \binom{[n + 1]}{p}_{(k_{i - 1})} $ and $ i \leq \min(n - p + 1, p) $ hold, then each term in the formal sum belongs to $ \binom{[n + 1]}{p}_{(k_{i})} $, and the sum consists of exactly $ \mathsf{p}(i) $ terms (counted with multiplicities).
To compute the coefficients, we introduce the \textbf{standard Young tableaux}. A standard Young tableau is a filling of a Young diagram $ \lambda $ with all natural numbers from $ 1 $ to $ |\lambda| $, such that the entries increase strictly from left to right in each row and from top to bottom in each column.
Let $ \mathrm{SYT}(\lambda) $ denote the set of standard Young tableaux of shape $ \lambda $. We define $ f_{\lambda} \coloneqq \# \mathrm{SYT}(\lambda) $ and set $ f_{\varnothing} \coloneqq 1 $.
\begin{lem}[\textit{see, for instance,} L. Gatto \textit{and} I. Scherbak \cite{Gatto-Scherbak}, p.~278]\label{lem:diff_formula}
  Let $ \mathbf{x} : \mathbb{C} \to \mathbb{C}^{n + 1} $ be a holomorphic curve, and let $ p \, (\leq n + 1) $ be a positive integer.
  Then we have
  \begin{equation*}
    \dif{i} = \sum_{\sigma \in \binom{[\infty]}{p}_{(k_i)}}f_{\lambda(\sigma)}\dif{i}_{\; \sigma},
  \end{equation*}
  where $ \dif{i}_{\;\sigma} $ is defined by $ \mathbf{x}^{(i_0)} \wedge \mathbf{x}^{(i_1)} \wedge \cdots \wedge \mathbf{x}^{(i_{p - 1})} $ for $ \sigma = (i_0, i_1, \ldots, i_{p - 1}) \in \binom{[\infty]}{p}_{(k_i)}$.
\end{lem}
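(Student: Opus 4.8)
The plan is to argue by induction on $i$, exploiting the fact that, under the identification of wedge symbols with Maya diagrams, the differentiation operator $D = d/dz$ acts exactly as the \emph{up operator} on the Young lattice: it sends a diagram to the formal sum of all diagrams obtained by adjoining a single box. I would first record the base case $i = 0$, where $\dif{0} = \dif{0}_{\;\sigma_0}$ with $\sigma_0 = (0, 1, \ldots, p - 1)$, so that $\lambda(\sigma_0) = \varnothing$ and $f_\varnothing = 1$, consistent with the claimed formula. For the inductive step I would assume the stated expression for $\dif{i-1}$ and differentiate it term by term, so the whole argument reduces to understanding $D$ on a single symbol $\dif{i-1}_{\;\sigma}$ together with a combinatorial bookkeeping of coefficients.

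The first substantive step is to make the action of $D$ on one symbol explicit. By the Leibniz rule, $D(\dif{i-1}_{\;\sigma}) = \sum_{k = 0}^{p - 1} \mathbf{x}^{(i_0)} \wedge \cdots \wedge \mathbf{x}^{(i_k + 1)} \wedge \cdots \wedge \mathbf{x}^{(i_{p - 1})}$. Whenever $i_k + 1 = i_{k + 1}$ the corresponding summand contains a repeated factor and therefore vanishes; the surviving summands are precisely those for which the box immediately to the right of the $k$-th ball is empty, which is exactly the operation already recorded in the text (shifting one ball one step to the right into an empty box). Under the Maya--Young correspondence of Section~\ref{sec:assoc_curve}, each such move adjoins exactly one box to $\lambda(\sigma)$, distinct moves produce distinct diagrams, and conversely every diagram covering $\lambda(\sigma)$ in the Young lattice arises from exactly one move. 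Hence $D(\dif{i-1}_{\;\sigma}) = \sum_\tau \dif{i}_{\;\tau}$, where $\tau$ ranges over those $\tau \in \binom{[\infty]}{p}_{(k_i)}$ with $\lambda(\tau)$ obtained from $\lambda(\sigma)$ by adding a single box.

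Substituting the inductive hypothesis and regrouping by the target symbol $\tau$, the coefficient of $\dif{i}_{\;\tau}$ in $\dif{i}$ becomes $\sum_\mu f_\mu$, where $\mu$ ranges over all shapes obtained by deleting one removable corner from $\lambda(\tau)$. The proof therefore reduces to the branching recurrence $f_\lambda = \sum_{\mu \nearrow \lambda} f_\mu$ for the numbers of standard Young tableaux, and this is the combinatorial heart of the argument and the step I expect to require the most care. It holds because in any standard Young tableau of shape $\lambda$ with $|\lambda| = i$ the largest entry $i$ must occupy a removable corner, being strictly larger than both its left and its upper neighbour; deleting that cell sets up a bijection between $\mathrm{SYT}(\lambda)$ and the disjoint union $\bigsqcup_\mu \mathrm{SYT}(\mu)$ over the shapes $\mu$ covered by $\lambda$. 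Applying this identity identifies the coefficient of $\dif{i}_{\;\tau}$ with $f_{\lambda(\tau)}$ and closes the induction.

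As a sanity check and an alternative route, I would note that iterating the up operator from $\sigma_0$ shows directly that the coefficient of $\dif{i}_{\;\tau}$ equals the number of saturated chains $\varnothing = \lambda^{(0)} \subset \lambda^{(1)} \subset \cdots \subset \lambda^{(i)} = \lambda(\tau)$ in the Young lattice; recording which box is added at each step identifies such chains with the elements of $\mathrm{SYT}(\lambda(\tau))$, so the coefficient is $f_{\lambda(\tau)}$ without separately invoking the recurrence. The only genuine obstacles are thus the two identifications flagged above, namely that Leibniz differentiation vanishes exactly on repeated-factor terms and otherwise matches addable corners, and the SYT branching bijection; everything else is formal bookkeeping.
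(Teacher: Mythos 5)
Your proposal is correct and follows essentially the same route as the paper: induction on $i$, with the Leibniz rule producing the ``add one box'' moves and the branching recurrence $f_{\lambda} = \sum_{\mu \nearrow \lambda} f_{\mu}$ identifying the resulting coefficient with $f_{\lambda(\tau)}$, which is exactly the recurrence \eqref{eq:rec_rel} invoked in the paper's proof. Your write-up is more detailed (the explicit vanishing of repeated-factor terms and the chain-counting sanity check), but the underlying argument is the same.
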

\begin{proof}
  The proof proceeds by induction on $ i $.
  Let $ \lambda(\sigma) = (\lambda_0, \lambda_1, \ldots, \lambda_{p - 1}) $ be the Young diagram corresponding to $ \sigma \in \binom{[n + 1]}{p}_{(k_i)} $.
  Observe that the following recurrence relation holds for standard Young tableaux (see W. Fulton \cite{Fulton}, pp.~54--55) :
  \begin{equation}\label{eq:rec_rel}
    f_{\lambda(\sigma)} = \sum_{k : \, \lambda_k > 0} f_{\lambda(\sigma) - 1_k},
  \end{equation}
  where $ \lambda(\sigma) - 1_k $ is a partition of $ i - 1 $ defined as $ (\lambda_0, \lambda_1, \ldots, \lambda_{k - 1}, \lambda_k - 1, \lambda_{k + 1}, \ldots, \lambda_{p - 1}) $ if $ \lambda_k - 1 \geq \lambda_{k + 1} $ (otherwise, we set $ f_{\lambda(\sigma) - 1_k} = 0 $).
  The quantity $ f_{\lambda(\sigma)} $ is precisely the coefficient of $ \dif{i}_{\; \sigma} $, since each summand $ f_{\lambda(\sigma) - 1_k} $ on the right-hand side of \eqref{eq:rec_rel} appears as the coefficient of $ \dif{i - 1}_{\!\!\sigma'} $ for some $ \sigma' \in \binom{[n + 1]}{p}_{(i - 1)} $ whose derivative is $ \dif{i}_{\;\sigma} $, by the inductive hypothesis.
\end{proof}
\begin{rem}
  $ (1) $ \textup{(J. S. Frame, G. de B. Robinson, \textit{and} R. W. Thrall \cite{Frame-Robinson-Thrall}; \textit{see also} \cite{Fulton}, p.~53.)} Let $ \lambda $ be a Young diagram. It is well known that the number of standard Young tableaux of shape $ \lambda $ can be computed using the \textbf{hook length formula}:
  \begin{equation}\label{eq:hook_length_formula}
    f_{\lambda} = \frac{|\lambda|!}{\prod_{(i, j) \in \lambda}h(i, j)},
  \end{equation}
  where $ h(i, j) $ is the \textbf{hook length} of the $ (i, j) $-box in $ \lambda $, which is defined as the number of boxes lying to the right of the box in the same row or below it in the same column, including the box itself.

  \noindent $ (2) $ \textup{(\textit{see, for instance}, E. Smirnov \cite{Smirnov}, Theorem~2.31.)} $ f_{\lambda} $ also represents the degree of the Schubert variety $ X_{\widehat{\lambda}} \coloneqq \overline{\Omega_{\widehat{\lambda}}} \subseteq \mathrm{Gr}(p, n + 1) $ under the Pl\"ucker embedding. Here, $ \widehat{\lambda} $ denotes the complementary Young diagram of $ \lambda $ in the $ p \times (n - p + 1) $ rectangle $ [(n - p + 1)^p] $, and $ \Omega_{\lambda} $ is defined in \textup{Section~\ref{sec:gen_pec_rel}}, \textup{Subsection~\ref{subsec:geom_str}}
\end{rem}

\section{The Weyl peculiar relation}\label{sec:Weyl_pec_rel}

In this section, we present the proof of the Weyl peculiar relation in a form adapted to the context of this paper, following the Weyls' book \cite{Weyl_2}.
To this end, we begin by recalling fundamental notation and functions from classical Nevanlinna theory.
Fix a basis $ \mathbf{e}_0, \mathbf{e}_1, \ldots, \mathbf{e}_n $ of $ \mathbb{C}^{n + 1} $.
Let $ (\mathbb{C}^{n + 1})^{*} $ be the dual space of $ \mathbb{C}^{n + 1} $, with respect to the \emph{duality pairing} $ (\cdot, \cdot) $ which is defined by
\begin{equation*}
  (\mathbf{a}, \mathbf{x}) \coloneqq \sum_{k = 0}^n a_k x_k = a_0 x_0 + a_1 x_1 + \cdots + a_{n} x_{n}, \quad (\mathbf{a} \in (\mathbb{C}^{n + 1})^{*}, \mathbf{x} \in \mathbb{C}^{n + 1}).
\end{equation*}
More generally, for $ \mathbf{A}^{(p)} \in (\bigwedge^p \mathbb{C}^{n + 1})^{*} $ and $ \mathbf{X}^{(p)} \in \bigwedge^p \mathbb{C}^{n + 1} $, we define
\begin{equation*}
  (\mathbf{A}^{(p)}, \mathbf{X}^{(p)}) \coloneqq \sum_{\sigma \in \binom{[n + 1]}{p}}(A^{(p)})_{\sigma}(X^{(p)})_{\sigma
    }.
\end{equation*}
For all $ \mathbf{x}, \mathbf{y} \in \mathbb{C}^{n + 1} $, the Hermitian inner product $ \langle \mathbf{x}, \mathbf{y}\rangle $ is defined by
\begin{equation*}
  \langle \mathbf{x}, \mathbf{y}\rangle \coloneqq \sum_{k = 0}^n x_k \bar{y}_k = x_0 \bar{y}_0 + x_1 \bar{y}_1 + \cdots + x_n \bar{y}_n.
\end{equation*}
Similarly, for $ \mathbf{X}^{(p)}, \mathbf{Y}^{(p)} \in \bigwedge^p \mathbb{C}^{n + 1} $ we define
\begin{equation*}
  \langle \mathbf{X}^{(p)}, \mathbf{Y}^{(p)}\rangle \coloneqq \sum_{\sigma \in \binom{[n + 1]}{p}} (X^{(p)})_{\sigma} \overline{(Y^{(p)})}_{\sigma}.
\end{equation*}
Then we have $ \langle \mathbf{X}^{(p)}, \mathbf{Y}^{(p)}\rangle = (\mathbf{X}^{(p)}, \overline{\mathbf{Y}^{(p)}}) $.
In addition, the \emph{length} of $ \mathbf{x} $ (resp.~$ \mathbf{X}^{(p)} $) is given by $ |\mathbf{x}|^2 \coloneqq \langle\mathbf{x}, \mathbf{x}\rangle $ (resp.~$ |\mathbf{X}^{(p)}|^2 \coloneqq \langle \mathbf{X}^{(p)}, \mathbf{X}^{(p)} \rangle$).

Fix an orthonormal basis $ \mathbf{e}_0, \mathbf{e}_1, \ldots, \mathbf{e}_n $ of $ \mathbb{C}^{n + 1} $.
Then the \textbf{Hodge star operator} $ \star $ is a linear operator
\begin{equation*}
  \star : \bigwedge^p \mathbb{C}^{n + 1} \to \left(\bigwedge^{n - p + 1} \mathbb{C}^{n + 1} \right)^{*}
\end{equation*}
defined by the equation
\begin{equation*}
  \mathbf{A}^{(p)} \wedge \star\mathbf{X}^{(p)} = (\mathbf{A}^{(p)}, \mathbf{X}^{(p)})\mathbf{e}_0 \wedge \mathbf{e}_1 \wedge \cdots \wedge \mathbf{e}_n, \quad \mathbf{X}^{(p)} \in \bigwedge^p\mathbb{C}^{n + 1},
\end{equation*}
for all $ \mathbf{A}^{(p)} \in (\bigwedge^p \mathbb{C}^{n + 1})^{*}$.

\subsection{Fundamental Functions in the Weyl--Ahlfors Theory}
\ \par
\begin{defn}
  Let $ \mathbf{x} $ be a holomorphic curve. For each $ 1 \leq i \leq n $, outside zeros of $ \mathbf{X}^{(i)} $, we define
  \begin{equation*}
    S^{i}(z) \coloneqq 2\frac{|\mathbf{X}^{(i - 1)}|^2|\mathbf{X}^{(i + 1)}|^2}{|\mathbf{X}^{(i)}|^4}.
  \end{equation*}
  Fix a positive constant $ r_0 > 0 $ \textup{(\textit{sufficiently small})}. Then the \textbf{order function} $ T_i(r) $ of $ \mathbf{x} $ of rank $ i $ is defined by
  \begin{equation*}
    T_i(r) \coloneqq \frac{1}{2\pi}\int_{r_0}^r \frac{dt}{t}\int_{\Delta(t)}S^i(re^{\sqrt{-1}\theta})rdrd\theta \quad (r \geq r_0).
  \end{equation*}
  We sometimes denote it by $ T = T(r) $ if $ i = 1 $, and set
  \begin{equation*}
    T_0 \coloneqq 0, \quad T_{n + 1} \coloneqq 0.
  \end{equation*}
  In addition, we define
  \begin{equation*}
    \Omega_i(r) \coloneqq \frac{1}{4\pi}\int_0^{2\pi}\log \widetilde{S^i}(re^{\sqrt{-1}\theta})d\theta \quad (r \geq r_0),
  \end{equation*}
  where we denote $ \frac{S^i(z)}{2} $ by $ \widetilde{S}^i(z) $.
  These functions are also defined for the $ p $-th associated curve $ \mathbf{X}^{(p)} $, denoted by $ S^i\{\mathbf{X}^{(p)}\}, T_i\{\mathbf{X}^{(p)}\}$, and $ \Omega_i\{\mathbf{X}^{(p)}\}$, respectively.
\end{defn}
\begin{defn}
  Let $ w = (w_0 : w_1 : \cdots : w_{n}) $ be homogeneous coordinates on $ \mathbb{P}^n $, and let $ \mathbf{z}_i = (z_0, \ldots, z_{i - 1}, z_{i + 1}, \ldots, z_n) = \left(\frac{w_0}{w_i}, \ldots, \frac{w_{i - 1}}{w_i}, \frac{w_{i + 1}}{w_i}, \ldots, \frac{w_n}{w_i}\right) $ be local coordinates on $ U_i = \{w_i \neq 0\} $. The \textbf{Fubini-Study form} $ \omega_{\mathrm{FS}, n} $ is given by
  \begin{equation*}
    \omega_{\mathrm{FS}, n} \coloneqq \frac{\sqrt{-1}}{2\pi}\partial \bar{\partial} \log |w|^2 =  \frac{\sqrt{-1}}{2\pi}\partial \bar{\partial}\log(1 + |\mathbf{z}_i|^2)
  \end{equation*}
  on $ U_i $.
\end{defn}
The following lemma provides a geometric meaning of the order functions.
\begin{thm}[\textbf{Ahlfors--Shimizu characteristic}; T. Shimizu \cite{Shimizu}, pp.~126--127, Theorem III; \cite{Ahlfors}, pp.~10--12; \cite{Wu}, p.~102]\label{eq:A-S_char}
  \begin{equation*}
    T_i(r) = \int_{r_0}^r \frac{dt}{t}\int_{\Delta(t)}(\mathbf{X}^{(i)})^{\ast} \omega_{\mathrm{FS}, \binom{n + 1}{i} - 1} = \frac{\sqrt{-1}}{2\pi}\int_{r_0}^r \frac{dt}{t} \int_{\Delta(t)} \partial \bar{\partial}\log |\mathbf{X}^{(i)}(z)|^2.
  \end{equation*}
\end{thm}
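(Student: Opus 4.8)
The plan is to reduce all three displayed expressions to a single pointwise identity of $(1,1)$-forms and then integrate. The second equality, $(\mathbf{X}^{(i)})^{*}\omega_{\mathrm{FS}, \binom{n+1}{i}-1} = \frac{\sqrt{-1}}{2\pi}\partial\bar\partial\log|\mathbf{X}^{(i)}|^2$, is the standard description of the pullback of the Fubini--Study form along a holomorphic curve via its lift: writing $\omega_{\mathrm{FS}} = \frac{\sqrt{-1}}{2\pi}\partial\bar\partial\log|w|^2$ and substituting $w = \mathbf{X}^{(i)}(z)$, the chart-dependent factors are pluriharmonic and drop out under $\partial\bar\partial$. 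Since we use a reduced representation, $\mathbf{X}^{(i)}$ has no zeros after removal of the common factor $(z - z_0)^{d_i}$, so $\log|\mathbf{X}^{(i)}|^2$ is smooth and no delta contributions arise. Hence it remains only to match this with the first integrand, i.e.\ to prove $\frac{1}{2\pi}S^i\,dA = \frac{\sqrt{-1}}{2\pi}\partial\bar\partial\log|\mathbf{X}^{(i)}|^2$, where $dA$ denotes the Euclidean area form; integrating $\frac{dt}{t}$ over $[r_0, r]$ then yields the theorem.

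The core is a Laplacian computation. Because the Pl\"ucker coordinates $(X^{(i)})_\sigma$ are holomorphic, $\partial_z$ acts only on the unconjugated factor of $|\mathbf{X}^{(i)}|^2 = \langle \mathbf{X}^{(i)}, \mathbf{X}^{(i)}\rangle$, giving $\partial_z\log|\mathbf{X}^{(i)}|^2 = \langle(\mathbf{X}^{(i)})', \mathbf{X}^{(i)}\rangle / |\mathbf{X}^{(i)}|^2$. Applying $\partial_{\bar z}$ and using $\partial_{\bar z}\langle(\mathbf{X}^{(i)})', \mathbf{X}^{(i)}\rangle = |(\mathbf{X}^{(i)})'|^2$ together with the quotient rule yields the Cauchy--Schwarz defect
\begin{equation*}
\partial_z\partial_{\bar z}\log|\mathbf{X}^{(i)}|^2 = \frac{|(\mathbf{X}^{(i)})'|^2\,|\mathbf{X}^{(i)}|^2 - |\langle(\mathbf{X}^{(i)})', \mathbf{X}^{(i)}\rangle|^2}{|\mathbf{X}^{(i)}|^4}.
\end{equation*}
Comparing with $S^i = 2|\mathbf{X}^{(i-1)}|^2|\mathbf{X}^{(i+1)}|^2 / |\mathbf{X}^{(i)}|^4$ and recalling $\frac{\sqrt{-1}}{2}dz\wedge d\bar z = dA$, the theorem reduces to the algebraic identity
\begin{equation*}
|(\mathbf{X}^{(i)})'|^2\,|\mathbf{X}^{(i)}|^2 - |\langle(\mathbf{X}^{(i)})', \mathbf{X}^{(i)}\rangle|^2 = |\mathbf{X}^{(i-1)}|^2\,|\mathbf{X}^{(i+1)}|^2,
\end{equation*}
which also explains the normalizing factor $2$ in the definition of $S^i$.

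Proving this identity is the main obstacle. First I would simplify the derivative of the wedge: by the Leibniz rule $(\mathbf{X}^{(i)})' = \sum_{k=0}^{i-1}\mathbf{x}\wedge\cdots\wedge\mathbf{x}^{(k+1)}\wedge\cdots\wedge\mathbf{x}^{(i-1)}$, and every term with $k < i-1$ contains a repeated factor $\mathbf{x}^{(k+1)}$ and so vanishes; hence $(\mathbf{X}^{(i)})' = \mathbf{x}\wedge\mathbf{x}'\wedge\cdots\wedge\mathbf{x}^{(i-2)}\wedge\mathbf{x}^{(i)}$. Then I would pass to a Gram--Schmidt orthonormal frame $f_0, f_1, \ldots, f_i$ with $\mathrm{span}(f_0,\ldots,f_k) = \mathrm{span}(\mathbf{x},\ldots,\mathbf{x}^{(k)})$ and $\mathbf{x}^{(k)} = \sum_{j\le k}c_{kj}f_j$. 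In this frame $\mathbf{X}^{(i)}$, $\mathbf{X}^{(i\pm 1)}$ and $(\mathbf{X}^{(i)})'$ become explicit multiples of the orthonormal decomposable vectors $f_0\wedge\cdots\wedge f_k$, so all four quantities reduce to products of the $|c_{kk}|^2$ together with $|c_{i,i-1}|^2$ and $|c_{ii}|^2$; a one-line cancellation then leaves precisely $|\mathbf{X}^{(i-1)}|^2|\mathbf{X}^{(i+1)}|^2$. The difficulty is entirely in organizing these wedge/Gram computations and tracking conjugations; conceptually the identity is the infinitesimal Pl\"ucker formula and is the analytic heart of the Weyl--Ahlfors theory. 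Once it is in hand, assembling the first two steps and checking the constants ($\frac{\sqrt{-1}}{2\pi}dz\wedge d\bar z = \frac{1}{\pi}dA$ against $\frac{1}{2\pi}S^i\,dA$) completes the proof.
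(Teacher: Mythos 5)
Your proposal is correct and follows the same route as the paper: pull back the Fubini--Study form, compute $\partial\bar{\partial}\log|\cdot|^2$ as a Cauchy--Schwarz defect, and reduce to a Lagrange-type identity --- except that the paper carries this out only for $ i = 1 $, where the required identity is $ |\mathbf{x}|^2|\mathbf{x}'|^2 - |\langle\mathbf{x},\mathbf{x}'\rangle|^2 = |\mathbf{x}\wedge\mathbf{x}'|^2 $, verified by direct summation. For general $ i $ your argument additionally needs $ |\mathbf{X}^{(i)}\wedge(\mathbf{X}^{(i)})'| = |\mathbf{X}^{(i - 1)}||\mathbf{X}^{(i + 1)}| $, and the orthonormal-frame computation you sketch for it is precisely the Frenet-frame calculation the paper performs in Subsection~\ref{subsec:pec_rel} when deriving $ S^1\{\mathbf{X}^{(p)}\} = S^p $, so the ``main obstacle'' you identify is already resolved elsewhere in the text by exactly the method you propose.
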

In many cases, $ T_i $ is defined by this equation.
This representation indicates that the order function $ T_i $ measures the average area of $ \mathbf{X}^{(i)}(\Delta(r)) $ with respect to the Fubini--Study metric $ \omega_{\mathrm{FS}, \binom{n + 1}{i} - 1} $.
\begin{proof}
  Here, we prove only the case where $ i = 1 $. It suffices to show that
  \begin{equation*}
    \mathbf{x}^{\ast}\omega_{\mathrm{FS}, n} = \frac{\sqrt{-1}}{2\pi}\frac{|\mathbf{x}(z) \wedge \mathbf{x}'(z)|^2}{|\mathbf{x}(z)|^4}dz \wedge d\bar{z},
  \end{equation*}
  where $ \mathbf{x}'(z) \coloneqq \mathbf{x}^{(1)}(z) $.
  The computation of the left-hand side yields
  \begin{equation*}
    \mathbf{x}^{\ast}\omega_{\mathrm{FS}, n} = \frac{\sqrt{-1}}{2\pi}\partial \frac{\langle \mathbf{x}, \mathbf{x}'\rangle}{|\mathbf{x}|^2}d\bar{z} = \frac{\sqrt{-1}}{2\pi}\frac{|\mathbf{x}|^2|\mathbf{x}'|^2 - \langle \mathbf{x}, \mathbf{x}'\rangle \langle \mathbf{x}', \mathbf{x}\rangle}{|\mathbf{x}|^4}dz \wedge d\bar{z}.
  \end{equation*}
  The numerator of the right-hand side is calculated as follows:
  \begin{equation*}
    \begin{split}
      & |\mathbf{x}|^2|\mathbf{x}'|^2 - \langle \mathbf{x}, \mathbf{x}'\rangle \langle \mathbf{x}', \mathbf{x}\rangle                                                                             \\
      & = \left(\sum_{i = 0}^n |x_i|^2\right)\left(\sum_{j = 0}^n |x_j|^2\right) - \left(\sum_{i = 0}^n x_i \bar{x}_i'\right)\left(\sum_{j = 0}^n x_j' \bar{x}_j\right)                           \\
      & = \left(\sum_{i = 0}^n |x_i|^2|x_i'|^2 + \sum_{i \neq j}|x_i|^2|x_j'|^2\right) - \left(\sum_{i = 0}^n x_i \bar{x}_i' x_i' \bar{x}_i + \sum_{i \neq j}x_i \bar{x}_i' x_j' \bar{x}_j\right) \\
      & = \sum_{i \neq j}x_ix_j'(\bar{x}_i \bar{x}_j' - \bar{x}_i' \bar{x}_j) = \sum_{i < j}(x_i x_j' - x_j x_i')(\bar{x}_i \bar{x}_j' - \bar{x}_j' \bar{x_i})                                     \\
      & = \sum_{i < j}|x_i x_j' - x_j x_i'|^2 = |\mathbf{x} \wedge \mathbf{x}'|^2.
    \end{split}
  \end{equation*}
  Hence, the desired equality follows.
\end{proof}
\begin{ex}
  The Fubini--Study form on $ \mathbb{P}^1 $ is given by $ \omega_{\mathrm{FS}, 1} = \frac{\sqrt{-1}}{2\pi}\frac{dz \wedge d\bar{z}}{(1 + |z|^2)^2} $. By \textup{Theorem~\ref{eq:A-S_char}}, we obtain the following expression:
  \begin{equation*}
    T(r) = \frac{1}{\pi}\int_{r_0}^r \frac{dt}{t}\int_{\Delta(t)}\frac{|\mathbf{x} \wedge \mathbf{x}'|^2}{|\mathbf{x}|^4}udud\theta = \frac{\sqrt{-1}}{2\pi}\int_{r_0}^{r}\frac{dt}{t}\int_{\Delta(t)}\frac{|x'|^2}{(1 + |x|^2)^2}dz \wedge d\bar{z},
  \end{equation*}
  where $ x(z) \coloneqq \frac{x_1(z)}{x_2(z)} $ is a meromorphic function.
\end{ex}
\begin{defn}\label{def:N-V-Tbar}
  For a holomorphic curve $ \mathbf{x} $, we define
  \begin{equation*}
    N_i(r) \coloneqq \int_{r_0}^r \sum_{|z| < t}d_i(z)\frac{dt}{t}.
  \end{equation*}
  For the definition of $ d_i(z) $; see \textup{Lemma~\ref{lem:d_p}}. Let $ V_i(r) $ be the second-order difference of $ N_i(r) $ for $ i $:
  \begin{equation*}
    V_i(r) \coloneqq N_{i - 1}(r) - 2N_i(r) + N_{i + 1}(r) = \int_{r_0}^r \sum_{|z| < t}(v_i(z) - 1)\frac{dt}{t} \geq 0,
  \end{equation*}
  where we apply \textup{Lemma~\ref{lem:d-seq}} to the second equality.

  Using Ahlfors's notation, we define
  \begin{equation*}
    \overline{T}_i(r) \coloneqq T_i(r) + N_i(r).
  \end{equation*}
  In particular, we have $ \overline{T}_0 = 0 $, $ \overline{T}_1 = T_1 $, and $ \overline{T}_{n + 1} = N_{n + 1} $ .
  For the $ p $-th associated curve $ \mathbf{X}^{(p)} $, we denote the corresponding quantities by $ N_i\{\mathbf{X}^{(p)}\} $, $ V_i\{\mathbf{X}^{(p)}\} $, and $ \overline{T}_i\{\mathbf{X}^{(p)}\} $, respectively.
\end{defn}
The following two theorems play an important role in the Weyl--Ahlfors theory.
\begin{thm}[\textbf{Pl\"ucker formula}; \cite{Weyl_1}, p.~526; \cite{Weyl_2}, p.~123]\label{thm:Plucker_formula}
  \begin{equation*}
    V_i(r) + (T_{i - 1}(r) - 2T_{i}(r) + T_{i + 1}(r)) = \Omega_i(r) - \Omega_{i}(r_0) \quad (i = 1, 2, \ldots, n).
  \end{equation*}
  In Ahlfors's notation, we have
  \begin{equation*}
    \overline{T_{i - 1}}(r) - 2\overline{T_i}(r) + \overline{T_{i + 1}}(r) = \Omega_i(r) - \Omega_i(r_0) \quad (i = 1, 2, \ldots, n).
  \end{equation*}
\end{thm}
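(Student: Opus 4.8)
The plan is to reduce the whole statement to a first-main-theorem–type identity for each individual associated curve, and then to take a second difference in the rank. For each $j$, write $L_j(r) \coloneqq \frac{1}{2\pi}\int_0^{2\pi}\log|\mathbf{X}^{(j)}(re^{\sqrt{-1}\theta})|^2\,d\theta$ for the circular mean of $\log|\mathbf{X}^{(j)}|^2$. Since $\widetilde{S}^i = |\mathbf{X}^{(i-1)}|^2|\mathbf{X}^{(i+1)}|^2/|\mathbf{X}^{(i)}|^4$, taking logarithms and averaging over the circle gives at once
\[
  \Omega_i(r) = \tfrac12\bigl(L_{i-1}(r) + L_{i+1}(r) - 2L_i(r)\bigr),
\]
so that $\Omega_i(r) - \Omega_i(r_0)$ is precisely $\tfrac12$ times the second difference in $j$ of the quantity $L_j(r) - L_j(r_0)$. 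Thus the entire theorem follows as soon as $L_j(r) - L_j(r_0)$ is identified, and the only structural input is the additive splitting of $\log\widetilde{S}^i$.

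The key step is the identity
\[
  L_j(r) - L_j(r_0) = 2\,\overline{T}_j(r) = 2\bigl(T_j(r) + N_j(r)\bigr),
\]
a form of the First Main Theorem for $\mathbf{X}^{(j)}$. To prove it I would apply the Jensen--Green formula to the subharmonic function $\log|\mathbf{X}^{(j)}|^2$: writing $\mu_j \coloneqq \frac{\sqrt{-1}}{2\pi}\partial\bar{\partial}\log|\mathbf{X}^{(j)}|^2$ for its (positive) Riesz current, one has $\frac{d}{d\log r}L_j(r) = 2\,\mu_j(\Delta(r))$ for almost every $r$, and hence $L_j(r) - L_j(r_0) = 2\int_{r_0}^r \mu_j(\Delta(t))\,\frac{dt}{t}$. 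By the Poincar\'e--Lelong decomposition, $\mu_j$ splits as the smooth pullback $(\mathbf{X}^{(j)})^{\ast}\omega_{\mathrm{FS}}$ (which is insensitive to the common factor removed in passing to a reduced representation) together with atomic masses carried at the common zeros of the Pl\"ucker coordinates of $\mathbf{X}^{(j)}$; by Lemma~\ref{lem:d_p} the mass at such a point is exactly the order of vanishing $d_j$. Integrating the smooth part against $dt/t$ reproduces $T_j(r)$ by Theorem~\ref{eq:A-S_char}, while the atomic part reproduces $N_j(r)$ by Definition~\ref{def:N-V-Tbar}; summing yields $\overline{T}_j(r)$. Note that this route sidesteps the classical infinitesimal Pl\"ucker identity entirely, since Theorem~\ref{eq:A-S_char} already packages the smooth curvature as $T_j$.

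Combining the two displays gives the Ahlfors form
\[
  \Omega_i(r) - \Omega_i(r_0) = \overline{T}_{i-1}(r) - 2\overline{T}_i(r) + \overline{T}_{i+1}(r),
\]
and then substituting $\overline{T}_j = T_j + N_j$ and recalling $V_i = N_{i-1} - 2N_i + N_{i+1}$ from Definition~\ref{def:N-V-Tbar} produces the first form. The boundary ranks are consistent: $\mathbf{X}^{(0)}\equiv 1$ forces $L_0\equiv 0$ and $\overline{T}_0 = 0$, while $\mathbf{X}^{(n+1)} = W$ contributes no area, so $T_{n+1} = 0$ and all of its mass is atomic, matching $\overline{T}_{n+1} = N_{n+1}$.

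The main obstacle—and essentially the only delicate point—is the Jensen/Poincar\'e--Lelong step: one must justify differentiating the circular mean of a function with logarithmic singularities, correctly track the factor of $2$ arising from $|\cdot|^2$ versus $|\cdot|$, and cleanly separate the smooth curvature contribution (which becomes $T_j$) from the atomic contribution (which becomes $N_j$). The splitting of $\log\widetilde{S}^i$ and the convexity input of Lemma~\ref{lem:d-seq} (guaranteeing $V_i \geq 0$) are needed only to give meaning to the statement, not for the derivation itself.
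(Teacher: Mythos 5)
The paper gives no proof of Theorem~\ref{thm:Plucker_formula}: it is imported from \cite{Weyl_1} and \cite{Weyl_2}, so there is no internal argument to compare against, and your proposal must be judged on its own. It is correct, and it is in substance the classical derivation, organized slightly more cleanly: the additive splitting $\log\widetilde{S}^i=\log|\mathbf{X}^{(i-1)}|^2-2\log|\mathbf{X}^{(i)}|^2+\log|\mathbf{X}^{(i+1)}|^2$ reduces everything to the single identity $L_j(r)-L_j(r_0)=2\overline{T}_j(r)$, which follows from Green--Jensen and Poincar\'e--Lelong exactly as you describe: the smooth part of the Riesz measure of $\log|\mathbf{X}^{(j)}|^2$ is $(\mathbf{X}^{(j)})^{*}\omega_{\mathrm{FS}}$ and integrates to $T_j$ by Theorem~\ref{eq:A-S_char}, while the atomic part carries mass $d_j(a)$ at each common zero $a$ by Lemma~\ref{lem:d_p} and integrates to $N_j$. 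Taking the second difference in $j$ gives the Ahlfors form, and $V_i=N_{i-1}-2N_i+N_{i+1}$ converts it into the first form; the normalizations ($\frac{\sqrt{-1}}{2\pi}\partial\bar\partial\log|z|^2=\delta_0$, the factor $\frac{1}{2}$ in the definition of $\Omega_i$, and the boundary conventions $\overline{T}_0=0$, $\overline{T}_{n+1}=N_{n+1}$) all check out. The one hypothesis worth stating explicitly is that all three lengths in $\widetilde{S}^i$ are taken with the unreduced Pl\"ucker coordinates (the raw wedge products of derivatives of $\mathbf{x}_{\mathrm{red}}$), so that the circular mean of $\log\widetilde{S}^i$ is literally the second difference of the $L_j$; with that convention $\widetilde{S}^i$ vanishes to order $2(v_i-1)$ at the stationary points, which is harmless for the integrals and is precisely the mechanism by which the nonnegative term $V_i$ enters the first displayed identity.
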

\begin{thm}[\cite{Weyl_1}, p.~531; \cite{Weyl_2}, p.~156]\label{thm:omega_small}
  For any $ \kappa > 1$, the following inequality holds:
  \begin{equation*}
    \Omega_i(r) \leq \frac{\kappa}{2} \log T_i(r) - \log r \, //.
  \end{equation*}
  See \textup{Theorem~B} in the introduction for the definition of the symbol $ // $.
\end{thm}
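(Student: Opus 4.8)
The plan is to reduce the statement to a standard growth (``calculus'') lemma via two analytic inputs: Jensen's inequality on the circle $|z| = r$, and an identification of the circular mean of $ S^i $ with a second logarithmic derivative of the order function $ T_i $. First I would rewrite $ \Omega_i(r) = \tfrac{1}{2}\cdot\tfrac{1}{2\pi}\int_0^{2\pi}\log \widetilde{S}^i(re^{\sqrt{-1}\theta})\,d\theta $ and apply the concavity of $ \log $ to move the $ \log $ outside the integral:
\[
  \Omega_i(r) \le \frac{1}{2}\log\left(\frac{1}{2\pi}\int_0^{2\pi}\widetilde{S}^i(re^{\sqrt{-1}\theta})\,d\theta\right) + O(1).
\]
Thus $ \Omega_i $ is controlled by (half the logarithm of) the circular mean of $ \widetilde{S}^i $. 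One should note here that $ \log\widetilde{S}^i $ is integrable on almost every circle, the radii passing through zeros of $ \mathbf{X}^{(i)} $ forming a set of measure zero that can be discarded.

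Next I would compute this circular mean in terms of $ T_i $. Writing $ D \coloneqq r\,\tfrac{d}{dr} $ and setting $ A(r) \coloneqq \int_{\Delta(r)} S^i\,dA $, the Definition of $ T_i $ gives $ T_i(r) = \tfrac{1}{2\pi}\int_{r_0}^r A(t)\,\tfrac{dt}{t} $, hence $ DT_i(r) = \tfrac{1}{2\pi}A(r) $; differentiating once more with $ D $ and using that $ A'(r) = 2\pi r \cdot (\text{circular mean of } S^i) $, one finds that the circular mean of $ S^i $ at radius $ r $ equals $ r^{-2}D^2 T_i(r) $. Since $ \widetilde{S}^i = S^i/2 $, combining this with the Jensen step yields
\[
  \Omega_i(r) \le \frac{1}{2}\log D^2 T_i(r) - \log r + O(1).
\]
The crucial point is that the target term $ -\log r $ appears here automatically, emerging from the factor $ r^{-2} $ in the mean; no separate argument is needed to produce it.

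It then remains to bound $ D^2 T_i $ by a power of $ T_i $, which is the content of a Borel/Nevanlinna-type growth lemma applied twice. Because $ S^i \ge 0 $, the function $ T_i $ is nondecreasing and convex in $ \log r $, so $ DT_i $ is itself nondecreasing; the growth lemma then gives, for any small $ \delta > 0 $, both $ DT_i \le T_i^{1+\delta} $ and $ D^2 T_i = D(DT_i) \le (DT_i)^{1+\delta} $, each outside an exceptional set of finite measure. Combining the two yields $ D^2 T_i \le T_i^{(1+\delta)^2} $ off the union of the exceptional sets; choosing $ \delta $ so that $ (1+\delta)^2 \le \kappa $ and substituting into the previous display gives
\[
  \Omega_i(r) \le \frac{\kappa}{2}\log T_i(r) - \log r + O(1) \,//,
\]
as desired.

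The main obstacle is the measure-theoretic bookkeeping in the growth lemma. One must carry out two successive applications---one on $ T_i $ and one on $ DT_i $---verify the monotonicity that legitimizes applying the lemma to $ DT_i $, and control the measure of the union of the two exceptional sets while matching the convention attached to the symbol $ // $. The Jensen step introduces only a minor technicality (integrability of $ \log\widetilde{S}^i $ near zeros of $ \mathbf{X}^{(i)} $), resolved by excluding a negligible set of radii.
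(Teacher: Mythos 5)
The paper does not actually prove Theorem~\ref{thm:omega_small}: it is quoted from \cite{Weyl_1} and \cite{Weyl_2}, with Remark~\ref{rem:T-seq}~(2) noting only that the proof rests on the calculus lemma. Your skeleton --- Jensen's inequality to pass from $\Omega_i$ to the circular mean of $\widetilde{S}^i$, the identity expressing that mean as $r^{-2}D^2T_i$ with $D \coloneqq r\,\frac{d}{dr}$, and a double application of a Borel-type growth lemma --- is indeed the classical route, and the first two steps are correct as you state them.

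The gap is in the last step, precisely where you assert that ``the target term $-\log r$ appears automatically'' and that ``no separate argument is needed''. The standard calculus lemma controls $\frac{d}{dr}$, not $D = r\frac{d}{dr}$: it gives $T_i'(r) \le T_i(r)^{1+\delta}$ off a set of finite Lebesgue measure, hence only $DT_i \le r\,T_i^{1+\delta}$, and after the second application $D^2T_i \le r^{2+\delta}\,T_i^{(1+\delta)^2}$; substituting this back into $\Omega_i \le \tfrac{1}{2}\log D^2T_i - \log r + O(1)$ turns your $-\log r$ into $+\tfrac{\delta}{2}\log r$, a genuinely weaker statement (for curves of slow growth, e.g.\ $T_i(r) = O((\log r)^2)$, the discrepancy $\bigl(1+\tfrac{\delta}{2}\bigr)\log r$ is not $O(\log T_i)$). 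To obtain $DT_i \le T_i^{1+\delta}$ and $D^2T_i \le (DT_i)^{1+\delta}$ directly, one must apply the lemma in the variable $\log r$, and then the exceptional set has finite measure with respect to $d\log r$ (finite \emph{logarithmic} measure), not finite Lebesgue measure as demanded by the paper's convention for the symbol $//$; these are inequivalent on $[r_0,\infty)$, and for a general nondecreasing $A = DT_i$ the set where $DA > A^{1+\delta}$ can have infinite Lebesgue measure while having finite logarithmic measure. So as written the two halves of your conclusion --- the $-\log r$ on the right-hand side and the finite (Lebesgue) measure of the exceptional set --- cannot both be extracted from the growth lemma in the way you describe; you must either adopt the logarithmic-measure convention (under which your argument closes cleanly) or supply an additional argument for the stated form.
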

\begin{rem}\label{rem:T-seq}
  $ (1) $ \textup{Theorem~\ref{thm:Plucker_formula}} is an analogue of the Pl\"ucker formula for algebraic curves. Let $ M $ be a compact Riemann surface of genus $ g_M $, and let $ \mathbf{x} : M \to \mathbb{P}^n $ be a non-degenerate algebraic curve. We define the $ i $-th degree $ \nu_i $ of $ \mathbf{x} $, and the $ i $-th total ramification index $ \sigma_i $ by the following formulas:
  \begin{align*}
    \nu_i    & \coloneqq \deg(\mathbf{X}^{(i)}) = \int_{M}(\mathbf{X}^{(i)})^{*}\omega_{\mathrm{FS}, \binom{n + 1}{i} - 1}, \\
    \sigma_i & \coloneqq \sum_{p \in M}(v_i(p) - 1).
  \end{align*}
  Then the \textup{(\textit{global})} Pl\"ucker formula for algebraic curves states that
  \begin{equation}\label{eq:Plucker_formula_alg_curve}
    \sigma_i + (\nu_{i - 1} - 2\nu_i + \nu_{i + 1}) = 2g_M - 2.
  \end{equation}
  By the definition of $ \Omega_i(r) $, it is reasonable to expect that it describes the behavior of $ \mathbf{x} $ near infinity in the domain $ \mathbb{C} $.
  In particular, it is natural to predict that $ \Omega_i(r) $ becomes negative---corresponding to the value $ -2 $  in the Pl\"ucker formula for algebraic curves \eqref{eq:Plucker_formula_alg_curve} of genus $ 0 $---when $ r $ is sufficiently large. \textup{Theorem~\ref{thm:omega_small}} justifies this prediction modulo $ O(\log T_i) $.
  For more details on the Pl\"ucker formula for algebraic curves; see \textup{Chapter~2}, \textup{Section~4}, ``Pl\"ucker formulas \textup{(\textit{The General Pl\"ucker Formulas} I \& II)}'' in \cite{Griffiths-Harris}.

  \noindent$ (2) $ \textup{Theorem~\ref{thm:omega_small}} is proved using the \textbf{calculus lemma} \textup{(\cite{Ahlfors}, p.~15, (24); \cite{Weyl_2}, pp.~154--155, Lemma~6.~A)}, which is similar to Chebyshev's inequality. The appearance of exceptional sets in the Weyl--Ahlfors theory can all be traced back to this lemma.

  \noindent$ (3) $ By the two theorems above and \textup{Lemma~\ref{lem:d-seq}}, we see that the sequence $ \{T_i\}_{i = 0}^{n + 1} $ is concave modulo $O(\log \max_i T_i)$. In addition, this implies that each $ T_i $ $ (i = 1, \ldots, n)$ is of the same magnitude, in the sense that $ T_i(r) = O(T(r)) \, // $. More precisely, for all $ 1 \leq k < m \leq n $ and any $ \epsilon > 0 $, we have
  \begin{align}
    kT_m           & < mT_k + \epsilon T \, //, \label{eq:fund_ineq_1}           \\
    (n + 1 - m)T_k & < (n + 1 - k)T_m + \epsilon T \, //. \label{eq:fund_ineq_2}
  \end{align}
  These inequalities can be found in \cite{Ahlfors}, \textup{p.~14}; \cite{Wu}, \textup{p.~134}, \textup{Second Corollary}; \cite{Cowen-Griffiths}, \textup{p.~121}; and in \textup{S. Kobayashi} \cite{Kobayashi}, \textup{p.~423}. From these observations, in fact, we see that $ \{T_i\}_{i = 0}^{n + 1} $ is concave modulo $O(\log T) $. For convenience, we state this explicitly, as it will be used repeatedly:
  \begin{equation}\label{eq:concave_rel}
    T_{i - 1}(r) - 2T_i(r) + T_{i + 1}(r)                                  \leq \frac{\kappa}{2}\log T(r) - \log r \, //.
  \end{equation}
  Moreover, the equation $ T_p = T_1\{\mathbf{X}^{(p)}\} $ \textup{(\textit{see} \eqref{eq:T_p-omega_p})} implies that the order functions of the associated curve $ \mathbf{X}^{(p)} $ are also of the same magnitude.
  Therefore, from the perspective of studying the growth of holomorphic curves, all functions in the set $ \{T_i\}_{i = 1}^n \cup \{T_i\{\mathbf{X}^{(p)}\}\}_{i = 1}^{\binom{n + 1}{p} - 1} $ can be regarded as conveying essentially the same information.
  On the other hand, our approach in this paper is to consider $ \{T_i\}_{i = 0}^{n + 1} $ as a combinatorial sequence \textup(similar comments appear in \cite{Ahlfors}, \textup{p.~14}\textup). Note that the inequalities stated in this remark are also valid for $ \{\overline{T}_i\}_{i = 0}^{n + 1} $.
\end{rem}
\subsection{The Weyl Peculiar Relation and Its Proof}\label{subsec:pec_rel}
\ \par
Hereafter, we assume that the holomorphic curve $ \mathbf{X}^{(p)} $ is non-degenerate.

As seen in Example \ref{ex:stat_index}, we have
\begin{equation}\label{eq:v_p}
  v_1\{\mathbf{X}^{(p)}\} = v_{p}.
\end{equation}
We observe that a similar phenomenon also appears in Nevanlinna theory.
To compute $ S^{1}\{\mathbf{X}^{(p)}\} $, we introduce a specific orthonormal basis such that
\begin{align}\label{eq:unitary_coord}
   & \mathbf{x} = (x_0, 0, \ldots, 0),                                                                \notag        \\
   & \mathbf{x}^{(1)} = (x_0^{(1)}, x_1^{(1)}, 0, \ldots, 0),                                                \notag \\
   & \vdots                                                                                                         \\
   & \mathbf{x}^{(n)} = (x_0^{(n)}, x_1^{(n)}, \ldots, x_{n - 1}^{(n)}, x_n^{(n)})\notag
\end{align}
holds at $ z_0 $.
This moving frame is called the \textbf{Frenet frame}.

Then the only non-vanishing Pl\"{u}cker coordinate $ (X^{(p)})_{(i_0, i_1, \ldots, i_{p - 1})} $ of $ \mathbf{X}^{(p)} $ at $ z_0 $ occurs when $ (i_0, i_1, \ldots, i_{p - 1}) = \mathbf{1} \coloneqq (0, 1, \ldots, p - 1) $:
\begin{equation*}
  (X^{(p)})_{\mathbf{1}} = x_0 x_1^{(1)} \cdots x_{p - 1}^{(p - 1)}.
\end{equation*}
On the other hand,  the non-vanishing Pl\"{u}cker coordinates $ (\pdif{1})_{(i_0, i_1, \ldots, i_{p - 1})} $ of $ \dif{1} $ occur when $ (i_0, i_1, \ldots, i_{p - 1}) $ is either $ \mathbf{1} $ or $ \mathbf{2} \coloneqq (0, 1, \ldots, p - 2, p)$. Thus, we have
\begin{equation*}
  (\pdif{1})_{\mathbf{1}} = x_0 x_1^{(1)} \cdots x_{p - 2}^{(p - 2)} x_{p - 1}^{(p)}, \quad
  (\pdif{1})_{\mathbf{2}} = x_0 x_1^{(1)} \cdots x_{p - 2}^{(p - 2)} x_p^{(p)}.
\end{equation*}
These computations imply
\begin{equation*}
  \begin{split}
    |\mathbf{X}^{(p)} \wedge \dif{1}|
    & = |x_0 x_1^{(1)} \cdots x_{p - 1}^{(p - 1)} \cdot x_0 x_1^{(1)} \cdots x_{p - 2}^{(p - 2)} x_p^{(p)}|                          \\
    & = |x_0 x_1^{(1)} \cdots x_{p - 2}^{(p - 2)}||x_0 x_1^{(1)} \cdots x_{p}^{(p)}| = |\mathbf{X}^{(p - 1)}||\mathbf{X}^{(p + 1)}|.
  \end{split}
\end{equation*}
Therefore,
\begin{equation*}
  S^{1}\{\mathbf{X}^{(p)}\}
  = 2\frac{|\mathbf{X}^{(p)} \wedge \dif{1}|^2}{|\mathbf{X}^{(p)}|^4} = 2\frac{|\mathbf{X}^{(p - 1)}|^2|\mathbf{X}^{(p + 1)}|^2}{|\mathbf{X}^{(p)}|^4} = S^{p}.
\end{equation*}
Consequently, we have proved the following:
\begin{equation}\label{eq:T_p-omega_p}
  T_1\{\mathbf{X}^{(p)}\} = T_{p}, \quad \Omega_1\{\mathbf{X}^{(p)}\} = \Omega_{p}.
\end{equation}
Equalities \eqref{eq:v_p} and \eqref{eq:T_p-omega_p} are parallel. On the basis of this observation, H. and F. J. Weyl discovered that similar relations hold for $ T_2 $ and $ \Omega_2 $. These are the \textbf{Weyl peculiar relations} as mentioned in the introduction.
\begin{thm}[Weyl peculiar relation; \cite{Weyl_2}, pp.~160--162.]\label{thm:pec_rel}

  For each $ 1 \leq p \leq n $, we have
  \begin{equation*}
    T_2\{\mathbf{X}^{(p)}\} = T_{p - 1} + T_{p + 1}, \quad \Omega_2\{\mathbf{X}^{(p)}\} \geq \max(\Omega_{p - 1}, \Omega_{p + 1}).
  \end{equation*}
  The second inequality is parallel to
  \begin{equation*}
    v_2\{\mathbf{X}^{(p)}\} = \min(v_{p - 1}, v_{p + 1}).
  \end{equation*}
  which appears in \textup{Example~\ref{ex:stat_index}} for $ i = 2 $.
\end{thm}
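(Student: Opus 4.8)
The plan is to reduce both assertions to the single pointwise identity
\begin{equation*}
  S^{2}\{\mathbf{X}^{(p)}\} = S^{p-1} + S^{p+1},
\end{equation*}
an identity of smooth functions off the isolated zeros of the associated curves. Since each $S^{i}$, and hence $T_i$ and $\Omega_i$, is invariant under rescaling of the representative vector, I would evaluate everything at a fixed $z_0$ in the Frenet frame \eqref{eq:unitary_coord}; the resulting equality of functions then propagates to all $z$. The rank-$2$ order function needs the length of the second associated curve $\mathbf{X}^{(p)}\wedge\dif{1}$, which the computation preceding \eqref{eq:T_p-omega_p} already supplies:
\begin{equation*}
  |\mathbf{X}^{(p)}\wedge\dif{1}|^{2} = |\mathbf{X}^{(p-1)}|^{2}\,|\mathbf{X}^{(p+1)}|^{2}.
\end{equation*}

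The decisive step is the length of the third associated curve $\mathbf{X}^{(p)}\wedge\dif{1}\wedge\dif{2}$. In the Frenet frame the only nonzero Pl\"ucker coordinate of $\mathbf{X}^{(p)}$ sits at $\mathbf{1}$, those of $\dif{1}$ at $\mathbf{1},\mathbf{2}$, and---by Lemma~\ref{lem:diff_formula}, whose two relevant shapes $(1,1)$ and $(2)$ each admit a unique standard tableau, so that $f_{\lambda}=1$---the top coordinates of $\dif{2}$ sit at $\mathbf{3}_a \coloneqq (0,\dots,p-3,p-1,p)$ and $\mathbf{3}_b \coloneqq (0,\dots,p-2,p+1)$. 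The strict upper-triangularity of the frame forces every surviving $3\times 3$ Pl\"ucker minor of $(\mathbf{X}^{(p)},\dif{1},\dif{2})$ onto the column triples $\{\mathbf{1},\mathbf{2},\mathbf{3}_a\}$ and $\{\mathbf{1},\mathbf{2},\mathbf{3}_b\}$, each lower triangular, so that
\begin{equation*}
  |\mathbf{X}^{(p)}\wedge\dif{1}\wedge\dif{2}|^{2} = |\mathbf{X}^{(p)}|^{2}\,|(\pdif{1})_{\mathbf{2}}|^{2}\Bigl(|(\pdif{2})_{\mathbf{3}_a}|^{2}+|(\pdif{2})_{\mathbf{3}_b}|^{2}\Bigr).
\end{equation*}
Writing $P_k \coloneqq |x_k^{(k)}|^{2}$ and using $|\mathbf{X}^{(q)}|^{2}=\prod_{k=0}^{q-1}P_k$ at $z_0$, a direct substitution collapses the ratio to
\begin{equation*}
  \widetilde{S^{2}\{\mathbf{X}^{(p)}\}} = \frac{|\mathbf{X}^{(p)}|^{2}\,|\mathbf{X}^{(p)}\wedge\dif{1}\wedge\dif{2}|^{2}}{|\mathbf{X}^{(p)}\wedge\dif{1}|^{4}} = \frac{P_{p-1}}{P_{p-2}} + \frac{P_{p+1}}{P_{p}} = \widetilde{S^{p-1}} + \widetilde{S^{p+1}},
\end{equation*}
where the last equality uses $\widetilde{S^{q}} = P_{q}/P_{q-1}$.

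Both assertions then follow formally from this one identity. For the order function, the defining integral $T_i = \frac{1}{2\pi}\int_{r_0}^r \frac{dt}{t}\int_{\Delta(t)} S^{i}\,r\,dr\,d\theta$ is linear in $S^{i}$, so integrating $S^{2}\{\mathbf{X}^{(p)}\}=S^{p-1}+S^{p+1}$ gives $T_2\{\mathbf{X}^{(p)}\}=T_{p-1}+T_{p+1}$ with no error term. For the $\Omega$-term, since $\widetilde{S^{p-1}}$ and $\widetilde{S^{p+1}}$ are positive, $\log(\widetilde{S^{p-1}}+\widetilde{S^{p+1}})\ge\log\widetilde{S^{p-1}}$ and $\ge\log\widetilde{S^{p+1}}$; averaging over $\theta$ yields $\Omega_2\{\mathbf{X}^{(p)}\}\ge\max(\Omega_{p-1},\Omega_{p+1})$. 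I would also note that this same additive identity simultaneously explains the parallel $v_2\{\mathbf{X}^{(p)}\}=\min(v_{p-1},v_{p+1})$ of Example~\ref{ex:stat_index}: the order of vanishing of the \emph{sum} $S^{p-1}+S^{p+1}$ is the \emph{minimum} of the two orders, while its logarithmic average dominates that of each summand, which is exactly the $\min\mapsto\max$ reversal in the statement.

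The main obstacle is the second paragraph: correctly reading off the top Pl\"ucker coordinates of $\dif{2}$ from Lemma~\ref{lem:diff_formula} and verifying that no cross terms from the columns $\mathbf{1},\mathbf{2}$ pollute the $3\times 3$ minors. This rests entirely on the supports of $\mathbf{X}^{(p)}$ and $\dif{1}$ being exactly $\{\mathbf{1}\}$ and $\{\mathbf{1},\mathbf{2}\}$ together with the triangular structure of \eqref{eq:unitary_coord}. One must also treat the boundary cases $p=1$ and $p=n$, where one of $\mathbf{3}_a,\mathbf{3}_b$ degenerates and one of $\mathbf{X}^{(p-1)},\mathbf{X}^{(p+1)}$ is constant or the adjacent $S^{q}$ vanishes; here the conventions $\mathbf{X}^{(q)}=0$ for $q\notin\{0,\dots,n+1\}$ and $T_0=T_{n+1}=0$ make the relations hold under the natural reading of $\Omega_{p-1},\Omega_{p+1}$.
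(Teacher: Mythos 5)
Your proposal is correct and follows essentially the same route as the paper's proof: both establish the pointwise Frenet-frame identity $S^2\{\mathbf{X}^{(p)}\} = S^{p-1} + S^{p+1}$ by locating the surviving Pl\"ucker coordinates of $\dif{2}$ at $(0,\ldots,p-3,p-1,p)$ and $(0,\ldots,p-2,p+1)$, then integrate to obtain the $T$-equality and use $\log(a+b)\ge\max(\log a,\log b)$ for the $\Omega$-inequality. The only divergence is the boundary case $p=n$, which the paper handles via the dual curve through $S^{p}=S^{(n+1)-p}\{\bm{\xi}\}$ while you let the conventions $\mathbf{X}^{(q)}=0$ degenerate the direct computation; both are valid.
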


\begin{proof}
  Since $ T_0 = 0$, the statement is trivial when $ p = 1 $ (in which case $ \Omega_{0} $ is not considered). We define the \textbf{dual curve} $ \bm{\xi} $ of $ \mathbf{x} $ in the dual space $ (\mathbb{C}^{n + 1})^{\ast} \simeq \mathbb{C}^{n + 1} $ by
  $$ \star\bm{\xi} \coloneqq \mathbf{x} \wedge \mathbf{x}^{(1)} \wedge \cdots \wedge \mathbf{x}^{(n - 1)} = \mathbf{X}^{(n)},$$
  where $ \star $ denotes the Hodge star operator on $ (\mathbb{C}^{n + 1})^{*} $.
  The following identity holds:
  \begin{equation*}
    S^{p} = S^{(n + 1) - p}\{\bm{\xi}\}.
  \end{equation*}
  To verify this, we use the following formula (see \cite{Weyl_2}, p.~49):
  \begin{equation*}
    \star (\bm{\xi} \wedge \bm{\xi}^{(1)} \wedge \cdots \wedge \bm{\xi}^{(p - 1)}) = W^{p - 1}\mathbf{x} \wedge \mathbf{x}^{(1)} \wedge \cdots \wedge \mathbf{x}^{(n - p)},
  \end{equation*}
  where $ W $ is the Wronskian of $ \mathbf{x} $. From this, we obtain
  \begin{equation*}
    \begin{split}
      S^p & = 2\frac{|\mathbf{X}^{(p - 1)}|^2|\mathbf{X}^{(p + 1)}|^2}{|\mathbf{X}^{(p)}|^4} = 2\frac{|\mathbf{x} \wedge \mathbf{x}^{(1)} \wedge \cdots \wedge \mathbf{x}^{(p - 2)}|^2|\mathbf{x} \wedge \mathbf{x}^{(1)} \wedge \cdots \wedge \mathbf{x}^{(p)}|^2}{|\mathbf{x} \wedge \mathbf{x}^{(1)} \wedge \cdots \wedge \mathbf{x}^{(p - 1)}|^4} \\
      & = 2\frac{|\bm{\xi} \wedge \bm{\xi}^{(1)} \wedge \cdots \wedge \bm{\xi}^{((n + 1) - p - 2)}|^2|\bm{\xi} \wedge \bm{\xi}^{(1)} \wedge \cdots \wedge \bm{\xi}^{((n + 1) - p)}|^2}{|\bm{\xi} \wedge \bm{\xi}^{(1)} \wedge \cdots \wedge \bm{\xi}^{((n + 1) - p - 1)}|^4} = S^{(n + 1) - p}\{\bm{\xi}\}.
    \end{split}
  \end{equation*}
  In particular, we have $ S^{n - 1} = S^{2}\{\bm{\xi}\} $. Hence, we derive
  \begin{equation*}
    T_2\{\mathbf{X}^{(n)}\} = T_2\{\bm{\xi}\} = T_{n - 1} = T_{n - 1} + T_{n + 1}, \quad \Omega_2\{\mathbf{X}^{(n)}\} = \Omega_2\{\bm{\xi}\} = \Omega_{n - 1}.
  \end{equation*}
  This proves our claim for $ p = n $ (in which case $ \Omega_{n + 1} $ is not considered).

  We proceed by following the Weyls' argument for $ 2 \leq p \leq n - 1 $ (this range is equivalent to $ i = 2 \leq \min(n - p + 1, p)$). It suffices to show that
  \begin{equation}\label{eq:S^2}
    S^2\{\mathbf{X}^{(p)}\} = S^{p - 1} + S^{p + 1}.
  \end{equation}
  Indeed, from this, the first equality in Theorem~\ref{thm:pec_rel} follows directly, and the second inequality can be deduced from
  \begin{equation*}
    \log(S^{p - 1} + S^{p + 1}) \geq \max(\log S^{p - 1}, \log S^{p + 1}).
  \end{equation*}
  We prove \eqref{eq:S^2} by direct computation. Each term of $\dif{2} $ in Lemma~\ref{lem:diff_formula} is denoted by $ \dif{2}_{\;\mathbf{3}}$ and $\dif{2}_{\;\mathbf{4}}$, respectively:
  \begin{equation*}
    \dif{2} = \dif{2}_{\;\mathbf{3}} + \dif{2}_{\;\mathbf{4}},
  \end{equation*}
  where $ \mathbf{3} \coloneqq (0, 1, \ldots, p - 3, p - 1, p) $ and $ \mathbf{4} \coloneqq (0, 1, \ldots, p - 2, p + 1) $.
  The non-vanishing coordinates $ (\pdif{2})_{(i_0, i_1, \ldots, i_{p - 1})} $ of $ \dif{2} $ occur when $ (i_0, i_1, \ldots, i_{p - 1}) $ is equal to $ \mathbf{1} $, $ \mathbf{2} $, $ \mathbf{3} $, or $ \mathbf{4} $. More specifically, the Pl\"{u}cker coordinate $ (\pdif{2}_{\;\mathbf{3}})_{(i_0, i_1, \ldots, i_{p - 1})} $ vanishes unless $ (i_0, i_1, \ldots, i_{p - 1}) $ is equal to $ \mathbf{1}$, $\mathbf{2}$, or $ \mathbf{3} $.
  Similarly, $ (\pdif{2}_{\;\mathbf{4}})_{(i_0, i_1, \ldots, i_{p - 1})} $ vanishes unless $ (i_0, i_1, \ldots, i_{p - 1}) $ is equal to $ \mathbf{1}, \mathbf{2} $, or $ \mathbf{4} $. Using the Frenet frame, we compute $ (\pdif{2}_{\;\mathbf{3}})_{\mathbf{3}} $ and $ (\pdif{2}_{\;\mathbf{4}})_{\mathbf{4}} $:
  \begin{equation*}
    (\pdif{2}_{\;\mathbf{3}})_{\mathbf{3}} = x_0x_1^{(1)} \cdots x_{p - 3}^{(p - 3)} x_{p - 1}^{(p - 1)}x_{p}^{(p)},\quad (\pdif{2}_{\;\mathbf{4}})_{\mathbf{4}} = x_0 x_1^{(1)} \cdots x_{p - 2}^{(p - 2)} x_{p + 1}^{(p + 1)}.
  \end{equation*}
  Thus, when we regard $ \mathbf{X}^{(p)} \wedge \dif{1} \wedge \dif{2} $ as a holomorphic curve in $ \mathbb{P}(\bigwedge^{3}\mathbb{C}^{\binom{n + 1}{p}}) $ via the Pl\"{u}cker embedding, the only non-vanishing Pl\"{u}cker coordinates are those with indices $ (i_0, i_1, i_2) = (\mathbf{1}, \mathbf{2}, \mathbf{3}) $ and $ (\mathbf{1}, \mathbf{2}, \mathbf{4}) $.
  Hence, we have
  \begin{align}\label{eq:wedge_norm}
    |\mathbf{X}^{(p)} \wedge \dif{1} \wedge \dif{2}|^2 & = |(\mathbf{X}^{(p)} \wedge \dif{1} \wedge \dif{2})_{(\mathbf{1}, \mathbf{2}, \mathbf{3})}|^2 + |(\mathbf{X}^{(p)} \wedge \dif{1} \wedge \dif{2})_{(\mathbf{1}, \mathbf{2}, \mathbf{4})}|^2   \notag \\
                                                       & = |(X^{(p)})_{\mathbf{1}} (\pdif{1})_{\mathbf{2}}|^2(|(\pdif{2}_{\;\mathbf{3}})_{\mathbf{3}}|^2 + |(\pdif{2}_{\;\mathbf{4}})_{\mathbf{4}}|^2)    \notag                                              \\
                                                       & = |\mathbf{X}^{(p)} \wedge \dif{1}|^2(|(\pdif{2}_{\;\mathbf{3}})_{\mathbf{3}}|^2 + |(\pdif{2}_{\;\mathbf{4}})_{\mathbf{4}}|^2).
  \end{align}
  From these calculations, we obtain
  \begin{equation*}
    \begin{split}
      S^2\{\mathbf{X}^{(p)}\}
      & = 2\frac{|\mathbf{X}^{(p)}|^2|\mathbf{X}^{(p)} \wedge \dif{1} \wedge \dif{2}|^2}{|\mathbf{X}^{(p)} \wedge \dif{1}|^4} = 2\frac{|\mathbf{X}^{(p)}|^2(|(\pdif{2}_{\;\mathbf{3}})_{\mathbf{3}}|^2 + |(\pdif{2}_{\;\mathbf{4}})_{\mathbf{4}}|^2)}{|\mathbf{X}^{(p)} \wedge \dif{1}|^2}       \\
      & = 2\frac{|(\pdif{2}_{\;\mathbf{3}})_{\mathbf{3}}|^2 + |(\pdif{2}_{\;\mathbf{4}})_{\mathbf{4}}|^2}{|\mathbf{X}^{(p)}|^2} : \frac{|\mathbf{X}^{(p)} \wedge \dif{1}|^2}{|\mathbf{X}^{(p)}|^4}                                                                                               \\
      & = 2\left(\frac{|x_p^{(p)}|^2}{|x_{p - 2}^{(p - 2)}|^2} + \frac{|x_{p + 1}^{(p + 1)}|^2}{|x_{p - 1}^{(p - 1)}|^2}\right) : \frac{|x_p^{(p)}|^2}{|x_{p - 1}^{(p - 1)}|^2} = 2\frac{|x_{p - 1}^{(p - 1)}|^2}{|x_{p - 2}^{(p - 2)}|^2} + 2\frac{|x_{p + 1}^{(p + 1)}|^2}{|x_p^{(p)}|^2}.
    \end{split}
  \end{equation*}
  This proves our claim, since $ S^{p - 1} $ and $ S^{p + 1} $ are given by
  \begin{equation*}
    S^{p - 1} = 2\frac{|\mathbf{X}^{(p - 2)}|^2 |\mathbf{X}^{(p)}|^2}{|\mathbf{X}^{(p - 1)}|^4} = 2\frac{|x_{p - 1}^{(p - 1)}|^2}{|x_{p - 2}^{(p - 2)}|^2},\quad  S^{p + 1} = 2\frac{|\mathbf{X}^{(p)}|^2|\mathbf{X}^{(p + 2)}|^2}{|\mathbf{X}^{(p + 1)}|^2} = 2\frac{|x_{p + 1}^{(p + 1)}|^2}{|x_p^{(p)}|^2}.
  \end{equation*}
\end{proof}
\begin{cor}
  If we set $ T_2\{\mathbf{X}^{(0)}\} = T_2\{\mathbf{X}^{(n + 1)}\} = 0 $, then the sequence $ \{T_2\{\mathbf{X}^{(p)}\}\}_{p = 0}^{n + 1} $ is concave modulo $ O(\log T) $ .
\end{cor}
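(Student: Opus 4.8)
The plan is to reduce the whole statement to the Weyl peculiar relation (Theorem~\ref{thm:pec_rel}) together with the concavity of the original order functions recorded in \eqref{eq:concave_rel}. Write $ U_p \coloneqq T_2\{\mathbf{X}^{(p)}\} $. By Theorem~\ref{thm:pec_rel} we have $ U_p = T_{p - 1} + T_{p + 1} $ for $ 1 \leq p \leq n $, while $ U_0 = U_{n + 1} = 0 $ by the stated convention. To establish concavity of $ \{U_p\}_{p = 0}^{n + 1} $ modulo $ O(\log T) $ it suffices to bound the second differences $ U_{p - 1} - 2U_p + U_{p + 1} $ from above by $ O(\log T)\,// $ for each $ 1 \leq p \leq n $, and the strategy is to show that each such difference decomposes into second differences of the single sequence $ \{T_i\} $, to which \eqref{eq:concave_rel} applies directly.

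First I would treat the interior range $ 2 \leq p \leq n - 1 $. Substituting the peculiar relation and regrouping yields
\begin{equation*}
  U_{p - 1} - 2U_p + U_{p + 1} = (T_{p - 2} - 2T_{p - 1} + T_p) + (T_p - 2T_{p + 1} + T_{p + 2}),
\end{equation*}
so the second difference of $ \{U_p\} $ splits as a sum of two consecutive second differences of $ \{T_i\} $. Applying \eqref{eq:concave_rel} to each summand (with $ i = p - 1 $ and $ i = p + 1 $) bounds the total by $ \kappa \log T - 2\log r\,// $, which is certainly $ O(\log T)\,// $; since only finitely many indices are involved, the union of the associated exceptional sets still has finite measure.

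The only point requiring care is the two endpoints $ p = 1 $ and $ p = n $, where the convention $ U_0 = U_{n + 1} = 0 $ does \emph{not} agree with the naive extension of the peculiar relation (which would read $ T_{-1} + T_1 $ and $ T_n + T_{n + 2} $). Using $ T_0 = T_{n + 1} = 0 $, one computes
\begin{equation*}
  U_0 - 2U_1 + U_2 = T_1 - 2T_2 + T_3, \qquad U_{n - 1} - 2U_n + U_{n + 1} = T_{n - 2} - 2T_{n - 1} + T_n,
\end{equation*}
each again a single second difference of $ \{T_i\} $, hence bounded by \eqref{eq:concave_rel}. I expect the main (and only mildly subtle) step to be verifying that the endpoint convention does not secretly destroy concavity: replacing the naive boundary values $ T_1 $ and $ T_n $ by $ 0 $ decreases the corresponding endpoint second differences by exactly $ T_1 \geq 0 $ and $ T_n \geq 0 $, so it only makes the sequence \emph{more} concave, and the upper bounds remain valid. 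Collecting the interior and boundary estimates and absorbing the finitely many $ O(\log T) $ contributions then completes the argument.
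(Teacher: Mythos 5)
Your proposal is correct and follows essentially the same route as the paper: substitute the Weyl peculiar relation $T_2\{\mathbf{X}^{(p)}\} = T_{p-1} + T_{p+1}$, regroup the second difference as a sum of two consecutive second differences of $\{T_i\}$, and apply \eqref{eq:concave_rel}. The paper writes out only the interior case $2 \leq p \leq n-1$ and remarks that $p = 1$ and $p = n$ are similar; your explicit endpoint computations (each reducing to a single second difference of $\{T_i\}$) are the correct way to fill in that remark.
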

\begin{proof}
  For $ 2 \leq p \leq n - 1 $, we have
  \begin{equation*}
    \begin{split}
      T_2\{\mathbf{X}^{(p - 1)}\} -2T_2\{\mathbf{X}^{(p)}\} + T_2\{\mathbf{X}^{(p + 1)}\} & = (T_{p - 2} + T_p) - 2(T_{p - 1} + T_{p + 1}) + (T_{p} + T_{p + 2}) \\
      & = (T_{p - 2} - 2T_{p - 1} + T_p) + (T_p - 2T_{p + 1} + T_{p + 2}).
    \end{split}
  \end{equation*}
  Thus, from \eqref{eq:concave_rel}, our claim follows. The cases $ p = 1 $ and $ p = n $ can be proved in a similar manner.
\end{proof}
\section{Generalization of the Weyl Peculiar Relation}\label{sec:gen_pec_rel}

In this section, we generalize the Weyl peculiar relation for $ 3 \leq i \leq p(n - p + 1) $  (Theorem~\ref{thm:gen_pec_rel}). Our strategy is to develop an analogy with the method used in the proof of the Weyl peculiar relation presented in the preceding section (Section~\ref{sec:Weyl_pec_rel}, Subsection~\ref{subsec:pec_rel}).
We also note that similarities also appear with the method for evaluating stationary indices (Section~\ref{sec:assoc_curve}, Subsection~\ref{subsec:stat_index}).
\subsection{Proof of the Generalized Weyl Peculiar Relation}
\ \par
Recall that for each $ \sigma= (i_0, i_1, \ldots, i_{p - 1}) \in \binom{[\infty]}{p}_{(k_i)} $, the quantity $ \dif{i}_{\,\sigma} $ is defined as $\mathbf{x}^{(i_0)} \wedge \mathbf{x}^{(i_1)} \wedge \cdots \wedge \mathbf{x}^{(i_{p - 1})}$; see Lemma~\ref{lem:diff_formula}.

For $ (i_0, i_1, \ldots, i_{p - 1}) , (i_0', i_1', \ldots, i_{p - 1}') \in \binom{[\infty]}{p}$, we write
\begin{equation*}
  (i_0, i_1, \ldots, i_{p - 1}) \prec (i_0', i_1', \ldots, i_{p - 1}'),
\end{equation*}
if $ i_k < i_k' $ for some $ k $. (We remark that the symbol $ \prec $ does not represent an order.)
\begin{lem}\label{lem:coord_zero}
  Let $ \sigma \in \binom{[\infty]}{p}_{(k_i)} $ and $ \tau \in \binom{[n + 1]}{p} $.
  If $ \sigma \prec \tau $, then
  $(\pdif{i}_{\;\sigma})_{\tau} = 0$.
\end{lem}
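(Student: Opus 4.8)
The plan is to evaluate the Pl\"ucker coordinate at the base point $ z_0 $ in the Frenet frame \eqref{eq:unitary_coord} and to reduce the claim to the vanishing of a determinant that contains an oversized block of zeros. First I would write $ \sigma = (i_0, i_1, \ldots, i_{p - 1}) $ and $ \tau = (j_0, j_1, \ldots, j_{p - 1}) $, and record that, by the definition of the wedge product, the coordinate $ (\pdif{i}_{\;\sigma})_{\tau} $ is the $ p \times p $ minor
\begin{equation*}
  (\pdif{i}_{\;\sigma})_{\tau} = \det\left(x_{j_a}^{(i_b)}(z_0)\right)_{0 \leq a, b \leq p - 1},
\end{equation*}
obtained by selecting the rows $ j_0, j_1, \ldots, j_{p - 1} $ of the matrix whose columns are the component vectors of $ \mathbf{x}^{(i_0)}, \ldots, \mathbf{x}^{(i_{p - 1})} $.

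The crucial structural input is the triangular shape of the Frenet frame: since \eqref{eq:unitary_coord} shows that $ \mathbf{x}^{(m)} $ has nonzero entries only in positions $ 0, 1, \ldots, m $, I get $ x_j^{(m)}(z_0) = 0 $ whenever $ j > m $. Using the hypothesis $ \sigma \prec \tau $, I would fix an index $ k $ with $ i_k < j_k $ and then check that the rectangular block of entries with row index $ a \geq k $ and column index $ b \leq k $ is identically zero: monotonicity of the two strictly increasing sequences gives $ j_a \geq j_k > i_k \geq i_b $, so $ j_a > i_b $ and every such entry vanishes. This exhibits a zero submatrix indexed by the rows $ \{k, k + 1, \ldots, p - 1\} $ and the columns $ \{0, 1, \ldots, k\} $.

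Finally, I would observe that this zero block occupies $ p - k $ rows and $ k + 1 $ columns, whose sizes add up to $ (p - k) + (k + 1) = p + 1 > p $, and invoke the standard fact that a square matrix containing a zero block of total dimension exceeding its size has vanishing determinant. Concretely, in the Leibniz expansion a surviving term corresponds to a permutation that must send each of the $ p - k $ block rows into one of the $ p - (k + 1) $ columns outside the block; since $ p - k > p - (k + 1) $, no such injection exists, so every term vanishes and $ (\pdif{i}_{\;\sigma})_{\tau} = 0 $. I expect the only delicate point to be the index bookkeeping---verifying simultaneously that the block $ \{k, \ldots, p - 1\} \times \{0, \ldots, k\} $ is entirely zero \emph{and} large enough to kill the determinant---rather than any genuine analytic difficulty; once the correct block is pinned down, the argument is purely combinatorial linear algebra.
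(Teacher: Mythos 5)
Your proof is correct and follows essentially the same route as the paper's: pick an index $k$ witnessing $\sigma \prec \tau$, use the triangularity of the Frenet frame to produce a zero block of combined size $p + 1$ inside the $p \times p$ minor, and conclude that the determinant vanishes. The only difference is that the paper first runs a case analysis on whether components of $\sigma$ exceed $n$ before locating the witness index, whereas your observation that any witness $k$ automatically satisfies $i_k < j_k \leq n$ (so the Frenet vanishing genuinely applies to the whole block) makes that detour unnecessary.
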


\begin{proof}
  Let $ \sigma = (j_0, j_1, \ldots, j_{p - 1})$, $0 \leq j_0 < j_1 < \cdots < j_{p - 1} $ and $ \tau = (i_0, i_1, \ldots, i_{p - 1})$, $0 \leq i_0 < i_1 < \cdots < i_{p - 1} \leq n $. It suffices to show that there exists an integer $ 0 \leq j_s < n $ such that $ j_s < i_s $. Indeed, from \eqref{eq:unitary_coord}, it follows that $ x_{i_s}^{(j_s)} = 0 $, and hence $ x_{i_k}^{(j_l)} = 0 $ for every $ s \leq k \leq p - 1$ and $0 \leq l \leq s $. Therefore,
  \begin{equation*}
    (\pdif{i}_{\;\sigma})_{\tau} =
    \begin{vmatrix}
      x_{i_0}^{(j_0)} & x_{i_1}^{(j_0)} & \cdots & x_{i_s}^{(j_0)} & x_{i_{s + 1}}^{(j_0)} & \cdots & x_{i_{p - 1}}^{(j_0)} \\
      x_{i_0}^{(j_1)} & x_{i_1}^{(j_1)} & \cdots & x_{i_s}^{(j_1)} & x_{i_{s + 1}}^{(j_1)} & \cdots & x_{i_{p - 1}}^{(j_1)} \\
      \vdots          & \vdots          & \ddots & \vdots          & \vdots                & \ddots & \vdots                \\
      x_{i_0}^{(j_s)} & x_{i_1}^{(j_s)} & \cdots & x_{i_s}^{(j_s)} & x_{i_{s + 1}}^{(j_s)} & \cdots & x_{i_{p - 1}}^{(j_s)} \\
      \ast            & \ast            & \cdots & \ast            & \ast                  & \cdots & \ast                  \\
      \vdots          & \vdots          & \ddots & \vdots          & \vdots                & \ddots & \vdots                \\
      \ast            & \ast            & \cdots & \ast            & \ast                  & \cdots & \ast
    \end{vmatrix}
    =
    \begin{vmatrix}
      x_{i_0}^{(j_0)} & x_{i_1}^{(j_0)} & \cdots & 0      \\
      x_{i_0}^{(j_1)} & x_{i_1}^{(j_1)} & \cdots & 0      \\
      \vdots          & \vdots          & \ddots & \vdots \\
      x_{i_0}^{(j_s)} & x_{i_1}^{(j_s)} & \cdots & 0
    \end{vmatrix} \times \ast
    = 0.
  \end{equation*}
  If $ j_{p - 1} \leq n $, by definition of $ \prec $, there exists $ 0 \leq s \leq p - 1 $ such that $ j_s < i_s \leq n $. Thus, we may assume that $ j_{p - 1} > n $. If $ j_0 > n $, this contradicts the assumption $ \sigma \prec \tau $. Hence, we can find an integer $ 0 \leq s \leq p - 2 $ such that $ 0 \leq j_s \leq n $ and $ j_{s + 1} > n$.
  Suppose that $ j_k \geq i_k $ holds for all $ 0 \leq k \leq s $. Since $ \sigma \prec \tau $, there exists $ s + 1 \leq s' \leq p - 1 $ such that $ j_{s'} < i_{s'}$. This leads to a contradiction, as it implies $ n < j_{s + 1} \leq j_{s'} < i_{s'} \leq n $. Hence, we can always find some integer $ 0 \leq j_s < n $ satisfying $ j_s < i_s $.
\end{proof}
The following lemma is an analogue of \eqref{eq:wedge_norm}.
\begin{lem}\label{lem:norm_compare}
  Let $ n $ and $ p \, (\leq n)$ be positive integers. Fix $ i \leq p(n - p + 1) $. Then
  \begin{align*}
    |\mathbf{X}^{(p)} \wedge \dif{1} \wedge \cdots \wedge \dif{i}|^2 \geq |\mathbf{X}^{(p)} \wedge \dif{1} \wedge \cdots \wedge \dif{i - 1}|^2 \sum_{\sigma \in \binom{[n + 1]}{p}_{(k_i)}}f_{\lambda(\sigma)}^2|(\pdif{i}_{\;\sigma})_{\sigma}|^2.
  \end{align*}
\end{lem}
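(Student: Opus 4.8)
The plan is to prove the inequality pointwise at each $z_0 \in \mathbb{C}$, working in the Frenet frame \eqref{eq:unitary_coord}; since every norm appearing is invariant under the unitary change of basis defining that frame, no generality is lost. I would first grade the target space by \emph{level}: for each $l$ set $E_l \coloneqq \operatorname{span}\{\mathbf{e}_{\tau} : \tau \in \binom{[n + 1]}{p}_{(k_l)}\}$, where $\mathbf{e}_{\tau} \coloneqq \mathbf{e}_{i_0} \wedge \cdots \wedge \mathbf{e}_{i_{p - 1}}$ for $\tau = (i_0, \ldots, i_{p - 1})$, and $E_{<i} \coloneqq \bigoplus_{l < i} E_l$. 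These are mutually orthogonal subspaces of $\bigwedge^p \mathbb{C}^{n + 1}$. Abbreviating $A \coloneqq \mathbf{X}^{(p)} \wedge \dif{1} \wedge \cdots \wedge \dif{i - 1}$, the left-hand side is $|A \wedge \dif{i}|^2$.

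The key structural input is Lemma~\ref{lem:coord_zero}: together with the expansion of Lemma~\ref{lem:diff_formula} it forces each coordinate $(\dif{j})_{\tau}$ to vanish whenever $\tau$ lies at a level exceeding $j$, because a nonzero contribution $(\pdif{j}_{\;\sigma})_{\tau}$ requires $\sigma \not\prec \tau$, i.e.\ $\sigma \geq \tau$ componentwise, whence $\operatorname{level}(\tau) \leq \operatorname{level}(\sigma) = j$. Consequently $\dif{j} \in E_{<i}$ for every $0 \leq j \leq i - 1$, so $A \in \bigwedge^{i} E_{<i}$, while $\dif{i} \in E_{<i} \oplus E_i$. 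Writing $\dif{i} = u + w$ with $u \in E_{<i}$ and $w \in E_i$, the wedges $A \wedge u$ and $A \wedge w$ lie in the orthogonal graded pieces $\bigwedge^{i + 1} E_{<i}$ and $(\bigwedge^{i} E_{<i}) \wedge E_i$ of $\bigwedge^{i + 1}(\bigwedge^p \mathbb{C}^{n + 1})$, so
\begin{equation*}
  |A \wedge \dif{i}|^2 = |A \wedge u|^2 + |A \wedge w|^2 \geq |A \wedge w|^2,
\end{equation*}
and the inequality of the lemma arises exactly from discarding the nonnegative term $|A \wedge u|^2$. Expanding $w = \sum_{\tau \in \binom{[n + 1]}{p}_{(k_i)}} (\dif{i})_{\tau}\, \mathbf{e}_{\tau}$ and using that the vectors $A \wedge \mathbf{e}_{\tau}$ are pairwise orthogonal with $|A \wedge \mathbf{e}_{\tau}|^2 = |A|^2$ (each $\mathbf{e}_{\tau} \in E_i$ being a unit vector orthogonal to $E_{<i}$, which contains every factor of $A$), I obtain $|A \wedge w|^2 = |A|^2 \sum_{\tau} |(\dif{i})_{\tau}|^2$.

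It then remains to compute the top-level coordinates. For $\tau \in \binom{[n + 1]}{p}_{(k_i)}$ one has $(\dif{i})_{\tau} = \sum_{\sigma} f_{\lambda(\sigma)} (\pdif{i}_{\;\sigma})_{\tau}$ over $\sigma \in \binom{[\infty]}{p}_{(k_i)}$; by Lemma~\ref{lem:coord_zero} only the $\sigma$ with $\sigma \not\prec \tau$ survive, and since $\sigma$ and $\tau$ share the same coordinate sum $k_i$, componentwise domination forces $\sigma = \tau$. Hence $(\dif{i})_{\tau} = f_{\lambda(\tau)} (\pdif{i}_{\;\tau})_{\tau}$, and substituting into the previous display yields precisely the asserted bound. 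The step requiring the most care is this level bookkeeping — verifying that $\dif{j}$ occupies only levels $\leq j$ and that a single diagonal term survives at the top level; I also expect that tracking the discarded term $|A \wedge u|^2$ clarifies the inequality, since it vanishes exactly when $\operatorname{span}$ of the factors of $A$ equals $E_{<i}$ (the case $i \leq 2$, recovering the equality \eqref{eq:wedge_norm}) and becomes strictly positive once a level below $i$ carries more than one index, i.e.\ for $i \geq 3$.
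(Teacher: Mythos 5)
Your proof is correct and takes essentially the same route as the paper's: the same two inputs (Lemma~\ref{lem:diff_formula} and Lemma~\ref{lem:coord_zero}, plus the observation that on the fixed level $k_i$ the relation $\sigma \not\prec \tau$ forces $\sigma = \tau$) do all the work, and your orthogonal splitting $\dif{i} = u + w$ with $|A \wedge \dif{i}|^2 = |A \wedge u|^2 + |A \wedge w|^2$ is a coordinate-free restatement of the paper's step of discarding all squared Pl\"ucker coordinates except those indexed by tuples having exactly one entry in $\binom{[n+1]}{p}_{(k_i)}$. The only genuinely additional content is your closing remark identifying when the discarded term $|A \wedge u|^2$ vanishes, which correctly explains why equality holds for $i \leq 2$ (recovering \eqref{eq:wedge_norm}) but is only an inequality in general.
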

\begin{proof}
  Note that $ \binom{[n + 1]}{p}_{(k_i)} $ is not an empty set, which follows from our assumption $ i \leq q = p(n - p + 1) $; see \eqref{eq:dim_Grassmann}.
  By definition of the set $ \binom{[\infty]}{p}_{(k_i)} $, the sum of the components of any element $ (i_0, i_1, \ldots, i_{p - 1}) \in \binom{[\infty]}{p}_{(k_i)}$ equals $ k_i $. Hence, for every $ \sigma \in \binom{[\infty]}{p}_{(k_i)}$ and $ \tau \in \binom{[n + 1]}{p}_{(k_i)}$, we have $ \sigma \prec \tau $ if and only if $ \sigma \neq \tau $. Therefore, by Lemma~\ref{lem:diff_formula} and Lemma~\ref{lem:coord_zero}, the equality
  \begin{align*}
    (\pdif{i})_{\sigma} = \sum_{\tau \in \binom{[\infty]}{p}_{(k_i)}}f_{\lambda(\tau)}(\pdif{i}_{\;\tau})_{\sigma} = f_{\lambda(\sigma)}(\pdif{i}_{\;\sigma})_{\sigma}
  \end{align*}
  holds for all $ \sigma \in \binom{[n + 1]}{p}_{(k_i)} $, and we have $ (\pdif{i})_{\tau} = 0 $ for all $ \tau \in \bigcup_{l > i}\binom{[n + 1]}{p}_{(k_l)} $.
  Using this formula, we obtain the following:
  \begin{equation*}
    \begin{split}
      & |\mathbf{X}^{(p)} \wedge \dif{1} \wedge \cdots \wedge \dif{i}|^2 = \sum_{(\sigma_0, \sigma_1, \ldots, \sigma_i)}|(\mathbf{X}^{(p)} \wedge \dif{1} \wedge \cdots \wedge \dif{i})_{(\sigma_0, \sigma_1, \ldots, \sigma_i)}|^2 \\
      & \geq \sum_{\substack{(\sigma_0, \sigma_1, \ldots, \sigma_i)                                                                                                                                                                        \\ \sigma_i \in \binom{[n + 1]}{p}_{(k_i)} \\ \sigma_s \notin \binom{[n + 1]}{p}_{(k_i)} \, (s \neq i)}}|(\mathbf{X}^{(p)} \wedge \dif{1} \wedge \cdots \wedge \dif{i})_{(\sigma_0, \sigma_1, \ldots, \sigma_i)}|^2 \\
      & = |\mathbf{X}^{(p)} \wedge \dif{1} \wedge \cdots \wedge \dif{i - 1}|^2\sum_{\sigma \in \binom{[n + 1]}{p}_{(k_i)}}f_{\lambda(\sigma)}^2|(\pdif{i}_{\;\sigma})_{\sigma}|^2.
    \end{split}
  \end{equation*}
  Here, the first sum runs over all  lexicographically ordered sequences $ (\sigma_0, \sigma_1, \ldots, \sigma_i)$, where $ \sigma_s = (i_0^{(s)}, i_1^{(s)}, \ldots, i_{p - 1}^{(s)}) \in \bigcup_{0 \leq l \leq i}\binom{[n + 1]}{p}_{(k_l)} $ for $ 0 \leq s \leq i $.
  The second sum runs over all lexicographically such sequences $ (\sigma_0, \sigma_1, \ldots, \sigma_i) $ with $ \sigma_i \in \binom{[n + 1]}{p}_{(k_i)} $ and $ \sigma_0, \sigma_1, \ldots, \sigma_{i - 1} \in \bigcup_{0 \leq l \leq i - 1}\binom{[n + 1]}{p}_{(k_l)} $.
\end{proof}
Now, we provide a generalization of the formula \eqref{eq:S^2}.
\begin{prop}\label{prop:S_ineq}
  \begin{equation*}
    \prod_{k = 1}^i S^k\{\mathbf{X}^{(p)}\} \geq \sum_{\sigma \in \binom{[n + 1]}{p}_{(k_i)}} f_{\lambda(\sigma)}^2 \prod_{k = 1}^n(S^k)^{n_{\lambda(\sigma)}(k)}.
  \end{equation*}
\end{prop}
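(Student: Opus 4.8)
The plan is to first telescope the left-hand side into a single ratio of Gram-type norms, then invoke Lemma~\ref{lem:norm_compare}, and finally carry out a pointwise computation in the Frenet frame \eqref{eq:unitary_coord} to identify each resulting summand with $\prod_{k=1}^n (S^k)^{n_{\lambda(\sigma)}(k)}$. Writing $L_j := |\mathbf{X}^{(p)} \wedge \dif{1} \wedge \cdots \wedge \dif{j - 1}|^2$ for the squared norm of the $j$-th associated curve of $\mathbf{X}^{(p)}$, so that $L_0 = 1$, $L_1 = |\mathbf{X}^{(p)}|^2$, and $S^k\{\mathbf{X}^{(p)}\} = 2 L_{k - 1} L_{k + 1}/L_k^2$, the product over $1 \le k \le i$ collapses by cancellation to
\[
  \prod_{k = 1}^i S^k\{\mathbf{X}^{(p)}\} = 2^i \frac{L_{i + 1}}{L_1 L_i} = \frac{2^i}{|\mathbf{X}^{(p)}|^2} \cdot \frac{|\mathbf{X}^{(p)} \wedge \dif{1} \wedge \cdots \wedge \dif{i}|^2}{|\mathbf{X}^{(p)} \wedge \dif{1} \wedge \cdots \wedge \dif{i - 1}|^2}.
\]
Lemma~\ref{lem:norm_compare} bounds the final ratio below by $\sum_{\sigma \in \binom{[n + 1]}{p}_{(k_i)}} f_{\lambda(\sigma)}^2 |(\pdif{i}_{\;\sigma})_{\sigma}|^2$, so the proposition reduces to the pointwise identity
\[
  \frac{2^i |(\pdif{i}_{\;\sigma})_{\sigma}|^2}{|\mathbf{X}^{(p)}|^2} = \prod_{k = 1}^n (S^k)^{n_{\lambda(\sigma)}(k)}, \qquad \sigma = (i_0, i_1, \ldots, i_{p - 1}).
\]

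To prove this identity I would fix $z_0$, work in the Frenet frame \eqref{eq:unitary_coord}, and set $a_k := |x_k^{(k)}|^2$; since all quantities involved are Hermitian norms they are unitarily invariant, so evaluating them in the frame adapted to each point is legitimate. Because $x_c^{(l)} = 0$ for $c > l$, both Gram determinants are triangular: one gets $|\mathbf{X}^{(p)}|^2 = \prod_{j = 0}^{p - 1} a_j$, and by the same triangularity argument used in Lemma~\ref{lem:coord_zero} one gets $(\pdif{i}_{\;\sigma})_{\sigma} = \prod_{a = 0}^{p - 1} x_{i_a}^{(i_a)}$, hence $|(\pdif{i}_{\;\sigma})_{\sigma}|^2 = \prod_{a = 0}^{p - 1} a_{i_a}$. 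The same frame yields $S^k = 2 a_k/a_{k - 1}$, exactly as in the computation of $S^{p - 1}$ and $S^{p + 1}$ in the proof of Theorem~\ref{thm:pec_rel}. After matching the powers of $2$ via $\sum_{k = 1}^n n_{\lambda(\sigma)}(k) = |\lambda(\sigma)| = i$ (each box of the Russian-convention diagram is crossed by exactly one vertical line $x = k$), the identity becomes the purely combinatorial equation
\[
  \prod_{a = 0}^{p - 1} \frac{a_{i_a}}{a_a} = \prod_{k = 1}^n \left(\frac{a_k}{a_{k - 1}}\right)^{n_{\lambda(\sigma)}(k)}.
\]

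This last equation is where the Young-diagram bookkeeping is the real content, and I would settle it by reusing the combinatorics already developed for Proposition~\ref{prop:ord_ineq}. Telescoping each factor as $a_{i_a}/a_a = \prod_{k = a + 1}^{i_a}(a_k/a_{k - 1})$ shows that the exponent of $a_k/a_{k - 1}$ on the left is the number of indices $a$ with $a < k \le i_a$, which is precisely the ball-count $b_{\sigma}(k)$ appearing in the proof of Proposition~\ref{prop:ord_ineq}; that proof already established $b_{\sigma}(k) = n_{\lambda(\sigma)}(k)$, so the two sides agree term by term. Combining the three steps then gives the stated inequality.

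I expect the main obstacle to be organizational rather than a single hard estimate: the one genuine inequality is handed to us by Lemma~\ref{lem:norm_compare}, so the care lies in carrying out the two telescopings (of $S^k\{\mathbf{X}^{(p)}\}$ through the $L_j$, and of the $a_k$ through the exponents $n_{\lambda(\sigma)}(k)$) with the correct boundary conventions $L_0 = 1$ and $n_{\lambda(\sigma)}(0) = n_{\lambda(\sigma)}(n + 1) = 0$, and in verifying that the Frenet-frame identities are genuinely pointwise so that they assemble into the global inequality.
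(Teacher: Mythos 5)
Your proposal is correct and follows essentially the same route as the paper: the single inequality comes from Lemma~\ref{lem:norm_compare}, and the identification of $2^i|(\pdif{i}_{\;\sigma})_{\sigma}|^2/|\mathbf{X}^{(p)}|^2$ with $\prod_k (S^k)^{n_{\lambda(\sigma)}(k)}$ is the same Frenet-frame telescoping via $b_{\sigma}(k) = n_{\lambda(\sigma)}(k)$ that the paper uses. The only difference is cosmetic: you collapse $\prod_{k=1}^i S^k\{\mathbf{X}^{(p)}\}$ to $2^iL_{i+1}/(L_1L_i)$ in one step, whereas the paper pushes the factors $\widetilde{S^k}\{\mathbf{X}^{(p)}\}$ into the denominator recursively.
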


\begin{proof}
  By Lemma~\ref{lem:norm_compare}, we have
  \begin{align*}
    \widetilde{S^i}\{\mathbf{X}^{(p)}\}
     & = \frac{|\mathbf{X}^{(p)} \wedge \dif{1} \wedge \cdots \wedge \dif{i - 2}|^2|\mathbf{X}^{(p)} \wedge \dif{1} \wedge \cdots \wedge \dif{i}|^2}{|\mathbf{X}^{(p)} \wedge \dif{1} \wedge \cdots \wedge \dif{i - 1}|^4}                                                                               \\
     & \geq \frac{|\mathbf{X}^{(p)} \wedge \dif{1} \wedge \cdots \wedge \dif{i - 2}|^2}{|\mathbf{X}^{(p)} \wedge \dif{1} \wedge \cdots \wedge \dif{i - 1}|^2}\sum_{\sigma \in \binom{[n + 1]}{p}_{(k_i)}}f_{\lambda(\sigma)}^2|(\pdif{i}_{\;\sigma})_{\sigma}|^2                                         \\
     & = \frac{|\mathbf{X}^{(p)} \wedge \dif{1} \wedge \cdots \wedge \dif{i - 3}|^2}{|\mathbf{X}^{(p)} \wedge \dif{1} \wedge \cdots \wedge \dif{i - 2}|^2 \widetilde{S^{i - 1}}\{\mathbf{X}^{(p)}\}}\sum_{\sigma \in \binom{[n + 1]}{p}_{(k_i)}}f_{\lambda(\sigma)}^2|(\pdif{i}_{\;\sigma})_{\sigma}|^2. \\
  \end{align*}
  Repeating a similar computation, we obtain the following inequality:
  \begin{equation*}
    \widetilde{S^i}\{\mathbf{X}^{(p)}\} \geq \sum_{\sigma \in \binom{[n + 1]}{p}_{(k_i)}}f_{\lambda(\sigma)}^2\frac{|(\pdif{i}_{\;\sigma})_{\sigma}|^2}{|\mathbf{X}^{(p)}|^2\prod_{k = 1}^{i - 1}\widetilde{S^k}\{\mathbf{X}^{(p)}\}}.
  \end{equation*}
  Using the coordinates given in \eqref{eq:unitary_coord}, we can compute the right-hand side of this inequality. First, for every $ 1 \leq k \leq n $, we have
  \begin{equation*}
    S^k = 2\frac{|\mathbf{X}^{(k - 1)}|^2|\mathbf{X}^{(k + 1)}|^2}{|\mathbf{X}^{(k)}|^4} = 2\frac{|x_0 x_1^{(1)}\cdots x_{k - 2}^{(k - 2)}|^2|x_0 x_1^{(1)} \cdots x_{k}^{(k)}|^2}{|x_0 x_1 \cdots x_{k - 1}^{(k - 1)}|^4} = 2\frac{|x_k^{(k)}|^2}{|x_{k - 1}^{(k - 1)}|^2}.
  \end{equation*}
  We observe that there is a decomposition similar to \eqref{eq:delta_decomp}:
  $$\frac{|x_{i_k}^{(i_k)}|^2}{|x_k^{(k)}|^2} = \frac{|x_{k + 1}^{(k + 1)}|^2}{|x_k^{(k)}|^2}\frac{|x_{k + 2}^{(k + 2)}|^2}{|x_{k + 1}^{(k + 1)}|^2} \cdots \frac{|x_{i_k}^{(i_k)}|^2}{|x_{i_{k - 1}}^{(i_{k - 1})}|^2} = \widetilde{S^{k + 1}}\widetilde{S^{k + 2}} \cdots \widetilde{S^{i_k}}.$$
  Thus, it follows that
  \begin{align*}
    \frac{|(\pdif{i}_{\;\sigma})_{\sigma}|^2}{|\mathbf{X}^{(p)}|^2} = \frac{|x_{i_0}^{(i_0)} x_{i_1}^{(i_1)} \cdots x_{i_{p - 1}}^{(i_{p - 1})}|^2}{|x_0 x_1^{(1)} \cdots x_{p - 1}^{(p - 1)}|^2} = \prod_{k = 0}^{p - 1}\widetilde{S^{k + 1}}\widetilde{S^{k + 2}} \cdots \widetilde{S^{i_k}}.
  \end{align*}
  Furthermore, we have
  \begin{equation*}
    \frac{1}{2^i}\prod_{k = 1}^n(S^k)^{n_{\lambda(\sigma)}(k)} = \prod_{k = 1}^n(\widetilde{S^k})^{n_{\lambda(\sigma)}(k)} = \prod_{k = 1}^n(\widetilde{S^k})^{b_{\sigma}(k)} = \prod_{k = 0}^{p - 1}\widetilde{S^{k + 1}} \widetilde{S^{k + 2}} \cdots \widetilde{S^{i_k}},
  \end{equation*}
  where $ b_{\sigma}(k) $ denotes the same quantity as in the proof of Proposition~\ref{prop:ord_ineq}. Combining these, we obtain the desired inequality.
\end{proof}

\begin{thm}[Generalized Weyl Peculiar Relation \textit{for} $ \Omega_k\{\mathbf{X}^{(p)}\} $]\label{thm:gen_pec_rel_omega}
  For every integer $ 1 \leq i \leq p(n - p + 1) $, the following inequality holds:
  \begin{equation*}
    \sum_{k = 1}^i \Omega_k\{\mathbf{X}^{(p)}\} \geq \max_{\sigma \in \binom{[n + 1]}{p}_{(k_i)}}\left(\sum_{k = 1}^n n_{\lambda(\sigma)}(k)\Omega_k\right).
  \end{equation*}
  This inequality is parallel to \textup{Proposition~\ref{prop:ord_ineq}}:
  $$\sum_{k = 1}^i v_i\{\mathbf{X}^{(p)}\} \leq \min_{\sigma \in V(i)}\phi_p(\lambda(\sigma)) \leq \min_{\sigma \in \binom{[n + 1]}{p}_{(k_i)}}\left(\sum_{k = 1}^n n_{\lambda(\sigma)}(k)v_k\right).$$
\end{thm}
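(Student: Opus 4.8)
The plan is to deduce the inequality from the pointwise estimate of Proposition~\ref{prop:S_ineq} by passing to logarithms and integrating over each circle $|z| = r$. First I would rewrite Proposition~\ref{prop:S_ineq} in terms of the normalized quantities $\widetilde{S^k} = S^k/2$. Every $\sigma \in \binom{[n+1]}{p}_{(k_i)}$ satisfies $|\lambda(\sigma)| = i$, and the number of boxes of $\lambda(\sigma)$ equals $\sum_{k=1}^n n_{\lambda(\sigma)}(k)$; this is precisely the identity $\frac{1}{2^i}\prod_k (S^k)^{n_{\lambda(\sigma)}(k)} = \prod_k (\widetilde{S^k})^{n_{\lambda(\sigma)}(k)}$ already recorded in the proof of Proposition~\ref{prop:S_ineq}. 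Consequently the common factor $2^i$ cancels from both sides, and the inequality becomes
\[
  \prod_{k=1}^i \widetilde{S^k}\{\mathbf{X}^{(p)}\} \geq \sum_{\sigma \in \binom{[n+1]}{p}_{(k_i)}} f_{\lambda(\sigma)}^2 \prod_{k=1}^n (\widetilde{S^k})^{n_{\lambda(\sigma)}(k)}.
\]

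Next I would discard all but the largest summand. Since $\widetilde{S^k} \geq 0$ and $f_{\lambda(\sigma)}^2 \geq 1$ for every $\sigma$, a sum of nonnegative terms dominates its maximal summand, so
\[
  \prod_{k=1}^i \widetilde{S^k}\{\mathbf{X}^{(p)}\} \geq \max_{\sigma \in \binom{[n+1]}{p}_{(k_i)}} \prod_{k=1}^n (\widetilde{S^k})^{n_{\lambda(\sigma)}(k)}.
\]
Taking logarithms, which is monotone and hence inequality-preserving, converts the products into sums and gives, pointwise in $z = re^{\sqrt{-1}\theta}$,
\[
  \sum_{k=1}^i \log\widetilde{S^k}\{\mathbf{X}^{(p)}\} \geq \max_{\sigma} \sum_{k=1}^n n_{\lambda(\sigma)}(k)\log\widetilde{S^k}.
\]

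Finally I would apply $\frac{1}{4\pi}\int_0^{2\pi}(\cdot)\,d\theta$ at fixed $r$. By the definition of $\Omega_k$ the left-hand side becomes $\sum_{k=1}^i \Omega_k\{\mathbf{X}^{(p)}\}$. For the right-hand side I would invoke the elementary interchange $\int_0^{2\pi}\max_\sigma g_\sigma\,d\theta \geq \max_\sigma \int_0^{2\pi} g_\sigma\,d\theta$, valid because the index set is finite and, for each fixed $\sigma_0$, the pointwise bound $\max_\sigma g_\sigma(\theta) \geq g_{\sigma_0}(\theta)$ holds for all $\theta$; pulling the constants $n_{\lambda(\sigma)}(k)$ through the integral then yields $\frac{1}{4\pi}\int_0^{2\pi}\max_\sigma(\cdots)\,d\theta \geq \max_\sigma \sum_{k=1}^n n_{\lambda(\sigma)}(k)\Omega_k$. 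Chaining the two inequalities produces the claim.

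The step requiring the most care is this last interchange of maximum and integration: the inequality runs in the favorable direction exactly because the maximum is taken inside the integral, and it is crucial that the quantities pulled through are the exponents $n_{\lambda(\sigma)}(k)$ rather than the values $\widetilde{S^k}$. A secondary technical point is that $\log\widetilde{S^k}$ may have logarithmic singularities at the discrete zeros of $\mathbf{X}^{(k)}$; these meet each circle in a set of measure zero and do not affect the integrals defining $\Omega_k$, so the pointwise inequality integrates without difficulty. I expect no further obstacles, since the combinatorial content---the identification of the exponents with $n_{\lambda(\sigma)}(k)$ and of the coefficients with $f_{\lambda(\sigma)}^2$---is already packaged in Proposition~\ref{prop:S_ineq}, and the parallel estimate for the stationary indices in Proposition~\ref{prop:ord_ineq} confirms that the combinatorics is the correct one.
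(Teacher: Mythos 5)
Your proposal is correct and follows essentially the same route as the paper's own proof: both start from Proposition~\ref{prop:S_ineq} (with the factor $2^i$ cancelling because $\sum_k n_{\lambda(\sigma)}(k) = |\lambda(\sigma)| = i$), drop to a single summand using $f_{\lambda(\sigma)}^2 \geq 1$, take logarithms, and integrate over the circle before taking the maximum over $\sigma$. The only cosmetic difference is that the paper keeps the full sum inside the logarithm one step longer before discarding terms, which changes nothing.
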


\begin{proof}
  By Proposition~\ref{prop:S_ineq}, we have
  \begin{equation*}
    \begin{split}
      \sum_{k = 1}^i \Omega_k\{\mathbf{X}^{(p)}\} & = \sum_{k = 1}^i \frac{1}{4\pi}\int_0^{2\pi}\log \widetilde{S^k}\{\mathbf{X}^{(p)}\}d\theta                                                                                                   \\
      & = \frac{1}{4\pi}\int_0^{2\pi}\log \left(\prod_{k = 1}^i \widetilde{S^k}\{\mathbf{X}^{(p)}\}\right)d\theta                                                                                     \\
      & \geq \frac{1}{4\pi}\int_0^{2\pi}\log \left(\sum_{\sigma \in \binom{[n + 1]}{p}_{(k_i)}}f_{\lambda(\sigma)}^2 \prod_{k = 1}^n(\widetilde{S^k})^{n_{\lambda(\sigma)}(k)}\right)d\theta \\
      & \geq \max_{\sigma \in \binom{[n + 1]}{p}_{(k_i)}}\left(\sum_{k = 1}^n n_{\lambda(\sigma)}(k)\frac{1}{4\pi}\int_0^{2\pi}\log \widetilde{S^k}d\theta\right)                             \\
      & = \max_{\sigma \in \binom{[n + 1]}{p}_{(k_i)}}\left(\sum_{k = 1}^n n_{\lambda(\sigma)}(k)\Omega_k\right).
    \end{split}
  \end{equation*}
\end{proof}
The following is an analogue of Lemma~\ref{lem:d-seq}.
\begin{lem}\label{lem:V_ineq}
  For every $\sigma \in \binom{[n + 1]}{p}_{(k_i)}$, we have
  \begin{equation*}
    \sum_{k = 1}^n n_{\lambda(\sigma)}(k)V_k - \sum_{k = 1}^{i}V_k\{\mathbf{X}^{(p)}\} \geq 0.
  \end{equation*}
\end{lem}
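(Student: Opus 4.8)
The plan is to reduce the two integrated quantities to a single pointwise inequality at each point $z$. First I would unfold both terms using Definition~\ref{def:N-V-Tbar}. Applying that definition to $\mathbf{X}^{(p)}$ (legitimate, since $\mathbf{X}^{(p)}$ is assumed non-degenerate) gives
\[
  \sum_{k=1}^i V_k\{\mathbf{X}^{(p)}\}(r)=\int_{r_0}^r\sum_{|z|<t}\Bigl(\sum_{k=1}^i\bigl(v_k\{\mathbf{X}^{(p)}\}(z)-1\bigr)\Bigr)\frac{dt}{t},
\]
while expanding $V_k=N_{k-1}-2N_k+N_{k+1}$ and invoking Lemma~\ref{lem:d-seq} for $\mathbf{x}$ gives
\[
  \sum_{k=1}^n n_{\lambda(\sigma)}(k)\,V_k(r)=\int_{r_0}^r\sum_{|z|<t}\Bigl(\sum_{k=1}^n n_{\lambda(\sigma)}(k)\bigl(v_k(z)-1\bigr)\Bigr)\frac{dt}{t}.
\]
Since the outer operators $\int_{r_0}^r(\cdot)\tfrac{dt}{t}$ and $\sum_{|z|<t}$ are monotone, it suffices to prove, at each fixed $z$, the pointwise inequality $\sum_{k=1}^n n_{\lambda(\sigma)}(k)(v_k-1)\ge\sum_{k=1}^i(v_k\{\mathbf{X}^{(p)}\}-1)$, where $v_k=v_k(z)$ and $v_k\{\mathbf{X}^{(p)}\}=v_k\{\mathbf{X}^{(p)}\}(z)$ are the local stationary indices at $z$.

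Next I would dispose of the constant contributions. The key normalization is $\sum_{k=1}^n n_{\lambda(\sigma)}(k)=|\lambda(\sigma)|$: in the Russian convention each box of $\lambda(\sigma)$ is met by exactly one vertical integer line, namely the one through its centre, so the left-hand sum counts every box once; equivalently, this is the identity $b_\sigma(k)=n_{\lambda(\sigma)}(k)$ from the proof of Proposition~\ref{prop:ord_ineq} summed over $k$. Since $\sigma\in\binom{[n+1]}{p}_{(k_i)}$, formula~\eqref{eq:size_Young} gives $|\lambda(\sigma)|=i$, whence $\sum_{k=1}^n n_{\lambda(\sigma)}(k)=i=\sum_{k=1}^i 1$. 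The constant $-1$ contributions on the two sides therefore cancel, and the pointwise inequality collapses to
\[
  \sum_{k=1}^i v_k\{\mathbf{X}^{(p)}\}(z)\ \le\ \sum_{k=1}^n n_{\lambda(\sigma)}(k)\,v_k(z).
\]

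This last inequality is precisely the parallel statement recorded in Theorem~\ref{thm:gen_pec_rel_omega}, which rests on Proposition~\ref{prop:ord_ineq} together with the identity $d(\sigma)=v_0\{\mathbf{X}^{(p)}\}+\phi_p(\lambda(\sigma))$ established in its proof; applying it at each $z$ finishes the argument. The one point demanding care—and the main obstacle—is that Proposition~\ref{prop:ord_ineq} directly bounds $\sum_{k=1}^i v_k\{\mathbf{X}^{(p)}\}$ only by $\min_{\sigma\in V(i)}\phi_p(\lambda(\sigma))$, whereas I need the bound against $\phi_p(\lambda(\sigma))$ for an \emph{arbitrary} $\sigma$ at level $i$. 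I would secure this directly: choose a saturated chain of Young diagrams $\varnothing=\mu_0\subsetneq\mu_1\subsetneq\cdots\subsetneq\mu_i=\lambda(\sigma)$ with $|\mu_l|=l$, each corresponding to some $\tau^{(l)}\in\binom{[n+1]}{p}$ since $\mu_l\subseteq\lambda(\sigma)$ fits in the $p\times(n-p+1)$ rectangle. Because every stationary index satisfies $v_k\ge 1$ (the normal form has $\delta_0<\delta_1<\cdots$), the coordinates $(X^{(p)})_{\tau^{(l)}}$ have strictly increasing orders of vanishing $d_p+\phi_p(\mu_0)<\cdots<d_p+\phi_p(\mu_i)=d(\sigma)$; power series with distinct orders are leading-order independent, so the normal form of $\mathbf{X}^{(p)}$ has at least $i+1$ pivots of order $\le d(\sigma)$. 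Hence the $(i+1)$-st pivot order, namely $\sum_{k=0}^i v_k\{\mathbf{X}^{(p)}\}$, is at most $d(\sigma)=v_0\{\mathbf{X}^{(p)}\}+\phi_p(\lambda(\sigma))$, and subtracting $v_0\{\mathbf{X}^{(p)}\}$ gives the required pointwise inequality.
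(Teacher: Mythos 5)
Your proof is correct, and its skeleton coincides with the paper's: reduce to a pointwise inequality at each $z$, cancel the constant $-1$ contributions via $\sum_{k=1}^n n_{\lambda(\sigma)}(k) = |\lambda(\sigma)| = i$, and then establish $\sum_{k=1}^i v_k\{\mathbf{X}^{(p)}\}(z) \le \phi_p(\lambda(\sigma))(z)$. The only divergence is in how that last inequality is justified. The paper gets it straight from Proposition~\ref{prop:ord_ineq}: since $\binom{[n+1]}{p}_{(k_i)} \subseteq V(i)$ (the elements deleted in \eqref{eq:V(i)} realize minima of $d$ over $V(j)$ for $j<i$, and those minima are strictly smaller than every $d$-value at level $i$), one has $\sum_{k=1}^i v_k\{\mathbf{X}^{(p)}\} \le \min_{\tau\in V(i)}\phi_p(\lambda(\tau)) \le \phi_p(\lambda(\sigma))$ for your arbitrary $\sigma$ at level $i$ --- so the ``obstacle'' you flag dissolves in one line once this containment is noted. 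Your workaround --- building a saturated chain $\varnothing=\mu_0\subsetneq\cdots\subsetneq\mu_i=\lambda(\sigma)$, observing that the corresponding Pl\"ucker coordinates have strictly increasing vanishing orders bounded by $d(\sigma)$ (strictness coming from $v_k\ge 1$), and concluding that the $(i+1)$-st pivot order of $\mathbf{X}^{(p)}$ is at most $d(\sigma)$ --- is nevertheless a valid, self-contained re-derivation; it is essentially the mechanism behind \eqref{eq:stat_rank_p} and Proposition~\ref{prop:ord_ineq} specialized to a chain terminating at $\lambda(\sigma)$, and it spares the reader from unwinding the inductive definition of $V(i)$ at the cost of repeating the pivot argument.
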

\begin{proof}
  It suffices to show that for every $ z \in \Delta(r) $,
  \begin{equation*}
    \sum_{k = 1}^n n_{\lambda(\sigma)}(k)(v_k(z) - 1) - \sum_{k = 1}^{i}(v_k\{\mathbf{X}^{(p)}\}(z) - 1) \geq 0.
  \end{equation*}
  The left-hand side is bounded below by
  \begin{equation}\label{eq:min}
    \min_{\sigma \in \binom{[n + 1]}{p}_{(k_i)}}\left(\sum_{k  = 1}^n n_{\lambda(\sigma)}(k)v_k(z)\right) - \sum_{k = 1}^i v_k\{\mathbf{X}^{(p)}\}(z),
  \end{equation}
  where we use the fact that $ \sum_{k = 1}^n n_{\lambda(\sigma)}(k) = i $ $ (\sigma \in \binom{[n + 1]}{p}_{(k_i)})$.
  Recall the definitions of $ V(i) $ (see \eqref{eq:V(i)}) and $ \phi_p(\lambda) $ (Definition~\ref{def:phi}). By Proposition~\ref{prop:ord_ineq}, we have
  \begin{equation*}
    \sum_{k = 1}^{i}v_i\{\mathbf{X}^{(p)}\} \leq \min_{\sigma \in V(i)}\phi_p(\lambda(\sigma)).
  \end{equation*}
  Hence, \eqref{eq:min} is bounded below by
  \begin{align*}
    \min_{\sigma \in \binom{[n + 1]}{p}_{(k_i)}}\phi_p(\lambda(\sigma))(z) - \min_{\sigma \in V(i)}\phi_p(\lambda(\sigma))(z).
  \end{align*}
  Since $ V(i) \supseteq  \binom{[n + 1]}{p}_{(k_{i})} $, this quantity is greater than or equal to $ 0 $.
\end{proof}
The following result provides an explicit connection between $ i\overline{T}_p $ and $ \overline{T}_i\{\mathbf{X}^{(p)}\} $.
\begin{lem}\label{lem:iT_p-T_i{X^p}_rel}
  For every $ 1 \leq i \leq \binom{n + 1}{p} $, the following equality holds:
  \begin{equation*}
    \sum_{s = 1}^{i - 1}\sum_{k = 1}^s \Omega_k\{\mathbf{X}^{(p)}\} + i\overline{T}_p = \overline{T}_i\{\mathbf{X}^{(p)}\} + O(1).
  \end{equation*}
\end{lem}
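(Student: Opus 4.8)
The plan is to treat the lemma as a purely one–dimensional discrete integration problem for the sequence $\{\overline{T}_k\{\mathbf{X}^{(p)}\}\}_k$, whose second differences are controlled by the Pl\"ucker formula applied to the curve $\mathbf{X}^{(p)}$ itself. Writing $a_k \coloneqq \overline{T}_k\{\mathbf{X}^{(p)}\}$ and $b_k \coloneqq \Omega_k\{\mathbf{X}^{(p)}\}$, I would first invoke Theorem~\ref{thm:Plucker_formula} for the non-degenerate curve $\mathbf{X}^{(p)}$ (in place of $\mathbf{x}$), which is legitimate since the Ahlfors form of the formula holds for any non-degenerate holomorphic curve. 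This gives, for each $1 \leq k \leq \binom{n+1}{p} - 1$,
\[
  a_{k-1} - 2a_k + a_{k+1} = b_k - \Omega_k\{\mathbf{X}^{(p)}\}(r_0),
\]
where $\Omega_k\{\mathbf{X}^{(p)}\}(r_0)$ is a fixed constant, hence $O(1)$. Since $i \leq \binom{n+1}{p}$, all indices $k \leq i-1$ used below lie in the admissible range.

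Next I would record the two initial conditions $a_0 = \overline{T}_0\{\mathbf{X}^{(p)}\} = 0$ and $a_1 = \overline{T}_1\{\mathbf{X}^{(p)}\} = \overline{T}_p$. The first is immediate from Definition~\ref{def:N-V-Tbar}. The second combines $T_1\{\mathbf{X}^{(p)}\} = T_p$ from \eqref{eq:T_p-omega_p} with $N_1\{\mathbf{X}^{(p)}\} = N_p$, the latter holding because the common order of vanishing of the Pl\"ucker coordinates of $\mathbf{X}^{(p)}$ equals $d_p$, i.e.\ $v_0\{\mathbf{X}^{(p)}\} = d_p$. This identification of the initial value is the one genuinely delicate point, and I expect it to be the main obstacle: it is exactly what forces $\overline{T}_p$ (rather than the bare $T_p$) to appear on the left-hand side, since $N_p$ is not $O(1)$. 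One must therefore be careful that the representation of $\mathbf{X}^{(p)}$ is chosen so that $\overline{T}_1\{\mathbf{X}^{(p)}\} = \overline{T}_p$ holds on the nose, not merely up to a bounded error.

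The remaining computation is a double telescoping. Setting $D_k \coloneqq a_{k+1} - a_k$, the Pl\"ucker relation reads $D_k - D_{k-1} = b_k + O(1)$, so $D_k = D_0 + \sum_{\ell=1}^{k} b_\ell + O(1)$, with $D_0 = a_1 - a_0 = \overline{T}_p$. Summing once more yields
\[
  a_i = a_0 + \sum_{k=0}^{i-1} D_k = i\,\overline{T}_p + \sum_{k=1}^{i-1}\sum_{\ell=1}^{k} b_\ell + O(1),
\]
which, after relabelling the indices $(\ell,k)\mapsto(k,s)$ so that $\sum_{k=1}^{i-1}\sum_{\ell=1}^{k} b_\ell = \sum_{s=1}^{i-1}\sum_{k=1}^{s}\Omega_k\{\mathbf{X}^{(p)}\}$, is exactly the asserted identity. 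The last point to check is that the accumulated constants remain $O(1)$: this holds because the summation ranges are bounded by $\binom{n+1}{p}$, a quantity depending only on $n$ and $p$ and not on $r$, so only a bounded number of the fixed constants $\Omega_k\{\mathbf{X}^{(p)}\}(r_0)$ are ever added together. Apart from the initial-value bookkeeping, every step is elementary discrete calculus.
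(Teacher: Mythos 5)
Your proposal is correct and is essentially the paper's own argument: the paper likewise applies the Pl\"ucker formula (Theorem~\ref{thm:Plucker_formula}) to the curve $\mathbf{X}^{(p)}$ to identify $\Omega_k\{\mathbf{X}^{(p)}\}$ with the second difference $\overline{T}_{k-1}\{\mathbf{X}^{(p)}\} - 2\overline{T}_k\{\mathbf{X}^{(p)}\} + \overline{T}_{k+1}\{\mathbf{X}^{(p)}\}$ up to $O(1)$ and then telescopes the double sum using $\overline{T}_0\{\mathbf{X}^{(p)}\} = 0$ and $\overline{T}_1\{\mathbf{X}^{(p)}\} = \overline{T}_p$ (via \eqref{eq:T_p-omega_p} and $N_1\{\mathbf{X}^{(p)}\} = N_p$). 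Your rephrasing through first differences $D_k$ is the same computation, and your flagged ``delicate point'' about the initial value is exactly the identification $v_0\{\mathbf{X}^{(p)}\} = d_p$ that the paper sets up in Section~\ref{subsec:stat_index}.
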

\begin{proof}
  This statemant directly follows from Theorem~\ref{thm:Plucker_formula}. Indeed, we have
  \begin{equation*}
    \begin{split}
      \sum_{s = 1}^{i - 1}\sum_{k = 1}^s \Omega_k\{\mathbf{X}^{(p)}\} & = \sum_{s = 1}^{i - 1}\sum_{k = 1}^s (\overline{T}_{k - 1}\{\mathbf{X}^{(p)}\} - 2\overline{T}_k\{\mathbf{X}^{(p)}\} + \overline{T}_{k + 1}\{\mathbf{X}^{(p)}\}) + O(1) \\
      & = \sum_{s = 1}^{i - 1}(-\overline{T}_1\{\mathbf{X}^{(p)}\} - \overline{T}_s\{\mathbf{X}^{(p)}\} + \overline{T}_{s + 1}\{\mathbf{X}^{(p)}\}) + O(1)                      \\
      & = -i\overline{T}_p + \overline{T}_i\{\mathbf{X}^{(p)}\} + O(1),
    \end{split}
  \end{equation*}
  where we use the formula \eqref{eq:T_p-omega_p} (and \eqref{eq:iN_p-N_i{X^p}_rel}).
\end{proof}
We are now ready to prove the generalized Weyl peculiar relation.
\begin{thm}[Theorem A, Generalized Weyl Peculiar Relation \textit{for} $ T_i\{\mathbf{X}^{(p)}\} $]\label{thm:gen_pec_rel}
  Let $ n $ and $ p \ (\leq n) $ be positive integers, and let $ i $ be an integer satisfying $ 1 \leq i \leq p(n - p + 1) $. Assume that $ \mathbf{X}^{(p)} $ is non-degenerate as a holomorphic curve. Then the following inequality holds:
  \begin{equation}\label{eq:gen_pec_ineq}
    \sum_{s = 1}^{i - 1} \max_{\sigma \in \binom{[n + 1]}{p}_{(k_s)}}\left(\sum_{k = 1}^n n_{\lambda(\sigma)}(k)(T_{k - 1} - 2T_k + T_{k + 1})\right)
    + i T_p
    \leq T_i\{\mathbf{X}^{(p)}\} + O(1).
  \end{equation}
\end{thm}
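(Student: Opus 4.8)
The plan is to chain together the three results that have just been set up---Lemma~\ref{lem:iT_p-T_i{X^p}_rel}, Theorem~\ref{thm:gen_pec_rel_omega}, and the Pl\"ucker formula (Theorem~\ref{thm:Plucker_formula})---and to let the ramification terms absorb one another by means of Lemma~\ref{lem:V_ineq}. First I would invoke Lemma~\ref{lem:iT_p-T_i{X^p}_rel} to write $\overline{T}_i\{\mathbf{X}^{(p)}\} = \sum_{s=1}^{i-1}\sum_{k=1}^{s}\Omega_k\{\mathbf{X}^{(p)}\} + i\overline{T}_p + O(1)$, and then bound each inner sum $\sum_{k=1}^{s}\Omega_k\{\mathbf{X}^{(p)}\}$ from below by $\max_{\sigma \in \binom{[n+1]}{p}_{(k_s)}}\bigl(\sum_{k=1}^{n} n_{\lambda(\sigma)}(k)\Omega_k\bigr)$ using Theorem~\ref{thm:gen_pec_rel_omega}. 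To make the second differences of $T$ appear, I would substitute the first form of the Pl\"ucker formula, $\Omega_k = (T_{k-1}-2T_k+T_{k+1}) + V_k + O(1)$, noting that the constants $\Omega_k(r_0)$ contribute only $O(1)$ since the multiplicities $n_{\lambda(\sigma)}(k)$ are bounded for fixed $n,p,i$.

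The delicate step, and the one I expect to be the main obstacle, is interchanging the maximum over $\sigma$ with the splitting $\Omega_k = (T_{k-1}-2T_k+T_{k+1}) + V_k$, because the maximum does not distribute over a sum. The device is to fix, for each $s$, the diagram $\sigma_s^{*} \in \binom{[n+1]}{p}_{(k_s)}$ that maximizes the $T$-difference part $\sum_{k} n_{\lambda(\sigma)}(k)(T_{k-1}-2T_k+T_{k+1})$, and to evaluate the $\Omega$-maximum at this particular $\sigma_s^{*}$ rather than at the true maximizer of the $\Omega$-sum. Since any single $\sigma$ yields a lower bound for the maximum, this is legitimate, and it cleanly separates the estimate into the desired $\max_\sigma \sum_k n_{\lambda(\sigma)}(k)(T_{k-1}-2T_k+T_{k+1})$ plus the residual $\sum_k n_{\lambda(\sigma_s^{*})}(k)V_k$. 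Applying Lemma~\ref{lem:V_ineq} to $\sigma_s^{*}$ bounds this residual below by $\sum_{k=1}^{s} V_k\{\mathbf{X}^{(p)}\} \geq 0$.

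It then remains to reconcile the counting terms. After summing over $s$ and rewriting $\overline{T}_i\{\mathbf{X}^{(p)}\} = T_i\{\mathbf{X}^{(p)}\} + N_i\{\mathbf{X}^{(p)}\}$ and $\overline{T}_p = T_p + N_p$, the accumulated lower bound $\sum_{s=1}^{i-1}\sum_{k=1}^{s} V_k\{\mathbf{X}^{(p)}\}$ together with the terms $iN_p$ and $-N_i\{\mathbf{X}^{(p)}\}$ must be shown to be bounded below by $O(1)$; this is exactly the content of \eqref{eq:iN_p-N_i{X^p}_rel}, so these $N$- and $V\{\mathbf{X}^{(p)}\}$-contributions collapse up to $O(1)$. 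Equivalently---and perhaps more transparently---one can bypass $\overline{T}$ altogether by running the same telescoping directly on the associated-curve Pl\"ucker formula, which produces $-\sum_{s=1}^{i-1}\sum_{k=1}^{s}V_k\{\mathbf{X}^{(p)}\}$ as an explicit term that cancels against the residuals of the previous paragraph. Either way, what survives is precisely \eqref{eq:gen_pec_ineq}. All inequalities point the same direction throughout, since Lemma~\ref{lem:iT_p-T_i{X^p}_rel} is an equality and every subsequent estimate only decreases the right-hand side while we track a lower bound for $T_i\{\mathbf{X}^{(p)}\}$.
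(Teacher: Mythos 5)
Your proposal is correct and follows essentially the same route as the paper's own proof: it chains Lemma~\ref{lem:iT_p-T_i{X^p}_rel}, Theorem~\ref{thm:gen_pec_rel_omega}, and the Pl\"ucker formula, handles the non-distributivity of the maximum by evaluating at the maximizer $\sigma_s^{*}$ of the $T$-difference part and invoking Lemma~\ref{lem:V_ineq} there, and cancels the remaining $N$- and $V\{\mathbf{X}^{(p)}\}$-terms via \eqref{eq:iN_p-N_i{X^p}_rel}. Your explicit choice of $\sigma_s^{*}$ in fact spells out the step the paper states only as ``for some $\sigma_s$''; otherwise the arguments coincide.
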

\begin{proof}
  Lemma~\ref{lem:iT_p-T_i{X^p}_rel} and Theorem~\ref{thm:gen_pec_rel_omega} imply
  \begin{equation*}
    \sum_{s = 1}^{i - 1}\max_{\sigma \in \binom{[n + 1]}{p}_{(k_s)}}\left(\sum_{k = 1}^n n_{\lambda(\sigma)}(k)\Omega_k\right) + i\overline{T}_p \leq \overline{T}_i\{\mathbf{X}^{(p)}\} + O(1).
  \end{equation*}
  The Pl\"{u}cker formula (Theorem~\ref{thm:Plucker_formula}) implies
  \begin{equation}\label{eq:combined_ineq}
    \begin{split}
      \sum_{s = 1}^{i - 1}\max_{\sigma \in \binom{[n + 1]}{p}_{(k_s)}}\left(\sum_{k = 1}^n n_{\lambda(\sigma)}(k)(T_{k - 1} -2T_k + T_{k + 1} + V_k)\right) + iT_p & + iN_p - N_i\{\mathbf{X}^{(p)}\}  \\
      & \leq T_i\{\mathbf{X}^{(p)}\} + O(1).
    \end{split}
  \end{equation}
  An argument similar to that in Lemma~\ref{lem:iT_p-T_i{X^p}_rel} shows that
  \begin{equation}\label{eq:iN_p-N_i{X^p}_rel}
    \sum_{s = 1}^{i - 1}\sum_{k = 1}^s V_k\{\mathbf{X}^{(p)}\} + iN_p = N_i\{\mathbf{X}^{(p)}\}.
  \end{equation}
  Hence, for some $ \sigma_s \in \binom{[n + 1]}{p}_{(k_s)} $ $ (s = 1, 2, \ldots, i - 1)$, the left-hand side of \eqref{eq:combined_ineq} is bounded below by
  \begin{equation*}
    \begin{split}
      \sum_{s = 1}^{i - 1} & \sum_{k = 1}^n n_{\lambda(\sigma_s)}(k)(T_{k - 1} -2T_k + T_{k + 1})                                                                               \\
      & + iT_p + \sum_{s = 1}^{i - 1}\left(\sum_{k = 1}^n n_{\lambda(\sigma_s)}(k)V_k\right) - \sum_{s = 1}^{i - 1}\sum_{k = 1}^s V_k\{\mathbf{X}^{(p)}\}.
    \end{split}
  \end{equation*}
  Moreover, Lemma~\ref{lem:V_ineq} implies that
  \begin{equation*}
    \sum_{s = 1}^{i - 1}\left(\sum_{k = 1}^n n_{\lambda(\sigma_s)}(k)V_k\right) - \sum_{s = 1}^{i - 1}\sum_{k = 1}^s V_k\{\mathbf{X}^{(p)}\} \geq 0.
  \end{equation*}
  Therefore, we obtain
  \begin{equation*}
    \sum_{s = 1}^{i - 1}\max_{\sigma \in \binom{[n + 1]}{p}_{(k_s)}}\left(\sum_{k = 1}^n n_{\lambda(\sigma)}(k)(T_{k - 1} -2T_k + T_{k + 1})\right) + iT_p \leq T_i\{\mathbf{X}^{(p)}\} + O(1).
  \end{equation*}
\end{proof}

\subsection{Remarks on the Structure of Our Results}\label{subsec:geom_str}
\ \par
\noindent
\underline{\textbf{1.}} Recall the original relations \eqref{eq:T_p-omega_p} and Theorem~\ref{thm:pec_rel}. From the proof of the generalized Weyl peculiar relation, we see that $ T_p $, $ T_{p - 1} $, and $ T_{p + 1} $ correspond to Young diagrams of size $ 1 $ or $ 2 $ (see Figure~\ref{fig:Young_diag_size_1_2}).
\begin{figure}[htbp]
  \centering
  \begin{equation*}
    \Yvcentermath1 T_p \leftrightarrow  \yng(1) ,
    \;  T_{p - 1} \leftrightarrow \yng(1^2), \; T_{p + 1} \leftrightarrow \yng(2)
  \end{equation*}
  \caption{Young diagrams of size 1 and 2}\label{fig:Young_diag_size_1_2}
\end{figure}
A similar correspondence can also be observed for $ \Omega_p $, $ \Omega_{p - 1} $, and $ \Omega_{p + 1} $.

Moreover, as established earlier, we also obtained the following relations:
\begin{equation*}
  \Yvcentermath1
  v_1\{\mathbf{X}^{(p)}\} = \phi_p(\yng(1)), \quad v_1\{\mathbf{X}^{(p)}\} + v_2\{\mathbf{X}^{(p)}\} = \min\left(\phi_p\left(\yng(1^2)\right), \phi_p\left(\yng(2)\right)\right).
\end{equation*}

These phenomena reflect the geometric and combinatorial structure of $ \mathrm{Gr}(p, n + 1) $. Fix a basis $ \mathbf{e}_0, \mathbf{e}_1, \ldots, \mathbf{e}_{n} $ of $ \mathbb{C}^{n + 1} $. Let $ \lambda = (\lambda_0, \lambda_1, \ldots, \lambda_{p - 1}) \subseteq [(n - p + 1)^p] $ be a Young diagram. Then the \textbf{Schubert cell} $ \Omega_{\lambda} $ is defined by
\begin{equation*}
  \Omega_{\lambda} \coloneqq \{U \in \mathrm{Gr}(p, n + 1) \mid \dim(U \cap V_j) = i \Leftrightarrow n + 1 - p + i - \lambda_{i - 1} \leq j \leq n + 1 - p + i - \lambda_{i}\},
\end{equation*}
where $ V_j \coloneqq \mathrm{Span}_{\mathbb{C}}{(\mathbf{e}_0, \mathbf{e}_1, \ldots, \mathbf{e}_{j - 1})} $ $ (j = 1, 2, \ldots, n + 1)$. The closure $ X_{\lambda} \coloneqq \overline{\Omega_{\lambda}}$ is called a \textbf{Schubert variety}. It is known that $ X_{\lambda} $ can be expressed as follows:
\begin{equation*}
  X_{\lambda} = \{U \in \mathrm{Gr}(p, n + 1) \mid \dim(U \cap V_{n + 1 - p + i - \lambda_{i - 1}}) \geq i \, (1 \leq i \leq p)\}.
\end{equation*}
Since $ X_{\lambda} $ is a subvariety of $ \mathrm{Gr}(p, n + 1) $, it defines an integral (co)homology class
\begin{equation*}
  \sigma_{\lambda} \coloneqq [X_{\lambda}] \in H_{2(p(n - p + 1) - |\lambda|)}(\mathrm{Gr}(p,  n + 1), \mathbb{Z}) \simeq H^{2|\lambda|}(\mathrm{Gr}(p, n + 1), \mathbb{Z}),
\end{equation*}
via \emph{Poincar\'{e} duality}.
It is independent of the choice of basis (or flag) of $ \mathbb{C}^{n + 1} $, and is called a \textbf{Schubert class} (also called a \emph{Schubert cycle}). Moreover, we have
\begin{equation*}
  H^{*}(\mathrm{Gr}(p, n + 1), \mathbb{Z}) = \bigoplus_{\lambda \subseteq [(n - p + 1)^p]}\mathbb{Z} \cdot \sigma_{\lambda}.
\end{equation*}
The cup product structure on $ H^{*}(\mathrm{Gr}(p, n + 1), \mathbb{Z}) $ is well understood. For our purposes, we need the following special case.
\begin{thm}[\textbf{Pieri's formula}; \textit{see for instance}, \cite{Griffiths-Harris}, p.~203; \cite{Smirnov}, p.~194, Theorem~2.27]\label{thm:Pieri_formula}
  Let $ \lambda \subseteq [(n - p + 1)^p]$ be a Young diagram, and $ k \leq n - p + 1 $ be a non-negative integer. Then we obtain
  \begin{equation*}
    \sigma_{\lambda} \cdot \sigma_{[k]} = \sum_{\substack{\tau \subseteq [(n - p + 1)^p], \\ \tau \in \lambda \otimes [k]}}\sigma_{\tau},
  \end{equation*}
  where $ \lambda \otimes [k] $ denotes the set of Young diagams obtained from $ \lambda $ by adding $ k $ boxes, no two of which are in the same column.
\end{thm}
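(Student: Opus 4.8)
The plan is to prove Pieri's formula geometrically, by reducing the cup product to a collection of zero-dimensional intersection numbers and then computing each such number to be $0$ or $1$. Since $\{\sigma_\mu : \mu \subseteq [(n-p+1)^p],\ |\mu| = |\lambda| + k\}$ is a $\mathbb{Z}$-basis of $H^{2(|\lambda|+k)}(\mathrm{Gr}(p,n+1),\mathbb{Z})$, I would first write $\sigma_\lambda \cdot \sigma_{[k]} = \sum_{|\mu| = |\lambda|+k} c_\mu\, \sigma_\mu$ with integer coefficients $c_\mu$, and recover each $c_\mu$ by pairing against the dual basis. Here the key input is the duality pairing for Schubert classes: for $|\alpha| + |\beta| = \dim \mathrm{Gr}(p,n+1) = p(n-p+1)$ one has $\int_{\mathrm{Gr}(p,n+1)} \sigma_\alpha \cdot \sigma_\beta = \delta_{\beta,\widehat{\alpha}}$, where $\widehat{\alpha}$ is the complementary diagram in the $p \times (n-p+1)$ rectangle (the same complement appearing in the remark after Lemma~\ref{lem:diff_formula}). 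This identifies $\{\sigma_{\widehat{\mu}}\}$ as the dual basis and gives the triple-intersection expression
\[
  c_\mu = \int_{\mathrm{Gr}(p,n+1)} \sigma_\lambda \cdot \sigma_{[k]} \cdot \sigma_{\widehat{\mu}},
\]
reducing everything to an honest count of $p$-planes satisfying three families of incidence conditions.

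Next I would compute this number geometrically. By Kleiman's transversality theorem (we are in characteristic $0$), generic translates $X_\lambda(V_\bullet)$, $X_{[k]}(W_\bullet)$, $X_{\widehat{\mu}}(U_\bullet)$ of the three Schubert varieties meet transversally in the expected dimension, which is $0$; hence $c_\mu$ equals the cardinality of the resulting finite reduced set of $p$-planes. The special class $\sigma_{[k]}$ contributes a single condition, namely that $\Sigma$ meet a general linear subspace of dimension $n-p+2-k$, while $X_\lambda(V_\bullet)$ and $X_{\widehat{\mu}}(U_\bullet)$ impose the usual sequences of flag-intersection inequalities. Writing these out in adapted coordinates, I expect the count to vanish unless $\lambda \subseteq \mu$, and to equal $1$ precisely when the skew shape $\mu/\lambda$ is a horizontal strip of size $k$ (no two boxes in a common column), in which case one constructs the unique $\Sigma$ explicitly from the three flags and checks that the three tangent-space conditions span, confirming transversality and multiplicity one. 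This yields $c_\mu = 1$ exactly for $\mu \in \lambda \otimes [k]$ (with $\mu$ still inside the rectangle) and $c_\mu = 0$ otherwise, which is the assertion.

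The hard part will be the combinatorial bookkeeping in the second step: translating the simultaneous dimension-jump conditions imposed by the two general Schubert varieties, together with the single subspace-meeting condition from $\sigma_{[k]}$, into the clean statement that $\mu/\lambda$ is a horizontal strip, and verifying that the generic such $\Sigma$ is unique and the intersection reduced. Transversality itself I would dispose of abstractly via Kleiman, so the genuine content is the local coordinate analysis of the incidence conditions. As a cleaner alternative that sidesteps this bookkeeping, I would mention the symmetric-function route: once one establishes the ring isomorphism $H^\ast(\mathrm{Gr}(p,n+1),\mathbb{Z}) \cong \mathbb{Z}[x_1,\dots]/(\text{relations})$ sending $\sigma_\lambda \mapsto s_\lambda$ and $\sigma_{[k]} \mapsto h_k$, the identity $\sigma_\lambda \cdot \sigma_{[k]} = \sum_\tau \sigma_\tau$ becomes the classical Pieri rule $s_\lambda\, h_k = \sum_\tau s_\tau$ for Schur polynomials, whose proof via the Jacobi--Trudi determinant or the combinatorics of semistandard tableaux is purely algebraic; the remaining subtlety there is only to track which $\tau$ survive the truncation to diagrams fitting in the $p \times (n-p+1)$ rectangle.
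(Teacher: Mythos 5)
The paper offers no proof of Theorem~\ref{thm:Pieri_formula}: it is quoted as a classical result with pointers to \cite{Griffiths-Harris}, p.~203 and \cite{Smirnov}, Theorem~2.27, so there is no in-paper argument to compare against. Your first route is precisely the Griffiths--Harris argument those citations refer to, and as a strategy it is sound: the duality $\int_{\mathrm{Gr}(p,n+1)}\sigma_{\alpha}\cdot\sigma_{\beta} = \delta_{\beta,\widehat{\alpha}}$ for $|\alpha|+|\beta| = p(n-p+1)$ correctly reduces the problem to the triple intersection number $c_{\mu} = \int \sigma_{\lambda}\cdot\sigma_{[k]}\cdot\sigma_{\widehat{\mu}}$, the expected dimension is indeed $0$ when $|\mu| = |\lambda|+k$, Kleiman disposes of transversality, and your identification of the $\sigma_{[k]}$ condition as ``$\Sigma$ meets a general $(n-p+2-k)$-dimensional subspace'' matches the paper's convention for $X_{[k]}$.

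The gap is that the decisive step is only announced, not carried out: the assertion that the transverse count is $1$ exactly when $\mu/\lambda$ is a horizontal strip of size $k$ and $0$ otherwise \emph{is} Pieri's formula, and your ``I expect the count to vanish unless \dots'' and ``one constructs the unique $\Sigma$ explicitly'' defer the entire content. To close it you need the standard linear-algebra argument: with $A_{i}$ and $B_{i}$ the flag subspaces cutting out $X_{\lambda}$ and $X_{\widehat{\mu}}$, any $\Sigma$ in the intersection forces $\dim(A_{i}\cap B_{p+1-i})\geq 1$ for each $i$, which translates into the interlacing $\mu_{i-1}\geq\lambda_{i-1}\geq\mu_{i}$ (the horizontal-strip condition); when it holds, $\Sigma$ is pinned down as the span of the lines $A_{i}\cap B_{p+1-i}$ intersected with the auxiliary subspace, giving exactly one reduced point. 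One further caution on your ``cleaner alternative'': the ring isomorphism $H^{*}(\mathrm{Gr}(p,n+1),\mathbb{Z})\cong\Lambda/I$ sending $\sigma_{\lambda}\mapsto s_{\lambda}$ is itself usually \emph{established} by first proving Pieri (or Giambelli) geometrically, so invoking it here risks circularity unless you set it up independently, e.g.\ via Chern classes of the tautological quotient bundle and the splitting principle.
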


We now take the Frenet frame $ \mathbf{e}_0 = \mathbf{e}_0(z), \mathbf{e}_1 = \mathbf{e}_1(z), \ldots, \mathbf{e}_{n} = \mathbf{e}_n(z) $ corresponding to the holomorphic curve $ \mathbf{x} $. For each $ z_0 \in \mathbb{C} $, the expression \eqref{eq:unitary_coord} holds.
Hence, for each $ \sigma \in \binom{[n + 1]}{p}_{(k_i)} $, the subspace $ \dif{i}_{\;\sigma} = \dif{i}_{(i_0, i_1, \ldots, i_{p - 1})} \in \mathrm{Gr}(p, n + 1)$ can be represented as
\setcounter{MaxMatrixCols}{15}
\begin{equation*}
  \begin{pmatrix}
    x_{0}^{(i_0)}       & \cdots                      & x_{i_0}^{(i_0)}       & 0                         & \cdots & 0                     & 0                         & \cdots & 0      & 0      & \cdots & 0      \\
    x_{0}^{(i_1)}       & \cdots                      & x_{i_0}^{(i_1)}       & x_{i_0 + 1}^{(i_1)}       & \cdots & x_{i_1}^{(i_1)}       & 0                         & \cdots & 0      & 0      & \cdots & 0      \\
    \vdots              & \ddots                      & \vdots                & \vdots                    & \vdots & \vdots                & \vdots                    & \ddots & \vdots & \vdots & \ddots & \vdots \\
    x_{0}^{(i_{p - 1})} & \cdots                      & x_{i_0}^{(i_{p - 1})} & x_{i_0 + 1}^{(i_{p - 1})} & \cdots & x_{i_1}^{(i_{p - 1})} & x_{i_1 + 1}^{(i_{p - 1})} &
    \cdots              & x_{i_{p - 1}}^{(i_{p - 1})} & 0                     & \cdots                    & 0
  \end{pmatrix}
\end{equation*}
at $ z_0 $. That is, the subspace $ \dif{i}_{\;\sigma} $ is spanned by the row vectors of this matrix. This implies that
\begin{equation*}
  \dif{i}_{\;\sigma}(z_0) \in \Omega_{\widehat{\lambda(\sigma)}}(z_0),
\end{equation*}
where $ \Omega_{\widehat{\lambda(\sigma)}}(z_0) $ is the Schubert cell corresponding to $ \widehat{\lambda(\sigma)} $ in the basis $ \mathbf{e}_0(z_0), \mathbf{e}_1(z_0), \ldots, \mathbf{e}_n(z_0) $. Hence, Lemma~\ref{lem:diff_formula} gives a natural decomposition of $ \dif{i} $ into decomposable vectors $ \dif{i}_{\,\sigma} $, each of which lies in the Schubert cell $ \Omega_{\widehat{\lambda(\sigma)}}(z) $ corresponding to the Young diagram $ \widehat{\lambda(\sigma)} $ of size $ p(n - p + 1) - i $.
Moreover, each $ \dif{i}_{\;\sigma} $ lies in the subspace $ \mathrm{Span}_{\mathbb{C}}(\mathbf{e}_{\tau}, \mathbf{e}_{\sigma} \mid \tau \in \bigcup_{j < i}\binom{[n + 1]}{p}_{(k_j)}) $ by Lemma~\ref{lem:coord_zero}. Here $ \mathbf{e}_{\tau} $ is defined as $ \mathbf{e}_{j_0} \wedge \mathbf{e}_{j_1} \wedge \cdots \wedge \mathbf{e}_{j_{p - 1}} $ when $ \tau = (j_0, j_1, \ldots, j_{p - 1})$. These geometric structures seem to form the background of the Weyl peculiar relation.

Next, we investigate the case of algebraic curves. Let $ M $ be a compact Riemann surface and let $ \mathbf{x} : M \to \mathbb{P}^n $ be a non-degenrate algebraic curve.
A geometric interpretation of the original Weyl peculiar relation for algebraic curves
\begin{equation*}
  \nu_2\{\mathbf{X}^{(p)}\} = \nu_{p - 1} + \nu_{p + 1} \quad (\nu_k = \deg(\mathbf{X}^{(k)}), \nu_2\{\mathbf{X}^{(p)}\} \coloneqq \deg (\mathbf{X}^{(p)} \wedge \dif{1}))
\end{equation*}
can be found in \cite{Griffiths-Harris}, pp.~272--273.  We now examine this interpretation in light of their exposition.  We define the \emph{tangential ruled surface} \textup(also called the \emph{tangent developable}\textup) $ T(\mathbf{X}^{(p)}(M)) $ of $ \mathbf{X}^{(p)} $ by
\begin{equation*}
  T(\mathbf{X}^{(p)}(M)) \coloneqq \bigcup_{z \in M}T_z(\mathbf{X}^{(p)}(M)) \subseteq \mathrm{Gr}(p, n + 1) \subseteq \mathbb{P}^{\binom{n + 1}{p} - 1},
\end{equation*}
where the tangent line $ T_z(\mathbf{X}^{(p)}(M)) $ at $ z $ is given by
\begin{equation*}
  T_z(\mathbf{X}^{(p)}(M)) = \{(\mathbf{x} \wedge \mathbf{x}^{(1)} \wedge \cdots \wedge \mathbf{x}^{(p - 2)} \wedge (c_0 \mathbf{x}^{(p - 1)} + c_1 \mathbf{x}^{(p)}))(z)\}_{(c_0 \, : \, c_1) \in \mathbb{P}^1}.
\end{equation*}
By the definition of the degree of a map $ \deg_{\mathrm{cov}} $, we have
\begin{equation*}
  \begin{split}
    \nu_2\{\mathbf{X}^{(p)}\} & = \deg(\mathbf{X}^{(p)} \wedge \dif{1}) = ((\mathbf{X}^{(p)} \wedge \dif{1})_*[M] \cdot \sigma_1')_{\mathrm{Gr}(2, \binom{n + 1}{p})} \\
    & = \deg_{\mathrm{cov}}(\mathbf{X}^{(p)})\#(T(\mathbf{X}^{(p)}(M)) \cap \Gamma_{\binom{n + 1}{p} - 3})_{\mathbb{P}^{\binom{n + 1}{p} - 1}} \\
    & = \deg_{\mathrm{cov}}(\mathbf{X}^{(p)})\deg T(\mathbf{X}^{(p)}(M)),
  \end{split}
\end{equation*}
where $ \Gamma_{\binom{n + 1}{p} - 3} $ denotes a generic $ (\binom{n + 1}{p} - 3) $-plane in $ \mathbb{P}^{\binom{n + 1}{p} - 1} $, and $ \sigma_1' $ is the Schubert cycle on $ \mathrm{Gr}(2, \binom{n + 1}{p}) $ determined by a hyperplane section. Hence, we obtain
\begin{equation*}
  \deg_{\mathrm{cov}}(\mathbf{X}^{(p)})\deg T(\mathbf{X}^{(p)}(M)) = \nu_{p - 1} + \nu_{p + 1}.
\end{equation*}
We observe that this equality can be derived using Schubert calculus. Let $ \Gamma_j \coloneqq \mathbb{P}(V_{j + 1}) $. Then the subvarieties $ \sigma_{1, 1}(\Gamma_{n - p + 1}) $ and $ \sigma_{2}(\Gamma_{n - p - 1}) $ in $ \mathrm{Gr}(p, n + 1) $ are given by
\begin{align*}
  \sigma_{1, 1}(\Gamma_{n - p + 1})
                                 & = \{\Lambda \in \mathrm{Gr}(p, n + 1) \mid \dim(\mathbb{P}(\Lambda) \cap \Gamma_{n - p + 1}) \geq 1\}     \\
                                 & = \{\Lambda \in \mathrm{Gr}(p, n + 1) \mid \dim(\Lambda \cap V_{n - p + 2}) \geq 2 \},                    \\
  \sigma_{2}(\Gamma_{n - p - 1}) & = \{\Lambda \in \mathrm{Gr}(p, n + 1) \mid \mathbb{P}(\Lambda) \cap \Gamma_{n - p - 1} \neq \varnothing\} \\
                                 & = \{\Lambda \in \mathrm{Gr}(p, n + 1) \mid \dim(\Lambda \cap V_{n - p}) \geq 1\}.
\end{align*}
Pieri's formula (Theorem \ref{thm:Pieri_formula}) implies
\begin{equation*}
  \sigma_1^{2} = \sigma_{1, 1} + \sigma_2 = [\sigma_{1, 1}(\Gamma_{n - p + 1})] + [\sigma_2(\Gamma_{n - p - 1})].
\end{equation*}
We establish the following equivalences:
\begin{align*}
  T_z(\mathbf{X}^{(p)}(M)) \cap \sigma_{1, 1}(\Gamma_{n - p + 1}) \neq \varnothing & \Leftrightarrow \dim(\mathbf{X}^{(p - 1)}(z) \cap V_{n - p + 2}) \geq 1, \\
  T_z(\mathbf{X}^{(p)}(M)) \cap \sigma_{2}(\Gamma_{n - p - 1}) \neq \varnothing    & \Leftrightarrow \dim(\mathbf{X}^{(p + 1)}(z) \cap V_{n - p}) \geq 1.
\end{align*}
To prove the first equivalence, suppose there exists a nonzero vector $ 0 \neq \Lambda_1 \in \mathbf{X}^{(p - 1)}(z) \cap V_{n - p + 2} $. Then $ \Lambda_1 $ can be written as
\begin{equation*}
  \Lambda_1 = (a_0 \mathbf{x} + a_1 \mathbf{x}^{(1)} + \cdots + a_{p - 2} \mathbf{x}^{(p - 2)})(z) \in V_{n - p + 2} \quad (a_k \in \mathbb{C}, \, k = 0, 1, \ldots, p - 2).
\end{equation*}
Fix  $ (c_0 : c_1) \in \mathbb{P}^1 $. Then there exists $ b_k \in \mathbb{C} $ $ (k = 0, 1, \ldots, p - 2) $ such that
\begin{equation*}
  \Lambda_2 \coloneqq (b_0 \mathbf{x} + b_1 \mathbf{x}^{(1)} + \cdots + b_{p - 2} \mathbf{x}^{(p - 2)} + c_0 \mathbf{x}^{(p - 1)} + c_1 \mathbf{x}^{(p)})(z) \in V_{n - p + 2}.
\end{equation*}
Thus, we have
\begin{equation*}
  \mathrm{Span}_{\mathbb{C}}(\Lambda_1, \Lambda_2) \subseteq \mathrm{Span}_{\mathbb{C}}(\mathbf{X}^{(p - 1)}(z), (c_0 \mathbf{x}^{(p - 1)} + c_1 \mathbf{x}^{(p)})(z)) \subseteq V_{n - p + 2},
\end{equation*}
and hence $ \mathbf{X}^{(p - 1)}(z) \wedge (c_0\mathbf{x}^{(p - 1)} + c_1\mathbf{x}^{(p)})(z) \in T_z(\mathbf{X}^{(p)}(M)) \cap \sigma_{1, 1}(\Gamma_{n - p + 1})$. The converse implication can also be proved by reversing the above argument.
The proof of the latter statement is straightforward.
Therefore, we obtain the desired equality as follows:
\begin{align*}
   & \deg_{\mathrm{cov}}(\mathbf{X}^{(p)})\deg T(\mathbf{X}^{(p)})                                                                                                                                                               \\
   & = \deg_{\mathrm{cov}}(\mathbf{X}^{(p)})([T(\mathbf{X}^{(p)})(M)] \cdot \sigma_1^2)_{\mathrm{Gr}(p, n + 1)}                                                                                                                  \\
   & = \deg_{\mathrm{cov}}(\mathbf{X}^{(p)})([T(\mathbf{X}^{(p)})(M)] \cdot ([\sigma_{1, 1}(\Gamma_{n - p + 1})] + [\sigma_2(\Gamma_{n - p - 1})]))_{\mathrm{Gr}(p, n + 1)}                                                      \\
   & = \deg_{\mathrm{cov}}(\mathbf{X}^{(p - 1)})\#(\mathbf{X}^{(p - 1)}(M) \cap \Gamma_{n - p + 1})_{\mathbb{P}^n} + \deg_{\mathrm{cov}}(\mathbf{X}^{(p + 1)})\#(\mathbf{X}^{(p + 1)}(M) \cap \Gamma_{n - p - 1})_{\mathbb{P}^n} \\
   & = \nu_{p - 1} + \nu_{p + 1},
\end{align*}
where $ \Gamma_{n - p + 1} $ and $ \Gamma_{n - p - 1} $ are generic subspaces of $ \mathbb{P}^n $, of dimensions $ n - p + 1 $ and $ n - p - 1 $, respectively.

For $ i > 2 $, in place of $ T^{(2)}(\mathbf{X}^{(p)}(M)) \coloneqq T(\mathbf{X}^{(p)}(M)) $, it would be natural to consider the \emph{$ i $-th osculating planes} $ T^{(i)}(\mathbf{X}^{(p)}(M)) \subseteq \mathbb{P}^{\binom{n + 1}{p} - 1} $.
However, since $ T^{(i)}(\mathbf{X}^{(p)}(M)) \nsubseteq \mathrm{Gr}(p, n + 1) $, we cannot count points lying outside $ \mathrm{Gr}(p, n + 1) $, and so the above argument (Schubert calculus) does not seem to be directly applicable.
\begin{prob}
  Is a geometric interpretation similar to the one above possible for the generalized Weyl peculiar relation? How might this be related to the fact that it is given as an inequality rather than an equality?
\end{prob}

\noindent
\underline{\textbf{2.}} From \eqref{eq:fund_ineq_1}, \eqref{eq:fund_ineq_2} and \eqref{eq:T_p-omega_p}, for all $ \epsilon > 0$, we have
\begin{align}
  T_i\{\mathbf{X}^{(p)}\}                                & < (i + \epsilon)T_1\{\mathbf{X}^{(p)}\} = (i + \epsilon)T_p \, //, \label{eq:fund_ineq_3}                                                          \\
  \left(1 + \frac{1 - i}{\binom{n + 1}{p} - 1}\right)T_p & = \left(1 + \frac{1 - i}{\binom{n + 1}{p} - 1}\right)T_1\{\mathbf{X}^{(p)}\} < (1 + \epsilon)T_i\{\mathbf{X}^{(p)}\} \, //. \label{eq:fund_ineq_4}
\end{align}
For example, if $ i = 2 $, \eqref{eq:fund_ineq_3} becomes
\begin{equation*}
  T_{p - 1} + T_{p + 1} = T_2\{\mathbf{X}^{(p)}\} < (2 + \epsilon)T_p \, //.
\end{equation*}
This inequality is exactly Corollary~4.21 in \cite{Cowen-Griffiths}, p.~120.

Now, we compare \eqref{eq:fund_ineq_4} with \eqref{eq:gen_pec_ineq}. The left-hand side of \eqref{eq:fund_ineq_4} is clearly positive and less than $ T_p $ ($ i > 1 $).
In contrast, it is not even evident that the left-hand side of Theorem~\ref{thm:gen_pec_rel} is positive, so we need to analyze it more carefully. If we drop the term $ iT_p $ from the left-hand side of \eqref{eq:gen_pec_ineq}, our inequality becomes meaningless by \eqref{eq:concave_rel} in Remark~\ref{rem:T-seq}.
Therefore, in view of \eqref{eq:fund_ineq_3} and Theorem~\ref{thm:gen_pec_rel} (and Lemma~\ref{lem:iT_p-T_i{X^p}_rel}), the comparison between $ iT_p $ and $ T_i\{\mathbf{X}^{(p)}\} $ appears to offer the proper perspective from which to interpret this theorem. For example, the original Weyl peculiar relation can be viewed as taking the form
\begin{equation*}
  (T_{p - 1} - 2T_p + T_{p + 1}) + 2T_p = T_2\{\mathbf{X}^{(p)}\}.
\end{equation*}

In fact, we can show that the left-hand side of \eqref{eq:gen_pec_ineq}, which may seem difficult to estimate at first glance, is actually positive (Theorem~\ref{thm:pos_ineq}; see Section~\ref{sec:weighted_balanced_sum_formula}). To verify this, we use an alternative expression for the left-hand side of \eqref{eq:gen_pec_ineq}:
\begin{equation*}
  \sum_{s = 0}^{i - 1}\max_{\sigma \in \binom{[n + 1]}{p}_{(k_s)}}\left(\sum_{k = 1}^n n_{\lambda(\sigma)}(k)(T_{k - 1} - 2T_k + T_{k + 1}) + T_p\right).
\end{equation*}
Under this viewpoint, the main objects of our study are the sums consisting of several second-order differences $ T_{k - 1} -2T_k + T_{k + 1} $ together with a single term $ T_p $. While these quantities can take both positive and negative values individually, we will prove that their average is zero.

\section{The balanced sum formula}\label{sec:balanced_sum_formula}

In this section, we study the sequence $ \{T_k\}_{k = 0}^{n + 1} $ from a combinatorial perspective.

\subsection{The Balanced Sum Formula}
\ \par
The following theorem indicates that the total quantity maintains a balance between the ``negative'' terms $ T_{k - 1} -2T_k + T_{k + 1} $ and the positive term $ T_p $.
\begin{thm}[\textbf{Balanced Sum Formula}]\label{thm:balanced_sum_formula}
  \begin{equation}\label{eq:balanced_sum_formula}
    \sum_{s = 0}^{p(n - p + 1)}\sum_{\sigma \in \binom{[n + 1]}{p}_{(k_s)}}\left(\sum_{k = 1}^n n_{\lambda(\sigma)}(k)(T_{k - 1} - 2T_k + T_{k + 1}) + T_p\right) = 0.
  \end{equation}
\end{thm}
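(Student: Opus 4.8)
The plan is to observe first that the outer double sum ranges over all of $\binom{[n+1]}{p}$, since the sets $\binom{[n+1]}{p}_{(k_s)}$ partition $\binom{[n+1]}{p}$ as $s$ runs from $0$ to $p(n-p+1)$. Writing $C(k) \coloneqq \sum_{\sigma \in \binom{[n+1]}{p}} n_{\lambda(\sigma)}(k)$ and abbreviating the second-order difference by $\Delta^2 T_k \coloneqq T_{k-1} - 2T_k + T_{k+1}$, the formula \eqref{eq:balanced_sum_formula} becomes the purely algebraic identity
\[
  \sum_{k=1}^n C(k)\,\Delta^2 T_k + \binom{n+1}{p}T_p = 0,
\]
which in fact holds for any sequence $\{T_k\}_{k=0}^{n+1}$ with $T_0 = T_{n+1} = 0$; the analytic meaning of the $T_k$ plays no role.

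Next I would apply discrete summation by parts (self-adjointness of the second-difference operator): with $F_k \coloneqq C(k)T_{k+1} - C(k+1)T_k$ one checks $C(k)\Delta^2 T_k - T_k\,\Delta^2 C(k) = F_k - F_{k-1}$, so that $\sum_{k=1}^n C(k)\,\Delta^2 T_k = \sum_{k=1}^n T_k\,\Delta^2 C(k) + (F_n - F_0)$. The boundary terms vanish because $T_0 = T_{n+1} = 0$ together with $C(0) = C(n+1) = 0$ (a diagram inside the $p \times (n-p+1)$ box has no cell of content $-p$ or $n-p+1$). The theorem thus reduces to the single combinatorial lemma
\[
  \Delta^2 C(k) = -\binom{n+1}{p}\,\delta_{k,p} \qquad (1 \le k \le n).
\]

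To compute $C(k)$ I would use that $n_{\lambda(\sigma)}(k)$ equals the number of cells of $\lambda(\sigma)$ of content $k-p$: under the placement of Definition~\ref{def:phi} a cell in (English) position $(r,c)$ has its center at abscissa $p+(c-r)$, so the line $x=k$ meets in its interior exactly the cells of content $k-p$. For $k \ge p$ the correspondence $\sigma = (i_0,\dots,i_{p-1}) \leftrightarrow \lambda(\sigma)$ gives $n_{\lambda(\sigma)}(k) = \#\{j : i_j \ge k\} = |\sigma \cap \{k,\dots,n\}|$, by the same diagonal-length bookkeeping used in \eqref{eq:delta_decomp} and Proposition~\ref{prop:ord_ineq}; summing over $\sigma$ and counting the $\binom{n}{p-1}$ subsets containing each fixed $a \ge k$ yields $C(k) = (n+1-k)\binom{n}{p-1}$. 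For $k \le p$ I would invoke the transpose symmetry $\lambda \mapsto \lambda^{t}$, which interchanges $p$ and $w \coloneqq n-p+1$ and negates contents; it identifies $C(k)$ with the analogous quantity for the $w \times p$ box evaluated at $n+1-k \ge w$, reducing to the case already treated and giving $C(k) = k\binom{n}{p}$.

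These two linear expressions agree at $k=p$ (both equal $p\binom{n}{p} = (n-p+1)\binom{n}{p-1}$), so $\Delta^2 C(k) = 0$ for $1 \le k \le n$ with $k \ne p$, while at the kink $k=p$ the change of slope is $\Delta^2 C(p) = -\binom{n}{p-1} - \binom{n}{p} = -\binom{n+1}{p}$ by Pascal's rule. Feeding this into the reduced identity gives $\sum_{k=1}^n T_k\,\Delta^2 C(k) = -\binom{n+1}{p}T_p$, which is exactly \eqref{eq:balanced_sum_formula}. The main obstacle is the evaluation of $C(k)$: once the content interpretation of $n_{\lambda(\sigma)}(k)$ and the identity $n_{\lambda(\sigma)}(k) = |\sigma \cap \{k,\dots,n\}|$ for $k \ge p$ are established, the rest is telescoping together with Pascal's rule. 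I expect the delicate points to be the content/Maya bookkeeping and the verification that the two regimes match at $k=p$.
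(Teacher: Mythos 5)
Your proof is correct, but it is organized differently from the paper's. The paper first telescopes the inner sum for each fixed $\sigma$, using the decomposition of $\lambda(\sigma)$ into hooks to rewrite $\sum_{k}n_{\lambda(\sigma)}(k)(T_{k-1}-2T_k+T_{k+1})$ as $\sum_{j\in\sigma_{\leq p-1}^{c}}(T_j-T_{j+1})+\sum_{k\in\sigma_{\geq p}}(T_{k+1}-T_k)$, and only then sums over $\sigma$, obtaining the constants $\binom{n}{p}$ and $\binom{n}{p-1}$ by counting minimal lattice paths through a fixed edge via Vandermonde's convolution. You instead aggregate first, setting $C(k)=\sum_{\sigma}n_{\lambda(\sigma)}(k)$, evaluate $C(k)$ by the content interpretation of $n_{\lambda(\sigma)}(k)$ together with an elementary double count ($C(k)=(n+1-k)\binom{n}{p-1}$ for $k\geq p$, and $C(k)=k\binom{n}{p}$ for $k\leq p$ by transposition), and then perform a single global summation by parts so that the second difference lands on the piecewise-linear $C$ and localizes at its kink $k=p$, where Pascal's rule produces $-\binom{n+1}{p}$. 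Your reduced identity $\sum_k C(k)(a_{k-1}-2a_k+a_{k+1})=-\binom{n+1}{p}a_p$ is precisely the identity \eqref{eq:ak_id} that the paper records \emph{after} the theorem (and proves there by induction, reusing the path count from the main proof); in effect you prove \eqref{eq:ak_id} directly and deduce the theorem from it, reversing the paper's logical order. Your route is more elementary and self-contained --- it needs neither the hook decomposition nor Vandermonde, and it makes transparent that the mechanism is simply the self-adjointness of the discrete Laplacian applied to a function with one kink. What the paper's organization buys in exchange is the per-$\sigma$ telescoped expression indexed by the Maya diagram, which is reused verbatim in Corollary~\ref{cor:sec_diff_sum_id} and in the proof of the weighted balanced sum formula (Theorem~\ref{thm:weighted_balanced_sum_formula}), where the weights $f_{\lambda(\sigma)}f_{\widehat{\lambda(\sigma)}}$ break the simple aggregation into $C(k)$. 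The only points in your write-up that deserve explicit verification are the boundary values $C(0)=C(n+1)=0$ (which hold, since no cell of a diagram in the $p\times(n-p+1)$ box has content $-p$ or $n-p+1$) and the matching of the two linear formulas at $k=p$, both of which you address correctly.
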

\begin{proof}
  First, we compute $ \sum_{k = 1}^n n_{\lambda(\sigma)}(k)(T_{k - 1} - 2T_k + T_{k + 1}) $ for each $ \sigma \in \binom{[n + 1]}{p} $. Let $ \lambda(\sigma) $ be the Young diagram corresponding to $ \sigma $. We call the number of boxes on the main diagonal of $ \lambda(\sigma) $ the \emph{thickness} of $ \lambda(\sigma) $. In other words, this is the number of balls placed in boxes labeled greater than $ p $ in the Maya diagram $ \sigma $.
  Let $ t = t(\lambda(\sigma)) $ be the thickness of $ \lambda(\sigma) $. Then the Young diagram $ \lambda(\sigma) $ decomposes into $ t $ hooks. Denote them by $ \{h_j(\sigma)\}_{j = 1}^t $. Then we obtain
  \begin{equation*}
    \begin{split}
      \sum_{k = 1}^n n_{\lambda(\sigma)}(k)(T_{k - 1} - 2T_k + T_{k + 1}) & = \sum_{j = 1}^t \sum_{k = 1}^n n_{h_j(\sigma)}(k)(T_{k - 1} - 2T_k + T_{k + 1}) \\
      & = \sum_{j = 1}^t(T_{b_{j} - 1} - T_{b_j} - T_{a_j} + T_{a_{j} + 1}),
    \end{split}
  \end{equation*}
  where $ a_j $ (resp.~$ b_j $) is defined as the maximum (resp.~minimum) number $ c $ satisfying $n_{h_j(\sigma)}(c) \neq 0$. From the perspective of Maya diagrams, this quantity equals
  \begin{equation*}
    \sum_{j \in \sigma_{\leq p - 1}^c} (T_j - T_{j + 1}) + \sum_{k \in \sigma_{\geq p}} (T_{k + 1} - T_k),
  \end{equation*}
  where $ \sigma_{\leq p - 1}^{c} $ and $ \sigma_{\geq p} $ are defined by
  \begin{equation*}
    \sigma_{\leq {p - 1}}^c \coloneqq \{j \in \{0, 1, \ldots, p - 1\} \mid j \notin \sigma\}, \, \sigma_{\geq p} \coloneqq \{k \in \{p, p + 1, \ldots, n\} \mid k \in \sigma\}.
  \end{equation*}
  Here, $ k \in \sigma $ means that the Maya diagram $ \sigma $ has a ball in the box numbered $ k + 1 $. Next, we compute
  \begin{equation}\label{eq:T_sum}
    \sum_{s = 0}^{p(n - p + 1)}\sum_{\sigma \in \binom{[n + 1]}{p}_{(k_s)}}\sum_{j \in \sigma_{\leq p - 1}^c}T_j.
  \end{equation}
  We then place the Young diagram $ \lambda(\sigma) $ above the Maya diagram $ \sigma $ on $ \mathbb{R}^2 $, using the Russian convention, as illustrated in Figure~\ref{fig:Russian_conv} and Figure~\ref{fig:Young_diag_on_R^2}. We express \eqref{eq:T_sum} in the form $ \sum_{j = 0}^{p - 1} c_jT_j$.
  Let $ c_j(\sigma) $ (resp. $ c_j'(\sigma) $) denote the number of edges of squares in the Young diagram $ \lambda(\sigma) $ with a negative (resp. positive) slope that are located above the empty box numbered $ j + 1 $ in the Maya diagram $ \sigma $, where $j \in \sigma_{\leq p - 1}^c $ (resp. $ j \in \sigma_{\geq p} $). Then $ c_j $ is the total sum of $ c_j(\sigma) $ over all $ \sigma \in \binom{[n + 1]}{p}$ such that $ j \in \sigma_{\leq p - 1}^c $.
  Recall that a Young diagram fitting in the $ p \times (n - p + 1) $ rectangle can be identified with a minimal stepwise path from the bottom-left corner to the top-right corner of the $ p \times (n - p + 1) $ grid. Hence, to determine $ c_j $, it suffices to count the number of minimal paths that pass through one of the edges located above the empty box numbered $ j + 1 \, (j \in \sigma_{\leq p - 1}^c)$ in the diagram on $ \mathbb{R}^2 $.
  For simplicity, we consider only the case $ p \leq n - p + 1 $. For each $ 0 \leq j \leq p - 1 $, it can be readily verified that the number of such minimal paths is given by
  \begin{equation*}
    \sum_{s = 1}^j \binom{j - 1}{s - 1}\binom{n - j + 1}{s + p - j} = \sum_{s = 0}^{j - 1}\binom{j - 1}{s}\binom{n - j + 1}{n - p - s} = \binom{n}{n - p} = \binom{n}{p},
  \end{equation*}
  where we apply Vandermonde's convolution. As a result, we obtain $ c_j = \binom{n}{p} $ for all $ 0 \leq j \leq p - 1 $. Hence, \eqref{eq:T_sum} equals $ \binom{n}{p}\sum_{j = 1}^{p - 1} T_j $. In a similar way, if we define $ c_j' $ as the total sum of $ c_j'(\sigma) $, where $ \sigma $ runs over all $ \sigma \in \binom{[n + 1]}{p}$ such that $ j \in \sigma_{\geq p} $, then we can show that $ c_j' = \binom{n}{n + 1 - p} = \binom{n}{p - 1}$ for all $ p \leq j \leq n $.  Thus we have
  \begin{align}\label{eq:binom_coeff}
    \sum_{s = 0}^{p(n - p + 1)}\sum_{\sigma \in \binom{[n + 1]}{p}_{(k_s)}} & \left( \sum_{j \in \sigma_{\leq p - 1}^c}(T_j - T_{j + 1}) + \sum_{k \in \sigma_{\geq p}}(T_{k + 1} - T_k)\right) \notag \\
                                                                            & = -\binom{n}{p}T_p -\binom{n}{p - 1}T_p = -\binom{n + 1}{p}T_p.
  \end{align}
  Since
  \begin{equation*}
    \sum_{s = 0}^{p(n - p + 1)}\sum_{\sigma \in \binom{[n + 1]}{p}_{(k_s)}}T_p = \binom{n + 1}{p}T_p,
  \end{equation*}
  we obtain the desired equality.
\end{proof}
\begin{rem}
  In fact, we can show that $ c_j = \binom{n}{p} $ and $ c_j' = \binom{n}{p - 1} $ for all $ 0 \leq j \leq n $. This result also follows from the proof of \textup{Proposition~\ref{prop:Young_diag_count_formula}}.
\end{rem}
\subsection{Some Remarks on the Balanced Sum Formula}
\ \par
In this subsection, we further investigate the structure of Theorem~\ref{thm:balanced_sum_formula}.

\noindent
\underline{\textbf{1.}} From the proof of the balanced sum formula (Theorem~\ref{thm:balanced_sum_formula}), we derive the following formula.
\begin{cor}\label{cor:sec_diff_sum_id}
  \begin{align*}
    \sum_{s = 1}^{p(n - p + 1)}\sum_{\sigma \in \binom{[n + 1]}{p}_{(k_s)}}\left(\sum_{k = 1}^n n_{\lambda(\sigma)}(k) (\overline{T}_{k - 1} - 2\overline{T}_k + \overline{T}_{k + 1})\right) &                              \\
    = -\binom{n + 1}{p}\overline{T}_p                                                                                                                                                         & + \binom{n}{p - 1}N_{n + 1},
  \end{align*}
  where $ \overline{T}_i $ and $ N_{n + 1} $ are defined in \textup{Definition~\ref{def:N-V-Tbar}}.
\end{cor}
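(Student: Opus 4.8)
The plan is to extract from the proof of Theorem~\ref{thm:balanced_sum_formula} the purely combinatorial identity that underlies it and then apply it to the sequence $\{\overline{T}_k\}$ rather than $\{T_k\}$. The key observation is that the computation in that proof, which rewrote $\sum_{k=1}^n n_{\lambda(\sigma)}(k)(T_{k-1}-2T_k+T_{k+1})$ via the hook decomposition of $\lambda(\sigma)$ into the Maya-diagram form $\sum_{j \in \sigma_{\leq p-1}^c}(T_j - T_{j+1}) + \sum_{k \in \sigma_{\geq p}}(T_{k+1}-T_k)$, never used any analytic property of $T$. Hence, for an arbitrary sequence $\{a_k\}_{k=0}^{n+1}$ and any $\sigma \in \binom{[n+1]}{p}$, the same identity holds with $a$ in place of $T$.

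First I would sum this identity over all $\sigma \in \binom{[n+1]}{p}$; since the unique $\sigma$ with $s=0$ has empty Young diagram and thus $n_{\lambda(\sigma)} \equiv 0$, the range $1 \leq s \leq p(n-p+1)$ in the statement may be replaced by $0 \leq s \leq p(n-p+1)$ without changing the left-hand side. Collecting the coefficient $C_m$ of each $a_m$ amounts to counting, for fixed $m$, the number of $\sigma$ for which $m$ or $m-1$ lies in $\sigma_{\leq p-1}^c$ or $\sigma_{\geq p}$. For the interior indices $1 \leq m \leq n$ this reproduces exactly the cancellation carried out in Theorem~\ref{thm:balanced_sum_formula} (via Vandermonde's convolution, or the count of Proposition~\ref{prop:Young_diag_count_formula}), yielding $C_m = 0$ for $m \neq p$ and $C_p = -\binom{n}{p}-\binom{n}{p-1} = -\binom{n+1}{p}$. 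The genuinely new ingredient is the pair of boundary coefficients: $a_0$ picks up $+1$ from each $\sigma$ with $0 \notin \sigma$, of which there are $\binom{n}{p}$, while $a_{n+1}$ picks up $+1$ from each $\sigma$ containing $n$, of which there are $\binom{n}{p-1}$. This gives the clean identity
\begin{equation*}
  \sum_{\sigma \in \binom{[n+1]}{p}} \sum_{k=1}^n n_{\lambda(\sigma)}(k)(a_{k-1}-2a_k+a_{k+1}) = \binom{n}{p}a_0 - \binom{n+1}{p}a_p + \binom{n}{p-1}a_{n+1},
\end{equation*}
valid for every sequence $\{a_k\}_{k=0}^{n+1}$ (and recovering Theorem~\ref{thm:balanced_sum_formula} upon setting $a_k = T_k$, since $T_0 = T_{n+1} = 0$).

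Finally I would specialize to $a_k = \overline{T}_k$ and invoke Definition~\ref{def:N-V-Tbar}, which gives $\overline{T}_0 = 0$ and $\overline{T}_{n+1} = N_{n+1}$. The boundary term at $0$ then vanishes and the term at $n+1$ becomes $\binom{n}{p-1}N_{n+1}$, leaving exactly $-\binom{n+1}{p}\overline{T}_p + \binom{n}{p-1}N_{n+1}$, as claimed. Equivalently, one may split $\overline{T}_k = T_k + N_k$, apply Theorem~\ref{thm:balanced_sum_formula} directly to the $T$-part and apply the identity above to $V_k = N_{k-1}-2N_k+N_{k+1}$ for the $N$-part, using $N_0 = 0$.

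I expect the only real obstacle to be the honest verification of the boundary coefficient $C_{n+1} = \binom{n}{p-1}$. The proof of Theorem~\ref{thm:balanced_sum_formula} only needed the coefficients $c_j$ and $c_j'$ for $0 \leq j \leq p-1$ and $p \leq j \leq n$ respectively, and there the surviving endpoint contributions were silently annihilated by $T_0 = T_{n+1} = 0$. Here those endpoints no longer vanish, so one must confirm that the counting extends to $j = n$ (equivalently $m = n+1$) precisely as asserted in the remark following Theorem~\ref{thm:balanced_sum_formula}; this is where the count of Proposition~\ref{prop:Young_diag_count_formula} is the natural tool.
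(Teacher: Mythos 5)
Your proposal is correct and follows essentially the same route as the paper: the paper's own proof simply observes that the computation establishing the balanced sum formula is a purely combinatorial identity in the sequence, so rerunning it with $\overline{T}_k$ in place of $T_k$ and using $\overline{T}_0 = 0$, $\overline{T}_{n+1} = N_{n+1}$ produces the extra boundary term $\binom{n}{p-1}N_{n+1}$. Your explicit verification of the boundary coefficients (and the general identity for arbitrary sequences) is a more careful writing-out of exactly this step.
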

\begin{proof}
  Since $ \overline{T}_{0} = 0 $ and $ \overline{T}_{n + 1} = N_{n + 1} $, this relation immediately follows from the proof of \eqref{eq:balanced_sum_formula}.
\end{proof}

\noindent
\underline{\textbf{2.}}
Let $ \{a_k\}_{k = 0}^{n + 1} $ be a sequence with $ a_0 = a_{n + 1} = 0 $ and $ a_k \in \mathbb{R} $ $ (1 \leq k \leq n)$. Theorem~\ref{thm:balanced_sum_formula} is a reflection of the following identity:
\begin{align}\label{eq:ak_id}
  \sum_{k = 1}^{p}\binom{n}{p}k(a_{k - 1} - 2a_k + a_{k + 1}) + \sum_{k = p + 1}^n \binom{n}{p - 1}(n + 1 - k) (a_{k - 1} & - 2a_k + a_{k + 1})    \notag \\
                                                                                                                          & = -\binom{n + 1}{p}a_p.
\end{align}
Although this identity \eqref{eq:ak_id} can be readily verified by a direct computation, we instead derive it by explicitly computing the quantities that appear on the left-hand side of the equality \eqref{eq:balanced_sum_formula} in Theorem~\ref{thm:balanced_sum_formula}.
\begin{proof}[Proof of \eqref{eq:ak_id}]
  \ \par
  Fix $ k \in \{1, 2, \ldots, n\} $, and let $ \alpha_k \coloneqq \sum_{s = 0}^{p(n - p + 1)}\sum_{\sigma \in \binom{[n + 1]}{p}_{(k_s)}}n_{\lambda(\sigma)}(k) $. Then
  \begin{equation*}
    \sum_{s = 0}^{p(n - p + 1)}\sum_{\sigma \in \binom{[n + 1]}{p}_{(k_s)}}\left(\sum_{k = 1}^n n_{\lambda(\sigma)}(k)(a_{k - 1} - 2a_{k} + a_{k + 1})\right) = \sum_{k = 1}^n \alpha_k (a_{k - 1} - 2a_k + a_{k + 1}).
  \end{equation*}
  We claim that $ \alpha_k = \binom{n}{p}k $ for $ 1 \leq k \leq p $, and $ \alpha_k = \binom{n}{p - 1}(n + 1 - k) $ for $ p \leq k \leq n $. Here, we prove the former by induction on $ k $; the latter follows from a similar argument.
  As shown in the proof of Theorem~\ref{thm:balanced_sum_formula}, we have $ \alpha_1 = \binom{n}{p} $. Assume that the claim holds for some $ k $ $(\leq p - 1)$. Then, from the proof of Theorem~\ref{thm:balanced_sum_formula}, it follows that
  \begin{align*}
    \alpha_{k + 1}
     & = \sum_{s = 0}^{p(n - p + 1)}\left(\sum_{\sigma \in \binom{[n + 1]}{p}_{(k_s)}}n_{\lambda(\sigma)}(k) +\sum_{\sigma \in \binom{[n + 1]}{p}_{(k_s)}, \, k \in \sigma_{\leq p - 1}^c}c_k(\sigma) \right) \\
     & = \alpha_k + c_k = \binom{n}{p}k + \binom{n}{p} = \binom{n}{p}(k + 1),
  \end{align*}
  where $ c_k(\sigma) $ and $ c_k $ are defined in the proof of Theorem~\ref{thm:balanced_sum_formula}.
\end{proof}
\noindent
\underline{\textbf{3.}}
Theorem~\ref{thm:balanced_sum_formula} holds for any sequence $ \{a_k\}_{k = 0}^{n + 1} $ with $ a_0 = a_{n + 1} = 0 $ and $ a_k \in \mathbb{R} $ $ (1 \leq k \leq n) $, not necessarily restricted to $ \{T_k\}_{k = 0}^{n + 1} $.
Let $ M $ be a compact Riemann surface of genus $ g_M $, and let $ \mathbf{x} : M \to \mathbb{P}^n $ be a non-degenerate algebraic curve. The classical \textbf{Brill--Segre formula} (see, for instance, \cite{Griffiths-Harris}, pp.~270--271; \cite{Ru}, p.~18) states that
\begin{equation*}
  \sum_{k = 1}^{n}(n - k + 1)\sigma_{k} = n(n + 1)(g_M - 1) + (n + 1)\deg(\mathbf{x}).
\end{equation*}
The application of Theorem~\ref{thm:balanced_sum_formula} to the sequence $ \{\nu_k = \deg(\mathbf{X}^{(k)})\}_{k = 0}^{n + 1} $ yields a generalization of the Brill--Segre formula.
Indeed, when $ p = 1 $, \eqref{eq:balanced_sum_formula} for the sequence $ \{\nu_k\}_{k = 0}^{n + 1} $ becomes
\begin{equation}\label{eq:balanced_sum_formula_for_nu_k}
  \sum_{s = 0}^n \sum_{\sigma \in \binom{[n + 1]}{1}_{(k_s)}}\left(\sum_{k = 1}^n n_{\lambda(\sigma)}(k)(\nu_{k - 1} - 2\nu_k + \nu_{k + 1}) + \nu_1\right) = 0,
\end{equation}
where $ \binom{[n + 1]}{1}_{(k_s)} $ is the set $\{[k] \mid 0 \leq k \leq s\}$. Hence, we have $ n_{[k]}(l) = 1 $ $ (1 \leq l \leq k)$ and $ n_{[k]}(l) = 0 $ $ (k + 1 \leq l \leq n)$. Thus, the left-hand side of \eqref{eq:balanced_sum_formula_for_nu_k} equals
\begin{equation*}
  \begin{split}
    & \sum_{s = 1}^n \sum_{k = 1}^s (\nu_{k - 1} - 2\nu_k + \nu_{k + 1}) + (n + 1)\nu_1
    = \sum_{s = 1}^n \sum_{k = 1}^s(2g_M - 2 - \sigma_k)  + (n + 1)\deg(\mathbf{x})     \\
    & = n(n + 1)(g_M - 1) - \sum_{k = 1}^n (n - k + 1)\sigma_k + (n + 1)\deg(\mathbf{x}),
  \end{split}
\end{equation*}
where we apply the Pl\"ucker formula for algebraic curves (see \eqref{eq:Plucker_formula_alg_curve}).

We can also generalize the argument presented in Proposition on pp.~270--271 of \cite{Griffiths-Harris}.
\begin{cor}\label{cor:deg_X^p}
  If a non-degenerate algebraic curve $ \mathbf{x} : M \to \mathbb{P}^{n} $ is totally unramified \textup{(\textit{i.e. $ \sigma_i  = 0$ for all $ i $})}, then $ \mathbf{x} $ is the rational normal curve, and we have
  \begin{equation*}
    \deg(\mathbf{X}^{(p)}) = p(n - p + 1) = \dim \mathrm{Gr}(p, n + 1).
  \end{equation*}
\end{cor}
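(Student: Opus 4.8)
The plan is to feed the totally-unramified hypothesis into the balanced sum formula applied to the degree sequence $\{\nu_k\}_{k=0}^{n+1}$, where $\nu_k = \deg(\mathbf{X}^{(k)})$. First I would record the boundary behavior: $\mathbf{X}^{(0)} = 1$ and $\mathbf{X}^{(n+1)} = W$ both map to a point, so $\nu_0 = \nu_{n+1} = 0$, which is exactly the condition needed to invoke Theorem~\ref{thm:balanced_sum_formula} for $\{\nu_k\}$ (by the observation that \eqref{eq:balanced_sum_formula} holds for any sequence vanishing at the two endpoints). Next, since $\sigma_i = 0$ for all $i$, the Pl\"ucker formula for algebraic curves \eqref{eq:Plucker_formula_alg_curve} collapses to $\nu_{k-1} - 2\nu_k + \nu_{k+1} = 2g_M - 2$ for every $1 \leq k \leq n$; that is, all second-order differences equal the single constant $2g_M - 2$.

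The key simplification is that for $\sigma \in \binom{[n+1]}{p}_{(k_s)}$ one has $\sum_{k=1}^n n_{\lambda(\sigma)}(k) = |\lambda(\sigma)| = s$ by \eqref{eq:size_Young} (this is the same identity exploited in Lemma~\ref{lem:V_ineq}). Hence the inner expression in \eqref{eq:balanced_sum_formula} becomes $(2g_M - 2)s + \nu_p$, independent of $\sigma$, and summing over $\sigma$ merely introduces a factor $\#\binom{[n+1]}{p}_{(k_s)}$. The balanced sum formula therefore reads
\[
  (2g_M - 2)\sum_{s=0}^{p(n-p+1)} s\,\#\tbinom{[n+1]}{p}_{(k_s)} \;+\; \nu_p \sum_{s=0}^{p(n-p+1)}\#\tbinom{[n+1]}{p}_{(k_s)} \;=\; 0.
\]
Now Lemma~\ref{lem:Gauss_comp} evaluates the first sum as $\frac{p(n-p+1)}{2}\binom{n+1}{p}$, while the second is simply $\#\binom{[n+1]}{p} = \binom{n+1}{p}$. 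Dividing through by $\binom{n+1}{p}$ yields $(g_M - 1)p(n-p+1) + \nu_p = 0$, i.e.
\[
  \nu_p = (1 - g_M)\,p(n-p+1), \qquad 1 \leq p \leq n.
\]

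It then remains to pin down the genus and identify the curve. Taking $p = 1$ gives $\deg(\mathbf{x}) = \nu_1 = (1 - g_M)n$. Since $\mathbf{x}$ is non-degenerate in $\mathbb{P}^n$, its degree is at least $n$, which forces $1 - g_M \geq 1$, hence $g_M = 0$ and $\deg(\mathbf{x}) = n$. A non-degenerate curve of degree $n$ in $\mathbb{P}^n$ is the rational normal curve, and substituting $g_M = 0$ into the displayed formula gives $\deg(\mathbf{X}^{(p)}) = p(n-p+1)$, which equals $\dim \mathrm{Gr}(p, n+1)$ by \eqref{eq:dim_Grassmann}. The only genuinely non-formal input is the classical minimal-degree bound $\deg(\mathbf{x}) \geq n$ for a non-degenerate curve (equivalently, the positivity $\nu_p > 0$ of the degrees of the associated curves), and this is precisely the step that converts the algebraic identity into the geometric conclusion; everything else is a direct substitution into the balanced sum formula and Lemma~\ref{lem:Gauss_comp}.
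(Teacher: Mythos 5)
Your proposal is correct and follows essentially the same route as the paper: apply the balanced sum formula to $\{\nu_k\}_{k=0}^{n+1}$, use the Pl\"ucker formula for algebraic curves to reduce the second-order differences to the constant $2g_M - 2$, and evaluate the resulting sums via Lemma~\ref{lem:Gauss_comp} to get $\nu_p = (1 - g_M)\,p(n - p + 1)$. Your extra care in pinning down $g_M = 0$ (via the minimal-degree bound for non-degenerate curves) and in identifying the rational normal curve only makes explicit what the paper's terser proof leaves implicit.
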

\begin{proof}
  Since $ \mathbf{x} $ is totally unramified, we have
  \begin{equation*}
    (2g_M - 2)\sum_{s = 1}^{q}s \cdot \#\binom{[n + 1]}{p}_{(k_s)} + \binom{n + 1}{p}\nu_p = 0.
  \end{equation*}
  We thus have $ g_M = 0 $. Moreover, by Lemma~\ref{lem:Gauss_comp}, we obtain
  \begin{equation*}
    \nu_p = q = p(n - p + 1).
  \end{equation*}
\end{proof}
\begin{rem}
  \textup{Corollary~\ref{cor:deg_X^p}} can be found in \textup{R. Piene} \cite{Piene}, \textup{Section~3}, \textup{Example~1}, \textup{pp.~482--483}.
  In a similar manner, by applying the argument in \textup{\underline{\textbf{2.}}} and the Brill--Segre formula, one can compute $ \deg(\mathbf{X}^{(p)}) $ for a general curve $ \mathbf{x} : M \to \mathbb{P}^n $:
  \begin{equation*}
    \begin{split}
      \deg(\mathbf{X}^{(p)})
      & = \frac{n + 1 - p}{n + 1}\sum_{k = 1}^p k\sigma_k + \frac{p}{n + 1}\sum_{k = p + 1}^n (n + 1 - k)\sigma_k - p(n - p + 1)(g_M - 1) \\
      & = p(\deg(\mathbf{x}) + (p - 1)(g_M - 1)) - \sum_{k = 1}^{p - 1}(p - k)\sigma_k.
    \end{split}
  \end{equation*}
  This formula is stated in \cite{Piene}, \textup{Theorem~3.2}, \textup{p.~481}.
\end{rem}

\section{Combinatorial Analysis of Standard Young Tableaux}\label{sec:weighted_balanced_sum_formula}
\subsection{A Combinatorial Proposition on Standard Young Tableaux}
\ \par
To prove Theorem~\ref{thm:pos_ineq}, we study combinatorial properties of the standard Young tableaux in more detail.
\begin{defn}
  Let $ \tau $ be a Young diagram, and let $ \mathbb{Y}_{\tau} $ be the set of Young diagrams contained in $ \tau $.
  The poset $ \mathbb{Y}_{\tau} = (\mathbb{Y}_{\tau}, \subseteq) $ is called the \textbf{finite Young lattice} of $ \tau $.
  The two operations $ \lor $ \textup{(\textit{join})} and $ \land $ \textup{(\textit{meet})} are defined respectively as the union and intersection of two Young diagrams.
  For each $ \lambda \subseteq \tau $, we denote $ |\lambda| $ by $ \rk(\lambda) $. For two Young diagrams $ \lambda $, $ \lambda' $ with $ \lambda \subseteq \lambda' \subseteq \tau $, a map
  \begin{equation*}
    \lambda \to \lambda' : \{\rk(\lambda), \rk(\lambda) + 1, \ldots, \rk(\lambda')\} \to \mathbb{Y}_{\tau}, \quad s \mapsto (\lambda \to \lambda')(s)
  \end{equation*}
  is called a \textbf{saturated chain} \textup{(\textit{hereafter simply \textbf{chain}})} from $ \lambda $ to $ \lambda' $ if it satisfies $ (\lambda \to \lambda')(\rk(\lambda)) = \lambda $, $ (\lambda \to \lambda')(\rk(\lambda')) = \lambda' $, $ (\lambda \to \lambda')(s) \subseteq (\lambda \to \lambda')(s + 1) $ and $ |(\lambda \to \lambda')(s)| = s $ for all
  $ s \in \{\rk(\lambda), \rk(\lambda) + 1, \ldots, \rk(\lambda') - 1\} $.
\end{defn}
\begin{ex} Let $ p = 2 $ and $ n = 4 $. \textup{Figure~\ref{fig:Young_lattice}} is the finite Young lattice of $ \tau = [(n - p + 1)^p] = [3^2] = (3, 3) $.
  \begin{figure}[htbp]
    \begin{equation*}
      \Yvcentermath1
      \YRussian
      \xymatrix@=10pt{
      & & & {\gyoungs(0.65,;;,;;,;;)}\ar@{-}[dl]\\
      & & {\gyoungs(0.65,:;,;;,;;)}\ar@{-}[dl]\ar@{-}[d] & \\
      & {\gyoungs(0.65,;;,;;)}\ar@{-}[d] & {\gyoungs(0.65,:;,:;,;;)}\ar@{-}[dl]\ar@{-}[dr] & \\
      & {\gyoungs(0.65,:;,;;)}\ar@{-}[dl]\ar@{-}[dr] & & {\gyoungs(0.65,;,;,;)}\ar@{-}[dl] \\
      {\gyoungs(0.65,;;)}\ar@{-}[dr] & & {\gyoungs(0.65,;,;)}\ar@{-}[dl] & \\
      & {\yngs(0.65,1)}\ar@{-}[d] & & \\
      & \varnothing & &
      }
    \end{equation*}
    \caption{\texorpdfstring{The Young lattice $ \mathbb{Y}_{[3^2]} $}{The Young lattice Y_{[3^2]}}}
    \label{fig:Young_lattice}
  \end{figure}
\end{ex}
\begin{prop}\label{prop:Young_diag_count_formula}
  Let $ p $ be an integer with $ 1 \leq p \leq n $.
  For each integer $ 0 \leq j \leq n $, we have
  \begin{align*}
    \sum_{j \in \sigma_{\leq n}^c}f_{\lambda(\sigma)}f_{\widehat{\lambda(\sigma)}} & = f_{[(n - p + 1)^p, 1]} = \frac{(n - p + 1)(p(n - p + 1) + 1)!}{n + 1}\prod_{k = 0}^{n - p}\frac{k!}{(p + k)!}             \\
                                                                                   & = \frac{(n - p + 1)(p(n - p + 1) + 1)}{n + 1} f_{[(n - p + 1)^p]},                                                          \\
    \sum_{j \in \sigma_{\geq 0}}f_{\lambda(\sigma)}f_{\widehat{\lambda(\sigma)}}   & = f_{[n - p + 2, (n - p + 1)^{p - 1}]} = \frac{p(p(n - p + 1) + 1)!}{n + 1}\prod_{k = 0}^{p - 1}\frac{k!}{(n - p + 1 + k)!} \\
                                                                                   & = \frac{p(p(n - p + 1) + 1)}{n + 1}f_{[(n - p + 1)^p]}.
  \end{align*}
  For the meaning of the summations, see the proof of \textup{Theorem~\ref{thm:balanced_sum_formula}}.
\end{prop}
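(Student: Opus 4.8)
The plan is to interpret the products $f_{\lambda(\sigma)}f_{\widehat{\lambda(\sigma)}}$ through the finite Young lattice $\mathbb{Y}_R$ of the rectangle $R := [(n-p+1)^p]$, and then to recognize each constrained sum as counting standard Young tableaux of an enlarged shape. Writing $N := p(n-p+1) = |R|$, I first recall that $f_{\lambda}$ is the number of saturated chains from $\varnothing$ to $\lambda$ in $\mathbb{Y}_R$, while $f_{\widehat\lambda}$ counts the saturated chains from $\lambda$ up to $R$ (via the $180^\circ$ rotation identifying a standard tableau of the skew shape $R/\lambda$ with one of the straight complement $\widehat\lambda$). Hence $f_{\lambda(\sigma)}f_{\widehat{\lambda(\sigma)}}$ is exactly the number of maximal chains $\varnothing = \mu^{(0)} \subset \cdots \subset \mu^{(N)} = R$ through $\lambda(\sigma)$, equivalently the number of pairs consisting of $T \in \mathrm{SYT}(R)$ and a rank $k$ with $T|_{\le k} = \lambda(\sigma)$. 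Summing over $\sigma$ subject to the condition that position $j$ of the Maya diagram be empty (resp.\ occupied) therefore reduces the left-hand side to $\sum_T E_j(T)$ (resp.\ $\sum_T O_j(T)$), where $E_j(T)$ (resp.\ $O_j(T)$) counts the waypoints $\mu^{(k)}$ along $T$ in which position $j$ is empty (resp.\ occupied).

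Next I would pin down the value by a box-insertion bijection. The waypoints with position $0$ empty are precisely those $\mu$ with $\mu_{p-1} \ge 1$, i.e.\ those containing the box $(p-1,0)$; and a maximal chain $\varnothing \nearrow [(n-p+1)^p,1]$ is uniquely encoded by a maximal chain $\varnothing \nearrow R$ together with the waypoint at which the extra box in the new $(p+1)$-st row becomes addable (exactly when $(p-1,0)$ is present). This gives the first identity for $j=0$. The dual insertion, extending the first row by one box (addable exactly when $\mu_0 = n-p+1$, i.e.\ when position $n$ is occupied), gives the second identity for $j=n$. This also explains conceptually why the two augmented shapes are $R$ with one extra box placed in a new row and in a new column respectively, matching the empty/occupied (right-step/up-step) duality of the Maya diagram.

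The main obstacle is that these sums are independent of $j$, since the underlying families of diagrams genuinely differ (only their weighted cardinalities agree). To handle this I would track, along a chain $T$, the occupancy of position $j$ as a function of rank: it toggles on when a box of content $j-p$ is added and off when a box of content $j-p+1$ is added, and these events are forced to alternate because at most one ball can sit in a given box. A row-by-row accounting of the occupied intervals then expresses $\sum_T O_j(T)$ as a telescoping difference $\Sigma(j-p+1) - \Sigma(j-p)$ of the diagonal entry-sums $\Sigma(c) := \sum_{T \in \mathrm{SYT}(R)}\sum_{u \in D_c} T(u)$ over the content-$c$ diagonal $D_c$, plus a boundary term $(N+1)f_R$ that switches on precisely when position $j$ is occupied in the final configuration $R$. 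Independence of $j$ then amounts to showing that $\Sigma(c)$ is affine in $c$ with the correct slope on the relevant range, which I expect to pin down using the rotation symmetry $T \mapsto T^{*}$, $T^{*}(u) = N+1 - T(\rho u)$, giving $\Sigma(c) + \Sigma(\gamma - c) = |D_c|(N+1)f_R$ with $\gamma = (n-p+1)-p$, together with the two boundary evaluations already obtained. Alternatively one may seek a sign-reversing involution pairing the waypoints contributing a ``peak'' at slots $(j,j+1)$ with those contributing a ``valley'', proving $\sum_T O_j(T) = \sum_T O_{j+1}(T)$ directly; the delicate point in either route is matching the rank data, and this is where the argument is hardest.

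Finally, the explicit products and the ratios to $f_{[(n-p+1)^p]}$ are routine consequences of the hook-length formula \eqref{eq:hook_length_formula}. Passing from $R$ to $[(n-p+1)^p,1]$ multiplies the hook lengths of the first-column boxes $(m,0)$ by $\frac{n-m+1}{n-m}$ and introduces one new box of hook length $1$, so $\prod h$ is multiplied by $\frac{n+1}{n-p+1}$, yielding $f_{[(n-p+1)^p,1]} = \frac{(n-p+1)(p(n-p+1)+1)}{n+1}\, f_{[(n-p+1)^p]}$; the analogous computation along the first row produces the factor $\frac{n+1}{p}$ and the stated formula for $f_{[n-p+2,(n-p+1)^{p-1}]}$. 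Writing out the hook lengths of the rectangle $(n-p+1)^p$ then gives the displayed products $\prod_{k=0}^{n-p}\frac{k!}{(p+k)!}$ and $\prod_{k=0}^{p-1}\frac{k!}{(n-p+1+k)!}$.
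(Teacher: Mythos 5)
Your overall architecture is sound and its first and last stages coincide with the paper's: interpreting $f_{\lambda(\sigma)}f_{\widehat{\lambda(\sigma)}}$ as the number of maximal chains of $\mathbb{Y}_{[(n-p+1)^p]}$ through $\lambda(\sigma)$, identifying the $j=0$ (resp.\ $j=n$) sum with $f_{[(n-p+1)^p,1]}$ (resp.\ $f_{[n-p+2,(n-p+1)^{p-1}]}$) by recording the step at which the extra box is inserted, and extracting the closed forms from the hook length formula are all correct and are exactly what the paper does. The telescoping identity you derive, namely $\sum_{T}O_j(T)=\Sigma(j-p+1)-\Sigma(j-p)+(N+1)f_{R}\,[\,j\geq n-p+1\,]$ with $N=p(n-p+1)$ and $\Sigma(c)$ the sum of entries on the content-$c$ diagonal over all $T\in\mathrm{SYT}(R)$, is also correct and is a clean reformulation.

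The gap is in the one step where the paper does its real work: the independence of the sum from $j$. In your formulation this is equivalent to showing that $\Sigma(c+1)-\Sigma(c)$ is constant on each of the two content ranges separated by $\gamma=n-2p+1$, and the tools you propose do not pin this down. The rotation symmetry gives only the reflection constraint $\Sigma(c)+\Sigma(\gamma-c)=|D_c|(N+1)f_{R}$, and the two boundary evaluations fix $\Sigma$ only at the extreme diagonals $c=-(p-1)$ and $c=n-p$; for $n\geq 4$ there remain on the order of $n/2$ undetermined interior values, so ``reflection symmetry plus endpoints'' cannot force the required (piecewise) affineness --- you have merely restated the proposition, not reduced it. Your alternative, a sign-reversing involution proving $\sum_{T}O_j(T)=\sum_{T}O_{j+1}(T)$, is the right kind of idea (it is in spirit what the paper does, via its recursion among the chain counts $\mathrm{b}$, $\mathrm{br}$, $\mathrm{bb}$, $\mathrm{c}$ attached to bottom boxes, rightmost boxes and outside corners, which implements exactly the local shift from slot $j$ to slot $j+1$), but you do not construct it, and you yourself flag this as the hardest point. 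As written, the central claim of the proposition is therefore not proved; to close the argument you would need either to build that involution explicitly or to adopt a local sliding recursion of the type the paper uses.
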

\begin{proof}
  Here, we prove the former in the case where $ p < n - p + 1 $. The latter and the other cases can be proved in a similar way.

  First, we prove that $ \sum_{j \in \sigma_{\leq n}^c}f_{\lambda(\sigma)}f_{\widehat{\lambda(\sigma)}} $ is independent of the choice of an integer $ 0 \leq j \leq n $. A box in a Young diagram $ \lambda $ is called a \emph{bottom box} (resp. \emph{rightmost box}) if there is no box directly below (resp. to the right of) it, and an \emph{outside corner} if there is no box in $ \lambda $ directly below it or to the right of it.

  The sets $ \mathrm{B}(i, j), \mathrm{R}(i, j) $, $ \mathrm{BR}((i, j), (i', j')) $, $ \mathrm{BB}((i, j), (i', j')) $, and $ \mathrm{C}(i, j) $ are defined respectively by
  \begin{align*}
    \mathrm{B}(i, j)              & \coloneqq \{\lambda \subseteq   [(n - p + 1)^p] \mid \lambda \, \text{has the} \, (i, j)\text{-box as a bottom box.}\},          \\
    \mathrm{R}(i, j)              & \coloneqq \{\lambda \subseteq   [(n - p + 1)^p] \mid \lambda \, \text{has the} \, (i, j)\text{-box as a rightmost box.}\},       \\
    \mathrm{BR}((i, j), (i', j')) & \coloneqq \{\lambda \subseteq [(n - p + 1)^p] \mid  \lambda \, \text{has the} \, (i, j)\text{-box as a bottom box}               \\
                                  & \hspace{11em} \text{and the} \, (i', j')\text{-box as a rightmost box.}\},                                                       \\
    \mathrm{BB}((i, j), (i', j')) & \coloneqq \{\lambda \subseteq [(n - p + 1)^p] \mid \lambda \, \text{has the} \, (i, j)\text{-box and the} \, (i', j')\text{-box} \\
                                  & \hspace{19em} \text{as bottom boxes}.\},                                                                                         \\
    \mathrm{C}(i, j)              & \coloneqq \{\lambda \subseteq [(n - p + 1)^p] \mid \lambda \, \text{has the} \, (i, j)\text{-box as an outside corner.}\},
  \end{align*}
  Formally, we set $ \mathrm{B}(0, 1) \coloneqq \{\varnothing\} $, $ \mathrm{B}(0, j) \coloneqq \mathrm{B}(0, j - 1) \cup \mathrm{R}(1, j - 1) $.
  In addition, we define
  \begin{align*}
    \mathrm{b}(i, j)              & \coloneqq \sum_{\lambda \in \mathrm{B}(i, j)}\#\{c = (\varnothing \to [(n - p + 1)^p]) \mid c \, \text{passes through} \, \lambda\},                \\
    \mathrm{br}((i, j), (i', j')) & \coloneqq \sum_{\lambda \in \mathrm{BR}((i, j), (i', j'))} \#\{c = (\varnothing \to [(n - p + 1)^p]) \mid c \, \text{passes through} \, \lambda \}, \\
    \mathrm{bb}((i, j), (i', j')) & \coloneqq \sum_{\lambda \in \mathrm{BB}((i, j), (i', j'))} \#\{c = (\varnothing \to [(n - p + 1)^p]) \mid c \, \text{passes through} \, \lambda \}, \\
    \mathrm{c}(i, j)              & \coloneqq \sum_{\lambda \in \mathrm{C}(i, j)} \#\{c = (\varnothing \to [(n - p + 1)^p]) \mid c \, \text{passes through} \, \lambda \}.
  \end{align*}
  These numbers satisfy the following relations:
  \begin{enumerate}
    \item $\mathrm{b}(i, j) = \mathrm{bb}((i, j), (i, j + 1)) + \mathrm{c}(i, j),$
    \item $ \mathrm{b}(i, j) = \mathrm{bb}((i, j - 1), (i, j)) + \mathrm{br}((i, j), (i + 1, j - 1)),$
    \item $\mathrm{c}(i, j) = \mathrm{br}((i - 1, j), (i, j - 1)).$
  \end{enumerate}
  For simplicity, we consider only the case $ p + 1 \leq j \leq n - p + 1 $.
  Since $ \mathrm{bb}((p, j), (p, j + 1)) = \mathrm{b}(p, j + 1)$, these equalities imply
  \begin{align*}
     & \sum_{j - 1 \in \sigma_{\leq n}^c}f_{\lambda(\sigma)}f_{\widehat{\lambda(\sigma)}}
    = \sum_{k = 0}^{p}\mathrm{b}(p - k,  j - k)                                                                                         \\
     & =  \sum_{k = 0}^{p - 1}(\mathrm{bb}((p - k, j - k), (p - k, j - k + 1)) + \mathrm{c}(p - k, j - k)) + \mathrm{b}(0, j - p)       \\
     & = \sum_{k = 0}^{p - 1}(\mathrm{bb}((p - k, j - k), (p - k, j - k + 1))                                                           \\
     & \qquad \quad + \mathrm{br}((p - k - 1, j - k), (p - k, j - k - 1))) + \mathrm{b}(0, j - p)                                       \\
     & = \sum_{k = 0}^{p}\mathrm{b}(p - k, j - k + 1) = \sum_{j \in \sigma_{\leq n}^c}f_{\lambda(\sigma)}f_{\widehat{\lambda(\sigma)}},
  \end{align*}
  A similar equality holds for all $ 0 \leq j \leq n $. Thus, by applying this relation inductively, we obtain
  \begin{equation*}
    \sum_{j \leq \sigma_{\leq n}^c}f_{\lambda(\sigma)}f_{\widehat{\lambda(\sigma)}} = \sum_{0 \leq \sigma_{\leq n}^c}f_{\lambda(\sigma)}f_{\widehat{\lambda(\sigma)}} \quad (0 \leq j \leq n).
  \end{equation*}
  Hence, it is enough to show that $ f_{[(n - p + 1)^p, 1]} = \sum_{0 \leq \sigma_{\leq n}^c}f_{\lambda(\sigma)}f_{\widehat{\lambda(\sigma)}} $. Since all chains $ c = (\varnothing \to [(n - p + 1)^p, 1]) $ can be classified by the step at which they contain the $ (p + 1, 1) $-box, we have
  \begin{equation*}
    \begin{split}
      & f_{[(n - p + 1)^p, 1]} = \#\{\varnothing \to [(n - p + 1)^p, 1]\}                                                                                                                    \\
      & = \sum_{k = 1}^{p(n - p + 1) + 1}\#\{c = (\varnothing \to [(n - p + 1)^p, 1]) \mid [1^{p + 1}] \subseteq c(k), [1^{p + 1}] \nsubseteq c(k - 1)\}                                           \\
      & = \sum_{[1^p] \subseteq \lambda \subseteq [(n - p + 1)^p]} f_{\lambda} f_{\widehat{\lambda}}
      = \sum_{0 \in \sigma_{\leq n}^c} f_{\lambda(\sigma)}f_{\widehat{\lambda(\sigma)}},
    \end{split}
  \end{equation*}
  where the second sum runs over all Young diagrams that  contain $ [1^p] $ and are contained in $ [(n - p + 1)^p] $. Figure~\ref{fig:extd_Young_lattice} illustrates the argument above.
  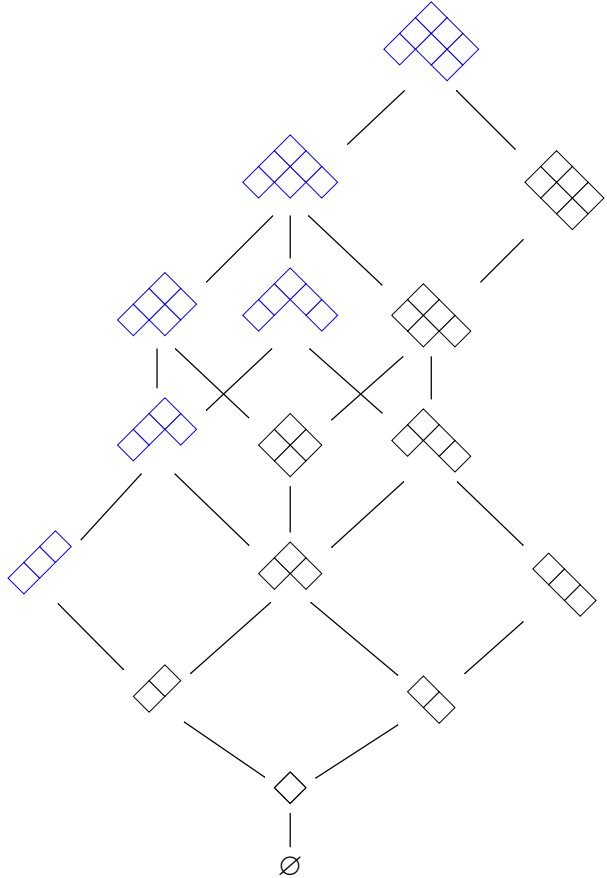
\begin{figure}[htbp]
    \begin{equation*}
      \Yvcentermath1
      \YRussian
      \xymatrix@=10pt{
      & & & \Ylinecolour{blue}{\gyoungs(0.65,:;;,:;;,;;;)}\ar@{-}[dl]\ar@{-}[dr] & \\
      & & \Ylinecolour{blue}{\gyoungs(0.65,::;,:;;,;;;)}\ar@{-}[dl]\ar@{-}[d]\ar@{-}[dr] & & {\gyoungs(0.65,;;,;;,;;)}\ar@{-}[dl]\\
      & \Ylinecolour{blue}{\gyoungs(0.65,:;;,;;;)}\ar@{-}[d]\ar@{-}[dr] & \Ylinecolour{blue}{\gyoungs(0.65,::;,::;,;;;)}\ar@{-}[dl]\ar@{-}[dr] & {\gyoungs(0.65,:;,;;,;;)}\ar@{-}[dl]\ar@{-}[d] & \\
      & \Ylinecolour{blue}{\gyoungs(0.65,::;,;;;)}\ar@{-}[dl]\ar@{-}[dr] & {\gyoungs(0.65,;;,;;)}\ar@{-}[d] & {\gyoungs(0.65,:;,:;,;;)}\ar@{-}[dl]\ar@{-}[dr] & \\
      \Ylinecolour{blue}{\gyoungs(0.65,;;;)}\ar@{-}[dr] & & {\gyoungs(0.65,:;,;;)}\ar@{-}[dl]\ar@{-}[dr] & & {\gyoungs(0.65,;,;,;)}\ar@{-}[dl] \\
      & {\gyoungs(0.65,;;)}\ar@{-}[dr] & & {\gyoungs(0.65,;,;)}\ar@{-}[dl] & \\
      & & {\yngs(0.65,1)}\ar@{-}[d] & & \\
      & & \varnothing & &
      }
    \end{equation*}
    \caption{\texorpdfstring{The Young lattice $ \mathbb{Y}_{[3^2, 1]} $}{The Young lattice Y_{[3^2, 1]}}}
    \label{fig:extd_Young_lattice}
  \end{figure}

  Moreover, the hook length formula \eqref{eq:hook_length_formula} implies
  \begin{equation*}
    \begin{split}
      f_{[(n - p + 1)^p, 1]} & = \frac{(p(n - p + 1) + 1)!}{\frac{(n + 1)!}{(n - p + 1)!}\left(\frac{(n - 1)!}{(n - p - 1)!} \cdots \frac{(p + 1)!}{1!}\frac{p!}{0!}\right)}
      = \frac{n - p + 1}{n + 1}\frac{(p(n - p + 1) + 1)!}{\frac{n!}{(n - p)!} \frac{(n - 1)!}{(n - p - 1)!} \cdots \frac{(p + 1)!}{1!} \frac{p!}{0!}}                        \\
      & = \frac{(n - p + 1)(p(n - p + 1) + 1)}{n + 1}f_{[(n - p + 1)^p]}.
    \end{split}
  \end{equation*}
\end{proof}
\begin{ex}
  Let $ n = 4 $ and $ p = 2 $. Then we obtain:
  \begin{align*}
     & \sum_{0 \in \sigma_{\leq 4}^c}f_{\lambda(\sigma)}f_{\widehat{\lambda(\sigma)}} = f_{\yngs(0.4,1^2)}f_{\yngs(0.4,2^2)} + f_{\yngs(0.4,2,1)}f_{\yngs(0.4,2,1)} + f_{\yngs(0.4,2^2)}f_{\yngs(0.4,1^2)} + f_{\yngs(0.4,3,1)}f_{\yngs(0.4,2)} + f_{\yngs(0.4,3,2)}f_{\yngs(0.4,1)} + f_{\yngs(0.4,3^2)}f_{\varnothing} = 21, \\
     & \sum_{1 \in \sigma_{\leq 4}^c}f_{\lambda(\sigma)}f_{\widehat{\lambda(\sigma)}} = f_{\yngs(0.4,1)}f_{\yngs(0.4,3,2)} + f_{\yngs(0.4,2)}f_{\yngs(0.4,3,1)} + f_{\yngs(0.4,3)}f_{\yngs(0.4,3)} + f_{\yngs(0.4,2^2)}f_{\yngs(0.4,1^2)} + f_{\yngs(0.4,3,2)}f_{\yngs(0.4,1)} + f_{\yngs(0.4,3^2)}f_{\varnothing} = 21,       \\
     & \sum_{2 \in \sigma_{\leq 4}^c}f_{\lambda(\sigma)}f_{\widehat{\lambda(\sigma)}} = f_{\varnothing}f_{\yngs(0.4,3^2)} + f_{\yngs(0.4,2)}f_{\yngs(0.4,3,1)} + f_{\yngs(0.4,2,1)}f_{\yngs(0.4,2,1)} + f_{\yngs(0.4,3)}f_{\yngs(0.4,3)} + f_{\yngs(0.4,3,1)}f_{\yngs(0.4,2)} + f_{\yngs(0.4,3^2)}f_{\varnothing} = 21,        \\
     & \sum_{2 \in \sigma_{\geq 0}}f_{\lambda(\sigma)}f_{\widehat{\lambda(\sigma)}} = f_{\yngs(0.4,1)}f_{\yngs(0.4,3,2)} + f_{\yngs(0.4,1^2)}f_{\yngs(0.4,2^2)} + f_{\yngs(0.4,2^2)}f_{\yngs(0.4,1,1)} + f_{\yngs(0.4,3,2)}f_{\yngs(0.4,1)} = 14,                                                                              \\
     & \sum_{3 \in \sigma_{\geq 0}}f_{\lambda(\sigma)}f_{\widehat{\lambda(\sigma)}} = f_{\yngs(0.4,2)}f_{\yngs(0.4,3,1)} + f_{\yngs(0.4,2,1)}f_{\yngs(0.4,2,1)} + f_{\yngs(0.4,2^2)}f_{\yngs(0.4,1,1)} + f_{\yngs(0.4,3^2)}f_{\varnothing} = 14,                                                                               \\
     & \sum_{4 \in \sigma_{\geq 0}}f_{\lambda(\sigma)}f_{\widehat{\lambda(\sigma)}} = f_{\yngs(0.4,3)}f_{\yngs(0.4,3)} + f_{\yngs(0.4,3,1)}f_{\yngs(0.4,2)} + f_{\yngs(0.4,3,2)}f_{\yngs(0.4,1)} + f_{\yngs(0.4,3^2)}f_{\varnothing} = 14.
  \end{align*}
  Furthermore, we obtain
  \begin{equation*}
    f_{\yngs(0.4,3^2,1)} = 21, \quad f_{\yngs(0.4,4,3)} = 14.
  \end{equation*}
\end{ex}
\begin{ex}
  It is well known that $ f_{[n, n]} $ equals the \textbf{Catalan number}
  \begin{equation*}
    C_n \coloneqq \frac{1}{n + 1}\binom{2n}{n} = \frac{(2n)!}{n!(n + 1)!}.
  \end{equation*}
  \textup{(\textit{see} OEIS A000108.)}
  Thus, if $ p = 2 $, the numbers computed using the second formula in \textup{Proposition \ref{prop:Young_diag_count_formula}} coincide with $ C_n $:
  \begin{equation*}
    \sum_{j \in \sigma_{\geq 0}}f_{\lambda(\sigma)}f_{\widehat{\lambda(\sigma)}} = f_{[n, n - 1]} = f_{[n, n]} = C_n.
  \end{equation*}
\end{ex}
\subsection{Proofs of the Weighted Balanced Sum Formula and Theorem~B}
\ \par
The main purpose of this section is to prove the following identity.
\begin{thm}[\textbf{Weighted Balanced Sum Formula}]\label{thm:weighted_balanced_sum_formula}
  \begin{align*}
     & \sum_{c \in \{\varnothing \to [(n - p + 1)^p]\}}\sum_{s = 0}^{p(n - p + 1)}\left(\sum_{k = 1}^n n_{c(s)}(k)(T_{k - 1} - 2T_k + T_{k + 1}) + T_p\right)                                                               \\
     & = \sum_{s = 0}^{p(n - p + 1)}\sum_{\sigma \in \binom{[n + 1]}{p}_{(k_s)}}f_{\lambda(\sigma)}f_{\widehat{\lambda(\sigma)}}\left(\sum_{k = 1}^n n_{\lambda(\sigma)}(k)(T_{k - 1} - 2T_k + T_{k + 1}) + T_p\right) = 0.
  \end{align*}
\end{thm}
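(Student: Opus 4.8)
The plan is to prove the two equalities separately: first, that the sum over maximal chains equals the sum over diagrams weighted by $f_{\lambda(\sigma)}f_{\widehat{\lambda(\sigma)}}$, and second, that this weighted sum vanishes. For the first equality I would interchange the order of summation and, for each rank $s$, count the maximal chains $c=(\varnothing\to[(n-p+1)^p])$ with $c(s)=\lambda$. Such a chain factors at $\lambda$ into a chain from $\varnothing$ to $\lambda$ and a chain from $\lambda$ to the rectangle; the former are enumerated by $\mathrm{SYT}(\lambda)$ and the latter by standard tableaux of the skew shape $[(n-p+1)^p]/\lambda$, whose count is $f_{\widehat{\lambda}}$ by the $180^\circ$-rotation symmetry of the rectangle. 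Hence exactly $f_{\lambda(\sigma)}f_{\widehat{\lambda(\sigma)}}$ chains pass through each $\sigma\in\binom{[n+1]}{p}_{(k_s)}$ at step $s$, and replacing the sum over chains by the sum over diagrams with this multiplicity, after identifying size-$s$ diagrams with $\binom{[n+1]}{p}_{(k_s)}$, yields the middle expression.

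For the vanishing, I would substitute the Maya-diagram decomposition established in the proof of Theorem~\ref{thm:balanced_sum_formula},
\begin{equation*}
  \sum_{k=1}^n n_{\lambda(\sigma)}(k)(T_{k-1}-2T_k+T_{k+1}) = \sum_{j\in\sigma_{\leq p-1}^c}(T_j-T_{j+1}) + \sum_{k\in\sigma_{\geq p}}(T_{k+1}-T_k),
\end{equation*}
into the weighted sum and split the result as $S=S_1+S_2+S_3$ according to the three groups of terms. In $S_3$ the constant $T_p$ factors out; since the number of maximal chains through the rank-$s$ slice is $f_{[(n-p+1)^p]}$ for every $s$, summing over all ranks gives $\sum_{\sigma}f_{\lambda(\sigma)}f_{\widehat{\lambda(\sigma)}}=(p(n-p+1)+1)f_{[(n-p+1)^p]}$, so that $S_3=(p(n-p+1)+1)f_{[(n-p+1)^p]}\,T_p$.

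The heart of the argument is $S_1$ and $S_2$, where I would collect the coefficient of each fixed $T_m$. The crucial input is Proposition~\ref{prop:Young_diag_count_formula}: the weighted count $\sum_{\sigma:\,m\notin\sigma}f_{\lambda(\sigma)}f_{\widehat{\lambda(\sigma)}}$ is independent of $m$ and equals $A:=f_{[(n-p+1)^p,1]}$, while $\sum_{\sigma:\,m\in\sigma}f_{\lambda(\sigma)}f_{\widehat{\lambda(\sigma)}}$ is independent of $m$ and equals $B:=f_{[n-p+2,(n-p+1)^{p-1}]}$. This $m$-independence forces every interior coefficient to cancel ($A-A=0$ in $S_1$, $B-B=0$ in $S_2$), and because $T_0=T_{n+1}=0$ the endpoint contributions drop out, leaving only the boundary term at $m=p$; one finds $S_1=-A\,T_p$ and $S_2=-B\,T_p$. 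Combining,
\begin{equation*}
  S=\bigl((p(n-p+1)+1)f_{[(n-p+1)^p]}-A-B\bigr)T_p,
\end{equation*}
and the explicit formulas of Proposition~\ref{prop:Young_diag_count_formula} give $A+B=\frac{p(n-p+1)+1}{n+1}\bigl((n-p+1)+p\bigr)f_{[(n-p+1)^p]}=(p(n-p+1)+1)f_{[(n-p+1)^p]}$, whence $S=0$.

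The main obstacle I anticipate is organizing the coefficient bookkeeping in $S_1$ and $S_2$ cleanly, in particular matching the restricted index sets $\sigma_{\leq p-1}^c$ and $\sigma_{\geq p}$ against the all-positions counts in Proposition~\ref{prop:Young_diag_count_formula}. This matching is legitimate because for a fixed $m\leq p-1$ emptiness of $\sigma$ at position $m$ is the same condition whether or not one restricts to the first $p$ positions, and dually for $m\geq p$; the remaining care is to verify that the single non-cancelling contribution indeed lands on $T_p$ rather than on the vanishing endpoints $T_0$ and $T_{n+1}$.
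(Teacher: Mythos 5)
Your proposal is correct and follows essentially the same route as the paper: the first equality via counting maximal chains through each $\lambda$ as $f_{\lambda}f_{\widehat{\lambda}}$, and the vanishing via the Maya-diagram telescoping decomposition from the proof of Theorem~\ref{thm:balanced_sum_formula} combined with the $j$-independence and explicit values in Proposition~\ref{prop:Young_diag_count_formula}. The only cosmetic difference is that you collect the coefficient of each $T_m$ and cancel interior terms, whereas the paper sums the $T_j$ and $T_{j+1}$ contributions separately before subtracting; the boundary bookkeeping landing on $-A\,T_p - B\,T_p$ is the same.
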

\begin{proof}
  For each Young diagram $ \lambda \subseteq [(n - p + 1)^p] $, the product $ f_{\lambda}f_{\widehat{\lambda}} $ represents the number of chains $ c $ from $ \varnothing $ to $ [(n - p + 1)^p] $ that  pass through $ \lambda $ in the Young lattice $ \mathbb{Y}_{[(n - p + 1)^p]} $. This establishes the first equality.
  By Proposition~\ref{prop:Young_diag_count_formula}, as in equation~\eqref{eq:binom_coeff}, we obtain
  \begin{equation*}
    \begin{split}
      & \sum_{s = 0}^{p(n - p + 1)} \sum_{\sigma \in \binom{[n + 1]}{p}_{(k_s)}}f_{\lambda(\sigma)}f_{\widehat{\lambda(\sigma)}} \left(\sum_{j \in \sigma_{\leq p - 1}^c}(T_j - T_{j + 1}) + \sum_{k \in \sigma_{\geq p}}(T_{k + 1} - T_k)\right) \\
      & = -\frac{(n - p + 1)(p(n - p + 1) + 1)}{n + 1}f_{[(n - p + 1)^p]}T_p - \frac{p(p(n - p + 1) + 1)}{n + 1}f_{[(n - p + 1)^p]}T_p                                                                                                            \\
      & = -(p(n - p + 1) + 1)f_{[(n - p + 1)^p]}T_p.
    \end{split}
  \end{equation*}
  Since
  \begin{equation*}
    \sum_{s = 0}^{p(n - p + 1)}\sum_{\sigma \in \binom{[n + 1]}{p}_{(k_s)}}f_{\lambda(\sigma)}f_{\widehat{\lambda(\sigma)}}T_p = \sum_{s = 0}^{p(n - p + 1)}f_{[(n - p + 1)^p]}T_p = (p(n - p + 1) + 1)f_{[(n - p + 1)^p]}T_p,
  \end{equation*}
  we obtain the second equality of Theorem~\ref{thm:weighted_balanced_sum_formula}.
\end{proof}
We now give the proof of Theorem~\ref{thm:pos_ineq}.
\begin{thm}[Theorem B]\label{thm:pos_ineq}
  Let $ 0 < \epsilon < 1 $. For each integer $ 1 \leq i \leq p(n - p + 1) $, the following inequality holds:
  \begin{equation*}
    (1 - \epsilon)\min(T_p, T_{n - p + 1}) < \sum_{s = 1}^{i - 1}\max_{\sigma \in \binom{[n + 1]}{p}_{(k_s)}}\left(\sum_{k = 1}^n n_{\lambda(\sigma)}(k)(T_{k - 1} - 2T_k + T_{k + 1})\right) + iT_p \, //.
  \end{equation*}
\end{thm}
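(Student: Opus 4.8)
The plan is to exploit the alternative form of the left-hand side recorded at the end of Section~\ref{sec:gen_pec_rel}, namely
\[
  L_i \coloneqq \sum_{s = 1}^{i - 1}\max_{\sigma \in \binom{[n + 1]}{p}_{(k_s)}}\left(\sum_{k = 1}^n n_{\lambda(\sigma)}(k)(T_{k - 1} - 2T_k + T_{k + 1})\right) + iT_p = \sum_{s = 0}^{i - 1} M_s,
\]
where $ M_s \coloneqq \max_{\sigma}g_s(\sigma) $ and $ g_s(\sigma) \coloneqq \sum_{k = 1}^n n_{\lambda(\sigma)}(k)(T_{k - 1} - 2T_k + T_{k + 1}) + T_p $. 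Rather than pinning down each maximizer, I would bound $ M_s $ from below by the \emph{chain average} of $ g_s $ and use the weighted balanced sum formula (Theorem~\ref{thm:weighted_balanced_sum_formula}) to control the endpoints of the resulting partial sums.

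First, set $ f \coloneqq f_{[(n - p + 1)^p]} $ and define $ \bar g_s \coloneqq f^{-1}\sum_{\sigma \in \binom{[n + 1]}{p}_{(k_s)}} f_{\lambda(\sigma)}f_{\widehat{\lambda(\sigma)}}\, g_s(\sigma) $. Since every maximal chain in $ \mathbb{Y}_{[(n - p + 1)^p]} $ meets exactly one diagram of each size $ s $, the weights $ f_{\lambda(\sigma)}f_{\widehat{\lambda(\sigma)}} $ sum to $ f $ (as in the proof of Theorem~\ref{thm:weighted_balanced_sum_formula}), so $ \bar g_s $ is a convex combination of the values $ g_s(\sigma) $ and hence $ M_s \geq \bar g_s $. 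Consequently $ L_i \geq \Sigma_i \coloneqq \sum_{s = 0}^{i - 1}\bar g_s $. The weighted balanced sum formula gives $ \sum_{s = 0}^{q}\bar g_s = 0 $ (with $ q = p(n - p + 1) $), and evaluating $ g_s $ at the \emph{unique} diagrams of sizes $ 0 $ and $ q $ (the empty diagram and the full rectangle) produces the exact endpoint identities $ \Sigma_1 = \bar g_0 = T_p $ and $ \Sigma_q = -\bar g_q = T_{n - p + 1} $.

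The key step is to show that $ \bar g_s $ is non-increasing modulo the Nevanlinna error term. Writing $ \bar g_s = f^{-1}\sum_{c}g_s(c(s)) $ as an average over maximal chains $ c $, the passage from $ c(s) $ to $ c(s + 1) $ adds a single box, which in the Maya picture shifts one ball one step to the right. Using the telescoped expression for $ \sum_k n_{\lambda(\sigma)}(k)(T_{k - 1} - 2T_k + T_{k + 1}) $ established in the proof of Theorem~\ref{thm:balanced_sum_formula}, a short case analysis (the ball stays below $ p $, crosses $ p $, or stays above $ p $) shows that in every case
\[
  g_{s + 1}(c(s + 1)) - g_s(c(s)) = T_{k - 1} - 2T_k + T_{k + 1} \leq \tfrac{\kappa}{2}\log T - \log r \, //
\]
for a single index $ k $, by the concavity estimate \eqref{eq:concave_rel}. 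Averaging over the $ f $ chains yields $ \bar g_{s + 1} - \bar g_s \leq \tfrac{\kappa}{2}\log T - \log r \, // $; equivalently, the second differences of the sequence $ \Sigma_i $ are bounded above by this error term.

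Finally I would conclude by a concavity-with-error argument. A sequence whose second differences are at most $ \eta \coloneqq \tfrac{\kappa}{2}\log T - \log r $ differs from a genuinely concave sequence by a quadratic of height $ O(q^2\eta^+) $, so on $ 1 \leq i \leq q $ one has $ \Sigma_i \geq \min(\Sigma_1, \Sigma_q) - O(q^2\eta^+) = \min(T_p, T_{n - p + 1}) - O(q^2\eta^+) \, // $. Since $ q $ is a fixed constant and \eqref{eq:fund_ineq_1}, \eqref{eq:fund_ineq_2} give $ \min(T_p, T_{n - p + 1}) \geq cT \, // $ for some $ c > 0 $, the error satisfies $ O(q^2\eta^+) < \epsilon\min(T_p, T_{n - p + 1}) \, // $, whence $ L_i \geq \Sigma_i > (1 - \epsilon)\min(T_p, T_{n - p + 1}) \, // $. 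The main obstacle is precisely the monotonicity of the chain average $ \bar g_s $: the conceptual content is that each single-box chain step contributes exactly one second difference, but carrying out the three-case Maya-diagram bookkeeping cleanly — and checking that the accumulated exceptional sets arising from the (constantly many) applications of \eqref{eq:concave_rel} still have finite measure — is the delicate point.
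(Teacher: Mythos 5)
Your argument is correct, and while it shares the paper's two main ingredients it assembles them along a genuinely different route. Both proofs rewrite the left-hand side as $\sum_{s=0}^{i-1}M_s$ with $M_s=\max_{\sigma}g_s(\sigma)$, and both rest on the weighted balanced sum formula (Theorem~\ref{thm:weighted_balanced_sum_formula}) together with the concavity estimate \eqref{eq:concave_rel}. The paper, however, argues by contradiction: it extracts from Theorem~\ref{thm:weighted_balanced_sum_formula} only the single inequality $\sum_{s=0}^{q-1}M_s\geq T_{n-p+1}$, supposes some partial sum falls below $(1-\epsilon)\min(T_p,T_{n-p+1})$ on a set of infinite measure, takes the minimal such index $i'$, notes that then $M_{i'-1}<0$ there, and uses the fact that enlarging a Young diagram only adds second differences (each bounded by $O(\log T)$) to force every later $M_s$ to be $O(\log T)$, contradicting the total lower bound. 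You instead give a direct proof: you lower-bound each $M_s$ by the chain average $\bar g_s$ (legitimate, since the weights $f_{\lambda(\sigma)}f_{\widehat{\lambda(\sigma)}}$ of the diagrams of a fixed size sum to $f_{[(n-p+1)^p]}$), observe that one chain step contributes exactly one second difference $T_{k-1}-2T_k+T_{k+1}$ with $1\leq k\leq n$ (this is precisely the one-box/one-ball bookkeeping from the proofs of Proposition~\ref{prop:ord_ineq} and Theorem~\ref{thm:balanced_sum_formula}), so that $\bar g_s$ is non-increasing modulo $O(\log T)$, and conclude that the partial sums $\Sigma_i$ are concave modulo $O(\log T)$ with endpoints $\Sigma_1=T_p$ and $\Sigma_q=T_{n-p+1}$. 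Your version avoids the minimal-counterexample device, makes the role of the two endpoint values transparent, and in fact yields the slightly sharper conclusion $L_i\geq\min(T_p,T_{n-p+1})-O(\log T)\,//$; the paper's version buys itself the right not to verify monotonicity of the chain average, at the cost of the indirect argument. The exceptional-set accounting is harmless in both proofs, since only a bounded number of applications of \eqref{eq:concave_rel} and of \eqref{eq:fund_ineq_1}--\eqref{eq:fund_ineq_2} are needed.
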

\begin{proof}
  Let $ \sigma $ be the unique element of the set $ \binom{[n + 1]}{p}_{(k_q)} $, where $ q = p(n - p + 1) $ (see \eqref{eq:dim_Grassmann}). Then the Young diagram $ \lambda(\sigma) $ is represented by the $ p \times (n - p + 1) $ rectangle.
  Hence we have
  \begin{equation*}
    \begin{split}
      & \max_{\sigma \in \binom{[n + 1]}{p}_{(k_q)}}\left(\sum_{k = 1}^n n_{\lambda(\sigma)}(k)(T_{k - 1} - 2T_k + T_{k + 1}) + T_p\right) \\
      & = \sum_{k = 1}^n n_{[(n - p + 1)^p]}(k)(T_{k - 1} - 2T_k + T_{k + 1}) + T_p                                                        \\
      & = (- T_p - T_{n - p + 1}) + T_p = -T_{n - p + 1}.
    \end{split}
  \end{equation*}
  Thus, Theorem~\ref{thm:weighted_balanced_sum_formula} implies
  \begin{equation*}
    T_{n - p + 1} \leq \sum_{s = 0}^{p(n - p + 1) - 1}\max_{\sigma \in \binom{[n + 1]}{p}_{(k_s)}}\left(\sum_{k = 1}^n n_{\lambda(\sigma)}(k)(T_{k - 1} - 2T_k + T_{k + 1}) + T_p\right).
  \end{equation*}

  Fix $ \epsilon > 0 $.
  Suppose that there exists an integer $ 2 \leq i \leq p(n - p + 1) - 1 $ such that the inequality
  \begin{equation*}
    \sum_{s = 0}^{i - 1}\max_{\sigma \in \binom{[n + 1]}{p}_{(k_s)}}\left(\sum_{k = 1}^n n_{\lambda(\sigma)}(k)(T_{k - 1} - 2T_k + T_{k + 1}) + T_p\right) \leq (1 - \epsilon)\min(T_p, T_{n - p + 1})
  \end{equation*}
  holds on a subset $ S \subseteq \mathbb{R} $ of infinite measure. Let $ i' $ be the minimal integer satisfying this condition. Then the following inequality holds on a subset $ S' \subseteq S $ of infinite measure:
  \begin{equation*}
    \max_{\sigma \in \binom{[n + 1]}{p}_{(k_{i' - 1})}}\left(\sum_{k = 1}^n n_{\lambda(\sigma)}(k)(T_{k - 1} - 2T_k + T_{k + 1}) + T_p\right) < 0.
  \end{equation*}
  Indeed, if this relation does not hold, then we obtain
  \begin{equation*}
    \begin{split}
      (1 - \epsilon) & \min(T_p, T_{n - p + 1}) - \sum_{s = 0}^{i' - 2}\max_{\sigma \in \binom{[n + 1]}{p}_{(k_s)}}\left(\sum_{k = 1}^n n_{\lambda(\sigma)}(k)(T_{k - 1} - 2T_k + T_{k + 1}) + T_p\right) \\
      & \geq \max_{\sigma \in \binom{[n + 1]}{p}_{(k_{i' - 1})}}\left(\sum_{k = 1}^n n_{\lambda(\sigma)}(k)(T_{k - 1} - 2T_k + T_{k + 1}) + T_p\right) \geq 0 \, //_{S},
    \end{split}
  \end{equation*}
  where the symbol $ //_{S} $ indicates that the inequality holds for all $ r \in S $ outside a subset of $ S $ of finite measure.
  This contradicts the minimality of $ i' $. Thus, by Remark~\ref{rem:T-seq}, it follows that
  \begin{equation*}
    \max_{\sigma \in \binom{[n + 1]}{p}_{(k_{s})}}\left(\sum_{k = 1}^n n_{\lambda(\sigma)}(k)(T_{k - 1} - 2T_k + T_{k + 1}) + T_p\right) < C \log T \, //_{S'}
  \end{equation*}
  for $ s \geq i' - 1 $ and $ C > 1 $.
  Hence we have
  \begin{equation*}
    \begin{split}
      T_{n - p + 1} & \leq \sum_{s = 0}^{p(n - p + 1) - 1}\max_{\sigma \in \binom{[n + 1]}{p}_{(k_s)}}\left(\sum_{k = 1}^n n_{\lambda(\sigma)}(k)(T_{k - 1} - 2T_k + T_{k + 1}) + T_p\right) \\
      & \leq (1 - \epsilon)\min(T_p, T_{n - p + 1}) + C' \log T \, //_{S'},
    \end{split}
  \end{equation*}
  which leads to a contradiction for some sufficiently large $ r \in S' \subseteq \mathbb{R} $.
\end{proof}
\subsection{Further Combinatorial Property of Standard Young Tableaux}
\ \par
The remainder of this section is devoted to purely combinatorial arguments related to Proposition~\ref{prop:Young_diag_count_formula}. Readers primarily interested in Nevanlinna theory may skip this part.
\begin{defn}
  Fix an integer $ 1 \leq p \leq n + 1 $. Let $ \lambda $ be a Young diagram, and let $ T $ be a standard Young tableau of shape $ \lambda $. We place $ T $ on $ \mathbb{R}^2 $ in the same manner as in \textup{Definition~\ref{def:phi}}. For each integer $ 1 \leq k \leq n $, define $ n_{T}(k) $ as the sum of the entries in the boxes of $ T $ that intersect the line $ x = k $.
\end{defn}
The following statement is equivalent to Proposition~\ref{prop:Young_diag_count_formula}.
\begin{cor}
  Let $ 1 \leq k \leq n $. Then the following formula holds:
  \begin{equation*}
    \begin{split}
      \phi_{[(n - p + 1)^p]}(k) & \coloneqq \sum_{T \in \mathrm{SYT}([(n - p + 1)^p])}n_{T}(k)         \\
      & = \sum_{\substack{1 \leq i \leq p, 1 \leq j \leq n - p + 1  \\ \text{s.t.} \, x = k \, \text{intersects the} \, (i, j)\text{-box on} \, \mathbb{R}^2}}\sum_{[j^{i - 1}, j - 1]
        \subseteq \tau \subseteq [(n - p + 1)^p] - (i, j)}(|\tau| + 1)f_{\tau}f_{\widehat{\tau}'} \\
      & =
      \left\{
      \begin{array}{ll}
        \frac{p(p(n - p + 1) + 1)}{n + 1}f_{[(n - p + 1)^p]}k                     & (1 \leq k \leq n - p + 1), \\
        \frac{(n - p + 1)(p(n - p + 1) + 1)}{n + 1}f_{[(n - p + 1)^p]}(n + 1 - k) & (n - p + 1 \leq k \leq n),
      \end{array}
      \right.
    \end{split}
  \end{equation*}
  where $ [(n - p + 1)^p] - (i, j) $ denotes the diagram obtained from $ [(n - p + 1)^p] $ by removing the $ (i, j) $-box, which is not necessarily a Young diagram, and $ \widehat{\tau}' $ is the complementary Young diagram of $ \tau $ in $ [(n - p + 1)^p] - (i, j) $.
\end{cor}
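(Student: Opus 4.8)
The plan is to pass from standard Young tableaux to maximal chains in the Young lattice and to reduce the evaluation to the position-independent weighted counts already established in Proposition~\ref{prop:Young_diag_count_formula}. Throughout write $\lambda = [(n-p+1)^p]$ and $N = p(n-p+1) = |\lambda|$, and recall that a standard Young tableau $T$ of shape $\mu$ is the same datum as a saturated chain $\varnothing = \mu^{(0)} \subset \mu^{(1)} \subset \cdots \subset \mu^{(|\mu|)} = \mu$, where the entry of a box $b$ equals the rank $m$ at which $b$ is adjoined (so $b \in \mu^{(m)} \setminus \mu^{(m-1)}$). First I would establish the middle equality by unfolding definitions. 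For a box $b$ on the line $x=k$ (equivalently, on the content diagonal $k-p$, by Definition~\ref{def:phi}) its entry in $T$ is the rank $|\nu|$ of the diagram $\nu = \mu^{(m)}$ at which $b$ is adjoined, and there $b$ is a removable corner of $\nu$. Splitting the chain at $\nu$ gives $f_{\nu \setminus b}$ choices below (chains $\varnothing \to \nu$ ending with $b$) and $f^{\lambda/\nu}$ choices above (chains $\nu \to \lambda$), and since $\lambda$ is a rectangle the skew count equals the straight count $f^{\lambda/\nu} = f_{\widehat\nu}$ of the complement. Writing $\tau = \nu\setminus b$ (so $|\nu| = |\tau|+1$ and $\widehat\nu = \widehat\tau'$) and noting that ``$b=(i,j)$ is a removable corner of $\nu$'' is exactly the condition $[j^{i-1},j-1]\subseteq\tau$, summing over $b$ and $\nu$ yields the displayed double sum. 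This step is essentially bookkeeping.

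The substance is the explicit value, which I would extract from Proposition~\ref{prop:Young_diag_count_formula} via a triple count: $\phi_{\lambda}(k) = \#\{(T,b,a) : b \text{ on } x=k,\ 1\le a\le v_T(b)\}$, where $v_T(b)$ is the entry of $b$. Fixing the ``time'' $a$ and using $a \le v_T(b) \iff b \notin \mu^{(a-1)}$ turns this into
$$\phi_{\lambda}(k) = \sum_{a=1}^{N}\Bigl(f_\lambda\, n_\lambda(k) - \sum_{|\mu| = a-1} f_\mu f_{\widehat\mu}\, n_\mu(k)\Bigr) = (N+1)\,f_\lambda\, n_\lambda(k) - \sum_{\mu\subseteq\lambda} f_\mu f_{\widehat\mu}\, n_\mu(k),$$
where I use that $f_\mu f_{\widehat\mu}$ counts the chains $\varnothing\to\lambda$ through $\mu$, and that each maximal chain meets exactly $N+1$ diagrams, so $\sum_{\mu\subseteq\lambda} f_\mu f_{\widehat\mu} = (N+1)f_\lambda$ --- this produces the factor $p(n-p+1)+1$ in the answer. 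It then remains to evaluate $\Sigma(k) := \sum_{\mu\subseteq\lambda} f_\mu f_{\widehat\mu}\, n_\mu(k)$. Writing $n_\mu(k)$ as the number of boxes $e=(i,j)$ of $\mu$ on the diagonal and using $e\in\mu \iff \mu\supseteq[j^i]$, one gets $\Sigma(k) = \sum_{e\text{ on }x=k}\sum_{\mu\supseteq[j^i]} f_\mu f_{\widehat\mu}$, and each inner sum is computed by the chain-classification plus hook-length argument of Proposition~\ref{prop:Young_diag_count_formula} (the case treated there being $[1^p]\subseteq\mu$). Assembling these and cancelling against $(N+1)f_\lambda\, n_\lambda(k)$ gives the stated value; equivalently one checks that the forward difference $\phi_{\lambda}(k)-\phi_{\lambda}(k-1)$ equals the position-independent constant $f_{[n-p+2,(n-p+1)^{p-1}]}$ on $1\le k\le n-p+1$ and $-f_{[(n-p+1)^p,1]}$ on $n-p+1\le k\le n$, together with the boundary value $\phi_{\lambda}(0)=0$.

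The main obstacle is this evaluation-and-cancellation. The coefficient $(N+1)f_\lambda\, n_\lambda(k)$ is ``trapezoidal'' in $k$, since the diagonal box-count $n_\lambda(k)$ is not linear, so the linearity of $\phi_{\lambda}(k)$ emerges only after the non-linear part of $\Sigma(k)$ cancels it exactly. Making this cancellation transparent --- rather than verifying it by brute hook-length manipulation box by box along the diagonal --- is where the argument needs care, and is precisely the point where the position-independence of Proposition~\ref{prop:Young_diag_count_formula} (namely that the weighted counts $\sum_{j\in\sigma_{\ge 0}}f_{\lambda(\sigma)}f_{\widehat{\lambda(\sigma)}}$ and $\sum_{j\in\sigma_{\le n}^c}f_{\lambda(\sigma)}f_{\widehat{\lambda(\sigma)}}$ are independent of the position $j$) must be leveraged. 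Handling the two ranges of $k$ and their agreement at the peak $k=n-p+1$ is the remaining bookkeeping.
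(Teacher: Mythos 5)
Your overall architecture is the same as the paper's: the first equality is obtained by splitting a maximal chain at the diagram where the box $(i,j)$ is adjoined (your account of this, including $f^{\lambda/\nu}=f_{\widehat{\nu}}$ for the rectangle, matches the paper's sketch), and the closed form is extracted by differencing in $k$ and invoking the position-independent sums of Proposition~\ref{prop:Young_diag_count_formula}. Your triple-count reduction $\phi_{\lambda}(k)=(N+1)f_{\lambda}n_{\lambda}(k)-\Sigma(k)$ with $N=p(n-p+1)$ and $\Sigma(k)=\sum_{\mu\subseteq\lambda}f_{\mu}f_{\widehat{\mu}}\,n_{\mu}(k)$ is correct and is a clean repackaging of what the paper does. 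However, there is a genuine gap in how you propose to evaluate $\Sigma(k)$. The step ``each inner sum $\sum_{\mu\supseteq[j^i]}f_{\mu}f_{\widehat{\mu}}$ is computed by the chain-classification plus hook-length argument of Proposition~\ref{prop:Young_diag_count_formula}'' fails: for an interior box $(i,j)$ that sum counts pairs (chain, diagram on the chain containing $(i,j)$), hence equals
\begin{equation*}
  \sum_{\mu\supseteq[j^i]}f_{\mu}f_{\widehat{\mu}}=\sum_{T\in\mathrm{SYT}(\lambda)}\bigl(N+1-T(i,j)\bigr)=(N+1)f_{\lambda}-\sum_{T}T(i,j),
\end{equation*}
which is exactly the unknown you are trying to compute; it is not a single hook-length evaluation, and the chain-classification trick of Proposition~\ref{prop:Young_diag_count_formula} only applies to the boundary box $(p+1,1)$ (where ``$[1^p]\subseteq\mu$'' is equivalent to addability of an external box). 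So route (a) is circular as written.

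The missing ingredient — which is precisely the content of the paper's claim $N(T')=p(n-p+1)+1-(n_{T'}(j+1)-n_{T'}(j))$, proved there by telescoping the entries $B_{i,j}$ along two adjacent diagonals — is the per-diagram identity: for $k\leq p$ one has $n_{\mu}(k)-n_{\mu}(k-1)=1$ if $k-1\in\sigma_{\mu,\leq n}^{c}$ and $=0$ otherwise, while for $k>p$ one has $n_{\mu}(k)-n_{\mu}(k-1)=-1$ if $k-1\in\sigma_{\mu,\geq 0}$ and $=0$ otherwise (read off from the Maya-diagram/boundary-path correspondence). With this, $\Sigma(k)-\Sigma(k-1)$ becomes literally one of the two sums evaluated in Proposition~\ref{prop:Young_diag_count_formula}, namely $\frac{(n-p+1)(N+1)}{n+1}f_{\lambda}$ or $-\frac{p(N+1)}{n+1}f_{\lambda}$, and combining with the explicit values $n_{\lambda}(k)-n_{\lambda}(k-1)\in\{1,0,-1\}$ on the three ranges of $k$ closes the computation and produces the stated piecewise-linear answer. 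You correctly flag that ``making this cancellation transparent'' is the crux, and your asserted forward-difference values are the right ones, but without the displayed per-diagram identity (or the paper's equivalent telescoping of $N(T')$) the argument does not yet go through.
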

\begin{proof}
  The first equality can be proved in almost the same way as the computation of $ f_{[(n - p + 1)^p, 1]} $ in the proof of Proposition~\ref{prop:Young_diag_count_formula}. Note that, in this context, the number $ |\tau| + 1 = \rk(\tau) + 1 $ corresponds to the $ (i, j) $-entry of a standard Young tableau of shape $ [(n - p + 1)^p] $.
  For the proof of the second equality, we consider the case where $ 1 \leq k \leq n - p + 1 $ and $ p \leq n - p + 1 $. The other cases can be handled in a similar manner.
  Fix a standard Young tableau $ T $ of shape $ \tau \subseteq \lambda \coloneqq [(n - p + 1)^p] $ and a standard Young tableau $ \widehat{T} $ of shape $ \widehat{\tau} $ which is complementary to $ \tau $ in the $ p \times (n - p + 1) $ rectangle $ \lambda $.
  Then the pair $ (T, \widehat{T}) $ can be identified with a standard Young tableau $ T' $ of shape $ \lambda $, in which a stepwise path is drawn that forms part of the contour of $ \tau $. Here, $ T' $ is obtained by gluing $ T $ to $ \widehat{T}_0 $ along this path, where the diagram $ \widehat{T}_0 $ is obtained by replacing numbers $ 1, 2, \ldots, |\widehat{\tau}| $ in $ \widehat{T} $ with $ p(n - p + 1), p(n - p + 1) - 1, \ldots, p(n - p + 1) - |\widehat{\tau}| + 1 $.
  Hence, for each $ 0 \leq j \leq n $, we obtain
  \begin{equation*}
    \begin{split}
      & \sum_{j \in \sigma_{\leq n}^c}f_{\lambda(\sigma)}f_{\widehat{\lambda(\sigma)}}                                                                                                                       \\
      & = \sum_{T' \in \mathrm{SYT}(\lambda)}\#\left\{(T', \sigma) \middle| \, \sigma \in \binom{[n + 1]}{p} \, \text{such that} \, T'|_{\lambda(\sigma)} \in \mathrm{SYT}(\lambda(\sigma)) \, \text{and} \, j \in \sigma_{\leq n}^c\right\},
    \end{split}
  \end{equation*}
  where $ T'|_{\lambda(\sigma)} $ denotes the diagram obtained by restricting $ T' $ to the shape $ \lambda(\sigma) $.
  Fix a standard Young tableau $ T' $ of shape $ \lambda $. We now count the number of $ \sigma \in \binom{[n + 1]}{p} $ such that $ T'|_{\lambda(\sigma)} \in \mathrm{SYT}(\lambda(\sigma)) $ and $ j \in \sigma_{\leq n}^c $, and denote this number by $ N(T') $. We claim that
  \begin{equation*}
    N(T') = p(n - p + 1) + 1 - (n_{T'}(j + 1) - n_{T'}(j)).
  \end{equation*}
  If this can be shown, then by Proposition~\ref{prop:Young_diag_count_formula}, we obtain
  \begin{equation*}
    \begin{split}
      \frac{(n - p + 1)(p(n - p + 1) + 1)}{n + 1}f_{\lambda}
      & = \sum_{T' \in \mathrm{SYT}(\lambda)}N(T')                                           \\
      & = (p(n - p + 1) + 1)f_{\lambda} - (\phi_{\lambda}(j + 1) - \phi_{\lambda}(j))
    \end{split}
  \end{equation*}
  for $ 0 \leq j \leq n - p $ (where we set $ \phi_{\lambda}(0) \coloneqq 0 $), and in particular,
  \begin{equation*}
    \phi_{\lambda}(1) = \frac{p(p(n - p + 1) + 1)}{n + 1}f_{\lambda}.
  \end{equation*}
  This implies that $ \phi_{\lambda}(k) = \frac{p(p(n - p + 1) + 1)}{n + 1}f_{\lambda}k $ $ (1 \leq k \leq n - p + 1)$.
  Let $ B_{i, j} $ denote the $ (i, j) $-entry of $ T' $. Define the set $ P(T') $ by
  \begin{equation*}
    P(T') \coloneqq \left\{\sigma \in \binom{[n + 1]}{p} \middle| \, T'|_{\lambda(\sigma)} \in \mathrm{SYT}(\lambda(\sigma))\right\}.
  \end{equation*}
  Clearly, we have $ \#P(T') = p(n - p + 1) + 1 $. Note that $ \sigma \in P(T') $ does not satisfy $ j \in \sigma_{\leq n}^c $ if and only if the number $ |\lambda(\sigma)| $ belongs to the set $ P'(T') $, defined by
  \begin{equation*}
    \begin{split}
      & P'(T') \coloneqq \\
      & \left\{
      \begin{array}{ll}
        \coprod_{k = 1}^{j + 1}\{B_{p - j + k - 1, k - 1}, B_{p - j + k - 1, k - 1} + 1,\ldots ,B_{p - j + k - 1, k} - 1\} & (0 \leq j \leq p - 1),     \\
        \coprod_{k = 1}^{p}\{B_{k, j - p + k}, B_{k, j - p + k} + 1, \ldots, B_{k, j - p + k + 1} - 1\}                    & (p - 1 \leq j \leq n - p),
      \end{array}
      \right.
    \end{split}
  \end{equation*}
  where we set $ B_{\ast, 0} \coloneqq 0 $.
  Hence, we obtain
  \begin{align*}
     & N(T') = \# P(T') - \#P'(T')                       \\
     & = \left\{
    \begin{array}{ll}
      p(n - p + 1) + 1 - \sum_{k = 1}^{j + 1}(B_{p - j + k - 1, k} - B_{p - j + k - 1, k - 1}) & (0 \leq j \leq p - 1),     \\
      p(n - p + 1) + 1 - \sum_{k = 1}^p (B_{k, j - p + k + 1} - B_{k, j - p + k})              & (p - 1 \leq k \leq n - p),
    \end{array}
    \right.
    \\
     & = p(n - p + 1) + 1 - (n_{T'}(j + 1) - n_{T'}(j)).
  \end{align*}

\end{proof}
\begin{ex}
  Let $ n = 4 $ and $ p = 2 $. We compute $ \phi_{\lambda}(k) $ $ (1 \leq k \leq 4)$ for the standard Young tableaux of shape $ \lambda = [(n - p + 1)^p] = [3^2] = (3, 3) $ \textup{(Figure~\ref{fig:tableaux_[3^2]})}.
  \begin{figure}[htbp]
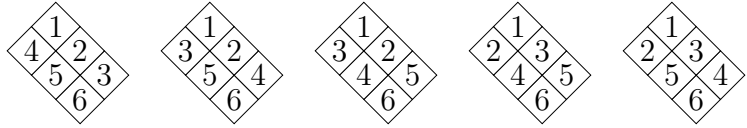

    \centering
    \begin{equation*}
      \Yvcentermath1
      \YRussian
      \young(63,52,41)
      \quad
      \young(64,52,31)
      \quad
      \young(65,42,31)
      \quad
      \young(65,43,21)
      \quad
      \young(64,53,21)
    \end{equation*}
    \caption{\texorpdfstring{Standard Young tableaux of shape $ [3^2] $}{Standard Young tableaux of shape [3^2]}}
    \label{fig:tableaux_[3^2]}
  \end{figure}

  Then we obtain
  \begin{align*}
    \phi_{[3^2]}(1) & = 4 + 3 + 3 + 2 + 2 = 14 = \frac{14}{5}f_{[3^2]},                                         \\
    \phi_{[3^2]}(2) & = 6 + 6 + 5 + 5 + 6 = 28 = \frac{14}{5}f_{[3^2]} \cdot 2,                                 \\
    \phi_{[3^2]}(3) & = 8 + 8 + 8 + 9 + 9 = 42 = \frac{14}{5}f_{[3^2]} \cdot 3 = \frac{21}{5}f_{[3^2]} \cdot 2, \\
    \phi_{[3^2]}(4) & = 3 + 4 + 5 + 5 + 4 = 21 = \frac{21}{5} f_{[3^2]}.
  \end{align*}
  In another expression, for example, $ \phi_{[3^2]}(2) $ is expressed as
  \begin{equation*}
    f_{\varnothing}f_{\yngs(0.4,3,2)} + 4f_{\yngs(0.4,2,1)}f_{\yngs(0.4,1^2)} + 5f_{\yngs(0.4,3,1)}f_{\yngs(0.4,1)} = 5 + 8 + 15 = 28.
  \end{equation*}
\end{ex}
\section{Geometric Equalities and Inequalities Based on the Weyl Peculiar Relation}\label{sec:geom_ineq}

In this section, we provide an elementary geometric interpretation of the generalized Weyl peculiar relation by applying our theorem to exponential curves.
\subsection{Exponential Curves}
\begin{defn}
  Let $ w_i = a_i - \sqrt{-1}b_i $ $ (i = 0, 1, \ldots, n)$ be mutually distinct complex numbers, where $ a_i $ and $ b_i $ are real numbers. Define a map $ \mathbf{e} $ by
  \begin{equation*}
    \mathbf{e} : \mathbb{C} \to \mathbb{P}^n, \quad z \mapsto (e^{w_0z} : e^{w_1z} : \cdots : e^{w_nz}).
  \end{equation*}
  This map is called an \textbf{exponential curve}. We denote the $ p $-th associated curve of $ \mathbf{e} $ by $ \mathbf{E}^{(p)} $.
\end{defn}
\begin{rem}
  The map $ \mathbf{e} $ is non-degenerate. Indeed, since the Wronskian of $ \mathbf{e} $ is given by the Vandermonde determinant multiplied by a non-zero entire function, it is not identically zero due to the assumption that the $ w_i $ $ (i = 0, 1, \ldots, n)$ are mutually distinct.

  In this section, we assume that $ \mathbf{E}^{(p)} $ is non-degenerate. If $ \mathbf{E}^{(p)} $ is degenerate, we can obtain a non-degenerate curve by slightly perturbing the $ w_i $. Indeed,
  \begin{equation*}
    \prod_{\substack{(i_0, i_1, \ldots, i_{p - 1}), \, (i_0', i_1', \ldots, i_{p - 1}') \in \binom{[n + 1]}{p} \\ (i_0, i_1, \ldots, i_{p - 1}) \neq (i_0', i_1', \ldots, i_{p - 1}')}}\sum_{k = 0}^{p - 1}(a_{i_k} - a_{i_k'}) = 0
  \end{equation*}
  is a closed condition on $ \mathbb{R}^{n + 1} $.
\end{rem}

\subsection{Application to Geometric Inequalities}
\ \par
The order function of exponential curves can be explicitly computed.
\begin{prop}[\cite{Ahlfors}, pp.~28--31; \cite{Weyl_2}, pp.~94--106; B. Shiffman \cite{Shiffman}, pp.~630--631]\label{prop:exp_order}
  \begin{equation*}
    T\{\mathbf{E}^{(p)}\}(r) = \frac{L_{p}}{2\pi}r + O(1),
  \end{equation*}
  where $ L_{p} $ denotes the perimeter of the convex hull of the finite set
  \begin{equation*}
    V_{p} \coloneqq \left\{\sum_{k = 0}^{p - 1}(a_{i_k}, b_{i_k})\right\}_{(i_0, i_1, \ldots, i_{p - 1}) \in \binom{[n + 1]}{p}} \subseteq \mathbb{R}^2.
  \end{equation*}
\end{prop}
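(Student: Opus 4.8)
The plan is to recognize $\mathbf{E}^{(p)}$ as an exponential curve in $\mathbb{P}^{\binom{n+1}{p}-1}$ (up to a diagonal projective change of coordinates) and then to quote the already-known $p=1$ computation for the resulting curve. First I would compute the Pl\"ucker coordinates of $\mathbf{E}^{(p)}$. Since the $k$-th component of $\mathbf{e}^{(j)}$ is $w_k^{\,j}e^{w_k z}$, for $\sigma = (i_0, i_1, \ldots, i_{p-1}) \in \binom{[n+1]}{p}$ the corresponding coordinate is the $p\times p$ minor
\begin{equation*}
  (E^{(p)})_{\sigma}(z) = \det\left(w_{i_l}^{\,j}e^{w_{i_l}z}\right)_{0 \le j,\, l \le p-1} = c_{\sigma}\,e^{W_{\sigma}z},
\end{equation*}
where $W_{\sigma} \coloneqq \sum_{k=0}^{p-1} w_{i_k}$ and, after factoring $e^{w_{i_l}z}$ out of the $l$-th column and evaluating the resulting Vandermonde determinant, $c_{\sigma} \coloneqq \prod_{0 \le s < t \le p-1}(w_{i_t} - w_{i_s})$. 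As the $w_i$ are pairwise distinct, each $c_{\sigma}$ is a nonzero constant. Writing $w_i = a_i - \sqrt{-1}\,b_i$ gives $W_{\sigma} = A_{\sigma} - \sqrt{-1}\,B_{\sigma}$ with $(A_{\sigma}, B_{\sigma}) = \sum_{k=0}^{p-1}(a_{i_k}, b_{i_k})$, so that under the identification $\mathbb{C} \cong \mathbb{R}^2$ the exponent $W_{\sigma}$ is precisely the point of $V_p$ indexed by $\sigma$.

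Next I would discard the constants $c_{\sigma}$. Set $\mathbf{f}(z) \coloneqq (\, \cdots : e^{W_{\sigma}z} : \cdots \,)_{\sigma \in \binom{[n+1]}{p}}$ and let $T\{\mathbf{f}\}$ be its rank-one order function; the diagonal matrix with entries $c_{\sigma}$ lies in $\mathrm{PGL}$ of $\mathbb{P}^{\binom{n+1}{p}-1}$, and I claim that $T\{\mathbf{E}^{(p)}\}(r) = T\{\mathbf{f}\}(r) + O(1)$. By the Ahlfors--Shimizu representation (Theorem~\ref{eq:A-S_char}), the difference of the two order functions equals
\begin{equation*}
  \frac{\sqrt{-1}}{2\pi}\int_{r_0}^{r}\frac{dt}{t}\int_{\Delta(t)}\partial\bar{\partial}\,\psi(z), \qquad \psi \coloneqq \log\frac{\sum_{\sigma}|c_{\sigma}|^2 e^{2\,\mathrm{Re}(W_{\sigma}z)}}{\sum_{\sigma}e^{2\,\mathrm{Re}(W_{\sigma}z)}}.
\end{equation*}
Because every $c_{\sigma}$ is a nonzero constant and no coordinate ever vanishes, $\psi$ is a smooth, globally bounded real function on $\mathbb{C}$; Jensen's formula then reduces the double integral to $\tfrac{1}{4\pi}\int_0^{2\pi}\psi(re^{\sqrt{-1}\theta})\,d\theta + O(1)$, which is $O(1)$ since $\psi$ is bounded. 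Hence the order function is insensitive, modulo $O(1)$, to the diagonal rescaling.

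Finally, the standing hypothesis that $\mathbf{E}^{(p)}$ is non-degenerate is equivalent to the pairwise distinctness of the exponents $W_{\sigma}$ --- equivalently, of the points of $V_p$ --- because the Wronskian of $\mathbf{f}$ is a Vandermonde determinant in the $W_{\sigma}$ multiplied by a nowhere-vanishing exponential factor. Thus $\mathbf{f}$ is a genuine non-degenerate exponential curve in $\mathbb{P}^{\binom{n+1}{p}-1}$ whose exponent set is $V_p$, and the cited evaluation of the order function of an exponential curve (Ahlfors; H.~and F.~J.~Weyl; Shiffman) gives $T\{\mathbf{f}\}(r) = \tfrac{L_p}{2\pi}r + O(1)$, with $L_p$ the perimeter of the convex hull of $V_p$. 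Combined with the previous paragraph, this yields the proposition. The step demanding the most care is the $O(1)$-invariance of the order function under the diagonal coordinate change; the Pl\"ucker/Vandermonde computation and the appeal to the $p=1$ case are routine.
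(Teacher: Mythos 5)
Your proposal is correct. The paper itself gives no proof of this proposition --- it is quoted with citations to Ahlfors, the Weyls, and Shiffman --- and your argument is essentially the standard one found in those sources: the Vandermonde computation shows that each Pl\"ucker coordinate of $\mathbf{E}^{(p)}$ is a nonzero constant times $e^{W_{\sigma}z}$, so $\mathbf{E}^{(p)}$ is itself (after a harmless diagonal rescaling, which you correctly dispose of via the boundedness of $\psi$ and the Green--Jensen formula) an exponential curve in $\mathbb{P}^{\binom{n+1}{p}-1}$ with exponent set $V_p$, to which the rank-one evaluation of the order function applies.
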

Let $ L_i^{(p)} $ denotes the perimeter of the convex hull of the finite set
\begin{equation*}
  V_i^{(p)} \coloneqq \left\{\sum_{\sigma \in I}\sum_{k = 0}^{p - 1}(a_{i_k}, b_{i_k})\right\}_{I = \left\{\sigma = (i_0, i_1, \ldots, i_{p - 1}) \in \binom{[n + 1]}{p} \right\} \, \textit{s.t.} \#I = i} \subseteq \mathbb{R}^2.
\end{equation*}
Naturally, $ L_i^{(p)} $ appears as a coefficient in the order function $ T_i\{\mathbf{E}^{(p)}\} $. By applying Theorem~\ref{thm:pec_rel} to the exponential curve $ \mathbf{e} $, we obtain the following formula.
\begin{cor}\label{cor:perim}
  \begin{equation*}
    L_{p - 1} + L_{p + 1} = L_2^{(p)}.
  \end{equation*}
\end{cor}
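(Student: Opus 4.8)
The plan is to recognize both sides of the identity as leading coefficients of order functions that are already linked by the Weyl peculiar relation (Theorem~\ref{thm:pec_rel}) applied to $\mathbf{e}$, so that the perimeter identity falls out by matching the linear-in-$r$ growth rates.

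First I would show that the $p$-th associated curve of an exponential curve is itself an exponential curve. Since $\mathbf{e}^{(j)}(z) = (w_0^j e^{w_0 z}, \ldots, w_n^j e^{w_n z})$, the Pl\"ucker coordinate of $\mathbf{E}^{(p)} = \mathbf{e} \wedge \mathbf{e}^{(1)} \wedge \cdots \wedge \mathbf{e}^{(p - 1)}$ indexed by $\sigma = (i_0, \ldots, i_{p - 1})$ factors as the Vandermonde determinant in $w_{i_0}, \ldots, w_{i_{p - 1}}$ times $\exp(W_\sigma z)$, where $W_\sigma \coloneqq \sum_{k = 0}^{p - 1} w_{i_k}$. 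As the $w_i$ are distinct, each Vandermonde factor is a nonzero constant, so $\mathbf{E}^{(p)}$ is projectively an exponential curve with frequencies $\{W_\sigma\}_\sigma$; writing $w_i = a_i - \sqrt{-1}\,b_i$, the point of $\mathbb{R}^2$ attached to $W_\sigma$ is $\sum_{k = 0}^{p - 1}(a_{i_k}, b_{i_k})$, which is exactly the point defining $V_p$. Moreover, two equal frequencies $W_\sigma = W_\tau$ would make the corresponding Pl\"ucker coordinates proportional and hence force the image of $\mathbf{E}^{(p)}$ into a hyperplane; thus the non-degeneracy of $\mathbf{E}^{(p)}$ assumed in this section is equivalent to the $W_\sigma$ being pairwise distinct, and Proposition~\ref{prop:exp_order} applies to $\mathbf{E}^{(p)}$ as a genuine exponential curve.

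Next I would evaluate the three order functions. Applying Proposition~\ref{prop:exp_order} to $\mathbf{e}$ at ranks $p - 1$ and $p + 1$, together with $T_{p - 1} = T_1\{\mathbf{E}^{(p - 1)}\}$ and $T_{p + 1} = T_1\{\mathbf{E}^{(p + 1)}\}$ from \eqref{eq:T_p-omega_p}, gives $T_{p - 1} = \frac{L_{p - 1}}{2\pi}r + O(1)$ and $T_{p + 1} = \frac{L_{p + 1}}{2\pi}r + O(1)$. For the last term I would apply \eqref{eq:T_p-omega_p} once more, now with base curve $\mathbf{E}^{(p)}$ and rank $2$, so that $T_2\{\mathbf{E}^{(p)}\}$ equals the rank-$1$ order function of the second associated curve of $\mathbf{E}^{(p)}$; I then apply Proposition~\ref{prop:exp_order} to the exponential curve $\mathbf{E}^{(p)}$ at rank $2$. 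By the same Vandermonde computation, this second associated curve is an exponential curve whose frequencies are the pairwise sums $W_\sigma + W_\tau$ $(\sigma \neq \tau)$, and whose $\mathbb{R}^2$-points are precisely the elements of $V_2^{(p)}$; hence $T_2\{\mathbf{E}^{(p)}\} = \frac{L_2^{(p)}}{2\pi}r + O(1)$.

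Finally, Theorem~\ref{thm:pec_rel} applied to $\mathbf{e}$ reads $T_2\{\mathbf{E}^{(p)}\} = T_{p - 1} + T_{p + 1}$. Substituting the three asymptotics and comparing the coefficients of $r$ (equivalently, dividing by $r$ and letting $r \to \infty$) yields $L_2^{(p)} = L_{p - 1} + L_{p + 1}$. The main obstacle will be this last evaluation: confirming that Proposition~\ref{prop:exp_order} legitimately applies to $\mathbf{E}^{(p)}$ at rank $2$. This rests on (i) the distinctness of the frequencies $W_\sigma$, which I deduce above from the non-degeneracy of $\mathbf{E}^{(p)}$, and (ii) the observation that any coincidences among the pairwise sums $W_\sigma + W_\tau$ are harmless, since $L_2^{(p)}$ is the perimeter of the convex hull of the \emph{set} $V_2^{(p)}$ and is therefore insensitive to repeated points.
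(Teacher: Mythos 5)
Your argument is correct and is essentially the paper's own derivation of Corollary~\ref{cor:perim}: the paper obtains the identity precisely by applying Theorem~\ref{thm:pec_rel} to $\mathbf{e}$ and reading off the coefficient of $r$ via Proposition~\ref{prop:exp_order}, and your Vandermonde factorization, the equivalence of non-degeneracy of $\mathbf{E}^{(p)}$ with distinctness of the frequencies $W_\sigma$, and the remark that repeated pairwise sums do not affect the convex hull supply exactly the details the paper leaves implicit. (The paper additionally records a second, purely convex-geometric proof via the Minkowski-sum identity $V_{p-1}+V_{p+1}=V_2^{(p)}$ and Lemma~\ref{lem:Minkowski_sum}, but that is presented only as an alternative.)
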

We present an alternative proof of this corollary. Recall a fundamental operation in convex geometry.
Let $ A $ and $ B $ be subsets of $ \mathbb{R}^n $. The \textbf{Minkowski sum} of $ A $ and $ B $ is defined by
\begin{equation*}
  A + B \coloneqq \{a + b \in \mathbb{R}^n \mid a \in A, \, b \in B\}.
\end{equation*}
The following is a fundamental property of the Minkowski sum.
\begin{lem}\label{lem:Minkowski_sum}
  Let $ A $, $ B \subseteq \mathbb{R}^2 $ be convex bodies. Then we have
  \begin{equation*}
    \mathrm{perimeter}(A + B) = \mathrm{perimeter}(A) + \mathrm{perimeter}(B),
  \end{equation*}
  where $ \mathrm{perimeter}(C) $ denotes the perimeter of a convex body $ C \subseteq \mathbb{R}^2 $.
\end{lem}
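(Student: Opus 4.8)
The plan is to reduce the identity to two facts about planar convex bodies: the additivity of the support function under Minkowski sum, and Cauchy's integral formula expressing the perimeter as the integral of the support function over all directions. For a convex body $K \subseteq \mathbb{R}^2$ and a unit direction $u_\theta = (\cos\theta, \sin\theta)$, set $h_K(\theta) \coloneqq \max_{x \in K}\langle x, u_\theta\rangle$. The additivity $h_{A + B} = h_A + h_B$ is immediate from the definition, since maximizing $\langle a + b, u_\theta\rangle$ over the product $A \times B$ splits into the separate maxima over $A$ and $B$. Granting Cauchy's formula $\mathrm{perimeter}(K) = \int_0^{2\pi} h_K(\theta)\, d\theta$, the lemma then follows at once:
\[
  \mathrm{perimeter}(A + B) = \int_0^{2\pi} h_{A + B}\, d\theta = \int_0^{2\pi}(h_A + h_B)\, d\theta = \mathrm{perimeter}(A) + \mathrm{perimeter}(B).
\]

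The main obstacle is justifying Cauchy's formula, and in particular handling convex bodies whose boundary is not smooth. For a body with $C^2$ boundary I would parametrize $\partial K$ by the outward-normal angle $\theta$, so that the boundary point is $h_K(\theta)u_\theta + h_K'(\theta)u_\theta^{\perp}$ and the arclength element becomes $ds = (h_K(\theta) + h_K''(\theta))\, d\theta$; integrating over a full period kills the $h_K''$ term and leaves $\int_0^{2\pi} h_K\, d\theta$. For a general convex body one approximates by smooth convex bodies and passes to the limit, using that the perimeter is continuous with respect to the Hausdorff metric and that the corresponding support functions converge uniformly.

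For the application in this paper it in fact suffices to treat convex polygons, since the sets $A$ and $B$ arising in Corollary~\ref{cor:perim} are convex hulls of finite point sets; in this case a fully elementary argument is available. The boundary of $A + B$ decomposes into edges, each a translate of an edge of $A$ or of $B$, arranged in order of their outward-normal direction, with edges sharing a common normal direction concatenated so that their lengths add. Summing the edge lengths over all normal directions therefore reproduces $\mathrm{perimeter}(A) + \mathrm{perimeter}(B)$ directly, and the general statement follows by polygonal approximation as above. The delicate point throughout is the book-keeping ensuring that no boundary length is created or lost in the passage to the Minkowski sum, which the support-function computation encodes cleanly.
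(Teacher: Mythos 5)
The paper itself gives no proof of Lemma~\ref{lem:Minkowski_sum}: it is recorded as a ``fundamental property of the Minkowski sum'' and used as a black box, so there is no argument to compare yours against line by line. Your proof is correct and is the standard one: the additivity $h_{A+B}=h_A+h_B$ of the support function is immediate, and Cauchy's formula $\mathrm{perimeter}(K)=\int_0^{2\pi}h_K(\theta)\,d\theta$ reduces the lemma to linearity of the integral. Your justification of Cauchy's formula is also sound --- for $C^2$ strictly convex bodies the boundary point with outward normal $u_\theta$ is $h_K(\theta)u_\theta+h_K'(\theta)u_\theta^{\perp}$, the arclength element is $(h_K+h_K'')\,d\theta$, and the $h_K''$ term integrates to zero over a period; the general case follows by smooth (or polygonal) approximation together with continuity of the perimeter in the Hausdorff metric. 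One point worth making explicit, since it matters for how the lemma is actually used in this paper: in Proposition~\ref{prop:sp_curve} the sets $V_k$ degenerate to line segments, and the quantities $L_k=2k(n-k+1)$ are \emph{twice} the segment lengths. The support-function formula handles this automatically (for a segment of length $\ell$ one gets $\int_0^{2\pi}h\,d\theta=2\ell$), so your route validates the degenerate case consistently with the paper's conventions, whereas a naive ``sum of edge lengths of a polygon'' picture would need a separate convention there. Your purely combinatorial edge-decomposition argument for polygons is also a legitimate self-contained proof for the finite point sets arising in Corollary~\ref{cor:perim}, and arguably closer in spirit to the elementary geometric flavor of Section~\ref{sec:geom_ineq}.
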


\begin{proof}[Another proof of \textup{Corollary~\ref{cor:perim}}]
  \ \par
  It is enough to show that
  \begin{align*}
    V_{p - 1} + V_{p + 1} = V_2^{(p)}.
  \end{align*}
  Indeed, Corollary~\ref{cor:perim} follows directly from this equality and Lemma~\ref{lem:Minkowski_sum}. For $ (i_0, i_1, \ldots, i_{p - 1}) $, $ (i_0', i_1', \ldots, i_{p - 1}') \in \binom{[n + 1]}{p} $, we write
  \begin{equation*}
    (i_0, i_1, \ldots, i_{p - 1}) < (i_0', i_1', \ldots, i_{p - 1}')
  \end{equation*}
  if there exists $ s \in \{0, 1, \ldots, p - 1\} $ such that $ i_k = i_k' $ $ (k = 0, 1, \ldots, s - 1)$ and  $ i_s < i_s' $. (This is the lexicographical order.) Let $ c_k \coloneqq (a_k, b_k) $ for $ 0 \leq k \leq n $. Then $ V_l $ $ (l = p - 1, p + 1)$ and $ V_2^{(p)} $ are given by
  \begin{align*}
    V_l       & = \left\{\sum_{k = 0}^{l - 1}c_{i_k}\right\}_{(i_0, i_1, \ldots, i_{l - 1}) \in \binom{[n + 1]}{l}} \quad (l = p - 1, p + 1),                                                    \\
    V_2^{(p)} & = \left\{\sum_{k = 0}^{p - 1}c_{i_k} + \sum_{k = 0}^{p - 1}c_{i_k'}\right\}_{\substack{(i_0, i_1, \ldots, i_{p - 1}), \, (i_0', i_1', \ldots, i_{p - 1}') \in \binom{[n + 1]}{p} \\ (i_0, i_1, \ldots, i_{p - 1}) < (i_0', i_1', \ldots, i_{p - 1}')}}.
  \end{align*}
  Let $ \sum_{k = 0}^{p - 1}c_{i_k} + \sum_{{k = 0}}^{p - 1}c_{i_k'} $ $ ((i_0, i_1, \ldots, i_{p - 1}) < (i_0', i_1', \ldots, i_{p - 1}')) $ be an element of $ V_2^{(p)} $. Then, by the definition of the lexicographical order, we obtain the minimal integer $ s $ such that $ i_s < i_s' $. Since $ i_{s - 1}' = i_{s - 1} < i_s $, the sequence $ (i_0', i_1', \ldots, i_{s - 1}', i_s, i_s', \ldots, i_{p - 1}') $ belongs to $ \binom{[n + 1]}{p + 1} $. Thus we obtain
  \begin{equation*}
    \sum_{k = 0}^{p - 1} c_{i_k} + \sum_{k = 0}^{p - 1} c_{i_k'} = \sum_{k = 0, \, k \neq s}^{p - 1} c_{i_k} + \left(c_{i_s} + \sum_{k = 0}^{p - 1}c_{i_k'}\right) \in V_{p - 1} + V_{p + 1}.
  \end{equation*}
  The reverse inclusion is also straightforward.
\end{proof}
\begin{rem}
  A similar interpretation can be applied to the Kronecker product of two exponential curves. Let $ \mathbf{x}_1 = (x_{1, i})_{i = 0, 1, \ldots, n} $ and $ \mathbf{x}_2 = (x_{2, j})_{j = 0, 1, \ldots, m} $ be reduced representations of holomorphic curves. Then $ (x_{1, i}x_{2, j})_{i = 0, 1, \ldots, n, j = 0, 1, \ldots, m} $ defines a holomorphic curve in $ (n + 1)(m + 1) $-space. This curve is called the \textbf{Kronecker product} of $ \mathbf{x}_1 $ and $ \mathbf{x}_2 $, denoted by $ \mathbf{x}_1 \times \mathbf{x}_2 $. Then we obtain the following equality \textup{(\cite{Weyl_2}, p.~107)}:
  \begin{equation}\label{eq:Kronecker_prod}
    T\{\mathbf{x}_1 \times \mathbf{x}_2\} = T\{\mathbf{x}_1\} + T\{\mathbf{x}_2\}.
  \end{equation}
  If $ \mathbf{x}_1 $ and $ \mathbf{x}_2 $ are exponential curves, then so is their Kronecker product $ \mathbf{x}_1 \times \mathbf{x}_2 $. Thus, the identity \eqref{eq:Kronecker_prod} for exponential curves is simply a reformulation of \textup{Lemma~\ref{lem:Minkowski_sum}}, applied to the convex hulls of two finite sets in $\mathbb{R}^2$.
\end{rem}
\begin{rem}
  The inequality \eqref{eq:concave_rel} implies that the sequence $\{T_i\}_{i = 0}^{n + 1}$ is concave with no $O(\log T)$ error term when the holomorphic curve in question is an exponential curve. In other words, the second-order difference $ T_{k - 1} - 2T_k + T_{k + 1}$ is always non-positive for every $ 1 \leq k \leq n $. Consequently, the same holds for $ L_{k - 1} - 2L_k + L_{k + 1} $. This fact can also be verified  directly; see \cite{Weyl_2}, \textup{pp.~124--125}.
\end{rem}
Furthermore, one can observe the following notable property for exponential curves.
\begin{lem}\label{lem:sym_ord}
  For $ 0 \leq p \leq n + 1 $, we have
  \begin{align*}
    L_{n - p + 1} = L_p.
  \end{align*}
  Namely, the sequence $ \{L_i\}_{i = 0}^{n + 1} $ is symmetric.
  In particular, by \textup{Proposition~\ref{prop:exp_order}}, we obtain
  \begin{equation*}
    T_{n - p + 1} = T_p + O(1)
  \end{equation*}
  for exponential curves.
\end{lem}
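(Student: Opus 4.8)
The plan is to exploit the complementation bijection between $p$-element and $(n-p+1)$-element subsets of $\{0, 1, \ldots, n\}$. Write $c_i \coloneqq (a_i, b_i) \in \mathbb{R}^2$ for $0 \leq i \leq n$, and let $S \coloneqq \sum_{i = 0}^n c_i$ denote the total sum. For any $p$-element index set $I = \{i_0, i_1, \ldots, i_{p - 1}\}$, its complement $I^c \coloneqq \{0, 1, \ldots, n\} \setminus I$ has $n - p + 1$ elements, and
\begin{equation*}
  \sum_{i \in I} c_i = S - \sum_{i \in I^c} c_i.
\end{equation*}
Since $I \mapsto I^c$ is a bijection from the $p$-subsets onto the $(n - p + 1)$-subsets, this identity shows that, as subsets of $\mathbb{R}^2$,
\begin{equation*}
  V_p = S - V_{n - p + 1} = \{S - v \mid v \in V_{n - p + 1}\}.
\end{equation*}

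First I would observe that the affine map $\rho : \mathbb{R}^2 \to \mathbb{R}^2$, $\rho(x) \coloneqq S - x$, is a point reflection about $S/2$, hence an isometry of the Euclidean plane. An isometry carries the convex hull of a finite set onto the convex hull of the image and preserves all Euclidean lengths; in particular it preserves the perimeter of the convex hull (where the perimeter of a degenerate hull, i.e. a point or a segment, is read as $0$ or twice the segment length, respectively). Applying this to $\rho$ and to $V_{n - p + 1}$, together with $V_p = \rho(V_{n - p + 1})$, yields $L_p = L_{n - p + 1}$, which is the first assertion.

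For the consequence on order functions, recall from \eqref{eq:T_p-omega_p}, applied to the exponential curve $\mathbf{e}$, that $T_p = T_1\{\mathbf{E}^{(p)}\} = T\{\mathbf{E}^{(p)}\}$. Hence Proposition~\ref{prop:exp_order} gives $T_p(r) = \frac{L_p}{2\pi}r + O(1)$ and likewise $T_{n - p + 1}(r) = \frac{L_{n - p + 1}}{2\pi}r + O(1)$; subtracting and using $L_p = L_{n - p + 1}$ produces $T_{n - p + 1}(r) = T_p(r) + O(1)$. (The extremal cases $p = 0$ and $p = n + 1$ are consistent, since $L_0 = L_{n + 1} = 0$ and $T_0 = T_{n + 1} = 0$ by convention.)

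There is essentially no serious obstacle here: the whole content is the complementation identity $V_p = S - V_{n - p + 1}$, after which the result is immediate from the isometry-invariance of the perimeter. The only point demanding mild care is the bookkeeping of degenerate configurations — when the $c_i$ fail to be in general position the convex hull of $V_p$ may collapse to a segment or to a single point — but because $\rho$ is an isometry the equality of perimeters persists verbatim in those cases, so no separate argument is needed.
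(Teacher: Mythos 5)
Your proof is correct and follows essentially the same route as the paper: the complementation bijection $I \mapsto I^c$ gives $V_{n-p+1} = \{S - v \mid v \in V_p\}$, and the point reflection being an isometry forces the convex hulls to have equal perimeters. The additional remarks on degenerate hulls and on the extremal cases $p = 0,\, n+1$ are fine but not needed beyond what the paper records.
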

\begin{proof}
  Let $ A \coloneqq \sum_{k = 0}^n a_k$ and  $B \coloneqq \sum_{k = 0}^n b_k $. Then we have
  \begin{equation*}
    V_{n - p + 1} = \{(A, B) - (a, b)\}_{(a, b) \in V_{p}}.
  \end{equation*}
  This implies $ L_{n - p + 1} = L_p $.
\end{proof}
\begin{prob}
  How are  $ T_{n - p + 1} $ and $ T_p $ related for general holomorphic curves?
\end{prob}
In a similar way to Corollary~\ref{cor:perim}, the following corollary can be derived.
\begin{cor}\label{cor:gen_perim}
  Let $ \{(a_i, b_i)\}_{i = 0}^n \subseteq \mathbb{R}^2 $ be a finite set. Let $ p $ and $ i $ be positive integers with $ p \leq n $ and $ 1 \leq i \leq p(n - p + 1) $. Then the following inequality holds:
  \begin{align}\label{eq:gen_perim_ineq}
    L_p \leq \sum_{s = 1}^{i - 1} \max_{\sigma \in \binom{[n + 1]}{p}_{(k_s)}}\left(\sum_{k = 1}^n n_{\lambda(\sigma)}(k)(L_{k - 1} - 2L_k + L_{k + 1})\right) + iL_p \leq L_i^{(p)}.
  \end{align}
\end{cor}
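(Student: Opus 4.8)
The plan is to derive both inequalities in \eqref{eq:gen_perim_ineq} by applying the two growth estimates established earlier—Theorem~\ref{thm:gen_pec_rel} (Theorem~A) and Theorem~\ref{thm:pos_ineq} (Theorem~B)—to the exponential curve $\mathbf{e}$ and then reading off the coefficient of $r$. The starting point is that for $\mathbf{e}$ every order function is exactly affine in $r$ modulo $O(1)$: by Proposition~\ref{prop:exp_order} together with $T_1\{\mathbf{E}^{(k)}\}=T_k$ (see \eqref{eq:T_p-omega_p}) one has $T_k(r)=\frac{L_k}{2\pi}r+O(1)$ for each $0\le k\le n+1$. Moreover, the $i$-th associated curve of $\mathbf{E}^{(p)}$ is again a (polynomially decorated) exponential curve whose $I$-th Pl\"ucker coordinate carries the dominant frequency $\sum_{\sigma\in I}\sum_{k=0}^{p-1}(a_{i_k},b_{i_k})$ as $I$ ranges over the $i$-element subsets of $\binom{[n+1]}{p}$; these frequencies are precisely the points of $V_i^{(p)}$, so the same reasoning gives $T_i\{\mathbf{E}^{(p)}\}(r)=\frac{L_i^{(p)}}{2\pi}r+O(1)$. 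Note that, as recorded in the remark following Corollary~\ref{cor:perim}, there is no logarithmic error term for exponential curves, which is what makes the leading-coefficient extraction exact.

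With these identifications in hand, the key elementary observation is that a maximum of finitely many affine functions of $r$ has leading coefficient equal to the maximum of the individual leading coefficients; that is, $\max_\sigma\bigl[A_\sigma r+B_\sigma\bigr]=(\max_\sigma A_\sigma)r+O(1)$ as $r\to\infty$. Applying this to each inner maximum over $\sigma\in\binom{[n+1]}{p}_{(k_s)}$ shows that the coefficient of $r/(2\pi)$ in the middle term of \eqref{eq:gen_pec_ineq} is precisely the middle term of \eqref{eq:gen_perim_ineq}. For the right inequality of \eqref{eq:gen_perim_ineq}, I would substitute the above affine expressions into Theorem~\ref{thm:gen_pec_rel}, divide by $r/(2\pi)$, and let $r\to\infty$; the bounded terms disappear in the limit, yielding $[\text{middle of }\eqref{eq:gen_perim_ineq}]\le L_i^{(p)}$.

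For the left inequality I would invoke Theorem~\ref{thm:pos_ineq}. By Lemma~\ref{lem:sym_ord} one has $T_{n-p+1}=T_p+O(1)$ for exponential curves, so $\min(T_p,T_{n-p+1})(r)=\frac{L_p}{2\pi}r+O(1)$. Fixing $0<\epsilon<1$, substituting into Theorem~B, and dividing by $r/(2\pi)$ gives $(1-\epsilon)L_p\le[\text{middle of }\eqref{eq:gen_perim_ineq}]$ in the limit $r\to\infty$; here the exceptional set attached to the symbol $//$ causes no difficulty, since it has finite Lebesgue measure and hence the inequality holds along arbitrarily large $r$, which suffices to compare the slopes. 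Letting $\epsilon\to 0$ then yields $L_p\le[\text{middle of }\eqref{eq:gen_perim_ineq}]$, completing the proof.

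The step I expect to require the most care is the identification $T_i\{\mathbf{E}^{(p)}\}(r)=\frac{L_i^{(p)}}{2\pi}r+O(1)$: one must verify that, after discarding the Vandermonde-type polynomial prefactors (which contribute only $O(1)$ to the order function), the dominant exponential frequency of the $I$-th Pl\"ucker coordinate of the $i$-th associated curve of $\mathbf{E}^{(p)}$ is indeed $\sum_{\sigma\in I}$ of the corresponding frequency of $\mathbf{E}^{(p)}$, so that Proposition~\ref{prop:exp_order} applies to this derived curve with frequency set $V_i^{(p)}$. Once this is settled, everything else is the routine passage from an inequality between affine-in-$r$ functions to the inequality between their slopes.
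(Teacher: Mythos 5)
Your proposal is correct and is essentially the paper's own argument: the paper derives Corollary~\ref{cor:gen_perim} ``in a similar way to Corollary~\ref{cor:perim},'' i.e.\ by applying Theorem~\ref{thm:gen_pec_rel} (for the upper bound) and Theorem~\ref{thm:pos_ineq} together with Lemma~\ref{lem:sym_ord} (for the lower bound) to the exponential curve and comparing the coefficients of $r$, exactly as you do. The only point left implicit in both your write-up and the paper is that for an arbitrary finite set $\{(a_i,b_i)\}$ one must perturb to make $\mathbf{E}^{(p)}$ non-degenerate and then pass to the limit using continuity of the perimeters, as indicated in the remark following the definition of exponential curves.
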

We present an example of Corollary~\ref{cor:perim} and Corollary~\ref{cor:gen_perim}.
\begin{ex}
  Let $ n = 5 $, $ p = 2 $, $ i = 3 \leq p(n - p + 1) = 8 $. \textup{Corollary~\ref{cor:perim}} implies that
  \begin{equation*}
    L_1 + L_3 = L_2^{(2)}.
  \end{equation*}
  Moreover, \textup{Corollary~\ref{cor:gen_perim}} together with \textup{Lemma~\ref{lem:sym_ord}} implies that
  \begin{equation*}
    L_2 \leq \max(2L_3, 2L_1 + L_2) \leq L_3^{(2)}.
  \end{equation*}
  When $ \{(a_i, b_i)\}_{i = 0}^5 = \{(0, 0), (0, 1), (1, 0), (1, 1), (1, 2), (2, 1)\} $, we have
  \begin{align*}
    L_1       = 2   & + 3\sqrt{2}, \quad
    L_2 = 4 + \sqrt{2} + 2\sqrt{5}, \quad
    L_3 = 8 + 2\sqrt{2},                      \\
    \quad L_2^{(2)} & = 10 + 5\sqrt{2}, \quad
    L_3^{(2)} = 8 + 4\sqrt{2} + 4\sqrt{5}.
  \end{align*}
  In this case, \textup{Corollary~\ref{cor:perim}} and \textup{Corollary~\ref{cor:gen_perim}} respectively take the forms:
  \begin{align*}
    L_1 + L_3                                  & = 10 + 5\sqrt{2} = L_2^{(2)},                                        \\
    L_2 < \max(2L_3, 2L_1 + L_2)  = 2L_1 + L_2 & = 8 + 7\sqrt{2} + 2\sqrt{5} = 22.3 \cdots < 22.6 \cdots = L_3^{(2)}.
  \end{align*}
\end{ex}
We now present a special exponential curve that attains the equality in \eqref{eq:gen_perim_ineq} of Corollary~\ref{cor:gen_perim}. This curve was constructed by H. Fujimoto \cite{Fujimoto_1} as the best possible example for the truncated version of the defect relation (see \eqref{eq:trunc_def_rel}).
\begin{prop}\label{prop:sp_curve}
  Define the exponential curve $ \mathbf{e}_0 $ by
  \begin{equation*}
    \mathbf{e}_0 : \mathbb{C} \to \mathbb{P}^n, \quad z \mapsto (1 : e^z : e^{2z} : \cdots : e^{nz}).
  \end{equation*}
  Equality in \eqref{eq:gen_perim_ineq} is attained when $ \mathbf{e} = \mathbf{e}_0 $.
  In particular, the holomorphic curve $ \mathbf{x} = \mathbf{e}_0 $ attains equality in \eqref{eq:gen_pec_ineq} of \textup{Theorem~\ref{thm:gen_pec_rel}}.
\end{prop}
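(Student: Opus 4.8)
The plan is to show that every inequality in the chain \eqref{eq:gen_perim_ineq} (equivalently \eqref{eq:gen_pec_ineq}) degenerates to an equality for $\mathbf{e}_0$, by computing both sides in closed form. The point of departure is that the weight points of $\mathbf{e}_0$ are $(a_i,b_i)=(i,0)$, hence collinear and equally spaced; so every convex hull in Proposition~\ref{prop:exp_order} is a segment and each $L_k$ is twice its length. Computing the extreme values of $i_0+\cdots+i_{k-1}$ over $\binom{[n+1]}{k}$ gives $\binom{k}{2}$ and $kn-\binom{k}{2}$, so
\[
  L_k = 2k(n-k+1), \qquad T_k(r) = \frac{k(n-k+1)}{\pi}r + O(1).
\]
The decisive feature is that $k\mapsto k(n-k+1)$ is quadratic with leading coefficient $-1$, so its second difference is the constant $-2$; therefore
\[
  L_{k-1}-2L_k+L_{k+1} = -4, \qquad T_{k-1}-2T_k+T_{k+1} = -\frac{2r}{\pi}+O(1)
\]
for every $1\le k\le n$, independently of $k$.

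First I would use this constancy to evaluate the left-hand side of \eqref{eq:gen_pec_ineq}. Since the second difference is independent of $k$, for every $\sigma\in\binom{[n+1]}{p}_{(k_s)}$ one has
\[
  \sum_{k=1}^n n_{\lambda(\sigma)}(k)(T_{k-1}-2T_k+T_{k+1}) = -\frac{2r}{\pi}\sum_{k=1}^n n_{\lambda(\sigma)}(k)+O(1) = -\frac{2rs}{\pi}+O(1),
\]
where I use $\sum_{k=1}^n n_{\lambda(\sigma)}(k)=|\lambda(\sigma)|=s$ exactly as in Lemma~\ref{lem:V_ineq}. In particular the bracketed quantity is the same for all $\sigma$ at level $s$, so each maximum equals $-2rs/\pi+O(1)$, and with $q\coloneqq p(n-p+1)$ the entire left-hand side collapses to
\[
  \sum_{s=1}^{i-1}\Big(-\frac{2rs}{\pi}\Big)+i\,\frac{p(n-p+1)}{\pi}r+O(1) = \frac{i(q-i+1)}{\pi}r+O(1).
\]
The identical computation with $L$ in place of $T$ shows that the middle term of \eqref{eq:gen_perim_ineq} equals $2i(q-i+1)$.

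It remains to compute the right-hand side $T_i\{\mathbf{E}^{(p)}\}$ (equivalently $L_i^{(p)}$), and this is where I expect the only real difficulty. A direct expansion of the Pl\"ucker coordinates of $\mathbf{e}_0$ gives $X^{(p)}_{(i_0,\ldots,i_{p-1})}(z)=V(i_0,\ldots,i_{p-1})\,e^{(i_0+\cdots+i_{p-1})z}$ with $V$ a nonzero Vandermonde determinant; hence any two coordinates whose index sets have equal weight $|\sigma|$ are proportional, and $\mathbf{E}^{(p)}$ is degenerate as soon as some level $\binom{[n+1]}{p}_{(k_s)}$ has more than one element. The correct object is therefore the non-degenerate model of $\mathbf{E}^{(p)}$: the distinct weights are precisely the consecutive integers $k_0,k_0+1,\ldots,k_q$, so after collapsing proportional coordinates $\mathbf{E}^{(p)}$ agrees, up to a projective-linear embedding $\mathbb{P}^q\hookrightarrow\mathbb{P}^{\binom{n+1}{p}-1}$, with the exponential curve $(1:e^z:\cdots:e^{qz})$. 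Running the segment computation of the first paragraph with $n$ replaced by $q$ then yields $T_i\{\mathbf{E}^{(p)}\}=\frac{i(q-i+1)}{\pi}r+O(1)$ and $L_i^{(p)}=2i(q-i+1)$, matching the left-hand side exactly. The care needed at this step is precisely this reduction to consecutive weights: computing the perimeter of $V_i^{(p)}$ naively over all size-$i$ subsets of pairs would overcount repeated weights and give a strictly larger value, so one must pass to the reduced model, whose extremal sums range over distinct weights. Tracing these equalities back through Corollary~\ref{cor:gen_perim} and the relation $T_k=\frac{L_k}{2\pi}r+O(1)$ shows that $\mathbf{e}=\mathbf{e}_0$ attains equality in both \eqref{eq:gen_perim_ineq} and \eqref{eq:gen_pec_ineq}, as claimed.
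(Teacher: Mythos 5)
Your argument is correct and, for its first three steps, coincides with the paper's proof: $V_k$ is a segment on the $x$-axis, so $L_k = 2k(n-k+1)$ and $T_k(r) = \frac{k(n-k+1)}{\pi}r + O(1)$; the second difference is the constant $-4$ (resp.\ $-\frac{2r}{\pi}+O(1)$); and, using $\sum_{k=1}^n n_{\lambda(\sigma)}(k) = s$, the middle term of \eqref{eq:gen_perim_ineq} collapses to $2i(q-i+1)$ with $q = p(n-p+1)$, exactly as in the paper. Where you genuinely diverge is the evaluation of the right-hand side, and your route is the more careful one. The paper simply asserts that $V_i^{(p)}$ is the segment from $ik_0 + \binom{i}{2}$ to $ik_q - \binom{i}{2}$; that computation tacitly assumes the $i$ chosen elements of $\binom{[n+1]}{p}$ have pairwise distinct weights, and with the literal definition of $V_i^{(p)}$ (all $i$-element subsets) it is false once $i \geq 4$ and $\min(p, n-p+1) \geq 2$: for $n=3$, $p=2$, $i=4$ the weight multiset is $\{1,2,3,3,4,5\}$, the extremal sums are $9$ and $15$, so the literal $L_4^{(2)} = 12$ strictly exceeds the middle term $8$. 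Your warning that the naive $V_i^{(p)}$ overcounts repeated weights, and your replacement of it by the non-degenerate reduced model $(1 : e^z : \cdots : e^{qz})$ of $\mathbf{E}^{(p)}$ (order functions being insensitive, up to $O(1)$, to a constant linear re-embedding, since $\bigwedge^i A$ distorts norms only by bounded factors), is precisely what is needed: it gives $T_i\{\mathbf{E}^{(p)}\}(r) = \frac{i(q-i+1)}{\pi}r + O(1)$ and hence equality in \eqref{eq:gen_pec_ineq}, which is the substantive ``in particular'' clause showing Theorem~\ref{thm:gen_pec_rel} is sharp. The equality claim for \eqref{eq:gen_perim_ineq} itself is only correct with $L_i^{(p)}$ read in your reduced sense; as literally defined it fails in the example above, and this is a defect of the paper's own endpoint computation rather than of your argument.
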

\begin{proof}
  For each $ 1 \leq k \leq n $, the set $ V_k $ is the line segment connecting $ (\frac{k(k - 1)}{2}, 0) $ and $ (kn - \frac{k(k - 1)}{2}, 0) $. Thus we have
  \begin{equation*}
    L_k = 2k(n - k + 1).
  \end{equation*}
  Hence it follows that
  \begin{equation*}
    \begin{split}
      & L_{k - 1} - 2L_k + L_{k + 1}                                                     \\
      & = 2(k - 1)(n - (k - 1) + 1) - 2(2k(n - k + 1)) + 2(k + 1)(n - (k + 1) + 1) = -4.
    \end{split}
  \end{equation*}
  Therefore, the middle term in \eqref{eq:gen_perim_ineq} can be computed as
  \begin{equation*}
    \begin{split}
      \sum_{s = 1}^{i - 1}\max_{\sigma \in \binom{[n + 1]}{p}_{(k_s)}}\left(-4\sum_{k = 1}^n n_{\lambda(\sigma)}(k)\right) + i \cdot 2p(n - p + 1) & = -4\frac{i(i - 1)}{2} + 2p(in -ip + i) \\
      & = 2i(np - p^2 + p - i + 1).
    \end{split}
  \end{equation*}
  On the other hand, $ V_i^{(p)} $ is also the line segment connecting $ (\frac{ip(p - 1)}{2} + \frac{i(i - 1)}{2}, 0) $ and $ (i(pn - \frac{p(p - 1)}{2})- \frac{i(i - 1)}{2}, 0) $. Hence, we obtain
  \begin{equation*}
    L_i^{(p)} = 2i(np - p^2 + p - i + 1).
  \end{equation*}
\end{proof}

\section{Proofs of the Second Main Theorem and Theorem C}\label{sec:SMT}
\subsection{The Proximity and Counting Functions}
\ \par
In Section~\ref{sec:gen_pec_rel}, we proved the generalized Weyl peculiar relation for $ T_i\{\mathbf{X}^{(p)}\} $ using Lemma~\ref{lem:iT_p-T_i{X^p}_rel}. From a viewpoint focused on the holomorphic curve $ \mathbf{X}^{(p)} $, we can also derive a Second Main Theorem-type statement from Lemma~\ref{lem:iT_p-T_i{X^p}_rel}. This provides a new approach to establishing the defect relations.

\begin{defn}[\cite{Weyl_2}, p.~20, \textup{(3.5)}]
  Let $ \mathbf{B}^{(h)} \in \bigwedge^h \mathbb{C}^{n + 1} $ \textup{(\textit{resp.} $ \mathbf{X}^{(p)} \in \bigwedge^p \mathbb{C}^{n + 1} $)} be a nonzero decomposable $ h $-vector \textup{(\textit{resp.} $ p $-vector)}. Then the \textbf{distance} between $ \mathbf{B}^{(h)} $ and $ \mathbf{X}^{(p)} $ is defined by
  \begin{equation*}
    ||\mathbf{B}^{(h)}:\mathbf{X}^{(p)}|| \coloneqq \frac{|\mathbf{B}^{(h)} \wedge  \mathbf{X}^{(p)}|}{|\mathbf{B}^{(h)}||\mathbf{X}^{(p)}|}.
  \end{equation*}
\end{defn}
\begin{rem}\label{rem:dist}
  $ (1) $
  Suppose $ \mathbf{B}^{(h)} = \mathrm{Span}_{\mathbb{C}}\{\mathbf{b}_0,\ldots,\mathbf{b}_{h-1}\} $ with an orthonormal system, and extend it to an orthonormal basis of $ \mathbb{C}^{n + 1} $.
  When $ \mathbf{X}^{(p)} $ is the $ p $-th associated curve of $ \mathbf{x} $, the wedge product $ \mathbf{B}^{(h)} \wedge \mathbf{X}^{(p)} $ is identified with the holomorphic curve obtained by projecting onto $ (\mathbf{B}^{(h)})^{\perp} $ along $ \mathbf{B}^{(h)} $. Hence,
  \begin{equation*}
    0 \leq \|\mathbf{B}^{(h)} : \mathbf{X}^{(p)}\| \leq 1
  \end{equation*}
  holds, as is evident from geometric considerations.

  \noindent $ (2) $
  If $ p = n + 1 - h $, then by the definition of the Hodge star operator $ \star $ on $ \bigwedge^h \mathbb{C}^{n + 1} $, we obtain
  \begin{equation*}
    ||\mathbf{B}^{(h)} : \mathbf{X}^{(p)}|| = \frac{|(\star \mathbf{B}^{(h)}, \mathbf{X}^{(p)})|}{|\mathbf{B}^{(h)}||\mathbf{X}^{(p)}|} = \frac{|(\star \mathbf{B}^{(h)}, \mathbf{X}^{(p)})|}{|\star\mathbf{B}^{(h)}||\mathbf{X}^{(p)}|}.
  \end{equation*}
\end{rem}
\begin{defn}[\cite{Weyl_2}, p.~110, \textup{(7.2)}]\label{def:prox_func}
  Let $ \mathbf{B}^{(h)} \in \bigwedge^h \mathbb{C}^{n + 1} $ be a nonzero decomposable $ h $-vector. Define the $ k $-th \textbf{proximity function} of $ \mathbf{x} $ along $ \mathbf{B}^{(h)} $ by
  \begin{equation*}
    \widetilde{m}_k(r, \mathbf{B}^{(h)}) \coloneqq \frac{1}{2\pi}\int_0^{2\pi} \log \frac{1}{|| \mathbf{B}^{(h)} : \mathbf{X}^{(k)}(re^{\sqrt{-1}\theta})||}d\theta.
  \end{equation*}
  If $ k = n + 1 - h $ and $ \mathbf{A}^{(k)} \coloneqq \star \mathbf{B}^{(h)} \in (\bigwedge^k \mathbb{C}^{n + 1})^{*} $, we denote this function by $ m_k(r, \mathbf{A}^{(k)}) $.
  For the $ p $-th associated curve $ \mathbf{X}^{(p)} $, we denote the corresponding proximity functions by $ \widetilde{m}_k\{\mathbf{X}^{(p)}\}(r, \mathbf{B}^{(h)}) $ and $ m_k\{\mathbf{X}^{(p)}\}(r, \mathbf{A}^{(k)}) $, respectively.
  Here $ \mathbf{B}^{(h)} $ is a nonzero decomposable $ h $-vector in $ \bigwedge^{h}\mathbb{C}^{\binom{n + 1}{p}} $, and $ \mathbf{A}^{(k)} = \star \mathbf{B}^{(h)} \in (\bigwedge^k \mathbb{C}^{\binom{n + 1}{p}})^{*} $, where $ \star $ denotes the Hodge star operator on $ \bigwedge^h \mathbb{C}^{\binom{n + 1}{p}} $, and $k = \binom{n + 1}{p} - h$.
\end{defn}
\begin{rem}
  $ (1) $ \textup{Remark~\ref{rem:dist}} $ (1) $ implies that $ \widetilde{m}_k \geq 0 $.

  \noindent $ (2) $ If $ h = n + 1 - k $, as observed in \textup{Remark~\ref{rem:dist}} $ (2) $, we obtain
  \begin{equation*}
    m_k(r, \mathbf{A}^{(k)}) = \frac{1}{2\pi}\int_0^{2\pi}\log \frac{|\mathbf{A}^{(k)}||\mathbf{X}^{(k)}|}{|(\mathbf{A}^{(k)}, \mathbf{X}^{(k)})|}d\theta.
  \end{equation*}
  This explains the geometric meaning of the proximity function. When $ \mathbf{X}^{(k)} $ approaches the hyperplane defined by the equation $ (\mathbf{A}^{(k)}, \mathbf{Y}^{(k)}) = 0 $ in $ \mathbb{C}^{\binom{n + 1}{k}} $, the denominator of the fraction in the integrand of $ m_k(r, \mathbf{A}^{(k)}) $ tends to zero.
  Accordingly, $ m_k(r, \mathbf{A}^{(k)}) $ becomes large. Namely, the proximity function reflects how closely the curve approaches the hyperplane determined by $ \mathbf{A}^{(k)} $.
\end{rem}
\begin{lem}\label{lem:m_p}
  Let $ h = \binom{n + 1}{p} - 1 $, and
  let $ \mathbf{B}^{(h)} \in \bigwedge^h \mathbb{C}^{\binom{n + 1}{p}} = \bigwedge^h \mathbb{C}^{h + 1} $ be a nonzero decomposable $ h $-vector. Assume that $ \star{\mathbf{B}}^{(h)} \in (\bigwedge^1 \mathbb{C}^{\binom{n + 1}{p}})^{*} = (\mathbb{C}^{\binom{n + 1}{p}})^{*} $ arises from an element of $ (\bigwedge^{p}\mathbb{C}^{n + 1})^{*} $ via the Pl\"ucker embedding, and is decomposable as such an element.
  Then we have
  \begin{equation*}
    \widetilde{m}_1\{\mathbf{X}^{(p)}\}(r, \mathbf{B}^{(h)}) = m_p(r, \star\mathbf{B}^{(h)}).
  \end{equation*}
\end{lem}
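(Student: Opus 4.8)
The plan is to show that the integrands defining the two proximity functions coincide pointwise, after which equality of the integrals over $[0, 2\pi]$ is immediate. The starting observation is that the first associated curve of the holomorphic curve $\mathbf{X}^{(p)} : \mathbb{C} \to \mathbb{P}^{\binom{n + 1}{p} - 1}$ is $\mathbf{X}^{(p)}$ itself, regarded as a $1$-vector (i.e.\ as the tuple of its Pl\"ucker coordinates) in $\mathbb{C}^{\binom{n + 1}{p}}$. Consequently, with the Hodge star, duality pairing and norms taken in the ambient space $\mathbb{C}^{\binom{n + 1}{p}} = \mathbb{C}^{h + 1}$, the defining integrand of $\widetilde{m}_1\{\mathbf{X}^{(p)}\}(r, \mathbf{B}^{(h)})$ is $\log \|\mathbf{B}^{(h)} : \mathbf{X}^{(p)}\|^{-1}$, where $\|\mathbf{B}^{(h)} : \mathbf{X}^{(p)}\| = |\mathbf{B}^{(h)} \wedge \mathbf{X}^{(p)}|/(|\mathbf{B}^{(h)}||\mathbf{X}^{(p)}|)$.

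First I would rewrite both sides using Remark~\ref{rem:dist}~$(2)$, which applies precisely in the complementary-dimension situation. In the ambient space $\mathbb{C}^{\binom{n + 1}{p}}$ the $1$-vector $\mathbf{X}^{(p)}$ and the $h$-vector $\mathbf{B}^{(h)}$ satisfy $1 + h = \binom{n + 1}{p}$, so Remark~\ref{rem:dist}~$(2)$ yields
\[
  \|\mathbf{B}^{(h)} : \mathbf{X}^{(p)}\| = \frac{|(\star\mathbf{B}^{(h)}, \mathbf{X}^{(p)})|}{|\star\mathbf{B}^{(h)}||\mathbf{X}^{(p)}|},
\]
where $\star$ is the Hodge star on $\bigwedge^h \mathbb{C}^{\binom{n + 1}{p}}$ and the pairing is the big-space duality pairing. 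Symmetrically, on the right-hand side, $m_p(r, \star\mathbf{B}^{(h)})$ is by definition the $p$-th proximity function of $\mathbf{x}$ against the decomposable covector $\mathbf{A}^{(p)} \coloneqq \star\mathbf{B}^{(h)} \in (\bigwedge^p \mathbb{C}^{n + 1})^{*}$; since $p + (n + 1 - p) = n + 1$, Remark~\ref{rem:dist}~$(2)$ applied in $\mathbb{C}^{n + 1}$ expresses its integrand as the logarithm of $|\mathbf{A}^{(p)}||\mathbf{X}^{(p)}|/|(\mathbf{A}^{(p)}, \mathbf{X}^{(p)})|$, now with the small-space pairing.

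The heart of the argument---and the only nontrivial step---is to verify that these two expressions agree. This amounts to the compatibility of the duality pairing and the Hermitian norm with the Pl\"ucker embedding: for the decomposable element $\mathbf{A}^{(p)}$ from which $\star\mathbf{B}^{(h)}$ is assumed to arise, the big-space pairing $(\star\mathbf{B}^{(h)}, \mathbf{X}^{(p)})$ equals the small-space pairing $(\mathbf{A}^{(p)}, \mathbf{X}^{(p)})$, and $|\star\mathbf{B}^{(h)}| = |\mathbf{A}^{(p)}|$. Both identities hold because the standard basis $\{\mathbf{e}_\sigma\}_{\sigma \in \binom{[n + 1]}{p}}$ of $\bigwedge^p \mathbb{C}^{n + 1}$ indexing the Pl\"ucker coordinates is orthonormal and is carried by the Pl\"ucker embedding to the standard orthonormal basis of $\mathbb{C}^{\binom{n + 1}{p}}$: the two pairings are literally the same sum $\sum_{\sigma}(A^{(p)})_{\sigma}(X^{(p)})_{\sigma}$, and the two norms the same $\sum_{\sigma}|(A^{(p)})_{\sigma}|^2$ and $\sum_{\sigma}|(X^{(p)})_{\sigma}|^2$. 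The decomposability hypothesis is exactly what guarantees that $\mathbf{A}^{(p)} = \star_{n + 1}\mathbf{B}'$ for a decomposable $\mathbf{B}' \in \bigwedge^{n + 1 - p}\mathbb{C}^{n + 1}$, so that $m_p(r, \mathbf{A}^{(p)})$ is well defined in the original sense of Definition~\ref{def:prox_func}. Substituting these identities shows the two integrands coincide pointwise in $\theta$, and integrating over $[0, 2\pi]$ completes the proof.
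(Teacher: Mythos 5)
Your proof is correct and follows essentially the same route as the paper's: both reduce $\widetilde{m}_1\{\mathbf{X}^{(p)}\}$ to the distance $\|\mathbf{B}^{(h)} : \mathbf{X}^{(p)}\|$ in the complementary-dimension case, convert the wedge product to the duality pairing via the Hodge star on $\bigwedge^h\mathbb{C}^{\binom{n+1}{p}}$ (Remark~\ref{rem:dist}~$(2)$), and then observe that the pairing and norms computed in $\mathbb{C}^{\binom{n+1}{p}}$ coincide with those computed in $\bigwedge^p\mathbb{C}^{n+1}$ because the Pl\"ucker coordinates index an orthonormal basis in both spaces. Your explicit remark that decomposability of $\star\mathbf{B}^{(h)}$ is what makes $m_p$ well defined in the sense of Definition~\ref{def:prox_func} matches the paper's opening sentence.
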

\begin{proof}
  Note that, by assumption, $ m_p(r, \star \mathbf{B}^{(h)}) $ is indeed well-defined. Since $ h = \binom{n + 1}{p} - 1 $, we obtain
  \begin{equation*}
    ||\mathbf{\mathbf{B}}^{(h)} : \mathbf{X}^{(p)}|| = \frac{|\mathbf{B}^{(h)} \wedge \mathbf{X}^{(p)}|_{\bigwedge^{h + 1}\mathbb{C}^{\binom{n + 1}{p}}}}{|\mathbf{B}^{(h)}|_{\bigwedge^{h}\mathbb{C}^{\binom{n + 1}{p}}}|\mathbf{X}^{(p)}|_{\bigwedge^{1}\mathbb{C}^{\binom{n + 1}{p}}}} = \frac{|(\star\mathbf{B}^{(h)}, \mathbf{X}^{(p)})_{\bigwedge^{p}\mathbb{C}^{n + 1}}|}{|\star\mathbf{B}^{(h)}|_{\bigwedge^{p}\mathbb{C}^{n + 1}}|\mathbf{X}^{(p)}|_{\bigwedge^{p}\mathbb{C}^{n + 1}}},
  \end{equation*}
  where the notations $ |\cdot|_{\bigwedge^k \mathbb{C}^l} $ and $ (\cdot, \cdot)_{\bigwedge^k \mathbb{C}^{l}} $ denote the application of $ |\cdot| $ and $ (\cdot, \cdot) $ on $ \bigwedge^k \mathbb{C}^l$, respectively. From this, our claim follows.
\end{proof}
\begin{rem}\label{rem:gen_m}
  $ (1) $ A general vector $ \star \mathbf{B}^{\left(\binom{n + 1}{p} - 1\right)} \in (\mathbb{C}^{\binom{n + 1}{p}})^{*} $ does not belong to $ (\bigwedge^p \mathbb{C}^{n + 1})^{*} $ under the Pl\"{u}cker embedding. Moreover, even if it does belong to $ (\bigwedge^p \mathbb{C}^{n + 1})^{*} $, it is not necessarily decomposable.

  \noindent $ (2) $ As a generalization of the definition of $ m_p(r, \mathbf{A}^{(p)}) $ \textup{(Definition~\ref{def:prox_func})}, for a $ p $-vector $ \mathbf{A}^{(p)} \in (\bigwedge^p \mathbb{C}^{n + 1})^{*}$ \textup{(\textit{not necessarily decomposable})}, we define
  \begin{equation*}
    m_p(r, \mathbf{A}^{(p)}) \coloneqq \widetilde{m}_1\{\mathbf{X}^{(p)}\}(r, \star\mathbf{A}^{(p)}),
  \end{equation*}
  where $ \star $ is the Hodge star operator on $ (\bigwedge^1 \mathbb{C}^{\binom{n + 1}{p}})^{*} = (\mathbb{C}^{\binom{n + 1}{p}})^{*} $.
\end{rem}
\begin{defn}[\cite{Weyl_2}, p.~79, \textup(2.6\textup)]\label{def:count_func}
  Let $ \mathbf{A}^{(k)} \in (\bigwedge^k\mathbb{C}^{n + 1})^{*} $ be a nonzero decomposable $ k $-vector, and fix a positive constant $ r_0 > 0 $ \textup{(\textit{sufficiently small})}. Then the $ k $-th \textbf{counting function} $ N_k(r, \mathbf{A}^{(k)}) $ of $ \mathbf{x} $ for $ \mathbf{A}^{(k)} $ is defined by
  \begin{equation*}
    N_k(r, \mathbf{A}^{(k)}) \coloneqq \int_{r_0}^r \sum_{|z| < t}v_k(z, \mathbf{A}^{(k)})\frac{dt}{t} \quad (r \geq r_0),
  \end{equation*}
  where $ v_k(z_0, \mathbf{A}^{(k)}) $ denotes the order of vanishing of $ (\mathbf{A}^{(k)}, \mathbf{X}_{\mathrm{red}}^{(k)}) $ at $ z_0 $.
\end{defn}
\begin{thm}[\textbf{First Main Theorem} \textit{for rank} $ k $, \cite{Weyl_2}, p.~80]\label{thm:FMT}
  Let $ \mathbf{A}^{(k)} $ be a nonzero decomposable $ k $-vector in $ (\bigwedge^k \mathbb{C}^{n + 1})^{*} $. Then the following identity holds:
  \begin{equation*}
    N_k(r, \mathbf{A}^{(k)}) + m_k(r, \mathbf{A}^{(k)}) - m_k(r_0, \mathbf{A}^{(k)}) = T_k(r) \quad (r \geq r_0).
  \end{equation*}
\end{thm}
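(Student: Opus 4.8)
The plan is to reduce the statement to the classical Jensen (Green) formula, after decomposing $\log|\mathbf{X}^{(k)}|$ into a ``counting'' part and a ``proximity'' part. Throughout I work with the reduced representation $\mathbf{X}^{(k)}_{\mathrm{red}}$, for which $|\mathbf{X}^{(k)}_{\mathrm{red}}(z)| > 0$ at every $z$, so that $\log|\mathbf{X}^{(k)}_{\mathrm{red}}|^2$ is smooth and $(\mathbf{X}^{(k)})^{*}\omega_{\mathrm{FS}} = \frac{\sqrt{-1}}{2\pi}\partial\bar\partial\log|\mathbf{X}^{(k)}_{\mathrm{red}}|^2$; by the Ahlfors--Shimizu characteristic (Theorem~\ref{eq:A-S_char}) this is precisely the integrand defining $T_k$. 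Set $f(z) \coloneqq (\mathbf{A}^{(k)}, \mathbf{X}^{(k)}_{\mathrm{red}}(z))$. Since $\mathbf{A}^{(k)} \neq 0$ is decomposable and $\mathbf{X}^{(k)}$ is non-degenerate, $f$ is a holomorphic function that is not identically zero, and the multiplicity of its zero at $z_0$ is exactly $v_k(z_0, \mathbf{A}^{(k)})$ by Definition~\ref{def:count_func}; hence the Jensen counting term of $f$ equals $N_k(r, \mathbf{A}^{(k)})$.

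First I would record the pointwise algebraic identity, valid wherever $f(z) \neq 0$,
\[
  \log|\mathbf{X}^{(k)}_{\mathrm{red}}(z)| = \log\frac{|f(z)|}{|\mathbf{A}^{(k)}|} + \log\frac{|\mathbf{A}^{(k)}|\,|\mathbf{X}^{(k)}_{\mathrm{red}}(z)|}{|f(z)|},
\]
in which the second summand is the nonnegative integrand of $m_k$ from Definition~\ref{def:prox_func}. Writing $A_g(t) \coloneqq \frac{1}{2\pi}\int_0^{2\pi}\log|g(te^{\sqrt{-1}\theta})|\,d\theta$ for the circle average and integrating the identity over $|z| = t$, the constant $\log|\mathbf{A}^{(k)}|$ pulls out and the last term integrates to $m_k(t, \mathbf{A}^{(k)})$, giving
\[
  A_{\mathbf{X}^{(k)}_{\mathrm{red}}}(t) = A_f(t) - \log|\mathbf{A}^{(k)}| + m_k(t, \mathbf{A}^{(k)}).
\]

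Next I would evaluate the two averages by the Jensen--Green formula, which in the normalization of this paper reads $A_g(r) - A_g(r_0) = \int_{r_0}^r \frac{dt}{t}\int_{\Delta(t)} \frac{\sqrt{-1}}{2\pi}\partial\bar\partial \log|g|^2$. Taking $g = \mathbf{X}^{(k)}_{\mathrm{red}}$ and applying Theorem~\ref{eq:A-S_char} yields $A_{\mathbf{X}^{(k)}_{\mathrm{red}}}(r) - A_{\mathbf{X}^{(k)}_{\mathrm{red}}}(r_0) = T_k(r)$; taking $g = f$ and applying the Poincar\'e--Lelong formula (the current $\frac{\sqrt{-1}}{2\pi}\partial\bar\partial\log|f|^2$ is the zero divisor of $f$) yields $A_f(r) - A_f(r_0) = \int_{r_0}^r n_f(t)\,\frac{dt}{t} = N_k(r, \mathbf{A}^{(k)})$. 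Subtracting the displayed average identity at $r_0$ from that at $r$ makes the constant cancel, and substituting the two evaluations gives $T_k(r) = N_k(r, \mathbf{A}^{(k)}) + m_k(r, \mathbf{A}^{(k)}) - m_k(r_0, \mathbf{A}^{(k)})$, which is the claim. The main obstacle is not any deep estimate but the careful bookkeeping of representations: one must use $\mathbf{X}^{(k)}_{\mathrm{red}}$ so that $T_k$ coincides with the smooth Ahlfors--Shimizu area (no spurious ramification contribution appears in $\log|\mathbf{X}^{(k)}_{\mathrm{red}}|^2$) while simultaneously ensuring that the zeros of $f$ reproduce exactly $N_k$; the remaining technical points (zeros of $f$ on the circles $|z| = r$, integrability near those zeros) are the standard ones in Jensen's formula and require only the usual justification.
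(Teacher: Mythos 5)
The paper does not prove this statement: it is quoted from the Weyls' book (\cite{Weyl_2}, p.~80) and used as a black box, so there is no internal proof to compare against. Your Green--Jensen argument is the standard derivation and is correct: the decomposition $\log|\mathbf{X}^{(k)}_{\mathrm{red}}| = \log\bigl(|f|/|\mathbf{A}^{(k)}|\bigr) + \log\bigl(|\mathbf{A}^{(k)}||\mathbf{X}^{(k)}_{\mathrm{red}}|/|f|\bigr)$ with $f = (\mathbf{A}^{(k)},\mathbf{X}^{(k)}_{\mathrm{red}})$ isolates exactly the integrand of $m_k$ from Remark~\ref{rem:dist}~$(2)$, the circle average of $\log|f|$ produces $N_k$ via Poincar\'e--Lelong with the same normalization as Definition~\ref{def:count_func}, and the circle average of $\log|\mathbf{X}^{(k)}_{\mathrm{red}}|$ produces $T_k$ via Theorem~\ref{eq:A-S_char}; your insistence on the reduced representation is precisely what makes both of the last two identifications clean. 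The only point I would tighten is the justification that $f \not\equiv 0$: what is actually needed (and what is classically true) is that linear non-degeneracy of $\mathbf{x}$ together with decomposability of $\mathbf{A}^{(k)}$ forces $(\mathbf{A}^{(k)},\mathbf{X}^{(k)}) \not\equiv 0$; non-degeneracy of $\mathbf{X}^{(k)}$ as a curve in $\mathbb{P}^{\binom{n+1}{k}-1}$ is a stronger hypothesis than the theorem requires and is not what the decomposability assumption is there for. This is a one-line fix, not a gap in the structure of the argument.
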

\begin{rem}\label{rem:FMT}
  As a generalization of the definition of $ N_k(r, \mathbf{A}^{(k)}) $ \textup{(Definition~\ref{def:count_func})}, for a $ k $-vector $ \mathbf{A}^{(k)} \in (\bigwedge^k \mathbb{C}^{n + 1})^{*} $ \textup{(\textit{not necessarily decomposable})}, we define
  \begin{equation*}
    N_k(r, \mathbf{A}^{(k)}) \coloneqq T_k(r) - m_k(r, \mathbf{A}^{(k)}) + m_k(r_0, \mathbf{A}^{(k)}) \quad (r \geq r_0).
  \end{equation*}
\end{rem}
\subsection{The Second Main Theorem and Theorem C}
\ \par
\begin{defn}[\cite{Weyl_2}, pp.~260 \textit{and} 262]\label{def:Psi-M}
  Let $ k \geq 1 $, $ 1 \leq h \leq n + 1 - k $, and $ 0 \leq i \leq h - 1 $ be integers.
  Set $ l \coloneqq h - i $, and fix a nonzero decomposable $ h $-vector $ \mathbf{B}^{(h)} = \mathbf{b}_0 \wedge \mathbf{b}_1 \wedge \cdots \wedge \mathbf{b}_{h - 1} \in \bigwedge^h\mathbb{C}^{n + 1} $. Assume that $ \mathbf{b}_0, \mathbf{b}_1, \ldots, \mathbf{b}_{h - 1} $ form an orthonormal basis of $ \mathbf{B}^{(h)} $. Then we define
  \begin{equation*}
    \Psi_i^k(\mathbf{B}^{(h)})(z) \coloneqq \frac{1}{\binom{h}{l}}\sum_{\mathbf{B}^{(l)} \subseteq \mathbf{B}^{(h)}} \frac{||\mathbf{B}^{(l)} : \mathbf{X}^{(k)}(z)||^2}{||\mathbf{B}^{(l)} : \mathbf{X}^{(k - 1)}(z)||^2},
  \end{equation*}
  where the sum runs over all $ l $-vectors spanned by $ l $ vectors chosen from the set $ \{\mathbf{b}_0, \mathbf{b}_1, \ldots, \mathbf{b}_{h - 1}\} $. We also define
  \begin{equation*}
    M^{(i)}_{k}(r, \mathbf{B}^{(h)}) \coloneqq \frac{1}{4\pi}\int_0^{2\pi}\log \frac{1}{\Psi_i^{k + i}(\mathbf{B}^{(h)})(re^{\sqrt{-1}\theta})}d\theta.
  \end{equation*}
  For the $ p $-th associated curve, we denote this function by $ M_k^{(i)}\{\mathbf{X}^{(p)}\}(r, \mathbf{B}^{(h)}) $. Here, $ 1 \leq h \leq \binom{n + 1}{p} - k $, $ 0 \leq i \leq h - 1 $, and $ \mathbf{B}^{(h)} \in \bigwedge^h\mathbb{C}^{\binom{n + 1}{p}} $ is a nonzero decomposable $ h $-vector.
\end{defn}
\begin{rem}\label{rem:M_k}
  $ (1) $ The function $ \Psi_i^{k} $ does not depend on the choice of the orthonormal basis $ \mathbf{b}_0, \mathbf{b}_1, \ldots, \mathbf{b}_{h - 1} $. Therefore, $ M_k^{(i)}(r, \mathbf{B}^{(h)}) $ is well-defined.

  \noindent $ (2) $ Since $ \Psi_i^k \leq 1$, it follows that $ M_k^{(i)} \geq 0 $.

  \noindent $ (3) $ If $ i = 0 $, then $ l = h $. Since $ \mathbf{B}^{(h)} $ is the unique $ h $-vector spanned by $ h $ vectors chosen from the set $ \{\mathbf{b}_0, \mathbf{b}_1, \ldots, \mathbf{b}_{h - 1}\} $, we obtain
  \begin{equation*}
    M_k^{(0)}(r, \mathbf{B}^{(h)}) = \frac{1}{4\pi}\int_0^{2\pi} \log \frac{||\mathbf{B}^{(h)} : \mathbf{X}^{(k - 1)}(re^{\sqrt{-1}\theta})||^2}{||\mathbf{B}^{(h)} : \mathbf{X}^{(k)}(re^{\sqrt{-1}\theta})||^2}d\theta = \widetilde{m}_k(r, \mathbf{B}^{(h)}) - \widetilde{m}_{k - 1}(r, \mathbf{B}^{(h)}).
  \end{equation*}
\end{rem}
\begin{defn}[\cite{Weyl_2}, p.~259]\label{def:in_gen_pos}
  Let $ k \geq 1 $ be an integer, and let $ \{\mathbf{B}_j^{(h)}\}_{j = 1}^d \subseteq \bigwedge^h \mathbb{C}^{n + 1} $ be a finite set of nonzero decomposable $ h $-vectors. The set $ \{\mathbf{B}_j^{(h)}\}_{j = 1}^d $ is said to be \textbf{in general position for $ k $} if, for every $ 0 \leq i \leq h - 1 $, one of the following holds:
  \begin{itemize}
    \item For all choices of $ \mathbf{B}_{j_0}^{(h)}, \mathbf{B}_{j_1}^{(h)}, \ldots, \mathbf{B}_{j_{W_i^{(k + i)} - 1}}^{(h)} \in \{\mathbf{B}_j^{(h)}\}_{j = 1}^d $, we have
          \begin{equation*}
            \#(\{\mathbf{B}_j^{(h)}\}_{j = 1}^d \cap \mathrm{Span}_{\mathbb{C}}(\mathbf{B}_{j_0}^{(h)}, \mathbf{B}_{j_1}^{(h)}, \ldots, \mathbf{B}_{j_{W_i^{(k + i)} - 1}}^{(h)})) = W_i^{(k + i)}.
          \end{equation*}
    \item $\#\{\mathbf{B}_j^{(h)}\}_{j = 1}^d < W_i^{(k + i)}.$
  \end{itemize}
  Here, $ W_i^{(j)} $ is given by
  \begin{equation*}
    W_i^{(j)} \coloneqq \sum_{s = 0}^{h - i - 1}\binom{j}{h - s}\binom{n - j + 1}{s}.
  \end{equation*}
  When considering $ \{\mathbf{B}_j\}_{j = 1}^d \subseteq \bigwedge^1 \mathbb{C}^{\binom{n + 1}{h}} = \mathbb{C}^{\binom{n + 1}{h}}$ \textup{(\textit{not necessarily arising from $ \bigwedge^h \mathbb{C}^{n + 1} $ via the Pl\"{u}cker embedding})}, if $ k = 1 $, we simply say that $ \{\mathbf{B}_j\}_{j = 1}^d $ is \textbf{in general position} in this paper.
\end{defn}
\begin{rem}\label{rem:W_i}
  $ (1) $ Let $ \mathbf{A}^{(j)} $ be a nonzero decomposable $ j $-vector. A nonzero decomposable $ h $-vector $ \mathbf{B}^{(h)} $ is said to be \textbf{$ i $-incident with $ \mathbf{A}^{(j)} $} if every $ (h - i) $-vector $ \mathbf{B}^{(h - i)} \subseteq \mathbf{B}^{(h)} $ satisfies $ \mathbf{B}^{(h - i)} \wedge \mathbf{A}^{(j)} = 0 $.
  For a general $ h $-vector $ \mathbf{B}^{(h)} $, we say that it is $ i $-incident with $ \mathbf{A}^{(j)} $ if all of its decomposable components are $ i $-incident with $ \mathbf{A}^{(j)} $.
  All decomposable $ h $-vectors that are $ i $-incident with $ \mathbf{A}^{(j)} $, together with the zero vector, form a linear subspace $ \Delta_i^{(j)}(\mathbf{A}^{(j)}) \subseteq \mathbb{C}^{\binom{n + 1}{h}} $. $ W_i^{(j)} $ is then obtained as the dimension of $ \Delta_i^{(j)}(\mathbf{A}^{(j)}) $ \textup{(\textit{see} \cite{Weyl_2}, p.~258)}:
  \begin{equation*}
    W_i^{(j)} = \dim \Delta_i^{(j)}(\mathbf{A}^{(j)}).
  \end{equation*}
  In the definition of in general position, the Weyls' book \cite{Weyl_2} more precisely considers only those subspaces of the form $ \mathrm{Span}_{\mathbb{C}}(\mathbf{B}_{j_0}^{(h)}, \mathbf{B}_{j_1}^{(h)}, \ldots, \mathbf{B}_{j_{W_i^{(k + i)} - 1}}^{(h)}) $ that are ``equivalent to'' the one given by $ \Delta_i^{(k + i)}(\mathbf{e}_0 \wedge \mathbf{e}_1 \wedge \cdots \wedge \mathbf{e}_{k + i - 1}) $. However, for the sake of simplicity, we do not impose this restriction in this paper.

  \noindent $ (2) $ $ W_i^{(1 + i)} $ is computed as
  \begin{equation*}
    W_i^{(1 + i)} = \sum_{s = 0}^{h - i - 1}\binom{1 + i}{h - s}\binom{n - (1 + i) + 1}{s}  = \binom{n - i}{h - i - 1}.
  \end{equation*}
\end{rem}
All analytic difficulties and techniques in obtaining the Second Main Theorem---such as the \emph{lemma on logarithmic derivatives} (LLD), \emph{sum into products estimate}, and others--- are encapsulated in the following theorem. This theorem also plays an crucial and indispensable role in our approach.
\begin{thm}[\cite{Weyl_2}, p.~263, \textup(11.1\textup); \textit{a version of this result can essentially be found in} \cite{Ahlfors}, pp.~22 \textit{and} 26]\label{thm:omega_M_small}
  Assume that the integers $ n, k, h, i $ and $ l $ satisfy the same conditions as those given above.
  Let $ \{\mathbf{B}_j^{(h)}\}_{j = 1}^d $ be a finite set of nonzero decomposable $ h $-vectors in general position for $ k $. Then, for any $ \kappa > 1 $, the following inequality holds:
  \begin{equation*}
    \Omega_{k + i}(r) + \frac{1}{W_i^{(k + i)}}\sum_{j = 1}^d (M_k^{(i)}(r, \mathbf{B}_j^{(h)}) - M_k^{(i + 1)}(r, \mathbf{B}_j^{(h)})) \leq \kappa\log T_{k + i}(r) - \log r \, //.
  \end{equation*}
\end{thm}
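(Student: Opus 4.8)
The plan is to follow the Ahlfors--Weyl pattern already exhibited by Theorem~\ref{thm:omega_small}, which is exactly the present statement in the degenerate case $d=0$: I would collapse the entire left-hand side into the circular mean of the logarithm of a single positive ``singular curvature'' density, bound that logarithmic mean by the logarithm of the mean via Jensen, identify the mean of the density with a second-order radial derivative of an auxiliary characteristic, control that characteristic by $T_{k+i}$ through the First Main Theorem, and finish with two applications of the calculus lemma. First I would assemble the left-hand side. Using $\Omega_{k+i}(r)=\frac{1}{4\pi}\int_0^{2\pi}\log\widetilde{S}^{k+i}\,d\theta$ and the identity $M_k^{(i)}(r,\mathbf{B})-M_k^{(i+1)}(r,\mathbf{B})=\frac{1}{4\pi}\int_0^{2\pi}\log\bigl(\Psi_{i+1}^{k+i+1}(\mathbf{B})/\Psi_i^{k+i}(\mathbf{B})\bigr)\,d\theta$ read off from Definition~\ref{def:Psi-M}, the quantity to be estimated equals
\[
  \frac{1}{4\pi}\int_0^{2\pi}\log G(re^{\sqrt{-1}\theta})\,d\theta,\qquad
  G \coloneqq \widetilde{S}^{k+i}\prod_{j=1}^d\left(\frac{\Psi_{i+1}^{k+i+1}(\mathbf{B}_j^{(h)})}{\Psi_i^{k+i}(\mathbf{B}_j^{(h)})}\right)^{1/W_i^{(k+i)}}.
\]
Here $G$ is the Ricci curvature of a singular volume form built from $|\mathbf{X}^{(k+i)}|$ and the distances $\|\mathbf{B}_j^{(l)}:\mathbf{X}^{(m)}\|$, using the infinitesimal Pl\"ucker identity $\widetilde{S}^m=\partial_z\partial_{\bar z}\log|\mathbf{X}^{(m)}|^2$ (implicit in the proof of Theorem~\ref{eq:A-S_char}) and the analogous Laplacians of the distance functions.

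Next, concavity of $\log$ (Jensen's inequality on the circle) gives
\[
  \frac{1}{4\pi}\int_0^{2\pi}\log G\,d\theta \;\le\; \frac{1}{2}\log\!\left(\frac{1}{2\pi}\int_0^{2\pi}G\,d\theta\right).
\]
Writing $\mathcal{T}(r)\coloneqq\int_{r_0}^r\frac{dt}{t}\int_{\Delta(t)}G\,\frac{\sqrt{-1}}{2\pi}dz\wedge d\bar z$ for the characteristic attached to $G$, one has $\frac{1}{2\pi}\int_0^{2\pi}G\,d\theta=\frac{1}{r}\frac{d}{dr}\bigl(r\frac{d}{dr}\mathcal{T}(r)\bigr)$, just as $T_{k+i}$ is the double integral of $\widetilde{S}^{k+i}$. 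The decisive step is the bound $\mathcal{T}(r)\le C\,T_{k+i}(r)+O(1)$. Integrating the curvature form and invoking the First Main Theorem (Theorem~\ref{thm:FMT}) expresses $\mathcal{T}$ as a fixed positive multiple of $T_{k+i}$ together with counting and proximity corrections of the $\mathbf{B}_j$; the weights $1/W_i^{(k+i)}$ and the general-position hypothesis for $k$, through the dimension count $W_i^{(k+i)}=\dim\Delta_i^{(k+i)}$ (Remark~\ref{rem:W_i}), are arranged precisely so that at each point only a controlled number of singular factors can be simultaneously large (the ``sum into products'' estimate), while $M_k^{(i)}\ge 0$ (Remark~\ref{rem:M_k}) and $N_k\le T_k+O(1)$ from the First Main Theorem fix the signs, so that the corrections combine into $\le C\,T_{k+i}+O(1)$.

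Finally I would apply the calculus lemma (Remark~\ref{rem:T-seq}) twice to the increasing, convex function $\mathcal{T}$: for any $\kappa>1$,
\[
  \frac{1}{r}\frac{d}{dr}\left(r\frac{d}{dr}\mathcal{T}(r)\right)\;\le\;\frac{\mathcal{T}(r)^{\kappa}}{r^{2}}\,//,
\]
the finite-measure exceptional set indicated by $//$ arising solely at this step. Combining with the Jensen bound and $\mathcal{T}=O(T_{k+i})$ yields, for $r$ large (so that the additive constant is absorbed, since $T_{k+i}\to\infty$),
\[
  \frac{1}{4\pi}\int_0^{2\pi}\log G\,d\theta\;\le\;\frac{\kappa}{2}\log\mathcal{T}(r)-\log r\;\le\;\kappa\log T_{k+i}(r)-\log r\,//,
\]
which is the assertion. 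I expect the genuine obstacle to be the estimate $\mathcal{T}=O(T_{k+i})$: reducing the averaged product of the $d$ singular distance factors to $T_{k+i}$ is exactly the point where the general-position combinatorics encoded in $W_i^{(j)}$ and the First Main Theorem must be made to interact precisely, and it is the analytic heart of the Weyl--Ahlfors Second Main Theorem. Everything else is the routine Jensen-plus-calculus-lemma mechanism already used for Theorem~\ref{thm:omega_small}.
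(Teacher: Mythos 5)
The first thing to note is that the paper does not prove this statement at all: Theorem~\ref{thm:omega_M_small} is imported verbatim from the Weyls' book (\cite{Weyl_2}, p.~263, (11.1)) and from Ahlfors \cite{Ahlfors}, and the surrounding text explicitly describes it as the place where ``all analytic difficulties and techniques'' of the Second Main Theorem are encapsulated. So there is no in-paper proof to compare against; your proposal has to stand on its own as a reconstruction of the classical argument.

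As such a reconstruction, your skeleton is the right one, and the bookkeeping you actually carry out is correct: the identity $M_k^{(i)}-M_k^{(i+1)}=\frac{1}{4\pi}\int_0^{2\pi}\log\bigl(\Psi_{i+1}^{k+i+1}/\Psi_i^{k+i}\bigr)\,d\theta$ does follow from Definition~\ref{def:Psi-M}, the concavity-of-$\log$ step is sound, and the double application of the calculus lemma at the end is exactly the mechanism behind Theorem~\ref{thm:omega_small}. But there is a genuine gap at precisely the step you yourself flag as decisive: you assert, without argument, both that $G$ is the Ricci curvature of an explicit singular volume form and that $\mathcal{T}(r)\le C\,T_{k+i}(r)+O(1)$. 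Neither is routine. The quantities $\Psi_i^{k+i}(\mathbf{B}_j)$ are \emph{sums} of ratios of distance functions, so $\partial\bar\partial\log$ of their ratio is not literally the curvature of any product of powers of the $\|\mathbf{B}^{(l)}:\mathbf{X}^{(m)}\|$; in the Ahlfors--Weyl argument the equality you posit is replaced by a chain of convexity inequalities for a carefully weighted singular metric, and the direction of each inequality has to be checked term by term. Likewise, the bound $\mathcal{T}=O(T_{k+i})$ requires (a) that the exponent $1/W_i^{(k+i)}$ makes each singular factor locally integrable, (b) that general position for $k$ prevents more than $W_i^{(k+i)}$ of the factors from being simultaneously singular --- this is exactly where the dimension count $W_i^{(k+i)}=\dim\Delta_i^{(k+i)}$ of Remark~\ref{rem:W_i} enters --- and (c) the ``sum into products'' estimate that converts the resulting pointwise control into a bound by the First Main Theorem. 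These three points are the entire content of the theorem; without them the argument establishes nothing beyond the case $d=0$, which is already Theorem~\ref{thm:omega_small}. In short, your proposal is an accurate map of the classical proof, but not a proof.
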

Using this theorem, we can derive an important consequence as stated below. One of the advantages of our approach lies in the simplicity of the computation, in comparison with the original proof by L. V. Ahlfors \cite{Ahlfors}.
The central idea of the proof is to regard the holomorphic curve $ \mathbf{X}^{(p)} $ as lying in $ \mathbb{P}^{\binom{n + 1}{p} - 1} $ rather than in $ \mathrm{Gr}(n + 1, p) $, so that we may apply the above theorem to $ \mathbf{X}^{(p)} $. (A similar idea was employed by H. and F. J. Weyl (see \cite{Weyl_1}, p.~535, (6.8)), and Ahlfors also mentioned this perspective in \cite{Ahlfors}, p.~27. Our aim here is to reinterpret the statement about $ \mathbf{X}^{(p)} $ as one about $ \mathbf{x} $ by incorporating the viewpoint of Lemma~\ref{lem:iT_p-T_i{X^p}_rel}.)
Moreover, we are able to remove the assumption that the $ p $-vectors in the set $ \{\mathbf{A}_j^{(p)}\}_{j = 1}^d $ are decomposable, although the geometric meaning of this generalization remains unclear. It also seems important that this idea shares a structural similarity with the original proof of Schmidt's Subspace Theorem \cite{Schmidt}.
\begin{thm}[\textbf{Second Main Theorem} \textit{for} $ \mathbf{X}^{(p)} $]\label{thm:SMT}

  Let $ \mathbf{x} $ be a holomorphic curve in $ \mathbb{C}^{n + 1} $, and assume that $ \mathbf{X}^{(p)} $ is non-degenerate as a holomorphic curve in $ \mathbb{C}^{\binom{n + 1}{p}} $.
  Let $ \{\mathbf{A}_j^{(p)}\}_{j = 1}^d \subseteq (\bigwedge^p\mathbb{C}^{n + 1})^{*} $ be a finite set of nonzero $ p $-vectors \textup{(\textit{not necessarily decomposable})}, in general position \textup{(\textit{i.e. in general position for $ 1 $ when regarded as points in} $ (\mathbb{C}^{\binom{n + 1}{p}})^{*} $)}.
  Then, for any $ \epsilon > 0 $, the following inequality holds:
  \begin{equation*}
    N_{\binom{n + 1}{p}}\{\mathbf{X}^{(p)}\}(r) + \sum_{j = 1}^d m_p(r, \mathbf{A}_j^{(p)}) < \left\{\binom{n + 1}{p} + \epsilon\right\}\overline{T}_p(r) \, //.
  \end{equation*}
  In particular, when $ p = 1 $, we obtain
  \begin{equation*}
    N_{n + 1}(r) + \sum_{j = 1}^d m(r, \mathbf{a}_j)
    < (n + 1 + \epsilon)T(r) \, //.
  \end{equation*}
\end{thm}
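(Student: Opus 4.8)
The plan is to regard $\mathbf{X}^{(p)}$ as a non-degenerate holomorphic curve in $\mathbb{P}^{N}$ with $N \coloneqq \binom{n+1}{p}-1$, and to apply Theorem~\ref{thm:omega_M_small} to it with rank $k=1$ and $h = N$. First I would set $\mathbf{B}_j \coloneqq \star \mathbf{A}_j^{(p)} \in \bigwedge^{N}\mathbb{C}^{\binom{n+1}{p}}$. Since each $\mathbf{A}_j^{(p)}$ is merely a covector (a $1$-form) in $(\mathbb{C}^{\binom{n+1}{p}})^{*}$, its Hodge dual $\mathbf{B}_j$ is automatically a decomposable $N$-vector, so the decomposability hypothesis of Theorem~\ref{thm:omega_M_small} holds regardless of whether $\mathbf{A}_j^{(p)}$ is decomposable in $(\bigwedge^p\mathbb{C}^{n+1})^{*}$ — this is exactly what lets us drop the decomposability assumption, and the general-position hypothesis transfers through $\star$ to ``in general position for $1$''. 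Using Remark~\ref{rem:gen_m}~(2) together with Remark~\ref{rem:M_k}~(3) and $\widetilde{m}_0\{\mathbf{X}^{(p)}\} \equiv 0$, I would record the identification $m_p(r,\mathbf{A}_j^{(p)}) = \widetilde{m}_1\{\mathbf{X}^{(p)}\}(r,\mathbf{B}_j) = M_1^{(0)}\{\mathbf{X}^{(p)}\}(r,\mathbf{B}_j)$.

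Next I would compute, via Remark~\ref{rem:W_i}~(2) with ambient dimension $\binom{n+1}{p}$, that $W_i^{(1+i)} = N-i$ for $0 \le i \le N-1$. Applying Theorem~\ref{thm:omega_M_small} for each such $i$ and multiplying the $i$-th inequality by $W_i^{(1+i)} = N-i$, the factor $1/W_i^{(1+i)}$ cancels, so the proximity terms collapse into a single telescoping sum $\sum_{i=0}^{N-1}(M_1^{(i)} - M_1^{(i+1)}) = M_1^{(0)} - M_1^{(N)}$, where $M_1^{(N)}\{\mathbf{X}^{(p)}\} = 0$ is the boundary value ($l = h-i = 0$, the distances from a scalar being $1$). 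Simultaneously $\Omega_{1+i}\{\mathbf{X}^{(p)}\}$ acquires the weight $N+1-(1+i)$, so summing over $i$ yields
\begin{equation*}
\sum_{j=1}^d m_p(r,\mathbf{A}_j^{(p)}) + \sum_{k=1}^{N}(N+1-k)\,\Omega_k\{\mathbf{X}^{(p)}\}(r) \le \sum_{k=1}^{N}(N+1-k)\bigl(\kappa \log T_k\{\mathbf{X}^{(p)}\}(r) - \log r\bigr) \,//.
\end{equation*}

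The crucial step is to recognize that the weighted sum of $\Omega_k\{\mathbf{X}^{(p)}\}$ is precisely what Lemma~\ref{lem:iT_p-T_i{X^p}_rel} evaluates: expanding its double sum as $\sum_{s=1}^{N}\sum_{k=1}^{s}\Omega_k = \sum_{k=1}^{N}(N+1-k)\Omega_k$ and taking $i = N+1 = \binom{n+1}{p}$ (so that $\overline{T}_{N+1}\{\mathbf{X}^{(p)}\} = N_{\binom{n+1}{p}}\{\mathbf{X}^{(p)}\}$, since $T_{N+1}\{\mathbf{X}^{(p)}\} = 0$) gives
\begin{equation*}
\sum_{k=1}^{N}(N+1-k)\,\Omega_k\{\mathbf{X}^{(p)}\} = N_{\binom{n+1}{p}}\{\mathbf{X}^{(p)}\} - \binom{n+1}{p}\overline{T}_p + O(1).
\end{equation*}
Substituting this into the previous inequality leaves $N_{\binom{n+1}{p}}\{\mathbf{X}^{(p)}\} + \sum_j m_p(r,\mathbf{A}_j^{(p)}) - \binom{n+1}{p}\overline{T}_p$ on the left, and it remains to absorb the error. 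By Remark~\ref{rem:T-seq} every $T_k\{\mathbf{X}^{(p)}\}$ is of the same magnitude as $T_1\{\mathbf{X}^{(p)}\} = T_p$ modulo $//$, so the right-hand side is $O(\log T_p)$; since $T_p \to \infty$ and $\overline{T}_p \ge T_p$, this is $o(\overline{T}_p)$ and is dominated by $\epsilon\,\overline{T}_p$ for large $r$ outside a set of finite measure. This yields the displayed inequality, and setting $p=1$ (so $\mathbf{X}^{(1)} = \mathbf{x}$, $\overline{T}_1 = T$, $m_1 = m$) gives the stated special case. I expect the main obstacle to be the careful handling of the boundary contributions and the transfer of hypotheses through the Hodge star: verifying $\widetilde{m}_0\{\mathbf{X}^{(p)}\} \equiv 0$ and $M_1^{(N)}\{\mathbf{X}^{(p)}\} \equiv 0$, confirming $W_i^{(1+i)} = N-i$ in the enlarged ambient space, and checking that ``general position for $1$'' for $\{\mathbf{A}_j^{(p)}\}$ is preserved by $\mathbf{B}_j = \star\mathbf{A}_j^{(p)}$. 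The genuinely conceptual point — that the exact cancellation of the weight $N+1-k$ against $W_i^{(1+i)}$ aligns the telescoping of Theorem~\ref{thm:omega_M_small} with the double sum of Lemma~\ref{lem:iT_p-T_i{X^p}_rel} — is forced by the structure once the weights are chosen, and is what keeps the computation short.
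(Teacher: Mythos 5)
Your proposal is correct and follows essentially the same route as the paper: apply Theorem~\ref{thm:omega_M_small} to $\mathbf{X}^{(p)}$ with $k=1$, $h=\binom{n+1}{p}-1$ and $\mathbf{B}_j^{(h)}=\star\mathbf{A}_j^{(p)}$, telescope the $M_1^{(i)}$ terms against the weights $W_i^{(1+i)}=h-i$, and identify the resulting weighted sum $\sum_{k=1}^{h}(h+1-k)\Omega_k\{\mathbf{X}^{(p)}\}$ via Lemma~\ref{lem:iT_p-T_i\{X^p\}_rel} with $N_{\binom{n+1}{p}}\{\mathbf{X}^{(p)}\}-\binom{n+1}{p}\overline{T}_p$. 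The only cosmetic difference is that you multiply each inequality by $W_i^{(1+i)}$ and sum once, whereas the paper sums the partial sums over $i$ from $0$ to $h-1$; these are arithmetically identical.
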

\begin{proof}
  Let $ h \coloneqq \binom{n + 1}{p} - 1 $, and fix $ 0 \leq i \leq h - 1 $ (where we choose $ k = 1 $). Let $ \mathbf{B}_j^{(h)} $ $ (j = 1, 2, \ldots, d)$ be the $ h $-vector defined by $ \mathbf{B}_j^{(h)} \coloneqq \star\mathbf{A}_j^{(p)} \in \bigwedge^h \mathbb{C}^{\binom{n + 1}{p}} $, where the Hodge star operator $ \star $ acts on $ (\mathbb{C}^{\binom{n + 1}{p}})^{*} $.
  Since we assume that $ \{\mathbf{A}_j^{(p)}\}_{j = 1}^d $ is in general position, Theorem~\ref{thm:omega_M_small} and Remark~\ref{rem:W_i} $ (2) $ imply
  \begin{equation*}
    \begin{split}
      \Omega_{1 + i}\{\mathbf{X}^{(p)}\}(r) + \frac{1}{h - i}\sum_{j = 1}^d(M_1^{(i)}\{\mathbf{X}^{(p)}\} & (r, \mathbf{B}_j^{(h)})- M_1^{(i + 1)}\{\mathbf{X}^{(p)}\}(r, \mathbf{B}_j^{(h)})) \\
      & \leq \kappa \log T_{1 + i}\{\mathbf{X}^{(p)}\}(r) - \log r \, //.
    \end{split}
  \end{equation*}
  Hence, we have
  \begin{align}\label{eq:SMT_ineq}
    \sum_{k = 1}^{i + 1}\Omega_k\{\mathbf{X}^{(p)}\}(r) + \sum_{j = 1}^d \sum_{k = 0}^i \frac{1}{h - k}(M_1^{(k)}\{\mathbf{X}^{(p)}\}(r, \mathbf{B}_j^{(h)}) - M_1^{(k + 1)} & \{\mathbf{X}^{(p)}\}(r, \mathbf{B}_j^{(h)}))  \notag \\
                                                                                                                                                                             & < \epsilon \overline{T}_p(r) \, //.
  \end{align}
  By summing over $ i = 0 $ to $ h - 1 $, we obtain
  \begin{align}\label{eq:SMT_ineq_2}
    \sum_{i = 0}^{h - 1}\sum_{k = 1}^{i + 1}\Omega_k\{\mathbf{X}^{(p)}\}(r) + \sum_{j = 1}^d \sum_{i = 0}^{h - 1}\sum_{k = 0}^i \frac{1}{h - k}(M_1^{(k)}\{\mathbf{X}^{(p)}\}(r, \mathbf{B}_j^{(h)}) - & M_1^{(k + 1)}\{\mathbf{X}^{(p)}\}(r, \mathbf{B}_j^{(h)})) \notag \\
                                                                                                                                                                                                       & < \epsilon \overline{T}_p(r) \, //.
  \end{align}
  We compute the left-hand side of \eqref{eq:SMT_ineq_2}. By Lemma~\ref{lem:iT_p-T_i{X^p}_rel}, we have
  \begin{equation*}
    \sum_{i = 0}^{h - 1}\sum_{k = 1}^{i + 1}\Omega_k\{\mathbf{X}^{(p)}\}(r) = -\binom{n + 1}{p}\overline{T}_p + N_{\binom{n + 1}{p}}\{\mathbf{X}^{(p)}\} + O(1).
  \end{equation*}
  Next, we evaluate the following quantity:
  \begin{equation*}
    \sum_{i = 0}^{h - 1} \sum_{k = 0}^i \frac{1}{h - k}(M_1^{(k)}\{\mathbf{X}^{(p)}\}(r, \mathbf{B}_j^{(h)}) - M_1^{(k + 1)}\{\mathbf{X}^{(p)}\}(r, \mathbf{B}_j^{(h)})).
  \end{equation*}
  Since $ M_1^{(h)}\{\mathbf{X}^{(p)}\}(r, \mathbf{B}_j^{(h)}) = 0 $, the expression reduces to
  \begin{equation*}
    \begin{split}
      M_1^{(0)}\{\mathbf{X}^{(p)}\}(r, \mathbf{B}_j^{(h)}) & = \widetilde{m}_1\{\mathbf{X}^{(p)}\}(r, \mathbf{B}_j^{(h)}) - \widetilde{m}_{0}\{\mathbf{X}^{(p)}\}(r, \mathbf{B}_j^{(h)}) \\
      & = m_p(r, \star\mathbf{B}_j^{(h)}) = m_p(r, \mathbf{A}_j^{(p)}),
    \end{split}
  \end{equation*}
  where we use Lemma~\ref{lem:m_p} (or Remark~\ref{rem:gen_m}) and Remark~\ref{rem:M_k} $ (3) $. Combining these computations, we obtain that the left-hand side of \eqref{eq:SMT_ineq_2} equals
  \begin{equation*}
    -\binom{n + 1}{p}\overline{T}_p + N_{\binom{n + 1}{p}}\{\mathbf{X}^{(p)}\} + \sum_{j = 1}^d m_p(r, \mathbf{A}_j^{(p)}) + O(1).
  \end{equation*}
\end{proof}
\begin{rem}
  $ (1) $ $ N_{\binom{n + 1}{p}}\{\mathbf{X}^{(p)}\}(r) $ is the \textbf{ramification counting function} for $ \mathbf{X}_{\mathrm{red}}^{(p)} $, which counts the zeros of the Wronskian $ W\{\mathbf{X}^{(p)}\} $ associated with $ \mathbf{X}_{\mathrm{red}}^{(p)} $. By \eqref{eq:iN_p-N_i{X^p}_rel}, another expression of $ N_{\binom{n + 1}{p}}\{\mathbf{X}^{(p)}\} $ is given as follows:
  \begin{equation*}
    N_{\binom{n + 1}{p}}\{\mathbf{X}^{(p)}\} = \sum_{s = 1}^{\binom{n + 1}{p} - 1}\sum_{k = 1}^sV_k\{\mathbf{X}^{(p)}\} + \binom{n + 1}{p}N_p.
  \end{equation*}

  \noindent$ (2) $ \textup{Ahlfors} \cite{Ahlfors} proved the following form of the Second Main Theorem \textup{(\textit{as a special case}, \cite{Ahlfors}, p.~24, $(\text{II}_k^k) $)}:
  \begin{equation}\label{eq:Ahlfors_SMT}
    \binom{n}{p - 1}N_{n + 1}(r) + \sum_{j = 1}^d m_p(r, \mathbf{A}_j^{(p)}) < \left\{\binom{n + 1}{p} + \epsilon\right\}\overline{T}_p(r) \, //.
  \end{equation}
  \textup{(\textit{Note that $ N_{n + 1} $ is the ramification counting function for $ \mathbf{x} $.})}
  Using \textup{Corollary~\ref{cor:sec_diff_sum_id}}, we obtain
  \begin{equation}\label{another form of SMT}
    \sum_{s = 1}^{p(n - p + 1)}\sum_{\sigma \in \binom{[n + 1]}{p}_{(k_s)}}\left(\sum_{k = 1}^n n_{\lambda(\sigma)}(k)\Omega_k\right) + \sum_{j = 1}^d m_p(r, \mathbf{A}_j^{(p)}) < \epsilon\overline{T}_p(r) \, //.
  \end{equation}
  This can be regarded as one manifestation of the `` $\Omega$-$m$ relation'' \textup{(\textit{see} \cite{Weyl_2}, pp.~241--253)}, in which $ \Omega_k $ and $ m_k $ are, in a certain sense, complementary to each other. It should be noted that, in contrast to \cite{Weyl_2}, the estimate for $ m_p $ does not appear as the difference of two proximity functions.
\end{rem}
\begin{prob}
  We may apply the Weyl peculiar relation for $ \Omega\{\mathbf{X}^{(p)}\} $ \textup{(Theorem~\ref{thm:gen_pec_rel_omega})} to the left-hand side of \eqref{eq:SMT_ineq}. However, the result obtained through this application appears to be weaker than the one obtained above. How, then, can we directly prove \eqref{another form of SMT}?
\end{prob}
A similar argument to that used in the proof of Theorem~\ref{thm:SMT} can be applied not only for $ h = \binom{n + 1}{p} - 1 $,
but also for all $ 1 \leq h \leq \binom{n + 1}{p} - 1 $.
\begin{thm}[Theorem C]\label{thm:gen_SMT}
  Let $ 1 \leq h \leq \binom{n + 1}{p} - 1 $ be an integer, and set $ h' \coloneqq \binom{n + 1}{p} - 1 $. Let $ \{\mathbf{B}_j^{(h)}\}_{j = 1}^d \subseteq \bigwedge^h \mathbb{C}^{\binom{n + 1}{p}} $ be a finite set of nonzero decomposable $ h $-vectors in general position for $ 1 $. Then, for any $ \epsilon > 0 $, the following inequality holds:
  \begin{equation*}
    \begin{split}
      \overline{T}_{h + 1}\{\mathbf{X}^{(p)}\}(r) + \sum_{j = 1}^d \frac{h}{\binom{h'}{h - 1}}\widetilde{m}_1\{\mathbf{X}^{(p)}\}(r, \mathbf{B}_j^{(h)}) + \sum_{j = 1}^d\sum_{k = 1}^{h - 1} & \frac{h' - h}{\binom{h' - k + 1}{h - k}}M_1^{(k)}\{\mathbf{X}^{(p)}\}(r, \mathbf{B}_j^{(h)}) \\
      & < (h + 1 + \epsilon)\overline{T}_p(r) \, //
    \end{split}
  \end{equation*}
  The inequality still holds when the overlines are removed from $ \overline{T}_{h + 1}\{\mathbf{X}^{(p)}\} $ and $ \overline{T}_p $. In particular, when $ p = 1 $, we obtain
  \begin{equation*}
    \overline{T}_{h + 1}(r) + \sum_{j = 1}^d \frac{h}{\binom{n}{h - 1}}\widetilde{m}_1(r, \mathbf{B}_j^{(h)}) + \sum_{j = 1}^d\sum_{k = 1}^{h - 1}\frac{n - h}{\binom{n - k + 1}{h - k}}M_1^{(k)}(r, \mathbf{B}_j^{(h)}) < (h + 1 + \epsilon)T(r) \, //.
  \end{equation*}
\end{thm}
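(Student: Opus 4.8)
The plan is to mimic the proof of Theorem~\ref{thm:SMT}, applying Theorem~\ref{thm:omega_M_small} to the curve $\mathbf{X}^{(p)}$ (viewed in $\mathbb{P}^{h'}$ with $h' = \binom{n+1}{p}-1$) with $k=1$, but now keeping $h$ general instead of specializing to $h = h'$. First I would fix $0 \le i \le h-1$ and apply Theorem~\ref{thm:omega_M_small}; since the ambient ``$n$'' for $\mathbf{X}^{(p)}$ equals $h'$, Remark~\ref{rem:W_i}~$(2)$ gives $W_i^{(1+i)} = \binom{h'-i}{h-i-1}$. Summing the resulting inequalities over the inner index from $0$ to $i$ telescopes the $\Omega$-terms into $\sum_{k=1}^{i+1}\Omega_k\{\mathbf{X}^{(p)}\}$; the error terms $\kappa\log T_{1+i}\{\mathbf{X}^{(p)}\} - \log r$ are absorbed into $\epsilon\overline{T}_p$ because every $T_{1+i}\{\mathbf{X}^{(p)}\}$ is $O(T_p)$ (Remark~\ref{rem:T-seq} together with \eqref{eq:T_p-omega_p}). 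This yields the exact analogue of \eqref{eq:SMT_ineq}, now with coefficient $\frac{1}{\binom{h'-k}{h-k-1}}$ in place of $\frac{1}{h-k}$ (the two agree when $h=h'$).

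Next I would sum this over $i = 0, \ldots, h-1$. The $\Omega$-part becomes $\sum_{s=1}^{h}\sum_{k=1}^{s}\Omega_k\{\mathbf{X}^{(p)}\}$, which by Lemma~\ref{lem:iT_p-T_i{X^p}_rel} (with index $h+1$) equals $\overline{T}_{h+1}\{\mathbf{X}^{(p)}\} - (h+1)\overline{T}_p + O(1)$. The $M$-part, after interchanging the order of summation, becomes $\sum_{j=1}^d\sum_{k=0}^{h-1} c_k\bigl(M_1^{(k)}\{\mathbf{X}^{(p)}\} - M_1^{(k+1)}\{\mathbf{X}^{(p)}\}\bigr)$ with $c_k = \frac{h-k}{\binom{h'-k}{h-k-1}}$, since for fixed $k$ the outer index $i$ runs over the $h-k$ values $k,\ldots,h-1$. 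An Abel summation, using $M_1^{(h)}\{\mathbf{X}^{(p)}\} = 0$ and $M_1^{(0)}\{\mathbf{X}^{(p)}\}(r,\mathbf{B}_j^{(h)}) = \widetilde{m}_1\{\mathbf{X}^{(p)}\}(r,\mathbf{B}_j^{(h)})$ (Remark~\ref{rem:M_k}~$(3)$ with $\widetilde{m}_0 \equiv 0$), rewrites it as $c_0\,\widetilde{m}_1\{\mathbf{X}^{(p)}\} + \sum_{k=1}^{h-1}(c_k - c_{k-1})M_1^{(k)}\{\mathbf{X}^{(p)}\}$, summed over $j$.

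The computational heart of the argument is the binomial identity matching these coefficients to the ones in the statement. Writing $c_k = \frac{(h-k)!\,(h'-h+1)!}{(h'-k)!}$, one checks $c_0 = \frac{h}{\binom{h'}{h-1}}$ directly, while the telescoped coefficient factors as
\begin{equation*}
  c_k - c_{k-1} = (h'-h+1)!\,\frac{(h-k)!}{(h'-k)!}\left(1 - \frac{h-k+1}{h'-k+1}\right) = \frac{h'-h}{\binom{h'-k+1}{h-k}},
\end{equation*}
which is precisely the coefficient in front of $M_1^{(k)}\{\mathbf{X}^{(p)}\}$. I expect this factorial bookkeeping---confirming that the difference collapses to a single binomial with the correct shift---to be the only genuinely delicate step; everything else is the same telescoping machinery as in Theorem~\ref{thm:SMT}.

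Finally, assembling the two parts and moving $-(h+1)\overline{T}_p$ to the right gives the stated inequality with overlines. To remove the overlines I would use the $T$-form of Lemma~\ref{lem:iT_p-T_i{X^p}_rel}, namely $\sum_{s=1}^{h}\sum_{k=1}^{s}\Omega_k\{\mathbf{X}^{(p)}\} = T_{h+1}\{\mathbf{X}^{(p)}\} - (h+1)T_p + \sum_{s=1}^{h}\sum_{k=1}^{s}V_k\{\mathbf{X}^{(p)}\} + O(1)$, obtained by telescoping the Pl\"ucker formula (Theorem~\ref{thm:Plucker_formula}). Since $V_k\{\mathbf{X}^{(p)}\} \ge 0$, the left-hand side with $\overline{T}$ dominates the one with $T$, so the inequality survives the replacement of $\overline{T}_{h+1}\{\mathbf{X}^{(p)}\}$ by $T_{h+1}\{\mathbf{X}^{(p)}\}$; the residual $\epsilon\overline{T}_p$ on the right is reabsorbed as $\epsilon' T_p$ using $\overline{T}_p = O(T_p)\,//$, which follows from the fundamental inequalities \eqref{eq:fund_ineq_1} and \eqref{eq:fund_ineq_2} (valid for $\overline{T}$) together with $\overline{T}_1 = T_1 = T$. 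The case $p=1$ is then immediate from $\mathbf{X}^{(1)} = \mathbf{x}$ and $h' = n$.
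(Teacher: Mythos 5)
Your proposal is correct and follows essentially the same route as the paper: the paper's proof of Theorem~\ref{thm:gen_SMT} simply repeats the argument of Theorem~\ref{thm:SMT} for general $h$ and records the binomial identity $\frac{h-k}{\binom{h'-k}{h-k-1}} - \frac{h-k+1}{\binom{h'-k+1}{h-k}} = \frac{h'-h}{\binom{h'-k+1}{h-k}}$, which is exactly your computation of $c_k - c_{k-1}$ after the Abel summation. Your removal of the overlines via \eqref{eq:iN_p-N_i{X^p}_rel} and $V_k\{\mathbf{X}^{(p)}\}\geq 0$ likewise matches the paper's one-line justification.
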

\begin{proof}
  The proof is almost the same as that of Theorem~\ref{thm:SMT}. Since \eqref{eq:iN_p-N_i{X^p}_rel} holds, the inequality remains valid under the replacement of $ \overline{T}_{h + 1}\{\mathbf{X}^{(p)}\} $ by $ T_{h + 1}\{\mathbf{X}^{(p)}\} $ and $ \overline{T}_p $ by $ T_p $. Note the following identity in the calculation:
  \begin{equation*}
    \frac{h - k}{\binom{h' - k}{h - k - 1}} - \frac{h - k + 1}{\binom{h' - k + 1}{h - k}} = \frac{h' - h}{\binom{h' - k + 1}{h - k}}.
  \end{equation*}
\end{proof}
\begin{rem}
  Since $ \widetilde{m}_1 \geq 0 $ and $ M_1^{(k)} \geq 0 $, this theorem can be viewed as a strengthening of equation~\eqref{eq:fund_ineq_3}. It also clearly includes \textup{Theorem~\ref{thm:SMT}} as a special case.
\end{rem}
\subsection{Defect Relations}
\ \par
\begin{defn}
  Let $ \mathbf{A}^{(p)} \in (\bigwedge^p\mathbb{C}^{n + 1})^{*} $ be a nonzero $ p $-vector \textup{(\textit{not necessarily decomposable})}. Then the \textbf{defect} $ \delta_p(\mathbf{A}^{(p)}) $ of $ \mathbf{A}^{(p)} $ for the holomorphic curve $ \mathbf{x} $ is defined by
  \begin{equation*}
    \delta_p(\mathbf{A}^{(p)}) \coloneqq \liminf_{r \to \infty} \frac{m_p(r, \mathbf{A}^{(p)})}{T_p(r)} = 1 - \limsup_{r \to \infty}\frac{N_p(r, \mathbf{A}^{(p)})}{T_p(r)},
  \end{equation*}
  where the second equality follows from \textup{Theorem~\ref{thm:FMT}} \textup{(or Remark~\ref{rem:FMT})}.
\end{defn}
The following important corollary follows from Theorem~\ref{thm:SMT}.
\begin{cor}[\textbf{Ahlfors's Defect Relation}; see, for instance, \cite{Wu}, p.~208]
  \begin{equation}\label{eq:def_rel}
    \sum_{j = 1}^d \delta_p(\mathbf{A}^{(p)}_j) \leq \binom{n + 1}{p}.
  \end{equation}
  In particular, this implies \textbf{Borel's theorem} \textup{($ p = 1 $)} and \textbf{Picard's theorem} \textup{($ p = 1$ \textit{and} $ n = 1 $)}.
\end{cor}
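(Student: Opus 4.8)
The plan is to deduce the defect relation \eqref{eq:def_rel} directly from the Second Main Theorem (Theorem~\ref{thm:SMT}), the First Main Theorem (Theorem~\ref{thm:FMT}), and the growth comparisons recorded in Remark~\ref{rem:T-seq}. First I would start from the inequality of Theorem~\ref{thm:SMT} and rewrite both sides so that the leading coefficient becomes exactly $\binom{n+1}{p}$. On the right I substitute $\overline{T}_p = T_p + N_p$, and on the left I use the identity \eqref{eq:iN_p-N_i{X^p}_rel} with $i = \binom{n+1}{p}$, which yields
\[
N_{\binom{n+1}{p}}\{\mathbf{X}^{(p)}\}(r) \geq \binom{n+1}{p} N_p(r)
\]
since every $V_k\{\mathbf{X}^{(p)}\} \geq 0$. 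Cancelling the common term $\binom{n+1}{p}N_p$ then leaves
\[
\sum_{j=1}^d m_p(r,\mathbf{A}_j^{(p)}) < \binom{n+1}{p}T_p(r) + \epsilon\,\overline{T}_p(r)\,//.
\]
This ``subtraction trick'' is what isolates the sharp constant, reducing the error to an $\epsilon$-multiple of an auxiliary characteristic.

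Next I would control that error by the growth comparison $\overline{T}_p = O(T_p)$. Since $\overline{T}_1 = T_1 = T$ and the fundamental inequalities \eqref{eq:fund_ineq_1} and \eqref{eq:fund_ineq_2} (valid for $\{\overline{T}_i\}$ by the final sentence of Remark~\ref{rem:T-seq}) force $\overline{T}_p = O(T)$, while the same inequalities applied to $\{T_i\}$ give $T = O(T_p)$, it follows that $\overline{T}_p \leq C\,T_p$ for all $r$ outside a set $E_0$ of finite measure, with $C$ depending only on $n$, $p$, $\mathbf{x}$ (in particular \emph{not} on $\epsilon$). Hence, off the union $E_0 \cup E_\epsilon$, which still has finite measure,
\[
\frac{\sum_{j=1}^d m_p(r,\mathbf{A}_j^{(p)})}{T_p(r)} < \binom{n+1}{p} + \epsilon C.
\]

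Then I would pass to defects. Because $m_p \geq 0$, the superadditivity of $\liminf$ gives $\sum_{j}\delta_p(\mathbf{A}_j^{(p)}) = \sum_j \liminf_{r} m_p(r,\mathbf{A}_j^{(p)})/T_p(r) \leq \liminf_r \sum_j m_p(r,\mathbf{A}_j^{(p)})/T_p(r)$; evaluating this $\liminf$ along a sequence $r \to \infty$ avoiding $E_0 \cup E_\epsilon$ (possible as that set has finite measure) yields $\sum_j \delta_p(\mathbf{A}_j^{(p)}) \leq \binom{n+1}{p} + \epsilon C$, and letting $\epsilon \downarrow 0$ with $C$ fixed produces \eqref{eq:def_rel}. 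For the ``in particular'' clauses I would observe that an omitted target $\mathbf{A}^{(p)}$ has $N_p(r,\mathbf{A}^{(p)}) \equiv 0$, whence $\delta_p(\mathbf{A}^{(p)}) = 1$ by the second expression for the defect (via Theorem~\ref{thm:FMT}); taking $p = 1$, the defect relation bounds the number of omitted hyperplanes by $n + 1$ (Borel), and specializing further to $n = 1$ bounds the omitted values of a meromorphic function by $\binom{2}{1} = 2$ (Picard).

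The step I expect to require the most care is the double role of $\overline{T}_p$: it must be \emph{absorbed} into the sharp main term through the exact identity \eqref{eq:iN_p-N_i{X^p}_rel}, yet simultaneously be shown to be $O(T_p)$ so that the residual $\epsilon\,\overline{T}_p$ disappears in the limit. Equally delicate is the bookkeeping of the exceptional sets: since both the growth comparison and Theorem~\ref{thm:SMT} hold only modulo sets of finite Lebesgue measure, the limit must be extracted along a sequence chosen outside $E_0 \cup E_\epsilon$, and it is essential that the constant $C$ be independent of $\epsilon$ so that the order of operations---first $\liminf_r$, then $\epsilon \downarrow 0$---is legitimate.
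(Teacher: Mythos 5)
Your proposal is correct and follows exactly the route the paper intends: the corollary is stated as an immediate consequence of Theorem~\ref{thm:SMT}, and your write-up supplies the standard deduction, in particular the key step of using \eqref{eq:iN_p-N_i{X^p}_rel} together with $V_k\{\mathbf{X}^{(p)}\} \geq 0$ to extract $\binom{n+1}{p}N_p$ from the ramification term and cancel it against $\binom{n+1}{p}\overline{T}_p$, so that the defect (defined with $T_p$, not $\overline{T}_p$, in the denominator) is bounded by the sharp constant. The bookkeeping of the exceptional sets and the order of limits ($\liminf_r$ first along a sequence avoiding the finite-measure exceptional set, then $\epsilon \downarrow 0$ with the comparison constant $C$ fixed) is handled correctly.
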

\begin{rem}
  The relation \eqref{eq:def_rel} was originally derived as a consequence of Ahlfors's Second Main Theorem \textup{\eqref{eq:Ahlfors_SMT}}.
  Moreover, \textup{H. Fujimoto} \cite{Fujimoto_1} showed a truncated version of the defect relation:
  \begin{equation}\label{eq:trunc_def_rel}
    \sum_{j = 1}^d \widetilde{\delta}_p(\mathbf{A}_j^{(p)}) \leq \binom{n + 1}{p}.
  \end{equation}
  Here, $ \widetilde{\delta}_p(\mathbf{A}^{(p)}) $ is defined by
  \begin{equation*}
    \widetilde{\delta}_p(\mathbf{A}^{(p)}) \coloneqq 1 - \limsup_{r \to \infty}\frac{\widetilde{N}_p(r, \mathbf{A}^{(p)})}{T_p(r)},
  \end{equation*}
  and the \textbf{truncated counting function} $ \widetilde{N}_p(r, \mathbf{A}^{(p)}) $ is given by
  \begin{equation*}
    \widetilde{N}_p(r, \mathbf{A}^{(p)}) \coloneqq \int_{r_0}^r \sum_{|z| < t}\min(v_p(z, \mathbf{A}^{(p)}), p(n - p + 1))\frac{dt}{t} \quad (r \geq r_0).
  \end{equation*}
\end{rem}
\subsection{Future direction}
\ \par
Let $ X = \bigcup_j U_j$ be a non-singular complex projective algebraic variety, and let $ D $ be a divisor on $ X $ defined by local equations $ \{(U_j, \psi_j)\} $.
Let $ \mathbf{x} : \mathbb{C} \to X $ be a holomorphic curve. Let $ [D] $ denote the line bundle over $ X $ associated with $ D $, eqquipped with a smooth Hermitian metric $ a = \{a_j\} $, where each $ a_j $ is a positive $ C^{\infty} $ function on $ U_j $ satisfying $ |\psi_j|^2 a_i = |\psi_i|^2 a_j $ on $ U_i \cap U_j \neq \varnothing $.
Then $ \psi = \{\psi_j\} \in H^0(X, \mathcal{O}([D])) $, and we define the \textbf{norm} of $ \psi $ by $ || \psi ||^2(w) \coloneqq \frac{|\psi_j(w)|^2}{a_j(w)} \, (w \in U_j) $. The \textbf{Chern form} $ \omega_{[D]} $ is defined by $ \frac{\sqrt{-1}}{2\pi}\partial \bar{\partial} \log a_i $ (on $ U_i $).
In this setting, we define the \textbf{proximity function} $ m_{\mathbf{x}, D}(r) $ and the \textbf{order function} $ T_{\mathbf{x}, [D]}(r) $ by
\begin{align*}
  m_{\mathbf{x}, D}(r)   & \coloneqq \frac{1}{2\pi}\int_0^{2\pi} \log \frac{1}{||\psi||(\mathbf{x}(re^{\sqrt{-1}\theta}))}d\theta, \\
  T_{\mathbf{x}, [D]}(r) & \coloneqq \int_{r_0}^r \frac{dt}{t}\int_{\Delta(t)}\mathbf{x}^{\ast}\omega_{[D]} \quad (r \geq r_0).
\end{align*}
See, for example, K. Kodaira \cite{Kodaira}, S. Lang \cite{Lang}, and \cite{Noguchi-Winkelmann}, for some results developed within this framework.
Note that if $ X = \mathbb{P}^n $ and $ D = H $ (the hyperplane defined by a vector $ \mathbf{a} \in \mathbb{P}^n $), then we obtain
\begin{align*}
  m_{\mathbf{x}, D}      & = m(r, \mathbf{a}) + O(1),                                        \\
  T_{\mathbf{x}, [D]}(r) & = T_{\mathbf{x}, \mathcal{O}_{\mathbb{P}^n}(1)}(r) = T(r) + O(1).
\end{align*}
The following is an important conjecture.
\begin{conj}[\textbf{Griffiths--Lang conjecture}; \textit{see} \cite{Lang}, pp.~196--197]
  Let $ D $ be a simple normal crossing divisor on $ X $.
  Then there exists a proper Zariski closed subset $ Z_{D} \subsetneq    X $ such that, for all holomorphic curves $ \mathbf{x} : \mathbb{C} \to X $ with $ \mathbf{x}(\mathbb{C}) \nsubseteq Z_D $, and for any ample line bundle $ E $ over $ X $, the following inequality holds:
  \begin{equation*}
    N_{\mathbf{x}, \mathrm{Ram}}(r) + m_{\mathbf{x}, D}(r) \leq T_{\mathbf{x}, K_
        X^{-1}}(r) + S_{\mathbf{x}}(r) \, //,
  \end{equation*}
  where $ K_X^{-1} $ is the anticanonical bundle over $ X $, and $ S_{\mathbf{x}}(r) \coloneqq O(\log r + \log^{+}T_{\mathbf{x}, E}(r)) $. Giving an appropriate definition of the ``ramification counting function'' $ N_{\mathbf{x}, \mathrm{Ram}}(r) $ is part of the content of this  conjecture.
\end{conj}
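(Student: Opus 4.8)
The plan is to attempt the conjecture by transporting the intrinsic geometry on $X$ into the value-distribution theory of a system of associated curves in a projective model, and then invoking the associated-curve Second Main Theorem of Section~\ref{sec:SMT}. First I would fix a very ample line bundle $L$ on $X$, together with an embedding $X \hookrightarrow \mathbb{P}^N = \mathbb{P}(H^0(X, L)^{*})$, and lift $\mathbf{x}$ to a reduced holomorphic curve $\tilde{\mathbf{x}} : \mathbb{C} \to \mathbb{C}^{N + 1}$ whose associated curves $\mathbf{X}^{(p)}$, order functions $T_p$, and ramification data are governed by the Pl\"ucker formula (Theorem~\ref{thm:Plucker_formula}) and the concavity relation \eqref{eq:concave_rel}. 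The strategy is to rewrite the proximity term $m_{\mathbf{x}, D}(r)$, after passing to a common very ample multiple so that each branch of the simple normal crossing divisor $D$ is cut out by sections of $L^{\otimes e}$, as a finite sum of the proximity functions $m_p(r, \mathbf{A}_j^{(p)})$ attached to suitable (not necessarily decomposable) $p$-vectors, and to express $T_{\mathbf{x}, K_X^{-1}}(r)$ as a linear combination of the $T_p$. Once both sides are so reduced, Theorem~\ref{thm:SMT} and its refinement Theorem~\ref{thm:gen_SMT} (Theorem~C) are the engines that produce the inequality up to the analytic exceptional set hidden in the symbol $//$, with $S_{\mathbf{x}}(r)$ absorbing the $\log T$ errors of Theorem~\ref{thm:omega_small}.

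Next I would supply the definition of the ramification term left open by the conjecture. The natural candidate inside our framework is the Wronskian datum $N_{\binom{N + 1}{p}}\{\mathbf{X}^{(p)}\}$ that already appears on the left-hand side of Theorem~\ref{thm:SMT}; by \eqref{eq:iN_p-N_i{X^p}_rel} it decomposes into the second-order ramification contributions $V_k\{\mathbf{X}^{(p)}\}$ together with a multiple of $N_p$, which is exactly the concave ``defect'' quantity controlled by Lemma~\ref{lem:iT_p-T_i{X^p}_rel}. I would therefore \emph{define} $N_{\mathbf{x}, \mathrm{Ram}}(r)$ as the counting function of the ramification divisor of the Gauss map of $\mathbf{x}$ relative to the embedding, and then verify, using the combinatorial identity of Theorem~\ref{thm:balanced_sum_formula} and the argument following Theorem~\ref{thm:SMT}, that it matches $N_{\binom{N + 1}{p}}\{\mathbf{X}^{(p)}\}$ modulo $S_{\mathbf{x}}(r)$; the balanced sum formula is what lets one convert the intrinsic ramification divisor into the second-order differences $T_{k-1} - 2T_k + T_{k+1}$ that the Weyl--Ahlfors machinery actually sees.

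The hard part will be twofold, and I do not expect the present techniques to resolve it in full generality. First, for a general smooth projective $X$ the anticanonical bundle $K_X^{-1}$ has no intrinsic link to the osculating flag structure driving the theory: the associated curves $\mathbf{X}^{(p)}$ are adapted to the \emph{full} linear system of $\mathbb{P}^N$ and only detect $K_{\mathbb{P}^N}^{-1} = \mathcal{O}_{\mathbb{P}^N}(N + 1)$, so recovering the correct coefficient $T_{\mathbf{x}, K_X^{-1}}$ rather than a multiple of $\deg_L X$ forces an adjunction-type bookkeeping (relating $K_X$ to $K_{\mathbb{P}^N}|_X$ and the normal bundle) that my combinatorial identities do not automatically supply. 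Second, and decisively, producing the \emph{proper Zariski-closed} exceptional set $Z_D$ — as opposed to the measure-zero set encoded by $//$ — is an intrinsically algebraic degeneracy statement: it cannot be extracted from the calculus lemma underlying Theorem~\ref{thm:omega_small}, and its proof would require constructing global jet differentials vanishing along an ample divisor, in the spirit of Siu's and Demailly's work, to force algebraic degeneracy of non-constant entire curves. This algebraicity of the exceptional locus is precisely the gap that keeps the Griffiths--Lang conjecture open; what the associated-curve method delivers unconditionally is the stated inequality with the analytic exceptional set $//$ in the model case $X = \mathbb{P}^n$ and $D$ a union of hyperplanes in general position, which is exactly Theorem~\ref{thm:SMT} specialized to $p = 1$.
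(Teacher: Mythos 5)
This statement is the Griffiths--Lang conjecture; the paper records it as an open conjecture and offers no proof, so there is no proof of record to compare your attempt against. What the paper does is specialize the conjecture to $X = \mathrm{Gr}(p, n + 1) \subseteq \mathbb{P}^{\binom{n + 1}{p} - 1}$ with $D$ a sum of hyperplanes determined by $p$-vectors in general position, using $K_{\mathrm{Gr}(p, n + 1)} = \mathcal{O}_{\mathbb{P}^{\binom{n + 1}{p} - 1}}(-(n + 1))$, and then observe that its own Theorem~\ref{thm:SMT} yields the analogous inequality only with the weaker coefficient $\binom{n + 1}{p} + \epsilon$ in place of the conjectured $n + 1 + \epsilon$ (for instance $6 + \epsilon$ versus $4 + \epsilon$ when $n = 3$, $p = 2$), with $N_{\binom{n + 1}{p}}\{\mathbf{X}^{(p)}\}$ offered as a candidate meaning for $N_{\mathbf{x}, \mathrm{Ram}}$. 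Your assessment is accurate and consistent with this: you correctly identify the two decisive obstructions, namely the adjunction/coefficient mismatch (the associated-curve machinery only sees $K_{\mathbb{P}^N}^{-1}$, not $K_X^{-1}$, which is exactly the gap visible in the paper's own numerical comparison) and the impossibility of extracting a proper Zariski-closed exceptional set $Z_D$ from the calculus lemma, whose exceptional sets are merely of finite Lebesgue measure; and you correctly note that what the method proves unconditionally is the hyperplane case, i.e.\ Theorem~\ref{thm:SMT} at $p = 1$.

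One caution on an intermediate step you do assert: defining $N_{\mathbf{x}, \mathrm{Ram}}$ via a Gauss map and ``verifying, using Theorem~\ref{thm:balanced_sum_formula}, that it matches $N_{\binom{N + 1}{p}}\{\mathbf{X}^{(p)}\}$ modulo $S_{\mathbf{x}}(r)$'' is not something the paper's combinatorics can deliver. The balanced sum formula is a purely formal identity valid for any sequence vanishing at both ends, applied to $\{T_k\}_{k = 0}^{n + 1}$; it does not translate an intrinsic ramification divisor on an arbitrary projective $X$ into the second-order differences $T_{k - 1} - 2T_k + T_{k + 1}$, and no such translation appears anywhere in the paper. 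Since you ultimately concede that the conjecture remains open, this does not change the verdict, but that identification should not be presented as a checkable step of the argument.
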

We consider the case $ X = \mathrm{Gr}(p, n + 1) \subseteq \mathbb{P}^{\binom{n + 1}{p} - 1} $ and $ D = \sum_{j = 1}^d H_j $, where $ \{H_j\}_{j = 1}^d $ are hyperplanes defined by $ p $-vectors $ \{\mathbf{A}_j^{(p)}\}_{j = 1}^d $ in general position. Then the above inequality reduces to
\begin{equation*}
  N_{\mathbf{X}^{(p)}, \mathrm{Ram}}(r) + \sum_{j = 1}^d m_p(r, \mathbf{A}_j^{(p)}) \leq (n + 1)T_p(r) + S_{\mathbf{X}^{(p)}}(r) \, //,
\end{equation*}
using the fact that $ K_{\mathrm{Gr}(p, n + 1)} = \mathcal{O}_{\mathbb{P}^{\binom{n + 1}{p} - 1}}(-(n + 1)) $. For example, if $ n = 3 $ and $ p = 2 $, this conjecture predicts that
\begin{equation*}
  N_{\mathbf{X}^{(2)}, \mathrm{Ram}}(r) + \sum_{j = 1}^d m_2(r, \mathbf{A}_j^{(2)}) < (4 + \epsilon)T_2(r) \, //
\end{equation*}
holds for all holomorphic curves $ \mathbf{x} : \mathbb{C} \to \mathbb{P}^3 $ with $ \mathbf{X}^{(2)}(\mathbb{C}) \nsubseteq Z_D $.
On the other hand, from Theorem~\ref{thm:SMT}, we obtain
\begin{equation*}
  N_{6}\{\mathbf{X}^{(2)}\}(r) + \sum_{j = 1}^d m_2(r, \mathbf{A}_j^{(2)}) < (6 + \epsilon)T_2(r) \, //.
\end{equation*}
(By \eqref{eq:Ahlfors_SMT}, this inequality remains valid if the function $ N_{6}\{\mathbf{X}^{(2)}\}(r) $ is replaced with $ 3N_4(r) $.) Therefore, this conjecture provides an alternative formulation of the Second Main Theorem by introducing a new ramification counting function $ N_{\mathbf{x}, \mathrm{Ram}}(r) $.

\section*{Acknowledgments.}
The author would like to thank Professor Ryoichi Kobayashi, who patiently engaged in seminar discussions over the course of a year and provided many valuable comments, new perspectives, and geometric insights, as well as continuous encouragement throughout the research process.

The author is also grateful to his supervisor, Professor Sho Tanimoto, whose helpful advice on the paper inspired the discovery of an application to the Second Main Theorem, and who also provided important feedback on the overall structure.

Finally, the author would like to thank Ryotaro Nishimura for helpful daily discussions and useful suggestions regarding the geometric background of the results.
\par\vspace{1em}
We use the \texttt{genyoungtabtikz} package to generate Young diagrams.


\begin{thebibliography}{99}
  \bibitem{Ahlfors} L. V. Ahlfors, \textit{The theory of meromorphic curves}, Acta Soc. Sci. Fenn. Nova Ser. A \textbf{3} (1941), pp.~171--183.
  \bibitem{Cartan}  H. Cartan, \textit{Sur les z\'{e}ros des combinaisons lin\'{e}aires de $ p $ fonctions holomorphes donn\'{e}es},
  Mathematica \textbf{7} (1933), pp.~5--31.
  \bibitem{Chen} W. Chen, \textit{Defect relations for degenerate meromorphic maps}, Trans. Amer. Math. Soc. \textbf{319} (1990),
  pp.~499--515.
  \bibitem{Chern} S. S. Chern, \textit{Complex Manifolds without Potential Theory: With an Appendix on the Geometry of Characteristic Classes}, 2nd ed., Universitext, Springer, 1979.
  \bibitem{Cowen-Griffiths} M. Cowen and P. Griffiths, \textit{Holomorphic curves and metrics of negative curvature}, J. Anal. Math. \textbf{29} (1976), pp.~93--153.
  \bibitem{Eremenko} A. Eremenko, \textit{A Toda lattice in dimension 2 and Nevanlinna theory}, Math. Phys. Anal. Geom. \textbf{31}
  (2007), pp.~39--46.
  \bibitem{Frame-Robinson-Thrall}  J. S. Frame, G. de B. Robinson, and R. M. Thrall, \textit{The hook graphs of the symmetric groups}, Canad. J. Math. \textbf{6} (1954), pp.~316--324.
  \bibitem{Fujimoto_1} H. Fujimoto, \textit{The defect relations for the derived curves of a holomorphic curve in $ P^n(\mathbb{C}) $}, Tohoku Math. J. \textbf{34} (1982), pp.~141--160.
  \bibitem{Fujimoto_2} H. Fujimoto, \textit{Value Distribution Theory of the Gauss Map of Minimal Surfaces in $\mathbb{R}^m$}, Aspects of Math. \textbf{E21}, 1993.
  \bibitem{Fulton} W. Fulton, \textit{Young Tableaux: With Applications to Representation Theory and Geometry}, London
  Math. Soc. Student Texts,  \textbf{35}, Cambridge Univ. Press,
  1997.
  \bibitem{Gatto-Scherbak} L. Gatto and I. Scherbak, \textit{``On one property of one solution of one equation'' or linear ODE's, Wronskians and Schubert calculus}, Mosc. Math. J. \textbf{12} (2012), no. 2, pp.~275--291.
  \bibitem{Gervais-Matsuo} J.-L. Gervais and Y. Matsuo, \textit{Classical $A_n$-$W$-geometry}, Commun. Math. Phys. \textbf{152} (1993), pp.~317--368.
  \bibitem{Griffiths-Harris} P. Griffiths and J. Harris, \textit{Principles of Algebraic Geometry}, Wiley Classics Library, Wiley, 1994.
  \bibitem{Huynh-Xie} D. T. Huynh and S.-Y. Xie, \textit{On the Weyl--Ahlfors theory of derived curves}, Math. Z. \textbf{300} (2022), pp.~475--491.
  \bibitem{Jost-Wang} J. Jost and G. Wang, \textit{Classification of solutions of a Toda system in $\mathbb{R}^2$}, Int. Math. Res. Not. \textbf{6} (2002), pp.~277--290.
  \bibitem{Kobayashi} S. Kobayashi, \textit{Hyperbolic Complex Spaces}, Grundlehren Math. Wiss. \textbf{318}, Springer, 1998.
  \bibitem{Kodaira} K. Kodaira, \textit{Nevanlinna Theory}, Springer Briefs in Mathematics, Springer, 2017. (from Japanese lecture notes taken by F. Sakai; translated by T. Ohsawa)
  \bibitem{Lang} S. Lang, \textit{Number Theory \textup{III}: Diophantine Geometry}, Encyclopaedia Math. Sci. \textbf{60}, Springer, 1991.
  \bibitem{Noguchi-Winkelmann} J. Noguchi and J. Winkelmann, \textit{Nevanlinna Theory in Several Complex Variables and Diophantine Approximation}, Grundlehren Math. Wiss. \textbf{350}, Springer, 2014.
  \bibitem{Piene} R. Piene, \textit{Numerical characters of a curve in projective $n$-space}, in \textit{Real and Complex Singularities} (Proc. Ninth Nordic Summer School/NAVF Sympos. Math., Oslo, 1976), Sijthoff \& Noordhoff, 1977, pp.~475--495.
  \bibitem{Ru} M. Ru, \textit{Nevanlinna Theory and Its Relation to Diophantine Approximation}, 2nd ed., World Scientific, 2021.
  \bibitem{Sato} M. Sato, \textit{Lectures by Mikio Sato} (1984-85), Math. Sci. Lecture Notes \textbf{5}, 1989. (lecture notes taken by T. Umeda, in Japanese)
  \bibitem{Schmidt} W. M. Schmidt, \textit{Diophantine Approximation}, Lecture Notes in Math.  \textbf{785}, Springer, 1980.
  \bibitem{Shiffman} B. Shiffman, \textit{On holomorphic curves and meromorphic maps in projective space}, Indiana Univ. Math. J. \textbf{28} (1979), pp.~627--641.
  \bibitem{Shimizu} T. Shimizu, \textit{On the theory of meromorphic functions}, Japan. J. Math. \textbf{6} (1929), pp.~119--171.
  \bibitem{Siu_1} Y.-T. Siu, \textit{Defect relations for holomorphic maps between spaces of different dimensions}, Duke Math. J. \textbf{55} (1987), pp.~213--251.
  \bibitem{Siu_2} Y.-T. Siu, \textit{Nonequidimensional value distribution theory and meromorphic connections}, Duke Math. J. \textbf{61} (1990), pp.~341--367.
  \bibitem{Smirnov} E. Smirnov, \textit{Grassmannians, flag varieties, and Gelfand--Zetlin polytopes}, in \textit{Recent Developments in Representation Theory}, Contemp. Math. \textbf{673}, Amer. Math. Soc., 2016, pp.~179--226.
  \bibitem{Stoll_1} W. Stoll, \textit{Die beiden Haupts\"{a}tze der Werteverteilungstheorie bei Funktionen mehrerer komplexer Ver\"{a}nderlichen \textup{(I)}}, Acta Math. \textbf{90} (1953), pp.~1--115.
  \bibitem{Stoll_2} W. Stoll, \textit{Die beiden Haupts\"{a}tze der Werteverteilungstheorie bei Funktionen mehrerer komplexer Ver\"{a}nderlichen \textup{(II)}}, Acta Math. \textbf{92} (1954), pp.~55--169.
  \bibitem{Vojta_1} P. Vojta, \textit{Diophantine Approximations and Value Distribution Theory}, Lecture Notes in Math. \textbf{1239}, Springer, 1987.
  \bibitem{Vojta_2} P. Vojta,  \textit{On McQuillan's ``tautological inequality'' and the Weyl--Ahlfors theory of associated curves}, arXiv:0706.3044, 2007.
  \bibitem{Weyl_1} H. Weyl and F. J. Weyl, \textit{Meromorphic curves}, Ann. of Math. \textbf{39} (1938), pp.~516--538.
  \bibitem{Weyl_2} H. Weyl and F. J. Weyl, \textit{Meromorphic Functions and Analytic Curves}, Ann. of Math. Stud. \textbf{12}, Princeton Univ. Press, 1943.
  \bibitem{Wu} H.-H. Wu, \textit{The Equidistribution Theory of Holomorphic Curves}, Ann. of Math. Stud. \textbf{64}, Princeton Univ. Press, 1970.
\end{thebibliography}
\end{document}